\newcommand{\cmark}{\textrm{\ding{52}}}%
\numberwithin{equation}{section}
\numberwithin{figure}{section}
\renewcommand*{\thefootnote}{\fnsymbol{footnote}}
\theoremstyle{plain}
\newtheorem{theorem}{Theorem}[section]
\newtheorem{lemma}{Lemma}[section]
\newtheorem{corollary}{Corollary}[section]
\newtheorem{assumption}{Assumption}[section]
\newtheorem{remark}{Remark}[section]
\newcommand{\bN}{\mathbb{N}}
\newcommand{\bR}{\mathbb{R}}
\newcommand{\cP}{\mathcal{P}}
\definecolor{darkgreen}{rgb}{0,0.35,0}
\title
{Tamed Euler approximation for fully superlinear growth McKean-Vlasov SDE and their particle systems: sharp rates for strong propagation of chaos, convergence and ergodicity}
\author{
\normalsize Simran Soni\textit{$^{a}$} \\
        \small   simran{\textunderscore}s@ma.iitr.ac.in
\and
\normalsize Neelima\textit{$^{b}$} \\
        \small   neelima{\textunderscore}maths@ramjas.du.ac.in 
\and 
\normalsize Chaman Kumar\textit{$^{a,}$}\footnote{CK acknowledges support from the London Mathematical Society under Scheme 5 (grant no. 52340) which partially funded his visit to the United Kindom.} 
\\
        \small   chaman.kumar@ma.iitr.ac.in 
\and
 \normalsize Gon\c calo dos Reis\textit{$^{c,d,}$}\footnote{G.d.R.~acknowledges support from the \emph{Funda{\c c}\~ao para a Ci\^{e}ncia e a Tecnologia} (Portuguese Foundation for Science and Technology) through the projects UIDB/00297/2020 (https://doi.org/10.54499/UIDB/00297/2020) and UIDP/00297/2020
(https://doi.org/10.54499/UIDP/00297/2020) (Center for Mathematics and Applications, NOVA Math) and by the UK Research and Innovation (UKRI) under the UK government’s Horizon Europe funding Guarantee [Project UKRI343].} 
\\
        \small  G.dosReis@ed.ac.uk
}
\date{%
    \footnotesize 
    $^{a}$~Department of Mathematics, Indian Institute of Technology Roorkee, Roorkee, 247 667, India
    \\
    $^{b}$~Department of Mathematics, Ramjas College, University of Delhi, Delhi, 110 007,  India
    \\
    $^{c}$~School of Mathematics, University of Edinburgh, JCMB, 
    Peter Guthrie Tait Road, Edinburgh, EH9 3FD, UK
    \\
    $^{d}$~Centro de Matem\'atica e Aplica\c c\~{o}es (Nova MATH), FCT, UNL, 2829-516 Caparica, Portugal
    \\
    \longdate \today \ (\currenttime)
}
\begin{document}

\selectlanguage{english}

\maketitle
\renewcommand*{\thefootnote}{\arabic{footnote}}
\begin{abstract}
    We study McKean--Vlasov Stochastic Differential Equations (MV-SDEs) whose drift and diffusion coefficients are of superlinear growth in \textit{all} their variables thus also superlinear in the measure component (the meaning is specified in the body of the paper). We address the finite and infinite time horizon case. 

    Our contribution is fourfold. 
    (a) We establish well-posedness for this class of equations and the corresponding interacting particle system. 
    (b) We prove two propagation of chaos results with explicit $L^2$-convergence rates: the first, is a general one where the rate degrades as the system's dimension $d$ increases; the second, attains the sharp rate $N^{-1/2}$ (in particle number $N$) uniformly over the dimension $d$ at the cost of a Vlasov kernel structure that is general and of superlinear growth for the measure dependency---the latter's proof fully avoids the Kantorovich-Rubinstein duality argument. 
    (c) Unlike existing works---based on semi-implicit schemes or truncated Euler schemes---we propose a fully explicit tamed Euler scheme that has reduced computational cost (comparatively). 
    The explicit scheme is shown to converge in strong $L^p$-sense with rate $1/2$ (in timestep).  
    (d) Lastly, we establish exponential ergodicity properties and long-time behavior for the MV-SDE, the corresponding interacting particle system, and the tamed scheme. The latter result is, to the best of our knowledge, fully novel. 
\end{abstract}

  \textbf{Keywords:} McKean-Vlasov SDE, interacting particle system, propagation of chaos, superlinear coefficient, taming, explicit scheme, sharp convergence rate, ergodicity. 

\textbf{MSC2020 subject classifications: } Primary: 60H35 , 65C30
Secondary: 65C05 , 65C35

\newpage 
\footnotesize
\tableofcontents
\normalsize
\section{Introduction}
\normalsize
McKean–Vlasov Stochastic Differential Equations (MV-SDEs), also referred to as mean-field or distribution-dependent SDEs, were first introduced by McKean in the 1960s as probabilistic counterparts to nonlinear parabolic PDEs \cite{mckean1966}. The key element is that such equations can be represented as limits of interacting particle systems, where each particle evolves under the influence of the empirical distribution of the system.
Through this particle dynamics to mean-field correspondence, MV-SDEs serve as a bridge between microscopic models of interacting agents and the macroscopic equations that describe their collective behavior. In a sense, MV-SDEs represent a type of dimensionality reduction of the large interacting population system and thus MV-SDEs appear nowadays in many applications in physics, biology, finance, machine learning and other fields \cite{Sznitman1991TopicsinPoC, kolokoltsov2010nonlinear, carmona2018a, carmona2018b,baladron2012, bolley2011, bossy2015, dreyer2011, guhlke2018,MR4776392,sabbar2025review,neufeld2025multilevel}.  
In the majority of cases explicit solutions of MV-SDEs are, as expected, out of reach. This makes numerical methods essential for both theoretical study and practical applications. 
A highly tractable methodology to numerically approximate an MV-SDE is via the particle system interpretation. Concretely, one writes an interacting $N$-particle system (IPS) that converges to the target MV-SDE (in particle number) and then one numerically approximates said IPS. 
From a theoretical point of view as the number of particles $N$ grows, the empirical law from the IPS converges to the law of the limiting MV-SDE in a phenomenon known as \emph{Propagation of Chaos} (PoC)  
\cite{Sznitman1991TopicsinPoC,lackerleflem2023sharpPoC,MR4489768,MR4489769}. The PoC result in combination with numerical discretization results form the recipe for this numerical approximation procedure. A detailed discussion on numerical methods for MV-SDEs that completely avoid the IPS approach is given in~\cite{agarwal2023numerical}.

We consider the following class of MV-SDEs 
\begin{equation}
\label{Mckean-Intro}
       dX_t = 
       \Big\{  \int_{\mathbb R^d} f(X_t,y)\mu_t(dy)+b(t,X_t,\mu_t)   \Big\}dt 
       + 
       \Big\{ \int_{\mathbb R^d} g(X_t,y)\mu_t(dy)+\sigma(t,X_t,\mu_t) \Big\} dW_t,
    \end{equation}
where $\mu_t$ denotes the law of $X_t$ for all $t\in [0, T]$, the initial value $X_0$ is a  sufficiently integrable $\mathbb R^d$-valued random variable that is independent of the $\bR^l$-valued Brownian motion $W$, and we take some measurable maps $f:\mathbb{R}^d \times \mathbb R^d\mapsto \mathbb{R}^d,$ $g:\mathbb{R}^d \times \mathbb R^d\mapsto \mathbb{R}^{d\times l},$ $b:[0,T] \times\mathbb{R}^d \times \mathcal{P}_2(\mathbb R^d) \mapsto  \mathbb R^d$ and $\sigma: [0,T] \times\mathbb{R}^d \times \mathcal{P}_2(\mathbb R^d) \mapsto \mathbb R^{d\times l}$ where $\mathcal{P}_2(\mathbb R^d)$ denotes the set of probability measures over $\mathbb R^d$ with finite second moment. 
The existence and uniqueness of the strong solution of MV-SDE \eqref{Mckean} with coefficients that meet Lipschitz conditions and grow linearly with respect to both measure and state variables are presently well established  \cite{Sznitman1991TopicsinPoC, carmona2018a, carmona2018b} including classes of coefficients more general than those of \eqref{Mckean-Intro}. 
Our focus is the superlinear growth class.

The importance of the general superlinear class, when $f,g,b,\sigma$ are of superlinear growth, is well articulated in \cite{chen2025} and it includes a review of use-cases. 
Nonetheless, in \cite{chen2025} the authors address only the convolution class that writes the terms involving $f,g$ in \eqref{Mckean-Intro} as $f(x,y)=K(x-y)$ for some function $K$ and thus $\int_{\mathbb R^d} f(X_t,y)\mu_t(dy)=(K* \mu_t)(X_t)$ where $K$ is of superlinear growth and $*$ stands for the convolution symbol. 
We call the general structure of the $f,g$ terms in \eqref{Mckean-Intro} as Vlasov kernels \cite{mckean1966} while those of the form $(K* \mu)(\cdot)$ are called convolution kernels. 
Although \cite{chen2024,chen2025} was a step forward in the sense that there was no literature addressing the time-discretisation of MV-SDEs with superlinear growth in the measure component, many superlinear situations of interest fall beyond the convolution archetype but are within the type of \eqref{Mckean-Intro}, e.g., \cite[\textsection 1.2.2]{bolley2011}, \cite[\textsection 3]{guillin2025some}, \cite{iguchi2025parameter}, \cite[Eq. (1.3)]{ringh2025kalman}. 
In \cite{guillin2025some}, the authors investigate how the random batch method \cite{MR4019084} affects the approximation of the invariant distribution of a McKean–Vlasov SDE of Langevin type. Their analysis leads to a modified McKean–Vlasov equation in which the variance of the solution process appears explicitly in the drift term. 
In \cite{ringh2025kalman}, the authors study the covariance preconditioned mean-field Langevin equation in the context of Consensus Based Optimization (CBO),  \cite{PiersHinds2025}, which involves analysing a class of MV-SDEs where the variance of the solution process appears in the diffusion coefficient; we note that their class of MV-SDEs mixes both convolution and Vlasov kernels (as in \eqref{Mckean-Intro} but beyond \cite{chen2024,chen2025}). 
Lastly, \cite[\textsection 1.2.2]{bolley2011} highlights several open-questions regarding extensions to nonlinear diffusion coefficients. 

\smallskip

The main aim of this paper is to close several literature gaps regarding MV-SDE when its coefficients are of general superlinear growth in their variables. We address wellposedness, particle approximation and propagation of chaos, then propose an explicit tamed Euler scheme, of reduced computational cost comparative to existing literature, to approximate \eqref{Mckean-Intro} showing in particular the feasibility of such schemes in finite and infinite time and with explicit convergence rates. The works closest to ours are \cite{chen2025,yuanping2024explicit,liu2025long}, and our contribution here is beyond their settings. In particular, we provide answers to questions left open in their contributions. 
\smallskip

\emph{Well-posedness and Propagation of Chaos.} 
Results on well-posedness of the MV-SDE with the superlinear drift coefficient in the state variable are nowadays well-known \cite{baladron2012,reis2019,kumar2021,reisinger2022,chaman2022,neelima2020}. In \cite{chaman2022}, wellposedness for MV-SDEs with common noise having superlinear growth in the state variable of all the coefficients is established. 
For MV-SDE driven by L\'evy noise, \cite{neelima2020} proved the existence and uniqueness of the strong solution under a new set of assumptions related to nonglobal Lipschitz conditions that allow the drift, diffusion and jump coefficients to have superlinear growth in the state variable. The common assumption in these works is that the coefficients are assumed Wasserstein-Lipschitz continuous in the measure variable. 

Considering convolution kernels in the drift coefficient, \cite{chen2024} proved the wellposedness of the MV-SDE when the kernel has superlinear growth and hence the authors effectively allow the drift coefficient to be fully superlinear in both state and measure. 
The result is extended in \cite{chen2025} for MV-SDE with fully superlinear coefficients in both the drift and diffusion coefficients in space and measure.
However, in both of these works, only convolution kernels (plus Wasserstein-Lipschitz dependencies) are addressed.

Alongside wellposedness for the MV-SDE, one deals also with the approximating interacting particle system with propagation of chaos (PoC) connecting the particle systems to its mean-field limits \cite{Sznitman1991TopicsinPoC,lackerleflem2023sharpPoC,MR4489768,MR4489769}. 
In the superlinear settings with non-constant diffusion and general Wasserstein-$2$ continuous measure functional, recent contributions establish the PoC with explicit convergence rates that typically encapsulate a dependence on the state-space dimension  \cite{reis2019,neelima2020,kumar2021,reisinger2022,chaman2022,li2023,yuanping2024explicit,jian2025modified}. 
Quantitative PoC rates in particle number $N$ often express a degradation in the ambient dimension $d$, concretely $N^{-1/d}$ (for $d>4$). This type of rates stem from a certain geometric phenomenon of the approximating empirical measure (under the Wasserstein-$2$ metric) where as the dimension grows, the ``holes'' in the empirical distribution (from uniform i.i.d.~sampling) get larger, and the optimal transport cost scales accordingly \cite{fournier2015}. 
Nonetheless, in a variety of settings, the PoC rate has been shown to be the sharp $N^{-1/2}$ without the dimension-decay, most recently in  \cite{zhang2025,baohao2025PoC-sharprate} under structural Lipschitz-type conditions on the (nonlinear) dependence on the measure---concretely Wasserstein-$1$ type functionals. See \cite{lackerleflem2023sharpPoC,bernou2025uniform} for, to our knowledge, best known results and \cite{MR4489768,MR4489769} for a wide review. 

The sharp $L^2$-strong PoC rate $N^{-1/2}$ has been attained in a variety of works studying numerical methods \cite{belomestny2018projected,awadelkarim2024multilevel,DellaMaestraHoffmann2022NonparametricEstimation,Hinds2025wellposedness,zhang2025} using arguments different from the usual ones. 
This is achieved in \cite{belomestny2018projected} using the classical Rosenthal inequality (see Lemma \ref{lemma:classicalRosenthalIneq}) in combination with strong uniformly Lipschitz conditions and scalar interaction kernels; in \cite[Lemma A.4]{awadelkarim2024multilevel} the Marcinkiewicz-Zygmund inequality is used instead of Rosenthal's inequality (these two inequalities are different but akin) alongside time-homogeneous coefficients, deterministic initial condition, and uniformly bounded and Lipschitz Vlasov kernels. 
More recently, and carried out independently, \cite{DellaMaestraHoffmann2022NonparametricEstimation,zhang2025,Hinds2025wellposedness} obtain the sharp $N^{-1/2}$ PoC rate by leveraging the Kantorovich-Rubinstein duality argument under the Wasserstein-$1$ metric; in \cite{DellaMaestraHoffmann2022NonparametricEstimation}, the diffusion coefficient is independent of the measure; in \cite{Hinds2025wellposedness} the authors rely explicitly on the fact that the MV-SDE under study is of reflected type on a bounded domain; and \cite{zhang2025} combines the Kantorovich duality argument with the conditional Rosenthal inequality (see Lemma \ref{lemma:conditionalRosenthalIneq}) but requires Lipschitz assumptions in the measure component.  
Establishing a viable mechanics for sharp dimension-free PoC rates for general superlinear and non-Lipschitz measure dependencies, remains an open question. 

\textit{Our contribution.} We establish strong well-posedness for \eqref{Mckean-Intro} with   coefficients that are fully superlinear in both the state and the measure variable. 
Our setting is more general than the convolution kernels class of \cite{chen2025} and extends previous work on superlinear MV-SDEs. Further, we do not assume any differentiability or ellipticity requirements. 
The methodology here departs significantly from \cite{chen2025}: we have identified a suitable Banach space where the fixed-point argument is applied over the interval $[0,T]$ unlike in \cite{chen2025} where the fixed-point argument is applied over sufficiently small intervals repeatedly. This essentially means that the moment bound of the solution process follows directly from the moment bound of the approximating MV-SDE. Thus, we do not require the `extra symmetry' assumptions of \cite{chen2025}. 

As a second contribution, we provide new quantitative PoC results. In the general setting with Wasserstein-$2$ functionals (in $b,\sigma$), we obtain dimension-dependent convergence rates which match the behavior observed in many contributions. 
More importantly, in the full Vlasov kernel case we obtain the sharp dimension-independent rate $N^{-1/2}$, complementing and advancing recent results that were previously confined to Lipschitz measure dependencies. 
Our results are broader than \cite{belomestny2018projected,zhang2025,PiersHinds2025} and avoid altogether Kantorovich duality arguments by showing that one needs only a certain conditioning argument, that is in no way tied to the Kantorovich duality, and some higher moment of the MV-SDE's solution. 
\smallskip

\emph{Numerical schemes for superlinear growth (non-Lipschitz in measure).}
In relation to numerical schemes for MV-SDEs with superlinear growth,  
several strategies are available with the recent \cite{jian2025modified} offering an overview of methods dealing with `particle corruption' \cite{dos2022,zhang2025,yuanping2024explicit}: (i) implicit scheme or backward Euler \cite{dos2022,liu2025long}
(ii) split-step (semi-implicit) integrators that separate the stiff components via an implicit step and control growth for fully superlinear MV-SDEs \cite{chen2022,chen2024,chen2025}; 
(iii) explicit time-adaptive or problem-aware discretizations \cite{reisinger2022,Tran2025MR4815916}; 
and (iv) explicit Euler schemes with truncation \cite{yuanping2024explicit} or projection \cite{liu2025long} or taming \cite{dos2022,chaman2022,Tran2025MR4815916,zhang2025}--- and all these schemes attain a strong $L^2$-rate of $1/2$ in the discretisation timestep. 
With the exception of \cite{chen2024,chen2025}, these contributions successfully address superlinear growth in space but assume a Wasserstein-Lipschitz behavior in the measure component. We point to Table \ref{table:literature} for an overview.

\begin{table}[htb]
\centering
\footnotesize
\setlength{\tabcolsep}{3pt}
\caption{Literature overview}
\begin{tabular}{|c|cccc|cccc|c|c|c|}
\hline
\multirow{3}{*}{Paper} &  \multicolumn{4}{c|}{Drift map}  & \multicolumn{4}{c|}{Diffusion maps}  &  \multirow{2}{*}{Kernel} & \multirow{3}{*}{Scheme type} & \multirow{3}{*}{Rate}  \\ \cline{2-9}

 & \multicolumn{2}{c|}{Space}  & \multicolumn{2}{c|}{Measure}  & \multicolumn{2}{c|}{Space}   & \multicolumn{2}{c|}{Measure}  & &  &  \\ \cline{2-9}
  
 & \multicolumn{1}{c|}{Lip.} & \multicolumn{1}{c|}{SuperLin.} & \multicolumn{1}{c|}{Lip} & SuperLin. & \multicolumn{1}{c|}{Lip.} & \multicolumn{1}{c|}{SuperLin.} &\multicolumn{1}{c|}{Lip} & \multicolumn{1}{c|}{SuperLin.} & type &  & \\ \hline
 \hline 

\cite{kumar2021} 
& \multicolumn{1}{c|}{\checkmark}   & \multicolumn{1}{c|}{\checkmark}    
& \multicolumn{1}{c|}{\checkmark}   & no 
& \multicolumn{1}{c|} {\checkmark}   & \multicolumn{1}{c|}{no}  
& \checkmark & \multicolumn{1}{|c|}{no} 
& --- &  Milstein (explicit)  & 1.0  \\ \hline


\multirow{2}{*}{\cite{dos2022}}   
& \multicolumn{1}{c|}{\multirow{2}{*}{\checkmark}}       & \multicolumn{1}{c|}{\multirow{2}{*}{\checkmark}}    
& \multicolumn{1}{c|}{\multirow{2}{*}{\checkmark}}       & \multirow{2}{*}{no}  
& \multicolumn{1}{c|}{\multirow{2}{*}{\checkmark}}       & \multicolumn{1}{c|}{\multirow{2}{*}{no}}  
& \multicolumn{1}{c|}{\multirow{2}{*}{\checkmark}}        & \multicolumn{1}{c|}{\multirow{2}{*}{no}} 
& \multirow{2}{*}{---} &  Tamed EM (explicit) & \multirow{2}{*}{0.5}    \\ 

&  \multicolumn{1}{c|}{}  &  \multicolumn{1}{c|}{}
&  \multicolumn{1}{c|}{}  & \multicolumn{1}{c|}{}
&  \multicolumn{1}{c|}{}  & \multicolumn{1}{c|}{}
& \multicolumn{1}{c|}{}   & \multicolumn{1}{c|}{}
&  & Backward EM (Implicit) &   \\ \hline


\multirow{2}{*}{\cite{reisinger2022}}   
& \multicolumn{1}{c|}{\multirow{2}{*}{\checkmark}}       & \multicolumn{1}{c|}{\multirow{2}{*}{\checkmark}}    
& \multicolumn{1}{c|}{\multirow{1}{*}{\checkmark}}       & \multirow{2}{*}{no}  
& \multicolumn{1}{c|}{\multirow{2}{*}{\checkmark}}       & \multicolumn{1}{c|}{\checkmark}  
& \multicolumn{1}{c|}{\checkmark}       & \multicolumn{1}{c|}{no} 
& \multirow{1}{*}{---} & Adaptive EM (explicit)  & 0.5    \\ \cline{4-4} \cline{7-11} 

&  \multicolumn{1}{c|}{}  &  \multicolumn{1}{c|}{}
&  \multicolumn{1}{c|}{\multirow{1}{*}{\checkmark $W^{(1)}$}}  & \multicolumn{1}{c|}{}
&  \multicolumn{1}{c|}{}  & \multicolumn{1}{c|}{no}
& \multicolumn{2}{c|}{no dependency}    
& \multirow{1}{*}{Vlasov} & Adaptive Milstein (explicit) &  1.0 \\ \hline


\cite{chen2022} 
& \multicolumn{1}{c|}{\checkmark}   & \multicolumn{1}{c|}{\checkmark}    
& \multicolumn{1}{c|}{\checkmark}   & no 
& \multicolumn{1}{c|} {\checkmark}   & \multicolumn{1}{c|}{no}  
& \checkmark & \multicolumn{1}{|c|}{no} 
& --- & Split-Step EM (semi-Implicit)  & 0.5   \\ \hline


\cite{li2023}${}^\star$ 
& \multicolumn{2}{c|}{Locally Lip.}       
& \multicolumn{1}{c|}{\checkmark}   & no 
& \multicolumn{2}{c|}{Locally Lip.}     
& \checkmark  & \multicolumn{1}{|c|}{no} 
& --- & EM (explicit) & 0.5    \\ \hline


\cite{chen2024}  
& \multicolumn{1}{c|}{\checkmark}   & \multicolumn{1}{c|}{\checkmark}    
& \multicolumn{1}{c|}{\checkmark}   & \checkmark 
& \multicolumn{1}{c|} {\checkmark}   & \multicolumn{1}{c|}{no}  
& \checkmark  & \multicolumn{1}{|c|}{no} 
& Convol.+$W^{(2)}$  & Split-Step EM (semi-Implicit) & 0.5    \\ \hline


$\dagger$\cite{yuanping2024explicit}${}^{\star}$ 
& \multicolumn{2}{c|}{Locally Lip.}       
& \multicolumn{1}{c|}{\checkmark}   & no 
& \multicolumn{2}{c|}{Locally Lip.} 
& \multicolumn{1}{c|}{\checkmark} & \multicolumn{1}{c|}{no} 
& --- & Truncated EM  (explicit) & 0.5    \\ \hline


\multirow{2}{*}{$\dagger$\cite{liu2025long}}   
& \multicolumn{1}{c|}{\multirow{2}{*}{\checkmark}}       & \multicolumn{1}{c|}{\multirow{2}{*}{\checkmark}}    
& \multicolumn{1}{c|}{\multirow{2}{*}{\checkmark}}       & \multirow{2}{*}{no}  
& \multicolumn{1}{c|}{\multirow{2}{*}{\checkmark}}       & \multicolumn{1}{c|}{\multirow{1}{*}{\checkmark}}  
& \multicolumn{1}{c|}{\multirow{2}{*}{\checkmark}}        & \multicolumn{1}{c|}{\multirow{2}{*}{no}} 
& \multirow{2}{*}{---} & Projected EM (explicit)  & \multirow{2}{*}{0.5}    \\ 
\cline{7-7}\cline{11-11} 

&  \multicolumn{1}{c|}{}  &  \multicolumn{1}{c|}{}
&  \multicolumn{1}{c|}{}  & \multicolumn{1}{c|}{}
&  \multicolumn{1}{c|}{}  & \multicolumn{1}{c|}{no}
& \multicolumn{1}{c|}{}   & \multicolumn{1}{c|}{}
&  & Backward EM (Implicit) &   \\ \hline


$\dagger$\cite{Tran2025MR4815916}${}^\star$   
& \multicolumn{1}{c|}{\checkmark}   & \multicolumn{1}{c|}{\checkmark}    
& \multicolumn{1}{c|}{\checkmark}   &  no
& \multicolumn{1}{c|}{\checkmark}   & \multicolumn{1}{c|}{\checkmark} 
& \multicolumn{1}{c|}{\checkmark}  & no
& --- & Tamed-adaptive EM scheme & 0.5  \\ \hline


\cite{chen2025}
& \multicolumn{1}{c|}{\checkmark}   & \multicolumn{1}{c|}{\checkmark}    
& \multicolumn{1}{c|}{\checkmark}   & \checkmark 
& \multicolumn{1}{c|} {\checkmark}   & \multicolumn{1}{c|}{\checkmark}  
& \checkmark  & \multicolumn{1}{|c|}{\checkmark} 
& Convol.+$W^{(2)}$  & Split-Step EM (semi-Implicit) & 0.5    \\ \hline


\hline
This work   & \multicolumn{1}{c|}{\cmark}  & \multicolumn{1}{c|}{\cmark}  & \multicolumn{1}{c|}{\cmark}  &  \cmark  & \multicolumn{1}{c|}{\cmark}  & \multicolumn{1}{c|}{\cmark}  &  \cmark & \multicolumn{1}{|c|}{\cmark} & Vlasov+$W^{(2)}$  &  Tamed EM(explicit)  & 0.5 \\ \hline \hline
\end{tabular}
\\
\raggedright{
`Lip.' = `Lipschitz' referring to satisfying a Lipschitz condition or also covering the Lipschitz condition; 
`SuperLin.' = `Superlinear growth' referring to satisfying a condition (e.g., polynomial growth, one-sided Lipschitz or Khasminskii-type) allowing for more than linear growth (see assumptions of Section \ref{sec:main:welpos}); 
Otherwise mentioned, `Lipschitz in measure' means Wasserstein-2 Lipschitz (see Eq. \eqref{eq:wasserstein-metric}); 
All Milstein methods require additional differentiability assumptions; Rate denotes strong $L^2$-rate; 
`EM' = Euler-Maruyama; `Convol.'=`Convolution kernel' type measure dependency. \\
}
\raggedright{
\textit{Additional notes}:
\\
\cite{li2023}${}^\star$ works with locally Lipschitz coefficients in the state variable, but under uniform linear growth assumptions; 
\\
\cite{yuanping2024explicit}${}^\star$ works with locally Lipschitz in the state variable and a Khasminskii-type growth condition, but the scheme's rate is established only under stronger growth conditions.
\\
\cite{Tran2025MR4815916}${}^\star$ include jumps and besides taming also a time-adaptive grid is used (as in \cite{reisinger2022}); 
\\
$\dagger$\cite{yuanping2024explicit,liu2025long}, \cite{Tran2025MR4815916} are the only works here that investigate as well the scheme's longtime behavior with \cite{yuanping2024explicit,chen2025} studying also ergodicity. All other works study the finite time case $T<\infty$.
}
\label{table:literature}
\end{table}  
\normalsize

Numerical experiments reported in the split-step literature, \cite[Remark 3.1]{chen2024}, suggest that ``taming'' can be ill-suited in convolution models with non-constant diffusion, unless strong dissipativity is present. This left open the question regarding the viability of taming schemes (also posed by \cite{yuanping2024explicit,jian2025modified}) under fully superlinear growth (in state and measure) in finite or infinite time.

\emph{Our contribution:} We propose an explicit tamed Euler scheme for MV-SDEs with fully superlinear growth coefficients (in both state and measure) that attains a strong convergence of order $1/2$ in $L^{p}$---no smoothness, ellipticity or constant diffusion coefficient is required. This closes the open questions left by \cite{chen2025,yuanping2024explicit,jian2025modified}.
\smallskip

\emph{Ergodicity and ergodic numerics (with taming).}
On the continuous-time side, there is a rich literature regarding ergodic properties for MV-SDEs and we mention only \cite{Wang2018-DDSDE-Landau-type} as this introduction is far too short to do justice to the existing body of work on the topic. 
In the SDE context, there is a parallel line of work proving that stabilized explicit or implicit schemes can approximate invariant distributions---but it is noteworthy to observe that only recently have contributions appeared regarding ergodicity of explicit tamed Euler schemes under one-sided Lipschitz and polynomial growth \cite{Brehier2023MR4655541,bao2024geometric,liu2024geometric,ottobrecrisanangeli2025}. 
In the MV-SDE context, ergodicity results and/or longtime behavior for schemes of superlinear growth MV-SDE under taming are largely missing from the literature with \cite{yuanping2024explicit,liu2025long} being the exception. 

In \cite{yuanping2024explicit}, the authors study a fully explicit Euler scheme of truncated/projected type for McKean–Vlasov SDEs whose drift exhibits superlinear growth in space and is Wasserstein–Lipschitz in the measure variable. Although the diffusion coefficient may grow polynomially in both the state and measure arguments, both coefficients remain Wasserstein–Lipschitz in measure. The analysis establishes convergence of the scheme over both finite and infinite time horizons.

\emph{Our contribution.} We establish exponential ergodicity not only for the MV-SDE but also for its interacting particle approximation and for the proposed explicit tamed Euler scheme. Regarding the latter, this is the first result demonstrating long-time stability and ergodicity for an explicit numerical scheme in the fully superlinear MV-SDE setting (and of the taming type in particular; see \cite{yuanping2024explicit,jian2025modified}). Our proof builds on ideas from \cite{bao2024geometric} and \cite{chen2025} where in the former (their section 2.2) the proof was carried out in Wasserstein-$1$ distance whilst here we work with Wasserstein-$2$ distance and with general coefficients.

\medskip
This paper is organized as follows. Section \ref{sec:main:welpos} describes the class of MV-SDEs we work with and contains the well-posedness results. Section \ref{sec:IPS-and-PoC} defines the associated IPS and establish the two PoC results of the paper. The taming scheme, properties and its convergence is established in Section \ref{sec:Taming-in Euler scheme}. The final part, Section \ref{sec:Ergodicity}  contains the several ergodicity result and those results for the tamed scheme appear in Section \ref{sec:ergodicity tamed Euler scheme}. Appendix \ref{appendix} collects several useful auxiliary results.

\subsection*{Notations}
Let $\mathbb N$ be the set of natural numbers and $\mathbb N_0:=\mathbb N \cup \{0\}$. Let  $\langle \cdot , \cdot \rangle $ denotes the inner product in $d$-dimensional Euclidean space of real numbers $\mathbb R^d$; both the Euclidean norm in $\bR^d$ and the Frobenius norm in $\bR^{d \times m}$ is denoted by $|\cdot|$.
The transpose of a matrix $A$ is denoted by $A^*$. 
Also, $\delta_x$ denotes the Dirac measure at point $x$. 
Further, $\mathcal B(\mathbb R^d)$  represents the Borel $\sigma$-field on $\mathbb R^d$ with $\mathcal{P} (\mathbb R^d)$ denoting the space of probability measures on $(\mathbb R^d,\mathbb B(\mathbb R^d))$. For $r\geq 1$, let $\mathcal{P}_r (\mathbb R^d)$ denotes the subspace of $\mathcal{P} (\mathbb R^d)$ having finite $r$-th moment, i.e., $\displaystyle \int_{\mathbb R^d} |x|^r \mu(dx) < \infty$. 
For $\mu, \nu \in \mathcal{P}_r (\mathbb{R}^d)$, the $r$-Wasserstein metric is defined as
\begin{equation}
\label{eq:wasserstein-metric}
    W^{(r)}(\mu,\nu) := \underset{\pi \in \Pi(\mu, \nu)}{\inf} \Bigl( \int_{\mathbb R^d} \int_{\mathbb R^d} |x-y|^r \pi(dx,dy) \Bigl)^{\frac{1}{r}}
\end{equation}
where $\Pi(\mu,\nu)$ is the set of all couplings of $\mu,\nu \in \mathcal{P}_r(\mathbb{R}^d)$. 
For $a,b\in \bR$,  we define the minimum of $a$ and $b$ as $a \wedge b$. 
We use the symbol $K>0$ to denote a generic constant that can vary from appearance to appearance. 
The floor function is denoted by $\lfloor \cdot \rfloor$. 
The positive part of a real number $a$ is denoted by $a^{+}$.  $\mathbbm{1}_A$ is indicator function of set $A$. 

\section{Well-posedness of MV-SDEs: full superlinear growth under the Vlasov structure}
\label{sec:main:welpos}
In this section, we state the class and framework for the equations we work with alongside well-posedness and moment estimates. 

Let $(\Omega,\{\mathscr{F}_t\}_{t\geq 0 },\mathscr{F}, \mathbb P )$ be a filtered probability space that satisfies the usual conditions, and let $W:=\{W_t\}_{t\geq 0}$ be a $l$-dimensional Brownian motion defined over it. 
For a fixed constant $T>0$,
assume that $f:\mathbb{R}^d \times \mathbb R^d\mapsto \mathbb{R}^d,$ $g:\mathbb{R}^d \times \mathbb R^d\mapsto \mathbb{R}^{d\times l},$ $b:[0,T] \times\mathbb{R}^d \times \mathcal{P}_2(\mathbb R^d) \mapsto  \mathbb R^d$ and $\sigma: [0,T] \times\mathbb{R}^d \times \mathcal{P}_2(\mathbb R^d) \mapsto \mathbb R^{d\times l}$ are Borel measurable functions. 
Define the following maps,  
\begin{align}
        \nu(t,x,\mu) & := \int_{\mathbb R^d} f(x,y)\mu(dy)+b(t,x,\mu) \quad \mbox{ and } \quad 
     \bar{\sigma}(t,x,\mu) 
     := \int_{\mathbb R^d} g(x,y)\mu(dy)+\sigma(t,x,\mu)\label{eq:v:sig}
    \end{align}
    for all $t \in [0, T]$, $x\in\mathbb R^d$ and $\mu \in \mathcal{P}_2(\mathbb R^d)$. 
In this article, we consider the following McKean--Vlasov Stochastic Differential Equation (MV-SDE), 
\begin{equation}
\label{Mckean}
       X_t = X_0 + \int_0 ^t \nu(s,X_s,\mu_s ^X )ds + \int_0 ^t  \bar{\sigma}(s,X_s,\mu_s ^X)dW_s
    \end{equation}
almost surely,  where $\mu_t ^X$ denotes the law of $X_t$ for all $t\in [0, T]$, and the initial value $X_0$ is an $\mathscr{F}_0$-measurable $\mathbb R^d$-valued random variable and is independent of $W$.


Notice that the coefficients  $\nu$ and  $\bar{\sigma}$ of MV-SDE \eqref{Mckean} are represented in an additive form of two functions where the first term provides a specific measure dependence through Vlasov kernels $f$ and $g$ while  their general measure dependence are expressed via the second term. 
Under this setup, the superlinearity in the measure is due to the superlinearity of the kernels $f$ and $g$ and the superlinearity in the state variable is due to the superlinearity of $\nu$ and $\bar{\sigma}$ in the state variable. 
Thus, MV-SDE \eqref{Mckean} is allowed to have fully superlinear structure.

\smallskip

For our framework, there are two leading fixed parameters $q,p_0$---let $q > 0$ and $p_0 > 2(q+1)$---where $q$ represents the highest polynomial degree domination on the map $f$ (and later on $b$) and $p_0$ is the integrability of the initial condition. 
In order to settle well-posedness and moments for MV-SDE \eqref{Mckean} we make the following assumptions.
\begin{assumption}{\label{assum_initial}}
   $\mathbb  E|X_0|^{p_0} < \infty$. 
\end{assumption}
\begin{assumption} \label{as:eu:b:sig}
    There exists a constant $L > 0$ such that
    \begin{align*}
         \langle x , b(t,x,\mu) \rangle + (p_0 -1)|\sigma(t,x,\mu)|^2 & \leq L\{1+ |x|^2 + W^{(2)} (\mu, \delta_0 )^2 \},
       \\
       \langle x -x' , b(t,x,\mu)- b(t,x',\mu') \rangle + |\sigma(t,x,\mu)- \sigma(t,x',\mu')|^2 & \leq L\{|x-x'|^2 + W^{(2)} (\mu, \mu' )^2 \},
    \end{align*}
    for all $t\in [0,T]$, $x,x' \in \mathbb R^d$ and $ \mu,\mu' \in \mathcal{P}_2(\mathbb R^d)$. 
    The maps $b,\sigma$ are assumed jointly continuous in their variables.
\end{assumption}
\begin{assumption}
\label{as:eu:f:g}
There exist  constants $L > 0$ and $q>0$ such that
   \begin{align*}
      \langle (x-y)-(x'-y')  , f(x,y) - f(x',y') \rangle +  &  (p_0 -1)|g(x , y)-g(x', y')|^2 
       \leq L|(x-y)-(x'-y')|^2 ,
        \\
        |f(x  , y)-f(x'  , y')| & \leq L \{1 + |x-y|+|x'-y'|\}^{q} \big|(x  - y)-(x'-y')\big|,
   \end{align*}
    for all  $x,x',y, y' \in \mathbb R^d$. 
   \end{assumption}
One can very roughly interpret the first condition of Assumption \ref{as:eu:f:g} when $L<0$ as the balance of the dynamics encapsulating an ``attraction'' between particles that increases with distance; this is a behavior different from a plasma. We refer to \cite{Veretennikov2006Ergodic} for further details.
  
   \begin{remark} \label{rem:fg:grw:eu}
       Due to Assumption \ref{as:eu:f:g}, there is a constant $K>0$ such that 
       \begin{align*}
        2 \langle x-y  , f(x,y) \rangle +  &  (p_0 -1)|g(x , y)|^2 
       \leq K(1+|x-y|^2),
       \\
       |g(x,y)-g(x',y')|^2 & \leq K \{1 + |x-y|+|x'-y'|\}^{q} \big|(x-y)  -(x'-y')\big|^2,  
       \\
           |f(x,y)| & \leq \, K (1 + |x-y|)^{q+1}, 
           \\
           |g(x,y)|^2  & \leq \, K (1 + |x-y|)^{q+2}, 
       \end{align*} 
        for any $x,x', y, y' \in \mathbb R^d$.  
   \end{remark}
\begin{remark}
It can be seen that the Vlasov kernels $f$ and $g$ are assumed to be independent of time. 
    However, one can extend all the results of this article, albeit with slight modifications, when they additionally depend on time (in a continuous fashion). 
\end{remark}
Our first result establishes well-posedness and moment estimates for \eqref{Mckean}, and the proof methodology is loosely inspired by  \cite{chen2025, chaman2022}. 
Regarding Assumption \ref{as:eu:b:sig}, we comment that a polynomial growth assumption in $b$ like the one stated for $f$ in Assumption \ref{as:eu:f:g}, the second condition, is not needed to establish well-posedness. That assumption in $f$ is needed to establish that a certain auxiliary contraction functional (the map $\Gamma$) is suitably well-defined (details below). 
\begin{theorem}[Existence and Uniqueness]
\label{E-U}
Take $q> 0$ and $p_0~> 2(q+1)$. 
Let Assumptions \ref{assum_initial}, \ref{as:eu:b:sig} and \ref{as:eu:f:g} hold. 
Then, there exists a unique strong solution to MV-SDE \eqref{Mckean} and the following estimate holds 
    \begin{align*}
       \sup_{ t \in [0,T]} \mathbb E  |X_t|^{p_0}  \leq K(1+ \mathbb E|X_0|^{p_0}) 
    \end{align*}
where $K>0$ is a constant that depends on $T$, $d$, $l$ and $L$. 
    \end{theorem}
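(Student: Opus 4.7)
The plan is to construct the unique strong solution to \eqref{Mckean} via a Banach fixed-point argument carried out on a suitably chosen space of adapted processes over the full interval $[0,T]$, and to extract the $L^{p_0}$-moment estimate from the same argument. I would work in the space $\mathcal{S}^{p_0}_T$ of continuous $\{\mathscr{F}_t\}$-adapted $\mathbb{R}^d$-valued processes $Y$ with $\|Y\|_{\mathcal{S}^{p_0}_T}^{p_0}:=\mathbb{E}\sup_{t\in[0,T]}|Y_t|^{p_0}<\infty$, endowed with the exponentially weighted variant $\|Y\|_\lambda^{p_0}:=\mathbb{E}\sup_{t\in[0,T]}e^{-\lambda t}|Y_t|^{p_0}$ for some $\lambda>0$ to be tuned later. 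For any $Z\in\mathcal{S}^{p_0}_T$ with $\mu^Z_t:=\mathrm{Law}(Z_t)$, I would define $\Gamma(Z):=Y$ to be the unique strong solution of the \emph{standard} (non-McKean--Vlasov) SDE
\begin{equation*}
dY_t=\nu(t,Y_t,\mu^Z_t)\,dt+\bar{\sigma}(t,Y_t,\mu^Z_t)\,dW_t,\qquad Y_0=X_0,
\end{equation*}
whose coefficients are superlinear only in the state variable since $\mu^Z$ is a prescribed deterministic flow; existence, uniqueness, and $L^{p_0}$-integrability of $\Gamma(Z)$ then follow from classical theory for SDEs of monotone superlinear growth applied directly on $[0,T]$, so that a solution to \eqref{Mckean} is exactly a fixed point of $\Gamma$.

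The crux is to prove that $\Gamma$ stabilises a suitable ball of $\mathcal{S}^{p_0}_T$ with a bound of the form $\|\Gamma(Z)\|^{p_0}_{\mathcal{S}^{p_0}_T}\le K(1+\mathbb{E}|X_0|^{p_0})$. I would apply It\^o's formula to $|Y|^{p_0}$, use the decomposition $\langle Y,f(Y,y)\rangle=\langle Y-y,f(Y,y)\rangle+\langle y,f(Y,y)\rangle$, Jensen's inequality $|\bar{g}|^2\le\int|g|^2\mu^Z(dy)$, and the tight estimate of Remark~\ref{rem:fg:grw:eu}, which is calibrated to dominate exactly the combination $2\langle Y-y,f\rangle+(p_0-1)|g|^2$ by $K(1+|Y-y|^2)$; Assumption~\ref{as:eu:b:sig} dispatches the $(b,\sigma)$-pair analogously. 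Residual contributions in $|Y|^{p_0-2}(1+|Y-y|)^{q+2}$ (coming from the factor-of-two gap between the It\^o weight $\tfrac{p_0-1}{2}|\bar\sigma|^2$ and the $(p_0-1)|g|^2$ weight in Remark~\ref{rem:fg:grw:eu}) and $|Y|^{p_0-2}|y|(1+|Y-y|)^{q+1}$ (from the $\langle y,f\rangle$ split) would then be absorbed via Young's inequality with conjugate exponents made admissible precisely by the hypothesis $p_0>2(q+1)$, converting them into $\varepsilon|Y|^{p_0}$ (absorbed on the left) plus moments of $\mu^Z$ of order at most $p_0$ (controlled by $\|Z\|_{\mathcal{S}^{p_0}_T}^{p_0}$ via Jensen). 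A standard combination of BDG for the martingale part, stopping-time localisation and Gronwall then closes the estimate, giving the required uniform $L^{p_0}$-bound with a constant depending only on $T$, $d$, $l$, $L$ and $\mathbb{E}|X_0|^{p_0}$.

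For the contraction step, I would pick $Z,\widetilde Z$ in the stabilised ball and apply It\^o to $|\Gamma(Z)_t-\Gamma(\widetilde Z)_t|^2$; the monotonicity parts of Assumptions~\ref{as:eu:b:sig} and~\ref{as:eu:f:g} (after integration against $\mu^Z$, using a coupling argument to bound $W^{(2)}(\mu^Z_s,\mu^{\widetilde Z}_s)^2\le\mathbb{E}|Z_s-\widetilde Z_s|^2$ and the moment bound above to tame the polynomial growth factors in the $(f,g)$-Lipschitz estimates) would yield
\begin{equation*}
\mathbb{E}|\Gamma(Z)_t-\Gamma(\widetilde Z)_t|^2\le C\int_0^t\mathbb{E}|\Gamma(Z)_s-\Gamma(\widetilde Z)_s|^2\,ds+C\int_0^t\mathbb{E}|Z_s-\widetilde Z_s|^2\,ds.
\end{equation*}
Gronwall followed by weighting by $e^{-\lambda t}$ and integration over $[0,T]$ gives $\|\Gamma(Z)-\Gamma(\widetilde Z)\|_\lambda^2\le (C/\lambda)\|Z-\widetilde Z\|_\lambda^2$, so choosing $\lambda$ large enough makes $\Gamma$ a strict contraction on the ball directly over the \emph{full} interval $[0,T]$, with no sub-interval concatenation as in \cite{chen2025}. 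Banach's fixed-point theorem then delivers the unique strong solution of \eqref{Mckean}, and the moment estimate of the previous paragraph specialises at the fixed point to the claimed $\sup_{t\in[0,T]}\mathbb{E}|X_t|^{p_0}\le K(1+\mathbb{E}|X_0|^{p_0})$. The hard part will be the moment bound: Remark~\ref{rem:fg:grw:eu} controls only one precise linear combination of $\langle Y-y,f\rangle$ and $(p_0-1)|g|^2$, and without the gap $p_0>2(q+1)$ the Young-type splitting used to absorb the $(1+|Y-y|)^{q+2}$ and $\langle y,f\rangle$ residuals into an admissible $\varepsilon|Y|^{p_0}$ plus finite $\mu^Z$-moments would produce an unclosable $|Y|^{p_0+q}$-term in the Gronwall inequality.
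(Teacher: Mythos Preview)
Your moment-bound step contains a genuine gap. After your decomposition you correctly isolate the residuals $|Y|^{p_0-2}(1+|Y-y|)^{q+2}$ (from the factor-of-two mismatch) and $|Y|^{p_0-2}|y|(1+|Y-y|)^{q+1}$ (from the $\langle y,f\rangle$ split), and you claim that Young's inequality with $p_0>2(q+1)$ reduces them to $\varepsilon|Y|^{p_0}$ plus $\mu^Z$-moments of order at most $p_0$. This is false. Expand $(1+|Y-y|)^{q+2}$ and pick the $|Y|^{q+2}$ piece: the first residual then contains a \emph{pure} $|Y|^{p_0+q}$ term, with no $y$-factor to Young against; no choice of conjugate exponents can bound $|Y|^{p_0+q}$ by $\varepsilon|Y|^{p_0}$ plus anything independent of $Y$ when $q>0$. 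The second residual likewise contains $|Y|^{p_0+q-1}|y|$, whose $|Y|$-exponent already exceeds $p_0$ for $q\ge 1$. The hypothesis $p_0>2(q+1)$ is a red herring here: it governs which $y$-moments you may integrate against $\mu^Z\in\mathcal P_{p_0}$, but it cannot lower a super-$p_0$ power of $|Y|$ inside the Gronwall inequality. Hence the claimed $Z$-independent bound $\|\Gamma(Z)\|_{\mathcal S^{p_0}_T}^{p_0}\le K(1+\mathbb E|X_0|^{p_0})$ does not hold, $\Gamma$ does not stabilise a ball, and since your contraction constant explicitly feeds on moments of $\Gamma(Z),\Gamma(\widetilde Z)$, you cannot make the weighted-norm contraction uniform either.

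This obstruction is exactly what the paper's proof is built to avoid. Rather than freezing the full law $\mu^Z$ and running a fixed point on processes, the paper freezes only the Vlasov-kernel part: the map $\Gamma$ acts on the Banach space $C([0,T],\mathcal M_q)$ of coefficient pairs $\varphi=(\varphi_1,\varphi_2)$ satisfying the one-sided Lipschitz condition~\eqref{assum_wellposedness}, and sends $\varphi$ to $\big(\mathbb E f(\cdot,X^\varphi_t),\,\mathbb E g(\cdot,X^\varphi_t)\big)$, where $X^\varphi$ solves the \emph{auxiliary MV-SDE} $dX^\varphi=(\varphi_1(X^\varphi)+b)\,dt+(\varphi_2(X^\varphi)+\sigma)\,dW$. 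Crucially this auxiliary equation retains the live Wasserstein-Lipschitz measure dependence of $(b,\sigma)$ but has the superlinear part $\varphi$ acting on the \emph{state only}; it therefore falls within \cite[Theorem~2.1]{chaman2022}, which delivers well-posedness and the $p_0$-moment bound directly, with the coercivity $2\langle x,\varphi_1(x)\rangle+(p_0-1)|\varphi_2(x)|^2\le K(1+|x|^2)$ holding by construction for every $\varphi\in\mathcal M_q$ --- so the dangerous $|Y|^{p_0+q}$ term never appears. The contraction is then obtained on $C([0,T],\mathcal M_q)$ via the simplex iteration $|\Gamma^j[\varphi_1]-\Gamma^j[\varphi_2]|^2_{[0,T],q}\le (KT)^j/j!\,|\varphi_1-\varphi_2|^2_{[0,T],q}$, without any ball restriction.
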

We emphasize that the constant $K$ does depend on the dimension parameter $d$ and such is one of the reasons why later on we need to prove moment bounds for IPS \eqref{IPS} in Theorem \ref{thm:eu-ips}.
\begin{proof} 
Recall $p_0> 2(q + 1)$ and $q> 0$   from Assumptions \ref{assum_initial}  and \ref{as:eu:f:g}, respectively. 
Let us define the space $\mathcal{M}_{q}$  of continuous functions $\psi= (\psi_1, \psi_2) :  \mathbb R^d \mapsto \mathbb R^d \times \mathbb R^{d \times l}$ satisfying the conditions, 
\begin{align}
     \sup_{ x \in \mathbb R^d} \frac{|\psi (x)|}{1+ |x|^{q+1}} & < \infty, \notag
     \\
   \mbox{ and }    \langle x  -x', \psi_1(x) - \psi_1(x') \rangle & + (p_0-1)
   |\psi_2(x)-\psi_2(x')|^2   \leq L  |x-x'|^2  \label{assum_wellposedness}
\end{align}
 for a constant $L>0$ for all $x, x' \in \mathbb R^d$. From \eqref{assum_wellposedness} one derives that 
\begin{align*}
    2\langle x,  \psi_1(x) \rangle + (p_0 -1) |\psi_2(x)|^2 \leq K \{1 + |x|^2 \} 
\end{align*}
where the positive constant $K$ depends on $L$, $p_0$, $\psi_1(0)$ and $\psi_2(0)$. 
 It can be seen that the space $\mathcal{M}_{q}$ is a Banach space under the norm 
\[
 |\psi|_{q} : = \sup_{ x \in \mathbb R^d} \frac{|\psi (x)|}{1+ |x|^{q+1}} \leq  \sup_{ x \in \mathbb R^d} \frac{|\psi_1 (x)|+|\psi_2 (x)|}{1+ |x|^{q+1}}< \infty, 
\]
and hence the path space $C([0,T],\mathcal{M}_{q})$ is a Banach space under the norm 
\[
 |\varphi|_{[0,T],q} : = \sup_{ t \in [0,T]} |\varphi (t)|_q=\sup_{ t \in [0,T]} \sup_{ x \in \mathbb R^d} \frac{|\varphi (t,x)|}{1+ |x|^{q+1}} \leq \sup_{ t \in [0,T]} \sup_{ x \in \mathbb R^d} \frac{|\varphi_1 (t,x)|+|\varphi_2 (t,x)|}{1+ |x|^{q+1}}< \infty.
\]
We aim to apply a fixed-point argument on this space. For this, we define the functional $\Gamma$ as 
 \[
 \Gamma : C([0,T],\mathcal{M}_{q}) \mapsto C([0,T],\mathcal{M}_{q})   
 \] 
 that maps $\varphi \mapsto \Gamma [\varphi]$ with 
\begin{align*}
    \Gamma [\varphi] (t,x) 
    &
    =
    \Big(\Gamma[\varphi]_1(t,x), \Gamma[\varphi]_2(t,x)\Big) 
    :=
    \Bigl ( \int_{\mathbb R^d} f(x,y)\mu_t ^\varphi (dy) , \int_{\mathbb R^d} g(x,y)\mu_t ^\varphi (dy)  \Bigl )
    =
    \Big(\mathbb E f(x,X_t^\varphi), \mathbb E g(x,X_t^\varphi)\Big)
\end{align*} 
for all $t \in [0,T]$, $ x \in \mathbb R^d$ where $\{\mu_t^\varphi\}_{t \in [0,T]}$ is the flow of marginal laws of the solution $X^\varphi$ of the following MV-SDE
\begin{align*}
    dX_t ^\varphi  = \nu ^\varphi (t, X_t ^\varphi, \mu_t ^{\varphi}) dt + \bar{\sigma}^\varphi (t, X_t ^\varphi, \mu_t ^{\varphi}) dW_t,   \quad X_0 ^\varphi = X_0
\end{align*}
almost surely for $t \in [0,T]$. 
In the above, $\nu ^\varphi$ and $\bar{\sigma}^\varphi$ are given for all $t \in [0,T]$, $ x \in \mathbb R^d$ and $\mu \in \mathcal{P}_2(\mathbb R^d)$ by 
\begin{align*}
        \nu ^\varphi (t, x, \mu) = \varphi_1 (t,x) + b(t, x, \mu)
        \quad\textrm{and}\quad
        \bar{\sigma}^\varphi (t, x, \mu) =  \varphi_2(t,x) + \sigma (t, x, \mu).
\end{align*}
 It is known from \cite[Theorem 2.1]{chaman2022} that the above MV-SDE has a unique solution and the flow $\{\mu_t^\varphi\}_{t \in [0,T]}$ of marginals is continuous. 
Furthermore, $\displaystyle \sup_{t \in [0,T]} \mathbb  E|X_t^{\varphi}|^{p_0} \leq K(1+\mathbb  E |X_0|^{p_0})e^{KT}$ where the constant $K>0$ depends on $T$, $d$, $l$ and $L$.
Note that if $\varphi$ is a fixed point of $\Gamma$, then $X^\varphi$ will be a solution of MV-SDE (\ref{Mckean}). 
Observe that if $\varphi \in C([0,T],\mathcal{M}_{q})$ and Assumption \ref{as:eu:f:g} holds, then for all $t \in [0,T]$ and $ x,x' \in \mathbb R^d$ we have 
\begin{align*}
    \langle x  -x' , \Gamma[\varphi]_1(t,x) 
    &
    - \Gamma[\varphi]_1(t,x') \rangle + (p_0 -1)|\Gamma[\varphi]_2(t,x )-\Gamma[\varphi]_2(t,x' )|^2 
    \\
     & \leq \mathbb E \big\{ \langle x  -x' , f(x,X_t^\varphi) -  f(x',X_t^\varphi) \rangle + (p_0 -1) |g(x,X_t^\varphi) -  g(x',X_t^\varphi)|^2 \big\}  
     \\
     &\leq L |x  -x'|^2.
\end{align*}
Now using Remark \ref{rem:fg:grw:eu} and  \cite[Theorem 2.1]{chaman2022} we find a norm estimate as follows 
\begin{align*}
    |\Gamma [\varphi]|_{[0,T],q}  
    \leq 
    \sup_{ t \in [0,T]} \sup_{x \in \mathbb R^d} \frac{|\mathbb E f(x,X_t^\varphi)|+|\mathbb E g(x,X_t^\varphi)|}{1+|x|^{q+1}} 
    & 
    \leq K \sup_{ t \in [0,T]} \sup_{ x \in \mathbb R^d} \frac{\displaystyle \mathbb E (1+|x-X_t^\varphi|)^{q+1}}{1+ |x|^{q+1}}
    \\
   &  
    \leq K \bigl (1+\sup_{t \in [0,T]} \mathbb E |X_t ^\varphi|^{q+1} \bigl) <\infty.
\end{align*}
 Therefore, $\Gamma$ is a well-defined map.
  Furthermore, for $\varphi_1 = (\varphi_{1,1},\varphi_{1,2}), \varphi_2 = (\varphi_{2,1},\varphi_{2,2}) \in C([0,T],\mathcal{M}_{q})$, one uses Assumption \ref{as:eu:f:g} and Remark \ref{rem:fg:grw:eu} to obtain the following estimates.  
    \begin{align}
        |\Gamma [\varphi_1]-\Gamma [\varphi_2]|_{[0,T],q} & \leq K \sup_{ t \in [0,T]} \sup_{ x \in \mathbb R^d} \frac{ \mathbb E [|f(x,X_t ^{\varphi_1})-f(x,X_t ^{\varphi_2})|+ |g(x,X_t ^{\varphi_1})-g(x,X_t ^{\varphi_2})| ]}{1+ |x|^{q+1}}  \nonumber
        \\
        &  \leq K \sup_{ t \in [0,T]} \sup_{ x \in \mathbb R^d} \frac{ \mathbb E [|X_t ^{\varphi_1}-X_t ^{\varphi_2}|(1+|x|^{q})(1 + |X_t ^{\varphi_1}| + |X_t ^{\varphi_2}|)^{q} ]}{1+ |x|^{q+1}} \nonumber
        \\
        & \leq  K \Bigl (\sup_{t\in [0,T]} \mathbb E |X_t ^{\varphi_1}-X_t ^{\varphi_2}|^2  \sup_{t\in [0,T]} \mathbb E (1+|X_t ^{\varphi_1}|^{2q}+ |X_t ^{\varphi_2}|^{2q}) \Bigl )^{1/2} \nonumber
    \end{align} 
    which, on using \cite[Theorem 2.1]{chaman2022} yields, 
    \begin{align}\label{con}
        |\Gamma [\varphi_1]-\Gamma [\varphi_2]|_{[0,T],q}^2 \leq K \sup_{t\in [0,T]} \mathbb E |X_t ^{\varphi_1}-X_t ^{\varphi_2}|^2.
    \end{align}
Using It\^o'{s} formula,  
\begin{align*}
    \mathbb E |X_t ^{\varphi_1}-X_t ^{\varphi_2}|^2  
    &
    \leq  2 \mathbb E \int_0^t \{ \langle X_s ^{\varphi_1}-X_s ^{\varphi_2},b(s,X_s ^{\varphi_1},\mu_s ^ {\varphi_1})-b(s,X_s ^{\varphi_2},\mu_s ^ {\varphi_2}) \rangle + |\sigma(s,X_s ^{\varphi_1},\mu_s ^ {\varphi_1})-\sigma(s,X_s ^{\varphi_2},\mu_s ^ {\varphi_2}|^2 \} ds 
    \\
    & \quad + 2 \mathbb E \int_0^t \{ \langle X_s ^{\varphi_1}-X_s ^{\varphi_2}, \varphi_{1,1}(s,X_s ^{\varphi_1})-\varphi_{2,1}(s,X_s ^{\varphi_2}) \rangle + |\varphi_{1,2}(s,X_s ^{\varphi_1})-\varphi_{2,2}(s,X_s ^{\varphi_2})|^2 \} ds
\end{align*}
which due to Assumption \ref{as:eu:b:sig} and Equation \eqref{assum_wellposedness} yields   
\begin{align*}
   \mathbb E  |X_t ^{\varphi_1} -X_t ^{\varphi_2}|^2   \leq & \,  K \mathbb E \int_0^t  |X_s ^{\varphi_1}-X_s ^{\varphi_2}|^2 ds 
   \\
    &  + 2 \mathbb E \int_0^t \{ \langle X_s ^{\varphi_1}-X_s ^{\varphi_2}, \varphi_{1,1}(s,X_s ^{\varphi_1})-\varphi_{1,1}(s,X_s ^{\varphi_2}) \rangle + 2|\varphi_{1,2}(s,X_s ^{\varphi_1})-\varphi_{1,2}(s,X_s ^{\varphi_2})|^2 \} ds 
    \\
     &  + 2 \mathbb E \int_0^t \{ \langle X_s ^{\varphi_1}-X_s ^{\varphi_2}, \varphi_{1,1}(s,X_s ^{\varphi_2})- \varphi_{2,1}(s,X_s ^{\varphi_2}) \rangle + 4 \mathbb E \int_0^t |\varphi_{1,2}(s,X_s ^{\varphi_2})-\varphi_{2,2} (s,X_s ^{\varphi_2})|^2 ds
     \\
      \leq &\, K \mathbb E \int_0^t  |X_s ^{\varphi_1}-X_s ^{\varphi_2}|^2 ds 
     \\
     &  + 4 \mathbb E \int_0 ^t \{|\varphi_{1,1}(s,X_s ^{\varphi_2})-\varphi_{2,1}(s,X_s ^{\varphi_2})|^2 + |\varphi_{1,2}(s,X_s ^{\varphi_2})-\varphi_{2,2} (s,X_s ^{\varphi_2})|^2 \}ds
     \\
      \leq & \, K \mathbb E \int_0^t  |X_s ^{\varphi_1}-X_s ^{\varphi_2}|^2 ds  + K   \int_0^t |\varphi_1 - \varphi_2|^2_{[0,s],q} (1+\mathbb E|X_s ^{\varphi_2}|^{2(q+1)} ) ds<\infty 
\end{align*}
 for all $t \in [0,T]$. 
By Gronwall's lemma, one gets
\begin{align*}
   \sup_{t \in [0,T]}  \mathbb E|X_t ^{\varphi_1}-X_t ^{\varphi_2}|^2 \leq   K  \int_0^T |\varphi_1 - \varphi_2|^2_{[0,s],q} ds
\end{align*}
and hence from \eqref{con}, 
\begin{align*}
\big|\, \Gamma [\varphi_1]-\Gamma [\varphi_2]\,\big|^2_{[0,T],q} 
    & \leq K \int_0^T |\varphi_1 - \varphi_2|^2_{[0,t_1],q} dt_1
\end{align*}
 which on  further iteration of the above inequality yields the well-known simplex estimation 
 \begin{align*}
     \big|\,\Gamma^j [\varphi_1]-\Gamma^j [\varphi_2] \,\big|^2_{[0,T],q} 
     & 
     \leq K^j \int_0^T \int_0^{t_1} \cdots \int_0^{t_{j-1}} |\varphi_1 - \varphi_2|^2_{[0,t_j],q} dt_j \ldots dt_1  \leq \frac{(KT)^j}{j!} |\varphi_1 - \varphi_2|^2_{[0,t_j],q} 
     \\
      &
      \leq \frac{(KT)^j}{j!} |\varphi_1 - \varphi_2|^2_{[0,T],q}
 \end{align*}
for all $j \in \mathbb N$ where $\displaystyle \sum_{j=1}^\infty \frac{(KT)^j}{j!} =e^{KT} <\infty$. 
Banach fixed-point theorem deliver the desired result and the proof concludes.
\end{proof}

\section{The particle systems and sharp rates for Propagation of Chaos}
\label{sec:IPS-and-PoC}
Consider $N$ independent and identically distributed copies $\{X_0^i\}_{i \in \{1, \ldots, N\}}$  and $\{W^i\}_{i\in \{1,\ldots, N\}}$ of the initial value $X_0$ and the Brownian motion $W$, respectively.
Then, the \textit{non-interacting particles system (nIPS)} connected with MV-SDE  \eqref{Mckean} is given~by
\begin{align}{\label{NIPS}}
    X_t ^i = X_0 ^i + \int_0^t \nu(s,X_s^{i},\mu_s^{X})ds + \int_0^t \bar{\sigma}(s,X_s^{i},\mu_s ^{X})dW^i_s
\end{align}
almost surely for all $t \in [0,T]$ and $i \in \{1,\ldots,N \}$ 
where $\mu_t ^{X}$ is the common law of the particles $\{X_t^i\}_{i \in \{1, \ldots, N\}}$, i.e.,  $\mu_t^{X^i} = \mu_t^X$  for all   $i \in \{1,\ldots,N \}$ (see \cite[Proposition 2.11]{carmona2018b}).

The $(\bR^d)^N$-valued \textit{interacting particle system (IPS)} associated with MV-SDE \eqref{Mckean} is given~by  
    \begin{equation}{\label{IPS}}
         X_t ^{i,N} = X_0 ^{i} + \int_0 ^t\nu(s,X_s^{i,N},\mu_s^{X,N})ds + \int_0^t \bar{\sigma}(s,X_s^{i,N},\mu_s ^{X,N})dW^i_s 
    \end{equation}
 almost surely for all $t \in [0,T]$ and  $i\in \{1,\ldots,N\}$   where 
 \begin{align*}
\mu_t^{X,N}  : =\frac{1}{N} \sum_{j=1}^N \delta_{X_t^{j,N}}
\end{align*}
is the empirical law of the particles $\{X_t^{i,N}\}_{i \in \{1,\ldots, N\}}$. 
Clearly,   from Equation \eqref{eq:v:sig}, 
\begin{align*}
     \nu(t, X_t^{i,N}, \mu_t^{X,N}) & = \frac{1}{N}\sum_{j=1}^{N}f(X_t^{i,N},X_t^{j,N})   + b(t,X_t^{i,N},\mu_t^{X,N}), 
     \\
    \mbox{ and }\ \bar{\sigma} (t, X_t^{i,N},\mu_t^{X,N} ) & =  \frac{1}{N}\sum_{j=1}^{N}g(X_t^{i,N},X_t^{j,N}) + \sigma(t,X_t^{i,N},\mu_t^{X,N}) 
 \end{align*}
 almost surely for all $i\in \{1,\ldots,N\}$ and $t \in [0,T]$.
The convergence of IPS \eqref{IPS} to nIPS \eqref{NIPS} is popularly known in the literature  as \textit{propagation of chaos} (PoC).

In addition, the following assumptions are needed.
Recall $q > 0$ and $p_0 > 2(q+1)$ from Section \ref{sec:main:welpos}. 
\begin{assumption}{\label{as:anti-sys}}
 The Vlasov kernel $f$ is anti-symmetric, i.e., 
  $f(x,y)  = - f(y,x)$ for all $x,y \in \mathbb R^d$.
Also, there exists a constant $L>0$ such that for all $x,y \in \mathbb R^d$
        \begin{align*}
        (|x|^{p_0 -2}-|y|^{p_0 -2})\langle x+y,f(x,y) \rangle & \leq L (|x|^{p_0}+|y|^{p_0}),
        \\
        \langle x-y,f(x,y) \rangle + 2(p_0-1) |g(x,y)|^2 &\leq L (1+|x-y|^2).
        \end{align*} 
    \end{assumption}
The existence and uniqueness of the IPS \eqref{IPS} is due to \cite{gyongy1980}. 
However, the moment estimate from \cite{gyongy1980} cannot be used directly as the bound appears therein depends on the number of particles $N$ that explode when $N$ tends to infinity. In the following, we provide the moment bound for IPS \eqref{IPS}, which does not explode. 
\begin{theorem}[Existence and uniqueness of IPS] \label{thm:eu-ips}
     Let the assumptions of Theorem \ref{E-U} hold. 
     Then, there exists a unique strong solution to  IPS \eqref{IPS}  associated  with MV-SDE \eqref{Mckean}. In addition, if Assumption \ref{as:anti-sys} is satisfied, then
    \begin{align*}
         \sup_{i \in \{1,\ldots,N \}} \sup_{t \in [0, T]}\mathbb E |X_t ^{i,N}|^{p_0}  \leq K(1+ \mathbb E|X_0|^{p_0})e^{KT}
    \end{align*}
   where $K>0$ is a constant independent of $N$. 
\end{theorem}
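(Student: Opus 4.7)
The result splits into (i) well-posedness of IPS \eqref{IPS} and (ii) the $N$-uniform $p_0$-moment bound. For (i), I would observe that IPS \eqref{IPS} is an $(\mathbb R^d)^N$-valued SDE whose aggregate coefficients are jointly locally Lipschitz in the full state variable: for $b,\sigma$ this follows from Assumption \ref{as:eu:b:sig} combined with the empirical-law bound $W^{(2)}(\mu^{X,N},\mu^{X',N})^2\le \tfrac{1}{N}\sum_i |X^{i,N}-X^{i',N}|^2$, while for $f,g$ it follows from Assumption \ref{as:eu:f:g} and Remark \ref{rem:fg:grw:eu}. A one-sided monotonicity estimate on $(\mathbb R^d)^N$ emerges after the index swap $i\leftrightarrow j$ using the antisymmetry of $f$, and the criterion of \cite{gyongy1980} delivers a unique strong solution.

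For (ii), the plan is to apply It\^o's formula to $|X^{i,N}|^{p_0}$, \emph{sum over $i\in\{1,\ldots,N\}$}, and symmetrize the problematic Vlasov drift via the antisymmetry of $f$. A particle-wise estimate blows up in $N$ because of superlinear growth of $f$, and a direct particle-wise bound on $|g|^2$ alone fails at the Young step (the exponent $(p_0-2)+(q+2)$ would exceed $p_0$). After summing over $i$ and applying the index swap $i\leftrightarrow j$ together with $f(x,y)=-f(y,x)$, the critical $f$-contribution rewrites as
\begin{align*}
\frac{p_0}{N}\sum_{i,j}|X^{i,N}|^{p_0-2}\bigl\langle X^{i,N}, f(X^{i,N},X^{j,N})\bigr\rangle = \frac{p_0}{2N}\sum_{i,j}\bigl\langle |X^{i,N}|^{p_0-2}X^{i,N}-|X^{j,N}|^{p_0-2}X^{j,N},\,f(X^{i,N},X^{j,N})\bigr\rangle.
\end{align*}
I would then decompose the increment on the right via the elementary identity $|x|^{p_0-2}x-|y|^{p_0-2}y=\tfrac{1}{2}(|x|^{p_0-2}+|y|^{p_0-2})(x-y)+\tfrac{1}{2}(|x|^{p_0-2}-|y|^{p_0-2})(x+y)$. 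The $(x+y)$-contribution is controlled directly by the first displayed inequality of Assumption \ref{as:anti-sys}, yielding a clean $K\sum_i |X^{i,N}|^{p_0}$ bound.

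The $(x-y)$-contribution is then combined, pair by pair, with the Jensen-reduced $g$-sum $\frac{p_0(p_0-1)}{N}\sum_{i,j}|X^{i,N}|^{p_0-2}|g(X^{i,N},X^{j,N})|^2$; a further index swap in the $g$-sum together with $f(x,y)=-f(y,x)$ arranges that the bracket $\langle X^{i,N}-X^{j,N}, f(X^{i,N},X^{j,N})\rangle+2(p_0-1)|g|^2$ emerges for \emph{both} orderings $(X^{i,N},X^{j,N})$ and $(X^{j,N},X^{i,N})$. The second displayed inequality of Assumption \ref{as:anti-sys} then bounds each such bracket by $L(1+|X^{i,N}-X^{j,N}|^2)$, and a Young estimate converts the aggregate into $K(N+\sum_i |X^{i,N}|^{p_0})$. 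The $b,\sigma$ drift is handled with Assumption \ref{as:eu:b:sig} together with $W^{(2)}(\mu^{X,N},\delta_0)^2=\frac{1}{N}\sum_j|X^{j,N}|^2$ via Young. A standard localization disposes of the stochastic integral; taking expectations, dividing by $N$, and invoking the exchangeability of the particles (i.i.d.\ initial data, symmetric dynamics), Gronwall's lemma delivers the stated $N$-uniform bound. The hard part will be the weight-matching between the symmetrized $f$- and $g$-sums so that Assumption \ref{as:anti-sys}'s combined inequality applies at each pair $(i,j)$; this is precisely why both $f(x,y)=-f(y,x)$ and the growth estimate $(|x|^{p_0-2}-|y|^{p_0-2})\langle x+y,f(x,y)\rangle\le L(|x|^{p_0}+|y|^{p_0})$ are imposed.
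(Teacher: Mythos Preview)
Your proposal is correct and follows essentially the same route as the paper: well-posedness via \cite{gyongy1980}, then It\^o's formula on $|X^{i,N}|^{p_0}$, averaging over particles, symmetrizing the $f$-contribution via antisymmetry into an $(x-y)$ piece paired with the $g$-term (controlled by the second inequality in Assumption~\ref{as:anti-sys}) and an $(x+y)$ piece (controlled by the first), followed by Gr\"onwall and exchangeability. The only cosmetic difference is that the paper packages your inline symmetrization as a standalone result (Corollary~\ref{lem:symm-growth}, derived from Lemma~\ref{lem:sym-gwt:erg}), whereas you carry out the identical decomposition directly.
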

\begin{proof}
The existence of a unique strong solution of IPS  \eqref{IPS} follows from \cite{gyongy1980}.
Now, using It\^{o}\textquotesingle{s} formula, 
   \begin{align*}
        \mathbb E |X_t^{i,N}|^{p_0}   = &\, \mathbb E |X_0|^{p_0}  + p_0 \mathbb E \int_{0}^t  |X_s^{i,N}|^{p_0-2}  \Big\langle X_s^{i,N}, b (s,X_{s}^{i,N},\mu_{s}^{X,N}) + \frac{1}{N} \sum_{j=1}^N f (X_{s}^{i,N},X_{s}^{j,N}) \Big\rangle  ds 
        \\
        & + p_0 \mathbb E \int_{0}^t  |X_{s}^{i,N}|^{p_0-2}  \Big\langle X_{s}^{i,N}, \big(\sigma (s,X_{s}^{i,N},\mu_{s}^{X,N}) + \frac{1}{N} \sum_{j=1}^N g (X_{s}^{i,N},X_{s}^{j,N,n}) \big) dW^i_s\Big\rangle   
        \\
        & +  \frac{p_0(p_0-2)}{2} \int_{0}^t  \mathbb E |X_{s}^{i,N}|^{p_0-4} \Big|\Big\{\sigma (s,X_{s}^{i,N},\mu_{s}^{X,N}) + \frac{1}{N} \sum_{j=1}^N g (X_{s}^{i,N},X_{s}^{j,N}) \Big\}^*X_{s}^{i,N}\Big|^2 ds 
        \\
        & + \frac{p_0}{2} \mathbb E \int_{0}^t  |X_{s}^{i,N}|^{p_0-2} \Big|\sigma (s,X_{s}^{i,N},\mu_{s}^{X,N}) + \frac{1}{N} \sum_{j=1}^N g (X_{s}^{i,N},X_{s}^{j,N}) \Big|^2 ds
        \\
      \leq &\, \mathbb E |X_0|^{p_0}  + p_0 \mathbb E \int_{0}^t  |X_s^{i,N}|^{p_0-2} \big\{ \big\langle X_s^{i,N}, b (s,X_{s}^{i,N},\mu_{s}^{X,N})  \big\rangle + (p_0-1) \big|\sigma (s,X_{s}^{i,N},\mu_{s}^{X,N})\big|^2 \big\} ds
      \\
     &  + p_0 \mathbb E \int_{0}^t  |X_s^{i,N}|^{p_0-2} \big\{ \big\langle X_s^{i,N}, \frac{1}{N} \sum_{j=1}^N f (X_{s}^{i,N},X_{s}^{j,N})  \big\rangle + (p_0-1) \big|\frac{1}{N} \sum_{j=1}^N g (X_{s}^{i,N},X_{s}^{j,N})\big|^2 \big\} ds
\end{align*}
which on averaging over all particles and using Assumption \ref{as:eu:b:sig} yields, \begin{align*}
   \frac{1}{N} \sum_{i=1}^N \mathbb E |X_t^{i,N}|^{p_0}  \leq &\, \mathbb E |X_0|^{p_0}  +   \frac{K}{N} \sum_{i=1}^N \mathbb E \int_{0}^t  |X_s^{i,N}|^{p_0-2} \big\{ 1+ | X_s^{i,N}|^2+ W^{(2)}(\mu_{s}^{X,N}, \delta_0)^2  \big\} ds
      \\
     &  +  \frac{p_0}{N^2} \sum_{i, j=1}^N \mathbb E \int_{0}^t  |X_s^{i,N}|^{p_0-2} \big\{ \big\langle X_s^{i,N},  f (X_{s}^{i,N},X_{s}^{j,N})  \big\rangle + (p_0-1) \big| g (X_{s}^{i,N},X_{s}^{j,N})\big|^2 \big\} ds
\end{align*} 
 and then using Corollary \ref{lem:symm-growth}  
\begin{align*}
     \frac{1}{N} \sum_{i=1}^N \mathbb E |X_t^{i,N}|^{p_0}  \leq &\, \mathbb E |X_0|^{p_0}+ K+ \int_0^t \frac{1}{N} \sum_{i=1}^N \mathbb E |X_s^{i,N}|^{p_0} ds
\end{align*}
 for all $t \in [0,T]$. 
 The proof of moment estimate finishes on using Gr\"onwall's inequality and exchangeability of IPS (see \cite[Section 2.1.2]{carmona2018b}). 
\end{proof}

In the remainder of this section, we provide two results on the rate of convergence of PoC. In Theorem \ref{PoC} the PoC convergence rate degrades across the ambient dimension parameter $d$ of the MV-SDE \eqref{Mckean}. This is related to the use of a $W^{(2)}$-convergence result in \cite[Theorem 5.8]{carmona2018a} and is unavoidable as $\mu\mapsto (b,\sigma) (\cdot,\cdot,\mu)$ are assumed to be general $W^{(2)}$-Lipschitz functionals.
When additional structure is available, for instance via Vlasov kernels that in no way need to be $W^{(1)}$-Lipschitz (as existing present day literature \cite{belomestny2018projected,zhang2025,Hinds2025wellposedness}), then the convergence rate of the propagation of chaos can be shown to hold uniformly over the dimension $d$---our second main result is Theorem \ref{dim-ind-poc} and it holds far beyond the assumptions in \cite{belomestny2018projected,zhang2025,Hinds2025wellposedness}.

\subsection[Propagation of chaos -- General case]{Propagation of chaos -- The general Wasserstein-2 case}
\label{sec:PoC-d-dep}
The following theorem gives the dimension-dependent  PoC convergence rate.  
 \begin{theorem}[Dimension-dependent PoC convergence rate]{\label{PoC}}
Let the conditions of Theorem \ref{thm:eu-ips} hold true for some  $p_0\geq 5$ and $p_0>   2(q+1)$ ($q>0$).  
Then,  the IPS \eqref{IPS} converges to the nIPS  \eqref{NIPS} with the following rate of convergence,
        \begin{align*}
        \underset{i\in \{1,\ldots,N\}}{\sup} ~ \underset{t \in [0, T]}{\sup} \mathbb E |X_t^{i,N}-X_t^i|^2 \leq K \begin{cases}
            N^{-1/2},          & \text{if } d < 4, \\
             N^{-1/2}\log N,   & \text{if} ~ d=4, \\
             N^{-2/d},         & \text{if } d > 4.
        \end{cases}
    \end{align*}
\end{theorem}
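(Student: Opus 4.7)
The plan is to apply It\^o's formula to $|X_t^{i,N}-X_t^i|^2$, average over $i\in\{1,\ldots,N\}$ and invoke exchangeability so that $\sup_i \mathbb{E}|X_t^{i,N}-X_t^i|^2$ coincides with $\tfrac{1}{N}\sum_i \mathbb{E}|X_t^{i,N}-X_t^i|^2$. Writing $e_t^i:=X_t^{i,N}-X_t^i$, the drift and diffusion differences split naturally into a \emph{regular} part, coming from $b,\sigma$, and a \emph{Vlasov} part, coming from the kernels $f,g$. For the regular part, Assumption \ref{as:eu:b:sig} (monotonicity) yields a bound of the form $K|e_s^i|^2 + K W^{(2)}(\mu_s^{X,N},\mu_s^X)^2$, after using $|a+b|^2\leq 2|a|^2+2|b|^2$ in the It\^o square to separate it from the $g$-contribution. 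For the Vlasov part, one introduces the reference empirical measure $\mu_s^{X,\star}:=\tfrac{1}{N}\sum_j \delta_{X_s^j}$ of the nIPS and decomposes
\begin{equation*}
\tfrac{1}{N}\sum_j f(X_s^{i,N},X_s^{j,N})-\int f(X_s^i,y)\mu_s^X(dy)=\tfrac{1}{N}\sum_j\bigl[f(X_s^{i,N},X_s^{j,N})-f(X_s^i,X_s^j)\bigr]+R_s^{f,i},
\end{equation*}
with $R_s^{f,i}:=\tfrac{1}{N}\sum_j f(X_s^i,X_s^j)-\int f(X_s^i,y)\mu_s^X(dy)$, and analogously for $g$.

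The crucial step is the symmetrisation of the pairwise term. After averaging, the anti-symmetry $f(y,x)=-f(x,y)$ of Assumption \ref{as:anti-sys} allows rewriting
\begin{equation*}
\tfrac{1}{N^2}\sum_{i,j}\bigl\langle e_s^i,f(X_s^{i,N},X_s^{j,N})-f(X_s^i,X_s^j)\bigr\rangle=\tfrac{1}{2N^2}\sum_{i,j}\bigl\langle e_s^i-e_s^j,f(X_s^{i,N},X_s^{j,N})-f(X_s^i,X_s^j)\bigr\rangle.
\end{equation*}
Paired with the corresponding $g$-square term $\tfrac{1}{N^3}\sum_{i,j}|g(X_s^{i,N},X_s^{j,N})-g(X_s^i,X_s^j)|^2$, this is exactly the pattern matched by Assumption \ref{as:eu:f:g}, since $(x-y)-(x'-y')=e_s^i-e_s^j$, producing a bound by $L|e_s^i-e_s^j|^2\leq 2L(|e_s^i|^2+|e_s^j|^2)$ (and the $(p_0-1)$-scaled $g$-square is absorbed on the left).

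The statistical errors are handled in two pieces. First, $\mathbb{E}[W^{(2)}(\mu_s^{X,N},\mu_s^X)^2]\leq 2\mathbb{E}[W^{(2)}(\mu_s^{X,N},\mu_s^{X,\star})^2]+2\mathbb{E}[W^{(2)}(\mu_s^{X,\star},\mu_s^X)^2]$: the first summand is $\leq \tfrac{2}{N}\sum_j \mathbb{E}|e_s^j|^2$ (plug in the obvious coupling), while the second is the classical empirical-measure bound \cite[Theorem 5.8]{carmona2018a} which, thanks to the uniform moment bound of Theorem \ref{thm:eu-ips} and the hypothesis $p_0\geq 5>4$, delivers the three dimension-dependent rates stated in the theorem. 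Second, the residual $\mathbb{E}|R_s^{f,i}|^2$ (and its $g$-analogue): conditionally on $X_s^i$ the family $\{X_s^j\}_{j\neq i}$ is i.i.d.\ of law $\mu_s^X$, so a variance computation yields $O(N^{-1})$; the superlinear growth $|f(x,y)|\leq K(1+|x-y|)^{q+1}$ (Remark \ref{rem:fg:grw:eu}) is tamed by the moment bound $\sup_s \mathbb{E}|X_s^i|^{p_0}<\infty$ from Theorem \ref{E-U}, which is exactly why $p_0>2(q+1)$ is required.

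Collecting the contributions produces
\begin{equation*}
\tfrac{1}{N}\sum_i \mathbb{E}|e_t^i|^2\leq C\int_0^t \tfrac{1}{N}\sum_i \mathbb{E}|e_s^i|^2\,ds+C\,\varepsilon_{N,d,T},
\end{equation*}
with $\varepsilon_{N,d,T}$ matching the stated regime-dependent rates, and Gr\"onwall's lemma together with exchangeability finishes the proof. The main obstacle is the symmetrisation: producing the $e^i-e^j$ pairing needed to activate Assumption \ref{as:eu:f:g} \emph{requires} the anti-symmetry of $f$; without it, the cross $i\neq j$ interactions would not be absorbable by the self-interaction terms. A secondary difficulty is controlling the superlinear Vlasov residual $R^{f,i}$, which crucially uses the higher-moment bound $p_0>2(q+1)$.
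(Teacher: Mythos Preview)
Your proposal is correct and follows essentially the same route as the paper: It\^o's formula plus particle-averaging, the anti-symmetry-based symmetrisation of the pairwise $f$-term to invoke the one-sided Lipschitz condition of Assumption~\ref{as:eu:f:g}, the i.i.d.\ variance argument for the Vlasov residuals $R^{f,i},R^{g,i}$, the triangle splitting of $W^{(2)}(\mu_s^{X,N},\mu_s^X)$ through the nIPS empirical measure, the Fournier--Guillin/Carmona--Delarue rate, and Gr\"onwall with exchangeability. The paper makes explicit that $p_0\geq 5$ is precisely what furnishes the constant $4$ (in place of $p_0-1$) in front of $|g(\cdot)-g(\cdot)|^2$ after the $|a+b|^2\leq 2|a|^2+2|b|^2$ splittings of the It\^o square, which is the exact role of that hypothesis.
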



\begin{proof} 

Recall that well-posedness and moment estimates for the nIPS \eqref{NIPS} and IPS \eqref{IPS} were established in Theorem \ref{E-U} and Theorem \ref{thm:eu-ips} respectively. 
 
Notice that as Assumption \ref{assum_initial} holds for  $p_0=5$, Assumption \ref{as:eu:f:g} yields that there exists a constant $L>0$ such that 
    \begin{align}
             \langle (x-y)-(x'-y')  , f(x,y) - f(x',y') \rangle  + 4|g(x , y)-g(x', y')|^2 
         \leq L|(x-y)-(x'-y')|^2 \label{eq:poc-coer}
    \end{align}
     for all $x,x',y, y'\in \mathbb R^d$.
Now, recall nIPS \eqref{NIPS} and IPS \eqref{IPS}, and apply It\^{o}'s formula to obtain,
\begin{align}
         \mathbb E |X_t^i -  X_t^{i,N}|^2  \leq  & \,  2 \, \mathbb E \int_0^t  \{ \langle X_s^i - X_s^{i,N}, b(s,X_s^i,\mu_s^{X^i}) - b(s,X_{s}^{i,N},\mu_s ^{X,N}) \rangle   + |\sigma(s,X_s^i,\mu_s ^{X^i})-\sigma(s,X_s^{i,N},\mu_s ^{X,N})|^2 \}ds \notag
         \\
         &  + 2 \, \mathbb E\int_0^t \Big\langle X_s^i - X_s^{i,N}, \int_{\mathbb R^d} f(X_s^i, x) \mu_s^{X^i}(dx) - \frac{1}{N} \sum_{j=1}^N f({X}^{i,N}_{s},{X}^{j,N}_{s}) \Big\rangle ds \notag
         \\
         &  +2 \, \mathbb E \int_0^t  \Bigl| \int_{\mathbb R^d} g(X_s^i , x) \mu_s^{X^i}(dx) -\frac{1}{N}\sum_{j=1}^N g({X}^{i,N}_{s},{X}^{j,N}_{s})\Bigl|^2ds \label{eq:ito:poc:dd}
\end{align}
         which, on averaging over all particles and  using Assumption \ref{as:eu:b:sig} yields, 
         \begin{align*}
\frac{1}{N} \sum_{i=1}^N \mathbb E  |X_t^i  - X_t^{i,N}|^2 
          \leq & \, \frac{K}{N}  \int_0^t  \sum_{i=1}^N  \mathbb E |X_s^i - X_s^{i,N}|^2 ds + \frac{K}{N} \sum_{i=1}^N   \int_0^t \mathbb E W^{(2)} (\mu_s^{X^i}, \mu_s ^{X,N})^2 ds 
          \\
         &  + \frac{2}{N} \sum_{i=1}^N \int_0^t  \mathbb E\Big\langle X_s^i - X_s^{i,N}, \frac{1}{N} \sum_{j=1}^N \int_{\mathbb R^d} \big(f(X_s^i, x) -  f({X}^{i}_{s},{X}^{j}_{s})\big)\mu_s^{X^i}(dx) \Big \rangle ds 
         \\
         &  + \frac{4}{N^3} \sum_{i=1}^N \int_0^t  \mathbb E \Bigl | \sum_{j=1}^N \int_{\mathbb R^d} (g(X_s^i , x) -  g({X}^{i}_{s},{X}^{j}_{s}))  \mu_s^{X^i}(dx) \Bigl |^2 ds 
         \\
          + \frac{2}{N^2} \sum_{i,j=1}^N \mathbb E\int_0^t & \, \big\{  \langle X_s^i - X_s^{i,N},   f({X}^{i}_{s},{X}^{j}_{s}) - f({X}^{i,N}_{s},{X}^{j,N}_{s})  \rangle + 2  |  g({X}^{i}_{s},{X}^{j}_{s}) - g({X}^{i,N}_{s},{X}^{j,N}_{s})  |^2  \big\}
         ds
    \end{align*}
    for all $t \in [0,T]$. 
    Thus, on using Young's inequality and equation \eqref{eq:poc-coer} (in the last term of the above inequality), one gets, 
\begin{align*}
   \frac{1}{N} \sum_{i=1}^N \mathbb E  |X_t^i - X_t^{i,N}|^2 & \, \leq   K  \int_0^t \frac{1}{N} \sum_{i=1}^N \mathbb E|X_s^i - X_s^{i,N}|^2 ds + \frac{K}{N} \sum_{i=1}^N   \int_0^t \mathbb E W^{(2)} (\mu_s^{X^i}, \mu_s ^{X,N})^2 ds 
   \\
      + \frac{1}{N} \sum_{i=1}^N&    \int_0^t \mathbb E |X_s^i - X_s^{i,N}|^2 ds +  \frac{1}{N} \sum_{i=1}^N \int_0^t \mathbb E \Bigl |\frac{1}{N} \sum_{j=1}^N \int_{\mathbb R^d} (f(X_s^i , x)  -  f({X}^{i}_{s},{X}^{j}_{s}) ) \mu_s^{X^i}(dx) \Bigl |^2 ds
     \\
      &  + \frac{4}{N^3} \sum_{i=1}^N  \mathbb E\int_0^t \Bigl | \sum_{j=1}^N \int_{\mathbb R^d} (g(X_s^i , x)  -  g({X}^{i}_{s},{X}^{j}_{s}) ) \mu_s^{X^i}(dx) \Bigl |^2 ds 
      \\
     &  + \frac{1}{N^2} \sum_{i,j=1}^N  \mathbb E\int_0^t  \big\{ \big \langle (X_s^i - X_s^{i,N})-(X_s^j - X_s^{j,N}),   f({X}^{i}_{s},{X}^{j}_{s}) - f({X}^{i,N}_{s},{X}^{j,N}_{s})  \big \rangle 
     \\
      & \qquad + 4 |  g({X}^{i}_{s},{X}^{j}_{s}) - g({X}^{i,N}_{s},{X}^{j,N}_{s}) |^2 \big\}
         ds 
\end{align*}
for all $t \in [0,T]$. 
Now, note that
 \begin{align*}
     \mathbb E  \Bigl | & \sum_{j=1}^N \int_{\mathbb R^d} (f(X_s^i , x)  -  f({X}^{i}_{s},{X}^{j}_{s}) ) \mu_s^{X^i}(dx) \Bigl |^2 
     \\
    & = \sum_{j=1}^N \sum_{k=1}^N \mathbb E \Bigl [ \Bigl \langle \int_{\mathbb R^d} (f(X_s^i , x)  -  f({X}^{i}_{s},{X}^{j}_{s}) ) \mu_s^{X^i}(dx),\int_{\mathbb R^d} (f(X_s^i , x)  -  f({X}^{i}_{s},{X}^{k}_{s}) ) \mu_s^{X^i}(dx) \Bigl \rangle \Bigl]
    \end{align*}
    for all $s \in [0,T]$. 
Observe that the cross-product terms, i.e., when $j \neq k$, are zero upon 
using the independent and identical distribution property of the nIPS \eqref{NIPS}. 
Consequently, due to Remark \ref{rem:fg:grw:eu} and Theorem \ref{E-U},  one deduces
    \begin{align}
     \mathbb E \Bigl | \sum_{j=1}^N \int_{\mathbb R^d}& (f(X_s^i , x)  -  f({X}^{i}_{s},{X}^{j}_{s}) ) \mu_s^{X^i}(dx) \Bigl |^2  = \sum_{j=1}^N \mathbb E \Bigl|\int_{\mathbb R^d} (f(X_s^i , x)  -  f({X}^{i}_{s},{X}^{j}_{s}) ) \mu_s^{X^i}(dx) \Bigl |^2 \notag
    \\
    & \leq K \sum_{j=1}^N \mathbb E \int_{\mathbb R^d} ((1+|X_s^i- x|)^{2(q+1)} + (1+|{X}^{i}_{s}-{X}^{j}_{s}|)^{2(q+1)} ) \mu_s^{X^i}(dx) \leq KN \label{eq:f:rate}
 \end{align}
 for all $s \in [0,T]$. 
By performing similar calculations (using Remark \ref{rem:fg:grw:eu}), one also obtains 
\begin{align}
    \mathbb E & \Bigl | \sum_{j=1}^N \int_{\mathbb R^d} (g(X_s^i , x)  -  g({X}^{i}_{s},{X}^{j}_{s}) ) \mu_s^{X^i}(dx) \Bigl |^2  \leq KN \label{eq:g:rate}
\end{align}
for all $s \in [0,T]$. Using the above estimates,  one obtains, 
 \begin{align*}
     \frac{1}{N} \sum_{i=1}^N \mathbb E  |X_t^i - X_t^{i,N}|^2 & \leq  K  \int_0^t \frac{1}{N} \sum_{i=1}^N  \mathbb E|X_s^i - X_s^{i,N}|^2 ds  + \frac{K}{N}  + \frac{K}{N} \sum_{i=1}^N   \int_0^t \mathbb E W^{(2)} (\mu_s^{X^i}, \mu_s ^{X,N})^2 ds 
     \\
     & \leq  K  \int_0^t \frac{1}{N} \sum_{i=1}^N  \mathbb E|X_s^i - X_s^{i,N}|^2 ds  + \frac{K}{N}  + \frac{K}{N} \sum_{i=1}^N   \int_0^t \mathbb E W^{(2)} \Big(\mu_s^{X^i}, \frac{1}{N} \sum_{j=1}^N \delta_{X_s^{j}}\Big)^2 ds 
 \end{align*}
 for all $t \in [0,T]$ and thus Gr\"onwall\textquotesingle{s} lemma yields, 
 \begin{align*}
     \frac{1}{N} \sum_{i=1}^N \mathbb E  |X_t^i - X_t^{i,N}|^2 & \leq \frac{K}{N}  + \frac{K}{N} \sum_{i=1}^N   \int_0^t \mathbb E W^{(2)} \Big(\mu_s^{X^i}, \frac{1}{N} \sum_{j=1}^N \delta_{X_s^{j}}\Big)^2 ds
 \end{align*}
for all $t \in [0,T]$. From 
\cite[Theorem 5.8]{carmona2018a}, one has
\begin{align}{\label{rate:empi}}
 \mathbb E W^{(2)} \Bigl(\frac{1}{N}\sum_{j=1}^N \delta_{X_t^j}, \mu_t ^{X} \Bigl)^2  = \begin{cases}
            N^{-1/2},        & \text{if } d < 4, 
            \\
             N^{-1/2}\log N,  & \text{if} ~ d=4, 
             \\
             N^{-2/d},        & \text{if } d > 4, 
        \end{cases}
\end{align}
for all $t \in [0,T]$ and using the fact that IPS \eqref{IPS} and nIPS \eqref{NIPS} are exchangeable (see \cite[Section 2.1.2]{carmona2018b}), which in turn completes the proof.         
\end{proof}
\subsection{Propagation of chaos -- the sharp (dimension-independent) rate result}
\label{sec:PoC-ind}
For the purpose of  dimension-independent  PoC convergence rate,  consider a special form of MV-SDE \eqref{Mckean} by taking the following forms of $v$ and $\bar{\sigma}$, 
 \begin{align} \label{eq:nu-barsig-inp}
        \nu(t,x,\mu) & = \int_{\mathbb R^d} f(x,y)\mu(dy) + \int_{\mathbb R^d} \tilde{b}(t,x,y) \mu(dy) \mbox{ and } 
        \overline{\sigma}(t,x,\mu)  = \int_{\mathbb R^d} g(x,y)\mu(dy)+ \int_{\mathbb R^d} \tilde{\sigma}(t,x,y) \mu(dy)
    \end{align}
for all $t \in [0,T], x\in \mathbb R^d$ and $\mu \in \mathcal{P}_2 (\mathbb R ^d)$ where $(\tilde{b}, \tilde{\sigma}):[0,T] \times \mathbb R^d \times \mathbb R^d \mapsto (\mathbb R^d, \mathbb R^{d\times l})$ are Borel-measurable functions, and $f$ and $g$ as considered before.
These particular forms of $\nu$ and $\bar{\sigma}$ (as given in Equation \eqref{eq:nu-barsig-inp}) are for this subsection and thereafter we return to their original forms given in Equation \eqref{eq:v:sig}. 
However, the same notations, namely $X$ and $X^{i, N}$, are used for the solution of MV-SDE \eqref{Mckean} and its IPS \eqref{IPS}, which  should not cause any confusion in the minds of the readers. 

In this special case, the sharp PoC convergence rate is achieved to be $1/2$ for any $p \in [2, 2p_0/(q+1)]$, unlike the general $W^{(2)}$-measure dependence case discussed above in Theorem \ref{PoC} where the dimension-dependent PoC convergence rate is proved in mean square.  
Thus, fix a constant $p \in [2, 2p_0/(q+1)]$ and make assumptions as given below.  
\begin{assumption}{\label{as:DIPoC}}
There exists a constant $L>0$ such that
    \begin{align*}
      \langle x, \tilde{b}(t,x,y) \rangle  +(p_0-1) |\tilde{\sigma}(t,x,y)|^2 & \leq L \{|x|^2+|y|^2\},
      \\
    \langle x-x', \tilde{b}(t,x,y) - \tilde{b}(t,x',y') \rangle  +2(p-1) |\tilde{\sigma} (t,x,y) - \tilde{\sigma}(t,x',y')|^2 & \leq L \{|x-x'|^2+|y-y'|^2\},
    \\
    |\tilde{b}(t,x,y) - \tilde{b}(t,x,y')|  & \, \leq L |y-y'|, 
    \end{align*}
for all $t \in [0,T]$ and $x$, $x'$, $y$, $y'$ $\in \mathbb R^d$.
The maps $\tilde{b}$ and $\tilde{\sigma}$ are assumed jointly continuous in their variables.
\end{assumption}
\begin{assumption}\label{as:f:g:PoC-2}
    There exists a constant $L>0$ such that 
    \begin{align*}
        (|x-x'|^{p-2}-|y-y'|^{p-2})  \langle  (x+y) -(x'+y'),f(x,y)  - f(x',y')\rangle \leq &\, L(|x-x'|^{p}+ |y-y'|^{p}), 
        \\
         \langle (x  - y)-  (x'  - y'), f(x,y) -  f(x',y')  \rangle  + 4 (p-1) \, |  g(x,y) - g(x',y')  |^2   \leq  & \, L |(x-y)-(x'-y')|^2,  
    \end{align*}
    for all  $x,y,x',y' \in \mathbb R^d$.
\end{assumption}
Clearly, Assumption \ref{as:DIPoC} implies Assumption \ref{as:eu:b:sig} if we take
\begin{align*}
    b(t,x,\mu)= \int_{\mathbb R^d} \tilde{b}(t,x,y) \mu(dy) \mbox{ and } \sigma(t,x,\mu)= \int_{\mathbb R^d} \tilde{\sigma}(t,x,y) \mu(dy),
\end{align*}
for all $t \in [0,T]$, $x \in \mathbb R^d$ and $\mu \in \mathcal{P}_2(\mathbb{R}^d)$.  
Thus, due to Theorem \ref{E-U}, MV-SDE \eqref{Mckean} with coefficients as given in Equation \eqref{eq:nu-barsig-inp} possesses a unique strong solution as well as the $p_0$-moment stability. 
For the sharp PoC convergence order of $1/2$, additional assumptions are needed on the kernels $f$ and $g$. 
\begin{remark} \label{rem:til:sig:lips}
    Assumption \ref{as:DIPoC} implies that there exists a constant $K>0$ such that 
   \begin{align*}
       |\tilde{\sigma}(t,x,y) - \tilde{\sigma}(t,x,y')|^2 & \leq K |y-y'|^2,
   \end{align*}
   for all $t \in [0,T]$ and $x$, $y$, $y'$ $\in \mathbb R^d$.
\end{remark}

For the sharp rate PoC result next, we keep the anti-symmetry condition of $f$ from Assumption \ref{as:anti-sys}, but the remainder of that assumption is replaced by Assumptions \ref{as:DIPoC}  and  \ref{as:f:g:PoC-2}. Further, as this is a special case of MV--SDE \ref{Mckean}, therefore well-posedness and moments follow from Theorem \ref{E-U}  and Theorem \ref{thm:eu-ips}.

\begin{theorem}[Sharp PoC rate across dimension] \label{dim-ind-poc}

Let $q>0$, $p_0 >2(q+1)$ and take $p \in [2, 2p_0/(q+1)]$. Let $f$ be anti-symmetric and Assumptions \ref{assum_initial}, \ref{as:eu:f:g},  \ref{as:DIPoC}  and  \ref{as:f:g:PoC-2} hold. Take IPS \eqref{IPS} and nIPS \eqref{NIPS} (and MV-SDE \eqref{Mckean}) with coefficients as given in Equation \eqref{eq:nu-barsig-inp}. 

Then, the conclusions of Theorem \ref{thm:eu-ips} and Theorem \ref{E-U} holds for the equations, i.e., wellposedness and moment estimates. 
Moreover, the IPS \eqref{IPS} with coefficients given in Equation \eqref{eq:nu-barsig-inp} 
converges to the corresponding nIPS \eqref{NIPS} with order $1/2$ (uniformly across the dimension parameter $d$) in the following sense
    \begin{align*}
     \displaystyle  \sup_{i \in \{1,\ldots,N \}} \sup_{t \in [0,T]} \mathbb E|X_t^i - X_t^{i,N}|^{p} \leq KN^{-\frac{p}{2}}
    \end{align*}
  for any $p \in [2, 2p_0/(q+1)]$ where $K>0$ is a constant independent of $N \in \mathbb N$.     
\end{theorem}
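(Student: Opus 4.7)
\smallskip

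\noindent\textbf{Proof proposal.} Fix $p \in [2, 2p_0/(q+1)]$. The plan is to apply It\^o's formula to $|X_t^i - X_t^{i,N}|^p$, then average over $i$ using exchangeability, absorb all coupling terms via the coercivity/anti-symmetry assumptions into a Gr\"onwall driver of the form $\frac{1}{N}\sum_i \mathbb E|X_s^i - X_s^{i,N}|^p$, and control the residual ``statistical'' terms directly in $L^p$ at the sharp rate $N^{-p/2}$. The key structural difference from Theorem \ref{PoC} is that we never invoke a $W^{(2)}$ empirical-measure estimate like \eqref{rate:empi}; instead, the Vlasov/integral form \eqref{eq:nu-barsig-inp} lets us isolate deviations of the form
\[
\Delta_s^i[\psi] := \frac{1}{N}\sum_{j=1}^N \psi(X_s^i, X_s^j) - \int_{\mathbb R^d} \psi(X_s^i, y)\,\mu_s^X(dy), \qquad \psi \in \{f, g, \tilde b, \tilde\sigma\},
\]
which, conditionally on $X_s^i$, are sample means of i.i.d.\ centred random variables, and hence can be bounded in $L^p$ via the conditional Rosenthal inequality (Lemma \ref{lemma:conditionalRosenthalIneq}). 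This is precisely what delivers the sharp $N^{-p/2}$ rate without any dimension penalty.

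\smallskip

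\noindent\textit{Step 1: It\^o and splitting.} I would apply It\^o's formula to $|X_t^i - X_t^{i,N}|^p$ and then, for each kernel $\psi \in \{f,g,\tilde b,\tilde\sigma\}$, use the algebraic split
\[
\frac{1}{N}\sum_{j=1}^N \psi(X_s^{i,N}, X_s^{j,N}) - \int \psi(X_s^i,y)\mu_s^X(dy) = \frac{1}{N}\sum_{j=1}^N\big[\psi(X_s^{i,N}, X_s^{j,N}) - \psi(X_s^i, X_s^j)\big] \;-\; \Delta_s^i[\psi].
\]
The first piece couples the IPS to the nIPS particle-by-particle; the second is the statistical fluctuation. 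Insert this into the It\^o identity to produce three types of contributions: (i) coercive ``Lipschitz-in-state'' terms from Assumptions \ref{as:DIPoC} and \ref{as:f:g:PoC-2}, (ii) ``double-sum'' kernel differences with weight $|X_s^i - X_s^{i,N}|^{p-2}$, and (iii) the fluctuation terms $\Delta_s^i[\psi]$.

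\smallskip

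\noindent\textit{Step 2: coercivity and anti-symmetry.} After averaging over $i\in\{1,\dots,N\}$, I would apply Young's inequality to the $|X_s^i - X_s^{i,N}|^{p-2}$ weight (taking exponents $(p/(p-2), p/2)$ when $p>2$, with $p=2$ being a boundary case handled directly) so that each ``coupling'' difference $|\psi(X_s^{i,N},X_s^{j,N}) - \psi(X_s^i,X_s^j)|^2$ appears raised to the power $p/2$. The first coercive bound in Assumption \ref{as:f:g:PoC-2}, combined with the anti-symmetry of $f$, is then used as in the moment proof of Theorem \ref{thm:eu-ips} to handle the double sum with weight $|X_s^i - X_s^{i,N}|^{p-2}$; Assumption \ref{as:DIPoC} takes care of $\tilde b,\tilde\sigma$ pieces; the second coercivity bound in Assumption \ref{as:f:g:PoC-2} kills the joint $f,g$ contribution. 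All these coupling contributions are then absorbed into $\int_0^t \frac{1}{N}\sum_i \mathbb E|X_s^i - X_s^{i,N}|^p \, ds$ (up to fluctuation remainders).

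\smallskip

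\noindent\textit{Step 3 (the crux): $L^p$-control of $\Delta_s^i[\psi]$.} Conditionally on $X_s^i$, the variables $\{X_s^j\}_{j\neq i}$ are i.i.d.\ with common law $\mu_s^X$ and independent of $X_s^i$, so the summands of $\Delta_s^i[\psi]$ are conditionally i.i.d.\ and centred. The conditional Rosenthal inequality (Lemma \ref{lemma:conditionalRosenthalIneq}) gives
\[
\mathbb E\Big[\big|\Delta_s^i[\psi]\big|^p \,\big|\, X_s^i\Big] \leq \frac{K}{N^{p/2}}\Big(\mathbb E\big[|\psi(X_s^i,X_s^1)|^2 \,\big|\, X_s^i\big]\Big)^{p/2} + \frac{K}{N^{p-1}}\mathbb E\big[|\psi(X_s^i,X_s^1)|^p \,\big|\, X_s^i\big].
\]
Taking the outer expectation, applying the growth bounds $|f(x,y)|\leq K(1+|x-y|)^{q+1}$ and $|g(x,y)|^2\leq K(1+|x-y|)^{q+2}$ from Remark \ref{rem:fg:grw:eu} (and the linear bounds on $\tilde b,\tilde\sigma$ from Assumption \ref{as:DIPoC}), and using $\sup_t \mathbb E|X_t|^{p(q+1)} \le \sup_t\mathbb E|X_t|^{p_0\wedge 2p_0}<\infty$ (which is where the restriction $p(q+1)\leq 2p_0$ enters), we conclude $\sup_{s\in[0,T]}\mathbb E|\Delta_s^i[\psi]|^p \leq K N^{-p/2}$. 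The dominant term is the first, giving the sharp $N^{-p/2}$ order.

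\smallskip

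\noindent\textit{Step 4: Gr\"onwall and exchangeability.} Collecting the steps yields, for $u(t):= \frac{1}{N}\sum_i \mathbb E|X_t^i - X_t^{i,N}|^p$, an inequality of the form $u(t) \leq K\int_0^t u(s)\,ds + K N^{-p/2}$. Gr\"onwall's lemma gives $\sup_{t\in[0,T]} u(t) \leq K N^{-p/2}$, and exchangeability of the IPS and nIPS (\cite[Section 2.1.2]{carmona2018b}) upgrades this to the uniform-in-$i$ bound asserted in the theorem.

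\smallskip

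\noindent\textit{Main obstacle.} The delicate part is Step 3 combined with Step 2: I must distribute the $|X_s^i - X_s^{i,N}|^{p-2}$ factor from It\^o's formula via Young in a way that preserves the $N^{-p/2}$ rate on the fluctuation side and still has all remaining pieces absorbable by coercivity/anti-symmetry. Concretely, the ``price'' of Young is that I need higher conditional $L^{p/2}$ or $L^p$ control of $\psi(X^i,\cdot)$ averaged over the population, which through the superlinear growth of $f,g$ translates into a moment requirement of order $p(q+1)$ on the MV-SDE---this is precisely why the admissible range is $p\in[2, 2p_0/(q+1)]$ and not larger.
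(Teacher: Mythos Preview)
Your proposal is correct and follows essentially the same route as the paper: It\^o's formula on $|X_t^i - X_t^{i,N}|^p$, average over particles, split each kernel into a particle-to-particle coupling piece and a fluctuation piece $\Delta_s^i[\psi]$, handle the coupling via anti-symmetry of $f$ and the coercivity in Assumptions~\ref{as:DIPoC} and~\ref{as:f:g:PoC-2} (the paper packages the weighted $f,g$ double sum into Lemma~\ref{L_rate_x^i}), bound each $\Delta_s^i[\psi]$ in $L^p$ at rate $N^{-p/2}$ via the conditional Rosenthal inequality (Lemma~\ref{lemma:conditionalRosenthalIneq}), then close with Gr\"onwall and exchangeability. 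One minor clarification on your Step~2: the paper does \emph{not} Young the weight away on the coupling terms---it keeps the factor $|X_s^i - X_s^{i,N}|^{p-2}$ attached and applies the coercivity bounds directly to the weighted inner product/squared difference (this is exactly why the first inequality in Assumption~\ref{as:f:g:PoC-2}, with the $(|x-x'|^{p-2}-|y-y'|^{p-2})$ prefactor, is needed); Young is used only to peel off the fluctuation pieces $A_3$--$A_6$ at full $p$-th power.
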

Since $p_0>2(q+1)$ then $2p_0/(q+1)>4$ thus $p$ will always satisfy $p\in [2,4]$ at the very least. 
We highlight that the proof of this result makes use of the conditional Rosenthal inequality (Lemma \ref{lemma:conditionalRosenthalIneq}) which in turn requires establishing high-order moments for the difference $|X_t^i - X_t^{i,N}|$. 
It is for this reason that the Assumption \ref{as:f:g:PoC-2}  is needed. 
On the other hand, the conditional Rosenthal inequality allows us to fully avoid using Kantorovich-Rubinstein duality arguments on $W^{(1)}$, as in  \cite{PiersHinds2025,zhang2025}, and hence have superlinear growth in $f$ and $g$. 

Lastly, we remark that the mechanics we use to prove  Theorem \ref{dim-ind-poc} easily recovers the results of \cite{zhang2025} (and \cite{PiersHinds2025} for the non-reflection case) and thus our results are broader in spectrum. 
Also, if $\nu$ and $\bar{\sigma}$ in Equation \eqref{eq:nu-barsig-inp} are replaced by 
\begin{align*}
        \nu(t,x,\mu) & = \int_{\mathbb R^d} f(x,y)\mu(dy) + F\Big(t,x,\int_{\mathbb R^d} \tilde b(t,x,y)\mu(dy)\Big)
        \\
        \mbox{ and } 
        \overline{\sigma}(s,x,\mu)  & = \int_{\mathbb R^d} g(x,y)\mu(dy)+ G\Big(t,x,\int_{\mathbb R^d} \tilde \sigma(t,x,y)\mu(dy)\Big) 
    \end{align*}
where $F:[0,T] \times \mathbb{R}^d \times \mathbb{R}^d \mapsto \mathbb R^d$ and $G:[0,T] \times \mathbb{R}^d \times \mathbb{R}^{d\times l} \mapsto \mathbb R^{d\times l}$, then our proof of the above theorem can be adapted even when $F$ is assumed to be one-sided Lipschitz continuous in the second variable and Lipschitz continuous in the third variable, and $G$ is Lipschitz continuous in both second and third variables, whilst \cite{zhang2025} allows for only Lipschitz continuity in $F$ and $G$ in the both variables. 
Thus, in our framework, both $f$ and $g$ are superlinear in both variables and $F$ is superlinear in the second variable, but these functions are linear in the framework of \cite{zhang2025}. 
It is worth mentioning that the well-posedness of MV-SDE \eqref{Mckean} and the associated IPS \eqref{IPS} with the above coefficients follows from our Theorem \ref{E-U} and Theorem \ref{thm:eu-ips}, respectively. 
\begin{proof}
The existence and uniqueness of IPS \eqref{IPS} with coefficients \eqref{eq:nu-barsig-inp} follows from \cite{gyongy1980} with the non-explosive moment estimate of IPS \eqref{IPS} established using the same arguments used in the proof of Theorem \ref{thm:eu-ips}. The well-posedness of MV-SDE \eqref{Mckean} with coefficients \eqref{eq:nu-barsig-inp} follows from our Theorem \ref{E-U}. 

We now address the PoC result. Using It\^o's formula we have, 
   \begin{align*}
       \frac{1}{N} \sum_{i=1}^N & \mathbb E|X_t^i  - X_t^{i,N}|^p 
       \\
       &
       \leq 
       \frac{p}{N} \sum_{i=1}^N \int_0^t \mathbb E|X_s^i - X_s^{i,N}|^{p-2} \Big\{ \Big\langle X_s^i - X_s^{i,N}, \int_{\mathbb R^d} \tilde{b}(s,X_s^{i},y)\mu_s^{X^i}(dy) - \int_{\mathbb R^d} \tilde{b}(s,X_s^{i,N},y)\mu_s^{X,N}(dy)\Big\rangle \nonumber
         \\
         & \qquad +(p-1) \Big|\int_{\mathbb R^d} \tilde{\sigma}(s,X_s^{i},y)\mu_s^{X^i}(dy)  - \int_{\mathbb R^d} \tilde{\sigma}(s,X_s^{i,N},y)\mu_s^{X,N}(dy) \Big|^2 \Big\} ds \nonumber
         \\
        &  +\frac{p}{N} \sum_{i=1}^N  \mathbb E\int_0^t |X_s^i - X_s^{i,N}|^{p-2} \Bigl \{\Big \langle X_s^i - X_s^{i,N}, \int_{\mathbb R^d} f(X_s^i, x) \mu_s^{X^i}(dx) - \frac{1}{N} \sum_{j=1}^N f({X}^{i,N}_{s},{X}^{j,N}_{s}) \Big \rangle \nonumber
        \\
        & \qquad + (p-1) \Bigl| \int_{\mathbb R^d} g(X_s^i,x) \mu_s^{X^i}(dx) -\frac{1}{N}\sum_{j=1}^N g({X}^{i,N}_{s},{X}^{j,N}_{s})\Bigl|^2 \Bigl \}ds  \notag
    \end{align*}
    which, on further simplification and Young's inequality yields, 
    \begin{align}
    \nonumber 
      \frac{1}{N} \sum_{i=1}^N & \mathbb E|X_t^i  - X_t^{i,N}|^p 
      \leq  \, 2(p-1)(2p-3)  \int_0^t \frac{1}{N}\sum_{i=1}^N  \mathbb E|X_s^i - X_s^{i,N}|^{p} ds  \notag
     \\
   &  + \frac{p}{N} \sum_{i=1}^N \int_0^t \mathbb E|X_s^i - X_s^{i,N}|^{p-2} \Bigl\{ \Big\langle X_s^i - X_s^{i,N},  \frac{1}{N} \sum_{j=1}^N \tilde{b}(s,X_s^{i},X_s^j) - \frac{1}{N} \sum_{j=1}^N \tilde{b}(s,X_s^{i,N},X_s^{j,N}) \Big\rangle \nonumber
         \\
  & \qquad +2(p-1) \Big|  \frac{1}{N} \sum_{j=1}^N \tilde{\sigma}(s,X_s^{i},X_s^j)- \frac{1}{N} \sum_{j=1}^N \tilde{\sigma}(s,X_s^{i,N},X_s^{j,N})\Big|^2 \Big\} ds \notag
   \\
     & + \frac{p}{N^2} \sum_{i,j=1}^N  \mathbb E\int_0^t |X_s ^i  - X_s ^{i,N}|^{p-2}  \big\{ \langle X_s^i - X_s^{i,N},  f({X}^{i}_{s},{X}^{j}_{s}) - f({X}^{i,N}_{s},{X}^{j,N}_{s}) \rangle \nonumber
      \\
      & \qquad  + 2(p-1) |  g({X}^{i}_{s},{X}^{j}_{s}) - g({X}^{i,N}_{s},{X}^{j,N}_{s})  |^2 \big\} ds \nonumber
  \\
  &+ \int_0^t \frac{1}{N}\sum_{i=1}^N \Big|\int_{\mathbb R^d} \tilde{b}(s,X_s^{i},y)\mu_s^{X^i}(dy) - \frac{1}{N} \sum_{j=1}^N \tilde{b}(s,X_s^{i},X_s^j) \Big|^p ds \nonumber
         \\
   &  +4(p-1)\frac{1}{N} \sum_{i=1}^N \int_0^t\Big|\int_{\mathbb R^d} \tilde{\sigma}(s,X_s^{i},y)\mu_s^{X^i}(dy)  - \frac{1}{N} \sum_{j=1}^N \tilde{\sigma}(s,X_s^{i},X_s^j)\Big|^p  ds \nonumber
  \\
       &\, + \int_0^t  \frac{1}{N} \sum_{i=1}^N  \mathbb E \Big| \int_{\mathbb R^d} f(X_s^i, x) \mu_s^{X^i}(dx) - \frac{1}{N} \sum_{j=1}^N f({X}^{i}_{s},{X}^{j}_{s}) \Big|^p ds  \nonumber
    \\
     & + 4(p-1) \int_0^t \frac{1}{N}\sum_{i=1}^N \Bigl| \int_{\mathbb R^d} g(X_s^i,x) \mu_s^{X^i}(dx) -\frac{1}{N}\sum_{j=1}^N g({X}^{i}_{s},{X}^{j}_{s})\Bigl|^p ds  \nonumber
      \\
        =: &\, 2(p-1)(2p-3)  \int_0^t \frac{1}{N}\sum_{i=1}^N  \mathbb E|X_s^i - X_s^{i,N}|^{p} ds  + A_1 + A_2+ A_3+ A_4+ A_5+ A_6 \label{A1+.+A6},
   \end{align}
for all $t \in [0,T]$. 
To estimate $A_1$, by using Assumption \ref{as:DIPoC} and Young\textquotesingle{s} inequality, one obtains
  \begin{align}
    A_{1}:= & \, \frac{p}{N^2} \sum_{i, j=1}^N \int_0^t \mathbb E|X_s^i - X_s^{i,N}|^{p-2} \big\{ \big\langle X_s^i - X_s^{i,N},   \tilde{b}(s,X_s^{i},X_s^j) -  \tilde{b}(s,X_s^{i,N},X_s^{j,N}) \big\rangle \nonumber
         \\
  & \qquad +2(p-1) \big|   \tilde{\sigma}(s,X_s^{i},X_s^j)-  \tilde{\sigma}(s,X_s^{i,N},X_s^{j,N})\big|^2 \big\} ds \nonumber
  \\
  \leq & \, \frac{K}{N} \int_{0}^t \sum_{i=1}^N \mathbb E|X_s^i - X_s^{i,N}|^{p}ds \label{eq:A1}
\end{align}
for all $t \in [0,T]$. 
Using Assumption \ref{as:DIPoC}, Lemma \ref{L_rate_x^i}, $A_{2.3}$ can be estimated by
\begin{align}
    A_{2} :=&\, \frac{p}{N^2} \sum_{i, j=1}^N  \mathbb E\int_0^t |X_s ^i  - X_s ^{i,N}|^{p-2}  \big\{ \langle X_s^i - X_s^{i,N},  f({X}^{i}_{s},{X}^{j}_{s}) - f({X}^{i,N}_{s},{X}^{j,N}_{s}) \rangle \notag
      \\
     & + 2(p-1) |  g({X}^{i}_{s},{X}^{j}_{s}) - g({X}^{i,N}_{s},{X}^{j,N}_{s})  |^2 \big\} ds  \leq K \int_0 ^t \frac{1}{N} \sum_{i=1}^N \mathbb E |X_s^{i} - X_s^{i,N}|^p ds \label{A2}
\end{align}
for all $t \in [0,T]$.
For the estimation of $A_3$, notice that
\begin{align*}
     A_{3} := & \frac{1}{N} \sum_{i=1}^N \int_0^t \mathbb E  \Big| \int_{\mathbb R^d} \tilde{b}(s,X_s^{i},y)  \mu_s^{X^i}(dy) - \frac{1}{N} \sum_{j=1}^N \tilde{b}(s,X_s^{i},X_s^j) )\Big|^p ds 
    \\
     = &\, \frac{1}{N^{p+1}} \sum_{i=1}^N \int_0^t \mathbb E \Big|  \int_{\mathbb R^d} (\tilde{b}(s,X_s^{i},y) - \tilde{b}(s,X_s^{i},X_s^{i}) ) \mu_s^{X^i}(dy)\Big|^p ds  
    \\
    & + \frac{1}{N^{p+1}} \sum_{i=1}^N \int_0^t \mathbb E \Big|\sum_{j\neq i}^N \int_{\mathbb R^d} (\tilde{b}(s,X_s^{i},y) - \tilde{b}(s,X_s^{i},X_s^{j}) ) \mu_s^{X^i}(dy)\Big|^p ds 
\end{align*}
for all $t \in [0,T]$. Let \begin{align*}
         Y_s^{i,j} := \displaystyle \int_{\mathbb R^d} 
              \big(\tilde{b}(s,X_s^i,y)  -  \tilde{b}(s,{X}^{i}_{s},{X}^{j}_{s}) \big) \mu_s^{X^i}(dy)  
     \end{align*}
 for all $s\in [0,T]$, $i, j \in \{1, \ldots,N\}$.  
 Since particles are independently and identically distributed, $Y_s^{i,1}$,$\ldots$,$Y_s^{i,j-1}$, $Y_s^{i,j+1}$,$\ldots$, $Y_s^{i,N}$ are conditionally independent given $X^i_s$ and $\mathbb EY_s^{i,i}\equiv 0$ and $\mathbb E[Y_s^{i,j}|X_s^i]=0$ whenever $j \neq i$ for all $i, j \in \{1, \ldots, N\}$. Thus, due to the conditional Rosenthal-type inequality (see Lemma \ref{lemma:conditionalRosenthalIneq}),
\begin{align*}
    A_{3} 
     \leq & \frac{1}{N^{p+1}}\sum_{i=1}^N \int_0^t \mathbb E \int_{\mathbb R^d}  |\tilde{b}(s,X_s^i,y)  -  \tilde{b}(s,{X}^{i}_{s},{X}^{i}_{s}) |^p \mu_s^{X^i}(dy) ds 
     \\
     & + \frac{1}{N^{p+1}} \sum_{i=1}^N \int_0^t \sum_{j\neq i} \mathbb E \int_{\mathbb R^d}   | \tilde{b}(s,X_s^i, y)  -  \tilde{b}(s,{X}^{i}_{s},{X}^{j}_{s}) |^p \mu_s^{X^i}(dy) ds
    \\
    &  + \frac{1}{N^{p+1}} \sum_{i=1}^N\mathbb E\int_0^t \Bigl ( \sum_{j\neq i}  \mathbb E\Bigl[\int_{\mathbb R^d}   | (\tilde{b}(s,X_s^i, y)  -  \tilde{b}(s,{X}^{i}_{s},{X}^{j}_{s})|^2 \mu_s^{X^i}(dy) \Big| X_s^i \Bigr]\Bigl)^{p/2}  ds
\end{align*}
and then Assumption \ref{as:DIPoC} and Theorem \ref{E-U} yields
\begin{align}
    A_{3}
    & \leq \frac{K}{N^{p+1}}\sum_{i=1}^N \int_0^t \mathbb E \int_{\mathbb R^d}  |y-X_s^i|^p \mu_s^{X^i}(dy) ds + \frac{K}{N^{p+1}} \sum_{i=1}^N \int_0^t \sum_{j\neq i} \mathbb E \int_{\mathbb R^d} |y-X_s^j|^p \mu_s^{X^i}(dy) ds  \notag
    \\
    & \quad + \frac{K (N-1)^{p/2-1}}{N^{p+1}} \sum_{i=1}^N\mathbb E\int_0^t \sum_{j\neq i}  \mathbb E\int_{\mathbb R^d} |y-X_s^j|^p \mu_s^{X^i}(dy) ds  \leq KN^{-p/2} \label{eq:A3}
\end{align}
for all $t\in [0,T]$. Performing similar calculations, due to Remark \ref{rem:til:sig:lips}, one can get
\begin{align}
   A_{4} := \frac{4(p-1)}{N}\sum_{i=1}^N \int_0^t\mathbb E \Bigl|\int_{\mathbb R^d} \tilde{\sigma}(s,X_s^{i},y)\mu_s^{X^i}(dy) - \frac{1}{N} \sum_{j=1}^N \tilde{\sigma}(s,X_s^{i},X_s^j) \Bigl|^p ds  \leq KN^{-p/2} \label{eq:A4}
\end{align}
for all $t\in [0,T]$. For the estimation of $A_{5}$, using the similar arguments as for $Y_s^{i,j}$ and using conditional Rosenthal-type inequality, one can obtain
\begin{align*}
    A_{5}  : = & \, \frac{K}{N} \sum_{i=1}^N  \mathbb E\int_0^t \Bigl |\frac{1}{N} \sum_{j=1}^N \int_{\mathbb R^d} (f(X_s^i , x)  -  f({X}^{i}_{s},{X}^{j}_{s}) ) \mu_s^{X^i}(dx) \Bigl |^p ds
    \\
    & \, 
 \leq \frac{K}{N^{p+1}}\sum_{i=1}^N \int_0^t \mathbb E \int_{\mathbb R^d}  |f(X_s^i , x)  -  f({X}^{i}_{s},{X}^{i}_{s}) |^p \mu_s^{X^i}(dx) ds 
 \\
 & \quad + \frac{K}{N^{p+1}} \sum_{i=1}^N \int_0^t \sum_{j\neq i} \mathbb E \int_{\mathbb R^d}   | f(X_s^i, x)  -  f({X}^{i}_{s},{X}^{j}_{s}) |^p \mu_s^{X^i}(dx) ds \nonumber
   \\
  & \quad  + \frac{K}{N^{p+1}} \sum_{i=1}^N\mathbb E\int_0^t \Bigl ( \sum_{j\neq i}  \mathbb E\Bigl[\int_{\mathbb R^d}   | f(X_s^i, x)  -  f({X}^{i}_{s},{X}^{j}_{s})|^2 \mu_s^{X^i}(dx) \Big| X_s^i \Bigr]\Bigl)^{p/2}  ds \nonumber
\end{align*}
which, on using Remark \ref{rem:fg:grw:eu}, H\"older's inequality and Theorem \ref{E-U}, gives
\begin{align}
    A_{5} & \leq\frac{K}{N^{p+1}}\sum_{i=1}^N \int_0^t \mathbb E \int_{\mathbb R^d}  (1+|X_s^i - x|+|X^{i}_{s}-{X}^{j}_{s}|)^{p(q+1)} \mu_s^{X^i}(dx) ds \nonumber
    \\
   & \quad + \frac{K}{N^{p+1}} \sum_{i=1}^N \int_0^t \sum_{j\neq i} \mathbb E \int_{\mathbb R^d}   (1+|X_s^i - x|+|X^{i}_{s}-{X}^{j}_{s}|)^{p(q+1)} \mu_s^{X^i}(dx) ds \nonumber
   \\
  & \quad + \frac{K (N-1)^{p/2-1}}{N^{p+1}} \sum_{i=1}^N\mathbb E\int_0^t \sum_{j\neq i}  \mathbb E\int_{\mathbb R^d}   (1+|X_s^i - x|+|X^{i}_{s}-{X}^{j}_{s}|)^{p(q+1)} \mu_s^{X^i}(dx)   ds \leq   KN^{-p/2} \label{A5}
\end{align}
for all $t \in [0,T]$. Similarly, one obtains
\begin{align}
    A_{6} : = \frac{4(p-1)}{N} \sum_{i=1}^N  \mathbb E\int_0^t \Bigl |\frac{1}{N} \sum_{j=1}^N \int_{\mathbb R^d} (g(X_s^i , x)  -  g({X}^{i}_{s},{X}^{j}_{s}) ) \mu_s^{X^i}(dx) \Bigl |^p ds \leq   KN^{-p/2} \label{A6}
\end{align}
for all $t \in [0,T]$.

Finally, gathering all estimates from \eqref{eq:A1} to \eqref{A6} and substituting them in \eqref{A1+.+A6}, applying Gr\"onwall's lemma and the exchangeability of particles concludes the desired result. 
\end{proof}


\section{The tamed Euler scheme for the interacting particle system and its convergence rate}
\label{sec:Taming-in Euler scheme}
In order to introduce an explicit tamed Euler scheme for the interacting particle system (IPS) \eqref{IPS}, the  interval $[0,T]$ is divided into  $n$ subintervals, each of length $h=1/n$, i.e., $t_k = k /n=kh$ for all $k \in \{0,\ldots, nT \}$. Recall 
the constant $q>0$ from Assumption \ref{as:eu:f:g} (also from Assumption \ref{as:b:poly} given below) and define the following taming method for the coefficients $b$, $\sigma$, $f$ and $g$ as follows;   
\begin{align}
\label{taming-FiniteT}
   & (b^n, \sigma^n)(t,x,\mu)  := \frac{(b, \sigma)(t,x,\mu)}{1+n^{- 1/2}|x|^{2q}} \mbox{ and }  (f^n, g^n)(x,y)  := \frac{(f,g)(x,y)}{1+n^{-1/2}|x-y|^{2q}},  
\end{align}
respectively, for all $t \in [0,T]$, $x,y\in \mathbb R^d$ and $\mu \in \mathcal{P}_2(\mathbb R^d)$. 
In this article,  the  following tamed Euler scheme is proposed for IPS  \eqref{IPS} 
\begin{align}
\label{Eulercheme-FiniteT}
    \hat{X}_{t_{k+1}} ^{i,N,n}
     = \hat{X}_{t_{k}} ^{i,N,n} 
    & + \Bigl (b^n (t_{k},\hat{X}_{t_{k}} ^{i,N,n}, \hat{\mu}_{t_k}^{X,N,n}) + \frac{1}{N} \sum_{j=1}^N f^n(\hat{X}_{t_{k}} ^{i,N,n},\hat{X}_{t_{k}} ^{j,N,n}) \Bigl )h 
    \nonumber
    \\
    &  + \Bigl ( \sigma^n (t_{k},\hat{X}_{t_{k}} ^{i,N,n}, \hat{\mu}_{t_k}^{X,N,n})+ \frac{1}{N} \sum_{j=1}^N g^n(\hat{X}_{t_{k}} ^{i,N,n} ,\hat{X}_{t_{k}} ^{j,N,n}) \Bigl) \Delta W_{t_k}^i
\end{align}
almost surely with initial value  $X_0 ^{i,N,n} = X_0 ^i$ where $\Delta W_{t_k}^i= W_{t_{k+1}}^i-W_{t_k}^i$ for all $i \in \{1,\ldots,N\}$   and $\hat{\mu}_{t_k}^{X,N,n} = \displaystyle \frac{1}{N} \sum_{j=1}^N \delta_{\hat{X}_{t_k}^{j,N,n}}$.
Also, let  $k_n(t)=\lfloor nt \rfloor/ n$   and define the time-continuous version of the tamed Euler scheme \eqref{Eulercheme-FiniteT}  as 
\begin{align}{\label{scheme}}
    \hat{X}_t^{i,N,n} 
    = X_0^i 
    & + \int_0^t \Bigl (b^n(k_n(s),\hat{X}_{k_n(s)}^{i,N,n},\hat{\mu}_{k_n(s)}^{X,N,n}) + \frac{1}{N} \sum_{j=1}^N f^n(\hat{X}_{k_n(s)} ^{i,N,n},\hat{X}_{k_n(s)} ^{j,N,n}) \Bigl )ds \nonumber\\
    & + \int_0^t  \Bigl ( \sigma^n (k_n(s),\hat{X}_{k_n(s)} ^{i,N,n}, \hat{\mu}_{k_n(s)}^{X,N,n})+ \frac{1}{N} \sum_{j=1}^N g^n(\hat{X}_{k_n(s)} ^{i,N,n} ,\hat{X}_{k_n(s)} ^{j,N,n}) \Bigl) dW_{s}^i
\end{align}
almost surely for all $t \in [0,T]$ and $i \in \{1,\ldots,N\}$.\
\subsection{Moment bounds}
In order to establish the moment bound of the  scheme \eqref{scheme},   make further assumption about the coefficient $b(t,x,\mu)$. 
\begin{assumption}
\label{as:b:poly}
There exists a constant $L>0$ such that
    \begin{align*}
         |b(t,x,\mu)-b(t,x',\mu')| & \leq L\{(1+|x|+|x'|)^{q}|x-x'|+W^{(2)} (\mu,\mu')\}, 
    \end{align*}
    for   all  $t\in [0,T]$, $x,x' \in \mathbb R^d$ and $ \mu,\mu' \in \mathcal{P}_2(\mathbb R^d)$.
\end{assumption}
The following remark is an immediate consequence of the above assumption and Assumption \ref{as:eu:b:sig}. 
These polynomial Lipschitz assumptions on $b$ and $f$ are needed to show the sharp rate $1/2$ of convergence of the scheme \eqref{scheme}. 
\begin{remark}\label{rem:b:sig:gwt:tem}
  From Assumptions \ref{as:eu:b:sig} and \ref{as:b:poly}, there is a constant $K>0$  such that 
\begin{align*}
|\sigma(t,x,\mu)-\sigma(t,x',\mu')|^2 & \leq K\{(1+|x|+|x'|)^{q}|x-x'|^2+W^{(2)} (\mu,\mu')^2\}, 
\\
   |b(t,x,\mu)|  & \leq K\{ (1+|x|)^{q+1} + W^{(2)}(\mu, \delta_0) \},
   \\
|\sigma(t,x,\mu)|^2 & \leq K\{ (1+|x|)^{q+2} + W^{(2)}(\mu, \delta_0)^2 \},  
\end{align*}
    for all $t \in [0,T]$, $x,  x'\in \mathbb R^d$ and $\mu, \mu' \in \mathcal{P}_2(\mathbb R^d)$.
\end{remark}
Through the above remark, one identifies the polynomial growth in $b$ and $\sigma$ while the Remark \ref{rem:fg:grw:eu} gives polynomial growth in the kernels $f$ and $g$. 
These growth restrictions are useful for identifying the growth of their tamed versions $b^n$, $\sigma^n$, $f^n$ and $g^n$ which are listed in the following remark. 
\begin{remark}{\label{R_f}}
From Equation \eqref{taming-FiniteT}, Remarks \ref{rem:fg:grw:eu} and \ref{rem:b:sig:gwt:tem}, there is a constant $K>0$,  independent of $n\in \mathbb N$, such that 
    \begin{align*}
    |b^n (t,x, \mu)| & \leq K \min \big\{n^{1/4} (1+|x|) +W^{(2)}(\mu, \delta_0), |b(t,x, \mu)| \}, 
    \\
    |\sigma^n (t,x, \mu)|^2 & \leq K  \min\{ n^{{1}/{4}} (1+|x|)^2+W^{(2)}(\mu, \delta_0)^2 ,  |\sigma(t,x, \mu)|^2 \}, 
    \\
     |f^n (x,y)| & \leq K \min\{ n^{1/4} (1+|x-y|), |f(x,y)| \},
     \\
     |g^n (x,y)|^2 & \leq K \min \{ n^{1/4} (1+|x-y|^2) ,|g(x,y)|^2 \},
    \end{align*}
    for all $t \in [0,T], x, y\in \mathbb R^d$ and $\mu \in \mathcal{P}_2(\mathbb R^d)$. 
\end{remark}
Observe that the coercivity conditions about $(b,\sigma)$ and $(f,g)$ as mentioned in  Assumption \ref{as:eu:b:sig} and Remark \ref{rem:fg:grw:eu}, respectively, play a crucial role in the proof of moment estimates of IPS \eqref{IPS} (see Theorem \ref{thm:eu-ips} above). 
The following remark provides similar coercivity conditions on their tamed versions $(b^n,\sigma^n)$ and $(f^n,g^n)$, resptively and they are used in the proof of  moment bound of the scheme \eqref{scheme}, see Lemma \ref{higher MB} below. 
\begin{remark}\label{rem:bn:sign:gwt:tem} 
By using Assumption \ref{as:eu:b:sig},  Remark \ref{rem:fg:grw:eu} and Equation \eqref{taming-FiniteT}, there is a constant $K>0$ such that   
    \begin{align*}
     \langle x, b^n (t,x, \mu)\rangle + (p_0 -1)|\sigma^n (t,x, \mu )|^2 & \leq K \{ 1+ |x|^2  + W^{(2)} (\mu,\delta_0)^2 \}, 
     \\
      2 \langle x-y,f^n(x,y) \rangle + (p_0 -1) |g^n(x,y)|^2 & \leq K \{1 + |x-y|^2 \}, 
\end{align*} 
for all $t \in [0,T]$, $x,y\in \mathbb R^d$,  $\mu \in \mathcal{P}_2(\mathbb R^d)$ and $n \in \mathbb N$. 
\end{remark} 
The following remark shows that the tamed version $f^n$ of $f$ satisfies conditions similar  to the one given in Assumption \ref{as:anti-sys} for the function  $f$. 
This is being used in proving the moment bound of the scheme \eqref{scheme} in Lemma \ref{higher MB}.   .  
\begin{remark} \label{rem:anti-sys:tem}
    By Assumption \ref{as:anti-sys}, the tamed function $f^n$ defined in Equation \eqref{taming-FiniteT} satisfies the anti-symmetric property, i.e., $f^n(x,y)=-f^n(y,x)$ for all $x$, $y \in \mathbb R^d$. 
    Further, there exists a constant $K>0$ such that
        \begin{align*}
        (|x|^{p_0 -2}-|y|^{p_0 -2})\langle x+y,f^n(x,y) \rangle & \leq K (|x|^{p_0}+|y|^{p_0}),
        \\
        \langle x-y,f^n(x,y) \rangle + 2(p_0-1) |g^n(x,y)|^2 &\leq K (1+|x-y|^2), 
        \end{align*}
        for all $x,y \in \mathbb R^d$. 
 
\end{remark}
On the basis of the observations made in  Remarks \ref{rem:bn:sign:gwt:tem} and \ref{rem:anti-sys:tem}, one can deduce that  $(b^n, \sigma^n)$ and $(f^n, g^n)$, as defined  in Equation \eqref{taming-FiniteT},  are indeed  correct choices of taming $(b, \sigma)$ and $(f,g)$, respectively. 

In the following remark, we discuss the case when the coefficients are linear.    
\begin{remark}
  In case the coefficients $b$, $\sigma$, $f$ and $g$ are globally Lipschitz continuous, i.e., if $q\equiv 0$, then their tamed versions given in Equation \eqref{taming-FiniteT} are no longer required. 
  Indeed, one can take $b^n= b$, $\sigma^n = \sigma$, $f^n=f$ and $g^n=g$ in the scheme \eqref{scheme} to define the Euler scheme of IPS \eqref{IPS}. 
  Furthermore, if $\sigma$ and $g$ are constants, then to tame the coefficients $b$ and $f$, one can use the following forms
   \begin{align*}
      b^n(t,x,\mu) :=\frac{b(t,x,\mu)}{1+n^{-1}|x|^{4q}} \mbox{ and } f^n(x,y)  := \frac{f(x,y)}{1+n^{-1}|x-y|^{4q}},  
  \end{align*}
 respectively, for all $n\in \mathbb N$, $t\in [0,T]$, $x,y \in \mathbb R^d$ and $\mu \in \mathcal{P}_2(\mathbb R^d)$. 
 In such a case, the rate of strong convergence in $L^p$-norm can be shown to be $1.0$ by adapting the approach developed in this paper. 
\end{remark}
It should be noted from Remark \ref{R_f} that the growth of these tamed functions is linear, but the constants appearing on the right side depend on $n$, which explode when $n$ goes to infinity.  
Due to these reasons, new techniques are developed to establish moment bound of the scheme \eqref{scheme}. 
We begin by estimating one-step error of the scheme \eqref{scheme}  which subsequently plays a critical role in our convergence analysis. 
\begin{lemma}{\label{L, diff, X_t}}
Let the assumptions of Theorem \ref{E-U} hold. 
In addition, Assumption \ref{as:b:poly} is also satisfied. 
 Then,
\begin{align}
     \mathbb E \big |\hat{X}_t^{i,N,n}- \hat{X}_{k_n(t)}^{i,N,n}  \bigl|^{p_0} \leq K n^{-3p_0/8} \mathbb E \bigl\{1+\bigl|\hat{X}_{k_n(t)}^{i,N,n}\bigr|^{p_0} +\frac{1}{N} \sum_{j=1}^N \bigl|\hat{X}_{k_n(t)}^{j,N,n}\bigr|^{p_0}\bigr\} \notag
\end{align}
for all $t \in [0,T], i \in \{1,\ldots,N \}$ where $K>0$ is a constant independent of $n,N \in \mathbb N.$
\end{lemma}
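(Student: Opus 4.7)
The plan is to exploit the explicit representation of the one-step increment, split it into drift and diffusion parts, and apply the tamed growth bounds from Remark \ref{R_f} together with Jensen/BDG. Using the time-continuous version \eqref{scheme} directly one writes
\begin{align*}
\hat{X}_t^{i,N,n} - \hat{X}_{k_n(t)}^{i,N,n}
& = \int_{k_n(t)}^t \Big( b^n(k_n(s),\hat X^{i,N,n}_{k_n(s)},\hat\mu^{X,N,n}_{k_n(s)}) + \frac{1}{N}\sum_{j=1}^N f^n(\hat X^{i,N,n}_{k_n(s)},\hat X^{j,N,n}_{k_n(s)}) \Big) ds
\\
& \quad + \int_{k_n(t)}^t \Big( \sigma^n(k_n(s),\hat X^{i,N,n}_{k_n(s)},\hat\mu^{X,N,n}_{k_n(s)}) + \frac{1}{N}\sum_{j=1}^N g^n(\hat X^{i,N,n}_{k_n(s)},\hat X^{j,N,n}_{k_n(s)}) \Big) dW^i_s .
\end{align*}
Since $s\in[k_n(t),t]$ forces $k_n(s)=k_n(t)$ and the interval length is at most $h=1/n$, the integrand is piecewise constant on that slice, and one may use $|a+b|^{p_0}\le 2^{p_0-1}(|a|^{p_0}+|b|^{p_0})$ to treat the two integrals separately.

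For the drift part, apply Hölder in time to get a factor $h^{p_0}$ in front of the $p_0$-th moment of $|b^n|+|N^{-1}\sum f^n|$. The bounds in Remark \ref{R_f} give the control $|b^n|\le K(n^{1/4}(1+|x|)+W^{(2)}(\mu,\delta_0))$ and $|f^n|\le Kn^{1/4}(1+|x-y|)$, so that raised to the power $p_0$ the drift contributes at most $K n^{-p_0}\cdot n^{p_0/4}$ times the moment expression $1+\mathbb E|\hat X^{i,N,n}_{k_n(t)}|^{p_0}+\frac1N\sum_j\mathbb E|\hat X^{j,N,n}_{k_n(t)}|^{p_0}$, which is $K n^{-3p_0/4}$ times that quantity. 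The empirical–measure term is dealt with by $W^{(2)}(\hat\mu^{X,N,n}_{k_n(t)},\delta_0)^{p_0}=\bigl(\tfrac1N\sum_j|\hat X^{j,N,n}_{k_n(t)}|^2\bigr)^{p_0/2}\le\tfrac1N\sum_j|\hat X^{j,N,n}_{k_n(t)}|^{p_0}$ by Jensen (since $p_0/2\ge 1$), and the Vlasov term by $|x-y|^{p_0}\le K(|x|^{p_0}+|y|^{p_0})$.

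For the diffusion part, invoke the Burkholder-Davis-Gundy inequality to bound the $p_0$-th moment by $K_{p_0}\mathbb E\bigl(\int_{k_n(t)}^t|S|^2 ds\bigr)^{p_0/2}\le K_{p_0}h^{p_0/2}\mathbb E|S|^{p_0}$, where $S$ denotes the (piecewise constant) diffusion integrand. Remark \ref{R_f} gives $|\sigma^n|^2\le K(n^{1/4}(1+|x|^2)+W^{(2)}(\mu,\delta_0)^2)$ and $|g^n|^2\le Kn^{1/4}(1+|x-y|^2)$, hence $|S|^{p_0}\le K n^{p_0/8}(1+|x|^{p_0}+\text{empirical moments})$. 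Combined with $h^{p_0/2}=n^{-p_0/2}$, this yields the diffusion contribution of order $n^{-p_0/2}\cdot n^{p_0/8}=n^{-3p_0/8}$ times the desired moment bracket. Since $n^{-3p_0/4}\le n^{-3p_0/8}$ for $n\ge 1$, the diffusion part dominates and delivers the stated rate.

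The only subtle point (and the sole ``obstacle'') is that one must not lose the balance between the taming factor $(1+n^{-1/2}|\cdot|^{2q})^{-1}$ and the exponent $p_0$: the bound $|b^n|\le Kn^{1/4}(1+|x|)$ is sharp in exactly the right sense to pair $h^{p_0}=n^{-p_0}$ with $n^{p_0/4}$ and produce $n^{-3p_0/4}$, while for the diffusion $|\sigma^n|^2\le Kn^{1/4}(\cdots)$ paired with $h^{p_0/2}=n^{-p_0/2}$ produces $n^{-3p_0/8}$. One must therefore always apply Remark \ref{R_f} \emph{before} raising to the $p_0$ power, and use the \emph{squared} bound for $\sigma^n,g^n$ so that only $n^{p_0/8}$ (not $n^{p_0/4}$) is produced. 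The proof concludes by summing the two estimates.
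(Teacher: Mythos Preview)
Your proposal is correct and follows essentially the same route as the paper: split the increment into drift and diffusion, use the interval length $h=1/n$ to extract $n^{-p_0}$ (drift) and $n^{-p_0/2}$ (diffusion via BDG or Gaussian moments), and then apply the linear-growth bounds of Remark~\ref{R_f} to collect the factors $n^{p_0/4}$ and $n^{p_0/8}$ respectively, leaving the dominant rate $n^{-3p_0/8}$. The paper's version is terser---it writes the two-line inequality directly and cites Remark~\ref{R_f}---but the content is identical.
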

This result is established using only the linear growth condition on the tamed coefficients given in Remark \ref{R_f}; it does not require Theorem \ref{thm:eu-ips} nor Assumption \ref{as:anti-sys}. 
    \begin{proof} 
By Equation \eqref{scheme}, one can write 
        \begin{align}
    \mathbb E \bigl|\hat{X}_t^{i,N,n} - \hat{X}_{k_n(t)}^{i,N,n}\bigr|^{p_0}  
     \leq  & K n^{-p_0}  \mathbb E \Big\{\big |b^n(k_n(t),\hat{X}_{k_n(t)}^{i,N,n},\hat{\mu}_{k_n(t)}^{X,N,n})\bigr|^{p_0} + \frac{1}{N} \sum_{j=1}^N \bigl|f^n(\hat{X}_{k_n(t)} ^{i,N,n},\hat{X}_{k_n(t)} ^{j,N,n})\big|^{p_0}  \Big\}\notag
    \\
    & + K n^{-p_0/2}  \mathbb E \Big\{\bigl|\sigma^n (k_n(t),\hat{X}_{k_n(t)} ^{i,N,n}, \hat{\mu}_{k_n(t)}^{X,N,n})\big|^{p_0}+ \frac{1}{N} \sum_{j=1}^N \bigl|g^n(\hat{X}_{k_n(t)} ^{i,N,n} ,\hat{X}_{k_n(t)} ^{j,N,n}) \bigl|^{p_0} \Big\}  \notag
\end{align}
for all $t \in [0,T]$ and $i \in \{1,\ldots,N\}$ and 
 Remark \ref{R_f} concludes the proof.
    \end{proof}
\begin{lemma}{\label{higher MB}}
Let the conditions of Theorem \ref{thm:eu-ips} be satisfied and let Assumption \ref{as:b:poly} hold. 
Then, 
  \begin{align*}
      \sup_{t \in [0,T]} ~ \sup_{i \in \{1,\ldots,N \}} \mathbb E |\hat{X}_t ^{i,N,n}|^{p_0}  \leq K
  \end{align*}
  where the constant $K>0$ is independent of $N,n \in \mathbb N$. 
\end{lemma}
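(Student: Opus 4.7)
The natural plan is to mimic the moment-bound strategy used for the IPS in the proof of Theorem \ref{thm:eu-ips}, but with two extra pieces of work: (i) accommodate the fact that in the scheme the coefficients are evaluated at the grid point $k_n(s)$ while the state $\hat X_s^{i,N,n}$ lives on the continuous timeline; and (ii) use the $n$-independent coercivity in Remarks \ref{rem:bn:sign:gwt:tem}--\ref{rem:anti-sys:tem} in place of the original Assumptions \ref{as:eu:b:sig} and \ref{as:anti-sys}. I would then conclude by Gr\"onwall and exchangeability.

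I would begin by applying It\^o's formula to $|\hat X_t^{i,N,n}|^{p_0}$ using the SDE \eqref{scheme}, and then average over $i\in\{1,\dots,N\}$. Writing $\bar X_s^i := \hat X_{k_n(s)}^{i,N,n}$, the dominant contribution is of the form
\begin{align*}
\frac{p_0}{N}\sum_{i=1}^N \mathbb E\int_0^t & |\hat X_s^{i,N,n}|^{p_0-2} \Big\{\big\langle \hat X_s^{i,N,n}, b^n(k_n(s),\bar X_s^i,\hat\mu_{k_n(s)}^{X,N,n}) + \tfrac1N\sum_j f^n(\bar X_s^i,\bar X_s^j)\big\rangle \\
& + \tfrac{p_0-1}{2}\big|\sigma^n(k_n(s),\bar X_s^i,\hat\mu_{k_n(s)}^{X,N,n}) + \tfrac1N\sum_j g^n(\bar X_s^i,\bar X_s^j)\big|^2\Big\}\,ds,
\end{align*}
and I would decompose $\hat X_s^{i,N,n} = \bar X_s^i + (\hat X_s^{i,N,n}-\bar X_s^i)$ (and similarly $|\hat X_s^{i,N,n}|^{p_0-2} = |\bar X_s^i|^{p_0-2} +$ a remainder) to split into a \emph{frozen} main term plus \emph{correction} terms.

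For the frozen main term, the $b^n,\sigma^n$ contribution is controlled by Remark \ref{rem:bn:sign:gwt:tem} together with $W^{(2)}(\hat\mu_{k_n(s)}^{X,N,n},\delta_0)^2 = \frac1N\sum_j|\bar X_s^j|^2$ and Young's inequality, yielding a bound of the form $K(1+\tfrac1N\sum_j|\bar X_s^j|^{p_0})$. The cross/Vlasov contribution from $f^n,g^n$ is handled by the symmetrization-in-$(i,j)$ argument used in Theorem \ref{thm:eu-ips}: using anti-symmetry of $f^n$ (Remark \ref{rem:anti-sys:tem}), rewrite $\tfrac{1}{N^2}\sum_{i,j}|\bar X_s^i|^{p_0-2}\langle \bar X_s^i, f^n(\bar X_s^i,\bar X_s^j)\rangle$ via the identity $\langle|x|^{p_0-2}x-|y|^{p_0-2}y,f^n(x,y)\rangle = \tfrac12(|x|^{p_0-2}+|y|^{p_0-2})\langle x-y,f^n(x,y)\rangle + \tfrac12(|x|^{p_0-2}-|y|^{p_0-2})\langle x+y,f^n(x,y)\rangle$, so that Remark \ref{rem:anti-sys:tem} (combined with Corollary \ref{lem:symm-growth}) gives the same $K(1+\tfrac1N\sum_j|\bar X_s^j|^{p_0})$ bound.

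The central difficulty is the \textbf{correction terms} arising from $\hat X_s^{i,N,n}-\bar X_s^i$ and from $|\hat X_s^{i,N,n}|^{p_0-2}-|\bar X_s^i|^{p_0-2}$: each must be controlled uniformly in $n$ despite the fact that the tamed coefficients themselves carry a mild $n$-growth (Remark \ref{R_f}). I would treat them by H\"older/Young inequalities combined with Lemma \ref{L, diff, X_t}, observing that the $n^{-3p_0/8}$ factor there absorbs any $n^{1/4}$-factor coming from $|b^n|,|\sigma^n|^2,|f^n|,|g^n|^2$ when balanced against $|\hat X_s^{i,N,n}|^{p_0-2}$-type weights via mean-value inequalities such as $||x|^{p_0-2}-|y|^{p_0-2}|\leq (p_0-2)(|x|^{p_0-3}+|y|^{p_0-3})|x-y|$. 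Choosing the exponents of Young carefully, each correction term is bounded by $K + K\cdot\tfrac{1}{N}\sum_j \mathbb E|\bar X_s^j|^{p_0}$, with constants independent of $n$ and $N$.

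Putting everything together yields
\[
\frac1N\sum_{i=1}^N \mathbb E|\hat X_t^{i,N,n}|^{p_0} \leq K + K\int_0^t \frac1N\sum_{j=1}^N \mathbb E|\hat X_{k_n(s)}^{j,N,n}|^{p_0}\,ds,
\]
and (after observing $\frac1N\sum_j \mathbb E|\bar X_s^j|^{p_0}\leq \frac1N\sum_j \sup_{r\leq s}\mathbb E|\hat X_r^{j,N,n}|^{p_0}$) Gr\"onwall's lemma delivers a uniform (in $n,N$) bound on the averaged $p_0$-moment. Finally, exchangeability of the particles (as in the proof of Theorem \ref{thm:eu-ips}) upgrades this to the per-particle bound $\sup_i\sup_{t\in[0,T]}\mathbb E|\hat X_t^{i,N,n}|^{p_0}\leq K$. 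The main technical obstacle throughout is the careful exponent bookkeeping that guarantees the $n^{-3p_0/8}$ gain from Lemma \ref{L, diff, X_t} always wins against the $n^{1/4}$-type growth inherited from the taming.
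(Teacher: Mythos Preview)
Your proposal is correct and follows essentially the same decomposition and proof strategy as the paper (It\^o's formula, particle averaging, splitting into a frozen main term handled by Remarks \ref{rem:bn:sign:gwt:tem}--\ref{rem:anti-sys:tem} and Corollary \ref{lem:symm-growth}, plus correction terms controlled via Remark \ref{R_f} and Lemma \ref{L, diff, X_t}, then Gr\"onwall and exchangeability). The only minor difference concerns the state-correction term with frozen weight (the paper's $U_{31}$): you bound it through Lemma \ref{L, diff, X_t} and H\"older/Young, whereas the paper substitutes the scheme's expansion for $\hat X_s^{i,N,n}-\hat X_{k_n(s)}^{i,N,n}$ and uses the $\mathscr{F}_{k_n(s)}$-measurability of the frozen weight and tamed coefficients to kill the stochastic integral in expectation, leaving a clean $n^{-1}|b^n+\tfrac1N\sum_j f^n|^2$; both routes give the required $n$-uniform bound.
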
  
\begin{proof}
   From Equation \eqref{scheme}  and It\^{o}\textquotesingle{s} formula, 
   \begin{align*}
        \mathbb E |\hat{X}_t^{i,N,n}&|^{p_0}   = \mathbb E |X_0^{i}|^{p_0}  + p_0 \mathbb E \int_{0}^t  |\hat{X}_s^{i,N,n}|^{p_0-2}  \Big\langle \hat{X}_s^{i,N,n}, b^n (k_n(s),\hat{X}_{k_n(s)}^{i,N,n},\hat{\mu}_{k_n(s)}^{X,N,n}) + \frac{1}{N} \sum_{j=1}^N f^n (\hat{X}_{k_n(s)}^{i,N,n},\hat{X}_{k_n(s)}^{j,N,n}) \Big\rangle  ds 
        \\
        & + p_0 \mathbb E \int_{0}^t  |\hat{X}_s^{i,N,n}|^{p_0-2}  \Big\langle \hat{X}_s^{i,N,n}, \big(\sigma^n (k_n(s),\hat{X}_{k_n(s)}^{i,N,n},\hat{\mu}_{k_n(s)}^{X,N,n}) + \frac{1}{N} \sum_{j=1}^N g^n (\hat{X}_{k_n(s)}^{i,N,n},\hat{X}_{k_n(s)}^{j,N,n}) \big) dW^i_s\Big\rangle   
        \\
        & +  \frac{p_0(p_0-2)}{2} \int_{0}^t  \mathbb E |\hat{X}_s^{i,N,n}|^{p_0-4} \Big|\big(\sigma^n (k_n(s),\hat{X}_{k_n(s)}^{i,N,n},\hat{\mu}_{k_n(s)}^{X,N,n}) + \frac{1}{N} \sum_{j=1}^N g^n (\hat{X}_{k_n(s)}^{i,N,n},\hat{X}_{k_n(s)}^{j,N,n}) \big)^*\hat{X}_s^{i,N,n}\Big|^2 ds 
        \\
        & + \frac{p_0}{2} \mathbb E \int_{0}^t  |\hat{X}_s^{i,N,n}|^{p_0-2} \Big|\sigma^n (k_n(s),\hat{X}_{k_n(s)}^{i,N,n},\hat{\mu}_{k_n(s)}^{X,N,n}) + \frac{1}{N} \sum_{j=1}^N g^n (\hat{X}_{k_n(s)}^{i,N,n},\hat{X}_{k_n(s)}^{j,N,n}) \Big|^2 ds
  \end{align*}
  and then averaging over all particles yields, 
  \begin{align}
      \frac{1}{N}  \sum_{i=1}^N & \mathbb E |\hat{X}_t^{i,N,n}|^{p_0} \leq \frac{1}{N}  \sum_{i=1}^N \mathbb E |X_0^{i}|^{p_0} \notag
      \\
            & +  \frac{p_0}{N}  \sum_{i=1}^N \mathbb E \int_{0}^t  |\hat{X}_s^{i,N,n}|^{p_0-2}  \Big\{ \big\langle \hat{X}_{k_n(s)}^{i,N,n}, b^n (k_n(s),\hat{X}_{k_n(s)}^{i,N,n},\hat{\mu}_{k_n(s)}^{X,N,n}) \big\rangle + (p_0-1)\big|\sigma^n (k_n(s),\hat{X}_{k_n(s)}^{i,N,n},\hat{\mu}_{k_n(s)}^{X,N,n}) \big|^2 \Big\} ds \notag
           \\
        & +  \frac{p_0}{N}  \sum_{i=1}^N \mathbb E \int_{0}^t  |\hat{X}_s^{i,N,n}|^{p_0-2} \Big\{ \Big\langle \hat{X}_{k_n(s)}^{i,N,n}, \frac{1}{N} \sum_{j=1}^N f^n (\hat{X}_{k_n(s)}^{i,N,n},\hat{X}_{k_n(s)}^{j,N,n}) \Big\rangle + (p_0-1)  \frac{1}{N} \sum_{j=1}^N \big|g^n (\hat{X}_{k_n(s)}^{i,N,n},\hat{X}_{k_n(s)}^{j,N,n}) \big|^2 \Big\}ds \notag
        \\
           & +  \frac{p_0}{N}  \sum_{i=1}^N \mathbb E \int_{0}^t  |\hat{X}_s^{i,N,n}|^{p_0-2}  \Big\langle \hat{X}_s^{i,N,n}-\hat{X}_{k_n(s)}^{i,N,n}, b^n (k_n(s),\hat{X}_{k_n(s)}^{i,N,n},\hat{\mu}_{k_n(s)}^{X,N,n}) + \frac{1}{N} \sum_{j=1}^N f^n (\hat{X}_{k_n(s)}^{i,N,n},\hat{X}_{k_n(s)}^{j,N,n}) \Big\rangle  ds \notag
           \\
           =:& \frac{1}{N} \sum_{i=1}^N \mathbb E |X_0^{i}|^{p_0} + U_1+ U_2 + U_3 \label{U_1 + U_2 + U_3}
       \end{align}
   for all $t\in [0,T]$.  
For the estimation of $U_1$, one uses Remark \ref{rem:bn:sign:gwt:tem}, the inequality $\displaystyle W^{(2)}(\hat{\mu}_{k_n(s)}^{X,N,n},\delta_0)^2  \leq \frac{1}{N} \sum_{j=1}^N  \big|\hat{X}_{k_n(s)}^{j,N,n}|^{2}$ and  Young\textquotesingle{s} inequality to get the following estimate, 
\begin{align}
    U_1  : = & \, \frac{p_0}{N}  \sum_{i=1}^N \mathbb E \int_{0}^t  |\hat{X}_s^{i,N,n}|^{p_0-2}  \Big\{ \big\langle \hat{X}_{k_n(s)}^{i,N,n}, b^n (k_n(s),\hat{X}_{k_n(s)}^{i,N,n},\hat{\mu}_{k_n(s)}^{X,N,n}) \big\rangle + (p_0-1)\big|\sigma^n (k_n(s),\hat{X}_{k_n(s)}^{i,N,n},\hat{\mu}_{k_n(s)}^{X,N,n}) \big|^2 \Big\} ds \notag
    \\
    \leq  & \, K   \int_{0}^t  \frac{1}{N} \sum_{i=1}^N \mathbb E|\hat{X}_s^{i,N,n}|^{p_0-2}  \big\{1 + |\hat{X}_{k_n(s)} ^{i,N,n}|^2 + W^{(2)}(\hat{\mu}_{k_n(s)}^{X,N,n},\delta_0)^2   \big\} ds \notag
    \\
     \leq &  \, K+ K\int_{0}^t  \frac{1}{N} \sum_{i=1}^N \big\{ \mathbb E|\hat{X}_s^{i,N,n}|^{p_0} + \mathbb E|\hat{X}_{k_n(s)}^{i,N,n}|^{p_0} \big\} ds \leq K +K\int_{0}^t    \frac{1}{N} \sum_{i=1}^N \sup_{r \in [0, s]} \mathbb E|\hat{X}_r^{i,N,n}|^{p_0} ds  \label{U1}
\end{align}
   for all $t\in [0, T]$. 
   Also, for estimating $U_2$,
   \begin{align}
       U_2 : = &  p_0 \int_{0}^t  \frac{1}{N^2} \sum_{i,j=1}^N \mathbb E  |\hat{X}_s^{i,N,n}|^{p_0-2} \Big\{ \Big\langle \hat{X}_{k_n(s)}^{i,N,n},  f^n (\hat{X}_{k_n(s)}^{i,N,n},\hat{X}_{k_n(s)}^{j,N,n}) \Big\rangle + (p_0-1)   \big|g^n (\hat{X}_{k_n(s)}^{i,N,n},\hat{X}_{k_n(s)}^{j,N,n}) \big|^2 \Big\}ds \notag
       \\
       = & p_0 \int_{0}^t  \frac{1}{N^2} \sum_{i,j=1}^N \mathbb E  |\hat{X}_{k_n(s)}^{i,N,n}|^{p_0-2} \Big\{ \Big\langle \hat{X}_{k_n(s)}^{i,N,n},  f^n (\hat{X}_{k_n(s)}^{i,N,n},\hat{X}_{k_n(s)}^{j,N,n}) \Big\rangle + (p_0-1)   \big|g^n (\hat{X}_{k_n(s)}^{i,N,n},\hat{X}_{k_n(s)}^{j,N,n}) \big|^2 \Big\}ds \notag
       \\
        +  p_0 \int_{0}^t  \frac{1}{N^2} & \sum_{i,j=1}^N \mathbb E  \big\{|\hat{X}_s^{i,N,n}|^{p_0-2}- |\hat{X}_{k_n(s)}^{i,N,n}|^{p_0-2} \big\}\Big\{ \Big\langle \hat{X}_{k_n(s)}^{i,N,n},  f^n (\hat{X}_{k_n(s)}^{i,N,n},\hat{X}_{k_n(s)}^{j,N,n}) \Big\rangle + (p_0-1)   \big|g^n (\hat{X}_{k_n(s)}^{i,N,n},\hat{X}_{k_n(s)}^{j,N,n}) \big|^2 \Big\}ds \notag
   \end{align}
   for all $t \in [0, T]$. 
   Using Corollary \ref{lem:symm-growth}, Lemma \ref{L, diff, X_t}, Cauchy-Schwarz inequality and Young\textquotesingle{s} inequality,  one can obtain
\begin{align}
    U_2  \leq & K \int_{0}^t     \frac{1}{N} \sum_{i,j=1}^N \mathbb E|\hat{X}_{k_n(s)}^{i,N,n}|^{p_0} ds \nonumber
    \\
    &  + K \int_{0}^t   \mathbb E  \frac{1}{N^2} \sum_{i,j=1}^N n^{1/4}(|\hat{X}_s^{i,N,n}|^{p_0-3} + |\hat{X}_{k_n(s)}^{i,N,n}|^{p_0-3}) |\hat{X}_s^{i,N,n}-\hat{X}_{k_n(s)}^{i,N,n}| (1+|\hat{X}_{k_n(s)}^{i,N,n}|^2+|\hat{X}_s^{i,N,n}-\hat{X}_s^{j,N,n}|^2)ds \nonumber
    \\
     \leq & K +K\int_{0}^t    \frac{1}{N} \sum_{i=1}^N \sup_{r \in [0, s]} \mathbb E|\hat{X}_r^{i,N,n}|^{p_0} ds {\label{U2}}
\end{align}
 for all $t\in [0, T]$. 
 Further, $U_3$ can be written as
\begin{align}
    U_3: =&  \frac{p_0}{N} \sum_{i=1}^N  \int_{0}^t  \mathbb E \big|\hat{X}_s^{i,N,n}|^{p_0-2}  \Big\langle \hat{X}_s^{i,N,n}-\hat{X}_{k_n(s)}^{i,N,n}, b^n (k_n(s),\hat{X}_{k_n(s)}^{i,N,n},\hat{\mu}_{k_n(s)}^{X,N,n}) + \frac{1}{N} \sum_{j=1}^N f^n (\hat{X}_{k_n(s)}^{i,N,n},\hat{X}_{k_n(s)}^{j,N,n}) \Big\rangle  ds \nonumber
    \\ 
     = &  \frac{p_0}{N} \sum_{i=1}^N  \int_{0}^t  \mathbb E \big|\hat{X}_{k_n(s)}^{i,N,n}|^{p_0-2}  \Big\langle \hat{X}_s^{i,N,n}-\hat{X}_{k_n(s)}^{i,N,n}, b^n (k_n(s),\hat{X}_{k_n(s)}^{i,N,n},\hat{\mu}_{k_n(s)}^{X,N,n}) + \frac{1}{N} \sum_{j=1}^N f^n (\hat{X}_{k_n(s)}^{i,N,n},\hat{X}_{k_n(s)}^{j,N,n}) \Big\rangle  ds \nonumber
    \\
    &  + \frac{p_0}{N} \sum_{i=1}^N \int_{0}^t   \mathbb E (\big|\hat{X}_s^{i,N,n}|^{p_0-2}  -\big|\hat{X}_{k_n(s)}^{i,N,n}|^{p_0-2}) \Big\langle \hat{X}_s^{i,N,n}-\hat{X}_{k_n(s)}^{i,N,n}, b^n (k_n(s),\hat{X}_{k_n(s)}^{i,N,n},\hat{\mu}_{k_n(s)}^{X,N,n}) \nonumber\\
    & \qquad  \qquad + \frac{1}{N} \sum_{j=1}^N f^n (\hat{X}_{k_n(s)}^{i,N,n},\hat{X}_{k_n(s)}^{j,N,n}) \Big\rangle  ds := U_{31} + U_{32} {\label{U3}}
\end{align}
for all $t\in [0, T]$. 
By using Cauchy-Schwarz inequality, Young\textquotesingle{s} inequality and Remark \ref{R_f}, $U_{31}$ can be estimated for all $t \in [0, T]$ as 
\begin{align}
\nonumber 
 U_{31}:
 &= p_0 \int_{0}^t  \frac{1}{N} \sum_{i=1}^N \mathbb E \big|\hat{X}_{k_n(s)}^{i,N,n}|^{p_0-2}
 \\ 
 &
 \qquad \qquad \qquad \times \Big\langle \hat{X}_s^{i,N,n}-\hat{X}_{k_n(s)}^{i,N,n}, b^n (k_n(s),\hat{X}_{k_n(s)}^{i,N,n},\hat{\mu}_{k_n(s)}^{X,N,n}) + \frac{1}{N} \sum_{j=1}^N f^n (\hat{X}_{k_n(s)}^{i,N,n},\hat{X}_{k_n(s)}^{j,N,n}) \Big\rangle  ds \nonumber
    \\
    & =  p_0 \int_{0}^t  \frac{1}{N} \sum_{i=1}^N \mathbb E \big|\hat{X}_{k_n(s)}^{i,N,n}|^{p_0-2}  \Big\langle \int _{k_n(s)}^s (b^n (k_n(r),\hat{X}_{k_n(r)} ^{i,N,n}, \hat{\mu}_{k_n(r)}^{X,N,n})dr +  \frac{1}{N} \sum_{j=1}^N f^n (\hat{X}_{k_n(r)} ^{i,N,n},\hat{X}_{k_n(r)} ^{j,N,n}))dr, \nonumber\\
    &  
    \qquad \qquad \qquad 
    b^n (k_n(s),\hat{X}_{k_n(s)}^{i,N,n},\hat{\mu}_{k_n(s)}^{X,N,n}) + \frac{1}{N} \sum_{j=1}^N f^n (\hat{X}_{k_n(s)}^{i,N,n},\hat{X}_{k_n(s)}^{j,N,n}) \Big\rangle  ds \nonumber
    \\
    & \leq K n^{-1} \int_{0}^t  \frac{1}{N} \sum_{i=1}^N \mathbb E \big|\hat{X}_{k_n(s)}^{i,N,n}|^{p_0-2} \bigl \{| b^n (k_n(s),\hat{X}_{k_n(s)}^{i,N,n},\hat{\mu}_{k_n(s)}^{X,N,n})|^2 +  \frac{1}{N} \sum_{j=1}^N |f^n (\hat{X}_{k_n(s)}^{i,N,n},\hat{X}_{k_n(s)}^{j,N,n})|^2 \bigl \} \nonumber
    \\
    & \leq K +K\int_{0}^t    \frac{1}{N} \sum_{i=1}^N \sup_{r \in [0, s]} \mathbb E|\hat{X}_r^{i,N,n}|^{p_0} ds.  \label{U31}
\end{align}
To estimate $U_{32}$, due to Remark \ref{R_f} and Lemma \ref{L, diff, X_t}, one can observe that for all $t\in [0, T]$
 \begin{align}
 \nonumber 
    U_{32} 
    & :=  p_0 \int_{0}^t  \frac{1}{N} \sum_{i=1}^N \mathbb E (\big|\hat{X}_s^{i,N,n}|^{p_0-2}  -\big|\hat{X}_{k_n(s)}^{i,N,n}|^{p_0-2}) \Big\langle \hat{X}_s^{i,N,n}-\hat{X}_{k_n(s)}^{i,N,n}, b^n (k_n(s),\hat{X}_{k_n(s)}^{i,N,n},\hat{\mu}_{k_n(s)}^{X,N,n}) 
    \\ \nonumber 
    & \qquad \qquad \qquad + \frac{1}{N} \sum_{j=1}^N f^n (\hat{X}_{k_n(s)}^{i,N,n},\hat{X}_{k_n(s)}^{j,N,n}) \Big\rangle  ds
    \\ \nonumber 
    & \leq K \int_{0}^t  \frac{n^{1/4}}{N} \sum_{i=1}^N \mathbb E (\big|\hat{X}_s^{i,N,n}|^{p_0-3} 
    \\ \nonumber 
    & \qquad \qquad \qquad 
    +\big|\hat{X}_{k_n(s)}^{i,N,n}|^{p_0-3})|\hat{X}_s^{i,N,n}-\hat{X}_{k_n(s)}^{i,N,n}|^2 \Big\{(1+|\hat{X}_{k_n(s)}^{i,N,n}|) + \frac{1}{N} \sum_{i=1}^N (1+|\hat{X}_{k_n(s)}^{i,N,n}-\hat{X}_{k_n(s)}^{j,N,n}|) \Big\}ds \nonumber
    \\
    & \leq K +K\int_{0}^t    \frac{1}{N} \sum_{i=1}^N \sup_{r \in [0, s]} \mathbb E|\hat{X}_r^{i,N,n}|^{p_0} ds.
    \label{U32}
 \end{align}
  Substituting \eqref{U31} and \eqref{U32} in \eqref{U3}, one obtains the following: 
 \begin{align}
     U_3 \leq K +K\int_{0}^t    \frac{1}{N} \sum_{i=1}^N \sup_{r \in [0, s]} \mathbb E|\hat{X}_r^{i,N,n}|^{p_0} ds  \label{U3b}
 \end{align}
 for all $t\in [0, T]$.
 Finally, substituting estimates from \eqref{U1}, \eqref{U2} and \eqref{U3b} into \eqref{U_1 + U_2 + U_3} and then applying the Gr\"onwall\textquotesingle{s} lemma and exchangeability of the particles which hold due to exchangeability of the initial values (see \cite[Section 2.1.2]{carmona2018b}),  we get the desired result.
\end{proof}

\subsection{Convergence rate of the scheme}
In this section, the strong convergence rate of the tamed Euler scheme \eqref{scheme} in $L^p$-norm is established (see Theorem \ref{thm:rate}) for $p$ satisfying $ p \in [2,   p_1)$ and $p \in [2,  \frac{p_0}{3q +1}]$ where $q>0$ and $p_0 > 2(q+1)$ are from Assumptions \ref{assum_initial}, \ref{as:eu:b:sig} and \ref{as:b:poly}, respectively, and $p_1$ comes from Assumption \ref{as:mon:rate}  given below.

 \begin{lemma}{\label{L,scheme,diff}}
Let the assumptions of Lemma \ref{higher MB} be satisfied. 
Then, 
   \begin{align*}
      \underset{i\in \{1,\ldots, N\}}{\sup} ~ \underset{ t \in [0, T]}{\sup}   \mathbb E|\hat{X}_{t} ^ {i,N,n}-\hat{X}_{k_n(t)} ^ {i,N,n}|^{p} \leq K n^{-p/2}
   \end{align*}
    for all $p \in [2, \frac{p_0}{q+1}]$  where $K$ is a constant that does not depend on $n,N \in \mathbb N$.
\end{lemma}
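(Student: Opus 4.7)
The plan is to start from the representation of the increment via the scheme \eqref{scheme},
\begin{align*}
\hat{X}_t^{i,N,n} - \hat{X}_{k_n(t)}^{i,N,n}
 &= \int_{k_n(t)}^t \Bigl( b^n(k_n(s),\hat{X}_{k_n(s)}^{i,N,n},\hat{\mu}_{k_n(s)}^{X,N,n}) + \tfrac{1}{N}\sum_{j=1}^N f^n(\hat{X}_{k_n(s)}^{i,N,n},\hat{X}_{k_n(s)}^{j,N,n}) \Bigr) ds \\
 &\quad + \int_{k_n(t)}^t \Bigl( \sigma^n(k_n(s),\hat{X}_{k_n(s)}^{i,N,n},\hat{\mu}_{k_n(s)}^{X,N,n}) + \tfrac{1}{N}\sum_{j=1}^N g^n(\hat{X}_{k_n(s)}^{i,N,n},\hat{X}_{k_n(s)}^{j,N,n}) \Bigr) dW_s^i,
\end{align*}
using $|a+b|^p \leq 2^{p-1}(|a|^p+|b|^p)$ to split the drift and diffusion contributions. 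Since $t-k_n(t)\leq 1/n$, H\"older's inequality on the drift integral yields a factor $n^{-(p-1)}\cdot n^{-1}=n^{-p}$ in front of $\sup_{s}\mathbb E|\cdots|^p$, while the Burkholder--Davis--Gundy inequality together with H\"older applied to the stochastic integral yields a factor $n^{-(p/2-1)}\cdot n^{-1}=n^{-p/2}$. The diffusion term is therefore the dominant contribution of order $n^{-p/2}$.

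To close the argument I would bound the tamed coefficients by their untamed counterparts: since the denominators in \eqref{taming-FiniteT} are at least $1$, we have $|b^n|\leq |b|$, $|\sigma^n|\leq |\sigma|$, $|f^n|\leq |f|$, $|g^n|\leq |g|$ pointwise, and this is exactly what makes the resulting constants independent of $n$. Combining with the polynomial growth from Remarks \ref{rem:fg:grw:eu} and \ref{rem:b:sig:gwt:tem} gives
$|b|^p+\tfrac{1}{N}\sum |f|^p \leq K\bigl\{ (1+|\hat X^{i,N,n}_{k_n(s)}|)^{p(q+1)} + W^{(2)}(\hat\mu^{X,N,n}_{k_n(s)},\delta_0)^p + \tfrac{1}{N}\sum(1+|\hat X^{i,N,n}_{k_n(s)}-\hat X^{j,N,n}_{k_n(s)}|)^{p(q+1)} \bigr\}$, and analogously (with exponent $p(q+2)/2$) for $|\sigma|^p$ and $|g|^p$. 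Taking expectations, Jensen applied to $W^{(2)}(\hat\mu^{X,N,n}_{k_n(s)},\delta_0)^p \leq \tfrac{1}{N}\sum|\hat X^{j,N,n}_{k_n(s)}|^p$ together with Lemma \ref{higher MB} and exchangeability bounds every term uniformly in $n$ and $N$, provided the largest polynomial degree appearing, $p(q+1)$, does not exceed $p_0$.

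The main obstacle, and simultaneously the origin of the range $p\in[2,\,p_0/(q+1)]$, is precisely this moment constraint: the drift and the Vlasov kernel $f$ grow like $(1+|\cdot|)^{q+1}$, so $\mathbb E|b^n|^p$ and $\mathbb E|f^n|^p$ require $p(q+1)\leq p_0$, which is the binding condition (for $\sigma, g$ one only needs $p(q+2)/2\leq p_0$, which is implied since $q\geq 0$). Assembling the drift contribution of order $n^{-p}$ with the diffusion contribution of order $n^{-p/2}$ and taking the supremum over $i$ via exchangeability yields the claimed $K n^{-p/2}$ bound.
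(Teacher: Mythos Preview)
Your proposal is correct and follows essentially the same route as the paper: the paper's proof is the one-liner ``follows by Remark \ref{R_f} and Lemma \ref{higher MB}'', and what you have written is precisely the unpacking of that reference---use $|b^n|\leq |b|$, $|\sigma^n|\leq|\sigma|$, $|f^n|\leq|f|$, $|g^n|\leq|g|$ (the second entries of the $\min$ in Remark \ref{R_f}), feed in the polynomial growth bounds, and close with the uniform $p_0$-moment bound of Lemma \ref{higher MB}, which forces $p(q+1)\leq p_0$. The only cosmetic difference is that on $[k_n(t),t]$ the integrands are constant (since $k_n(s)=k_n(t)$ there), so one can use Gaussian moments directly instead of BDG; either way works.
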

\begin{proof}
The proof follows by  Remark \ref{R_f} and  Lemma \ref{higher MB}. 
\end{proof}

To continue further we need to introduce the following assumptions. 
\begin{assumption} \label{as:mon:rate}
    There exist constants $L>0$ and $p_1>2$ such that 
    \begin{align*}
     \langle x-x' , b(t,x, \mu) - b(t,x', \mu') \rangle +(p_1-1)|\sigma(t,x, \mu)-\sigma (t,x', \mu')|^2  & \, \leq L \{|x-x'|^2+ W^{(2)}(\mu, \mu')^2\},
     \\
     \langle (x  - y)-  (x'  - y'), f(x,y) -  f(x',y')  \rangle  + 2 (p_1-1) \, |  g(x,y) - g(x',y')  |^2   & \, \leq L |(x-y)-(x'-y')|^2, 
     \\
     |b(t,x, \mu) - b(t',x, \mu)| + |\sigma(t,x, \mu) - \sigma(t',x, \mu)| & \, \leq L|t-t'|^{1/2},
\end{align*}
for all $t,t'\in [0,T]$, $x, x' \in \mathbb R^d$ and $\mu, \mu' \in \mathcal{P}_2(\mathbb R^d)$. 
\end{assumption}
To establish the rate of convergence (see Theorem \ref{thm:rate} below), we first establish the following lemmas.

\begin{lemma}{\label{l,f}}
 Let Assumptions \ref{assum_initial}, \ref{as:eu:b:sig}, \ref{as:eu:f:g}, \ref{as:anti-sys} be satisfied for some $p_0>2(q+1)$, $q>0$; Assumption \ref{as:mon:rate} holds for some $p_1 >2$.
 Then, 
  \begin{align*}
     \frac{1}{N^2} & \sum_{i, j=1}^N   \mathbb E\int_0^t |X_s^{i,N} - \hat{X}_s^{i,N,n}|^{p-2}  \{ \langle X_s^{i,N} - \hat{X}_s^{i,N,n},  f(X_s^{i,N}, X_s^{j,N}) - f^n (\hat{X}^{i,N,n}_{k_n(s)},\hat{X}^{j,N,n}_{k_n(s)})\rangle \nonumber \\
       & \quad \quad + (p-1)|g(X_s^{i,N}, X_s^{j,N})-g^n (\hat{X}^{i,N,n}_{k_n(s)},\hat{X}^{j,N,n}_{k_n(s)})|^2 \} ds
      \leq K \mathbb E\int_0 ^t |X_s^{i,N} - \hat{X}_s ^{i,N,n}|^p ds + Kn^{-\frac{p}{2}}
 \end{align*}
 for all   $t \in [0,T]$,  $p \in [2,  p_1)$ and $p \in [2, \frac{p_0}{3q +1}]$  where $K$ is a constant independent of $n,N \in \mathbb N$.  
   \end{lemma}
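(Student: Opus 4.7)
The plan is to insert two intermediate points to split each integrand into three pieces with distinct roles. Concretely,
\begin{align*}
f(X_s^{i,N}, X_s^{j,N}) - f^n(\hat X^{i,N,n}_{k_n(s)}, \hat X^{j,N,n}_{k_n(s)})
&= \bigl[f(X_s^{i,N}, X_s^{j,N}) - f(\hat X^{i,N,n}_s, \hat X^{j,N,n}_s)\bigr] \\
&\quad + \bigl[f(\hat X^{i,N,n}_s, \hat X^{j,N,n}_s) - f(\hat X^{i,N,n}_{k_n(s)}, \hat X^{j,N,n}_{k_n(s)})\bigr] \\
&\quad + \bigl[f(\hat X^{i,N,n}_{k_n(s)}, \hat X^{j,N,n}_{k_n(s)}) - f^n(\hat X^{i,N,n}_{k_n(s)}, \hat X^{j,N,n}_{k_n(s)})\bigr],
\end{align*}
and analogously for $g$ (whose squared difference is split via $|a+b+c|^2\leq 3(|a|^2+|b|^2+|c|^2)$). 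Writing $\Delta^i_s:=X^{i,N}_s-\hat X^{i,N,n}_s$, the first bracket carries the \emph{main monotonicity} contribution and will absorb into $K\,\mathbb E\int_0^t |\Delta^i_s|^p\,ds$; the second bracket is the time-increment error and the third is the taming defect, each producing the $Kn^{-p/2}$ error.

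For the main piece, use the anti-symmetry of $f$ (Assumption \ref{as:anti-sys}) to rewrite
\begin{align*}
\frac{1}{N^2}\sum_{i,j}|\Delta^i_s|^{p-2}\langle \Delta^i_s, Df^{ij}_s\rangle
&= \frac{1}{4N^2}\sum_{i,j}\bigl(|\Delta^i_s|^{p-2}+|\Delta^j_s|^{p-2}\bigr)\langle \Delta^i_s-\Delta^j_s, Df^{ij}_s\rangle \\
&\quad + \frac{1}{4N^2}\sum_{i,j}\bigl(|\Delta^i_s|^{p-2}-|\Delta^j_s|^{p-2}\bigr)\langle \Delta^i_s+\Delta^j_s, Df^{ij}_s\rangle,
\end{align*}
with $Df^{ij}_s:=f(X^{i,N}_s,X^{j,N}_s)-f(\hat X^{i,N,n}_s,\hat X^{j,N,n}_s)$, and pair the symmetric-weight sum with the $i\leftrightarrow j$-symmetric $|Dg^{ij}|^2$-term. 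Assumption \ref{as:mon:rate} applied with $(X^{i,N}_s,X^{j,N}_s)$ vs.\ $(\hat X^{i,N,n}_s,\hat X^{j,N,n}_s)$ then yields
\begin{align*}
\langle \Delta^i_s-\Delta^j_s, Df^{ij}_s\rangle + 2(p-1)|Dg^{ij}_s|^2 \leq L\,|\Delta^i_s-\Delta^j_s|^2 - 2(p_1-p)|Dg^{ij}_s|^2,
\end{align*}
whose residual $|Dg^{ij}|^2$ is discarded thanks to the slack $p_1>p$. A standard Young interpolation collapses $(|\Delta^i|^{p-2}+|\Delta^j|^{p-2})|\Delta^i-\Delta^j|^2 \leq K(|\Delta^i|^p+|\Delta^j|^p)$, and particle exchangeability delivers the desired $K\int\mathbb E|\Delta^i_s|^p\,ds$ bound. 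The weight-asymmetric sum is treated via the symmetric-growth Corollary \ref{lem:symm-growth}, combined with the polynomial Lipschitz bound of Assumption \ref{as:eu:f:g} and the uniform $p_0$-moments from Theorem \ref{thm:eu-ips} and Lemma \ref{higher MB}.

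For the time-increment piece, the polynomial Lipschitz bounds in Assumption \ref{as:eu:f:g} and Remark \ref{rem:fg:grw:eu} dominate the difference by a polynomial factor of degree $q$ in the scheme values times $(|\hat X^{i,N,n}_s-\hat X^{i,N,n}_{k_n(s)}|+|\hat X^{j,N,n}_s-\hat X^{j,N,n}_{k_n(s)}|)$ for $f$ (and its squared analogue for $g$); distributing the weight $|\Delta^i|^{p-2}$ through H\"older, applying Lemma \ref{L,scheme,diff} for the $n^{-p/2}$ bound on the $\hat X$-increments, and closing using the uniform moment bound of Lemma \ref{higher MB}, yields the $Kn^{-p/2}$ estimate. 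For the taming-defect piece, use
\begin{align*}
|f(x,y)-f^n(x,y)| = |f(x,y)|\cdot \frac{n^{-1/2}|x-y|^{2q}}{1+n^{-1/2}|x-y|^{2q}} \leq n^{-1/2}|f(x,y)|\,|x-y|^{2q} \leq K n^{-1/2}(1+|x-y|)^{3q+1},
\end{align*}
together with the analogous $g$-bound. The exponent restriction $p\leq p_0/(3q+1)$ is precisely what guarantees that $\mathbb E(1+|\hat X^{i,N,n}_{k_n(s)}-\hat X^{j,N,n}_{k_n(s)}|)^{p(3q+1)}$ stays bounded uniformly in $n,N$ via Lemma \ref{higher MB}. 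Summing the three contributions closes the estimate.

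The main obstacle is the weighted anti-symmetrization for $p>2$: the $|\Delta^i|^{p-2}$ weight does not commute with the monotonicity structure of Assumption \ref{as:mon:rate}, forcing the careful split above into a symmetric-weight piece (that cleanly uses the monotonicity and the slack $p_1-p>0$ to absorb $|Dg^{ij}|^2$) and a weight-asymmetric remainder (that relies on the symmetric-growth machinery of Corollary \ref{lem:symm-growth} paired with the polynomial Lipschitz bound). A secondary bookkeeping difficulty is the exponent restriction $p\leq p_0/(3q+1)$, pinned down exactly by the taming-defect estimate, where the $(3q+1)$-degree polynomial is raised to the $p$-th power.
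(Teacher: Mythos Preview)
Your three-way decomposition into the monotonicity piece, the time-increment piece, and the taming-defect piece is exactly the paper's $S_1+S_2+S_3$ split, and your treatment of $S_2$ and $S_3$ matches the paper, including the identification of the exponent $3q+1$ in the taming defect as the source of the restriction $p\leq p_0/(3q+1)$.

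The gap is in the weight-asymmetric sum inside $S_1$. Corollary \ref{lem:symm-growth} does not apply here: that corollary \emph{assumes} a bound of the form $(|x|^{\varrho-2}-|y|^{\varrho-2})\langle x+y,\mathbbm{f}(x,y)\rangle\leq \hat L(|x|^\varrho+|y|^\varrho)$ for a fixed anti-symmetric $\mathbbm{f}$, whereas $Df^{ij}_s$ is not a function of $(\Delta^i_s,\Delta^j_s)$ alone but of the full states $X^{i,N}_s,X^{j,N}_s,\hat X^{i,N,n}_s,\hat X^{j,N,n}_s$. If you instead push through the polynomial Lipschitz bound $|Df^{ij}|\leq L(1+|X^{i,N}-X^{j,N}|+|\hat X^{i,N,n}-\hat X^{j,N,n}|)^q|\Delta^i-\Delta^j|$, you obtain $(|\Delta^i|^p+|\Delta^j|^p)$ multiplied by the random polynomial weight $(1+\cdots)^q$; decoupling this expectation via H\"older would require higher moments of $|\Delta^i|$ than $p$, which is circular, and the uniform $p_0$-moments of $X^{i,N}$ and $\hat X^{i,N,n}$ alone cannot separate the factors. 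The paper closes $S_1$ by following Lemma \ref{L_rate_x^i}, whose proof rests on the \emph{structural} first condition of Assumption \ref{as:f:g:PoC-2},
\[
(|x-x'|^{p-2}-|y-y'|^{p-2})\big\langle (x-x')+(y-y'),\, f(x,y)-f(x',y')\big\rangle\leq L\big(|x-x'|^p+|y-y'|^p\big),
\]
which controls the weight-asymmetric term directly, with no moment or H\"older argument. (The paper's proof explicitly names Assumption \ref{as:f:g:PoC-2} at this step even though the lemma statement lists only \ref{as:mon:rate}; you should treat that hypothesis as needed.)
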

We note that the ``$3q+1$''-constant for the domain of $p$ arises from  Equation \eqref{S3} below. 
 \begin{proof}
First, one can write 
     \begin{align}
        \frac{1}{N^2}  \sum_{i, j=1}^N &  \mathbb E\int_0^t |X_s^{i,N} - \hat{X}_s^{i,N,n}|^{p-2}  \big\{\big \langle X_s^{i,N} - \hat{X}_s^{i,N,n},  f(X_s^{i,N},X_s^{j,N}) - f^n (\hat{X}^{i,N,n}_{k_n(s)},\hat{X}^{j,N,n}_{k_n(s)})  \big \rangle \nonumber 
        \\
       & \qquad \qquad + (p-1)|g(X_s^{i,N}, X_s^{j,N})-g^n (\hat{X}^{i,N,n}_{k_n(s)},\hat{X}^{j,N,n}_{k_n(s)}))|^2 \big \} ds \nonumber
       \\
        \leq \, & \frac{1}{N^2} \sum_{i, j=1}^N  \mathbb E\int_0 ^t |X_s^{i,N}  - \hat{X}_s^{i,N,n}|^{p-2} \{ \langle X_s^{i,N}  - \hat{X}_s ^{i,N,n}, f(X_s^{i,N},X_s^{j,N}) -  f(\hat{X}_s^{i,N,n},\hat{X}_s^{j,N,n}) \rangle  \nonumber
       \\
       & \quad \quad + (p_1-1)|g(X_s^{i,N}, X_s^{j,N})-g(\hat{X}_s^{i,N,n},\hat{X}_s^{j,N,n})|^2 \} ds \nonumber 
       \\
       &  + \frac{1}{N^2} \sum_{i, j=1}^N  \mathbb E \int_0 ^t |X_s^{i,N}  - \hat{X}_s^{i,N,n}|^{p-2} \{ \langle X_s^{i,N}  - \hat{X}_s ^{i,N,n}, f(\hat{X}_s^{i,N,n},\hat{X}_s^{j,N,n})- f(\hat{X}_{k_n(s)}^{i,N,n},\hat{X}_{k_n(s)}^{j,N,n}) \rangle \nonumber
       \\
       & \qquad \qquad + K |g(\hat{X}_s^{i,N,n},\hat{X}_s^{j,N,n})-g(\hat{X}_{k_n(s)}^{i,N,n},\hat{X}_{k_n(s)}^{j,N,n})|^2\} ds \nonumber
       \\
        &  + \frac{1}{N^2} \sum_{i, j=1}^N  \mathbb E\int_0 ^t|X_s^{i,N}  - \hat{X}_s^{i,N,n}|^{p-2} \{ \langle X_s^{i,N}  - \hat{X}_s ^{i,N,n}, f(\hat{X}_{k_n(s)}^{i,N,n},\hat{X}_{k_n(s)}^{j,N,n})-f^n(\hat{X}_{k_n(s)}^{i,N,n},\hat{X}_{k_n(s)}^{j,N,n}) \rangle \nonumber
        \\
        & \qquad \qquad + K |g(\hat{X}_{k_n(s)}^{i,N,n},\hat{X}_{k_n(s)}^{j,N,n})-g^n(\hat{X}_{k_n(s)}^{i,N,n},\hat{X}_{k_n(s)}^{j,N,n})|^2  \} ds\nonumber\\
         =: \, &  S_1 + S_2 + S_3 \label{S1+S2+S3}
     \end{align}
     for all $t \in [0,T]$. 
    Following similar steps as in Lemma \ref{L_rate_x^i} under Assumption \ref{as:f:g:PoC-2} and \ref{as:mon:rate}, $S_1$ can be estimated by
\begin{align}
    S_1  := & \frac{1}{N^2} \sum_{i, j=1}^N  \mathbb E\int_0 ^t |X_s^{i,N}  - \hat{X}_s^{i,N,n}|^{p-2} \{ \langle X_s^{i,N}  - \hat{X}_s ^{i,N,n},  f(X_s^{i,N},X_s^{j,N}) -  f(\hat{X}_s^{i,N,n},\hat{X}_s^{j,N,n}) \rangle  \nonumber
    \\
    & \qquad \qquad + (p_1-1)|g(X_s^{i,N},X_s^{j,N})-g(\hat{X}_s^{i,N,n},\hat{X}_s^{j,N,n})|^2 \} ds \nonumber 
 \\
 & \leq  \frac{K}{N} \sum_{i=1}^N \mathbb E \int_0 ^t |X_s^{i,N} - \hat{X}_s ^{i,N,n}|^p ds \label{S1}
    \end{align}
    for all $t \in [0,T]$. 
    For the estimation of $S_2$, one uses the Cauchy-Schwarz inequality, Young\textquotesingle{s} inequality, Assumption \ref{as:eu:b:sig}, Remark \ref{rem:fg:grw:eu}, Lemmas \ref{higher MB} and \ref{L,scheme,diff} to obtain the following estimates, 
\begin{align}
   S_2  := & \frac{1}{N^2} \sum_{i,j=1}^N  \mathbb E \int_0 ^t |X_s^{i,N}  - \hat{X}_s^{i,N,n}|^{p-2} \big \{ \langle X_s^{i,N}  - \hat{X}_s ^{i,N,n}, f(\hat{X}_s^{i,N,n},\hat{X}_s^{j,N,n})- f(\hat{X}_{k_n(s)}^{i,N,n},\hat{X}_{k_n(s)}^{j,N,n}) \rangle \nonumber
   \\
   &  \qquad \qquad + K |g(\hat{X}_s^{i,N,n},\hat{X}_s^{j,N,n})-g(\hat{X}_{k_n(s)}^{i,N,n},\hat{X}_{k_n(s)}^{j,N,n})|^2 \big\} ds \nonumber
   \\
    \leq &  \frac{K}{N^2} \sum_{i,j=1}^N \mathbb E \int_0 ^t |X_s^{i,N}  - \hat{X}_s^{i,N,n}|^{p-1} |f(\hat{X}_s^{i,N,n},\hat{X}_s^{j,N,n})- f(\hat{X}_{k_n(s)}^{i,N,n},\hat{X}_{k_n(s)}^{j,N,n}| ds \nonumber
   \\
   & \qquad \qquad +  \frac{K}{N^2} \sum_{i, j=1}^N  \mathbb E \int_0 ^t |X_s^{i,N}  - \hat{X}_s^{i,N,n}|^{p-2} |g(\hat{X}_s^{i,N,n},\hat{X}_s^{j,N,n})- g(\hat{X}_{k_n(s)}^{i,N,n},\hat{X}_{k_n(s)}^{j,N,n}|^2 ds \nonumber
   \\
   \leq &  \,\frac{K}{N} \sum_{i=1}^N \mathbb E \int_0 ^t |X_s^{i,N} - \hat{X}_s ^{i,N,n}|^p ds  +  \frac{K}{N^2} \sum_{i, j=1}^N  \int_0 ^t \bigl \{\mathbb E\bigl[1 + |\hat{X}_s^{i,N,n}-\hat{X}_s^{j,N,n}|^{2pq} + |\hat{X}_{k_n(s)}^{i,N,n}-\hat{X}_{k_n(s)}^{j,N,n}|^{2pq} \bigl] \bigl \}^{1/2} \nonumber
   \\
   & \qquad \qquad  \bigl \{\mathbb E \bigl[|\hat{X}_s^{i,N,n}- \hat{X}_{k_n(s)}^{i,N,n}|^{2p} +|\hat{X}_s^{j,N,n}- \hat{X}_{k_n(s)}^{j,N,n}|^{2p} \bigl] \bigl\}^{1/2} ds \nonumber
   \\
   \leq & \frac{K}{N} \sum_{i=1}^N \mathbb E \int_0 ^t |X_s^{i,N} - \hat{X}_s ^{i,N,n}|^p ds  + Kn^{-\frac{p}{2}}\label{S2}
\end{align}
for all $t \in [0,T]$. 

 For $S_3$, using the Cauchy-Schwarz inequality, Young\textquotesingle{s} inequality, Equation \eqref{taming-FiniteT}, Remark \ref{rem:fg:grw:eu}, Lemma  \ref{higher MB},  one gets
\begin{align}
       S_3 & =  \frac{1}{N^2} \sum_{i, j=1}^N \mathbb E\int_0 ^t|X_s^{i,N}  - X_s^{i,N,n}|^{p-2} \{ \langle X_s^{i,N}  - \hat{X}_s ^{i,N,n}, f(\hat{X}_{k_n(s)}^{i,N,n},\hat{X}_{k_n(s)}^{j,N,n})-f^n(\hat{X}_{k_n(s)}^{i,N,n},\hat{X}_{k_n(s)}^{j,N,n}) \rangle \nonumber
       \\
        & \qquad \qquad + K |g(\hat{X}_{k_n(s)}^{i,N,n},\hat{X}_{k_n(s)}^{j,N,n})-g^n(\hat{X}_{k_n(s)}^{i,N,n},\hat{X}_{k_n(s)}^{j,N,n})|\} ds\nonumber
        \\
        & \leq \frac{K}{N^2} \sum_{i, j=1}^N  \mathbb E\int_0 ^t |X_s^{i,N}  - X_s^{i,N,n}|^{p-1} |f(\hat{X}_{k_n(s)}^{i,N,n},\hat{X}_{k_n(s)}^{j,N,n})-f^n(\hat{X}_{k_n(s)}^{i,N,n},\hat{X}_{k_n(s)}^{j,N,n})| ds \nonumber
        \\
        & \qquad \qquad + \frac{K}{N^2} \sum_{i, j=1}^N  \mathbb E\int_0 ^t |X_s^{i,N}  - X_s^{i,N,n}|^{p-2} |g(\hat{X}_{k_n(s)}^{i,N,n},\hat{X}_{k_n(s)}^{j,N,n})-g^n(\hat{X}_{k_n(s)}^{i,N,n},\hat{X}_{k_n(s)}^{j,N,n})|^2 ds\nonumber 
        \\
        & \leq \frac{K}{N} \sum_{i=1}^N  \mathbb E\int_0 ^t |X_s^{i,N}  - X_s^{i,N,n}|^{p}ds + \frac{Kn^{-\frac{p}{2}}}{N^2} \sum_{i, j=1}^N  \mathbb E\int_0 ^t \frac{1+|\hat{X}_{k_n(s)}^{i,N,n}-\hat{X}_{k_n(s)}^{j,N,n})|^{(3q+1)p}}{(1+n^{-\frac{1}{2}}|\hat{X}_{k_n(s)}^{i,N,n}-\hat{X}_{k_n(s)}^{j,N,n}|^{2q})^{p}}  ds \nonumber
        \\ 
        & \leq \frac{K}{N} \sum_{i=1}^N \mathbb E\int_0 ^t | X_s^{i,N}  - \hat{X}_s ^{i,N,n}|^2 ds + Kn^{-\frac{p}{2}} \label{S3},
\end{align}
for all $t \in [0,T]$.  
Thus, plugging the estimates from \eqref{S1}, \eqref{S2} and \eqref{S3} into \eqref{S1+S2+S3} yields the desired result.
 \end{proof}

The rate of strong convergence of the tamed Euler scheme \eqref{scheme} of IPS \eqref{IPS} in $L^p$-norm is shown below. 
\begin{theorem}\label{thm:rate}  

{Let Assumptions \ref{assum_initial}, \ref{as:eu:b:sig}, \ref{as:eu:f:g}, \ref{as:anti-sys},\ref{as:b:poly} and \ref{as:mon:rate} be satisfied for some $p_0>2(q+1)$ with $q>0$; Assumption \ref{as:mon:rate} holds with a $p_1>2$. Then, the tamed Euler scheme \eqref{scheme} converges to IPS \eqref{IPS} in $L^p$-sense with rate $1/2$, \textit{i.e},
\begin{align*}
       \underset{i\in \{1,\ldots,N\}}{\sup} ~ \underset{t \in [0,T]}{\sup}  \mathbb E|X_t^{i,N} - \hat{X}_t^{i,N,n}|^p \leq Kn^{-p/2}
    \end{align*}
all $p \in [2, p_1)$ and $p \in [2, \frac{p_0}{3q +1}]$ where $K$ is a positive constant that does not depend on $n,N \in \mathbb N$.  }

\end{theorem}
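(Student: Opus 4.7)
The plan is to mimic the proof pattern of Lemma \ref{l,f} but now for the full difference $|X_t^{i,N} - \hat{X}_t^{i,N,n}|$, combining the kernel-error control already established there with analogous estimates for the $(b,\sigma)$ terms. Since $p \in [2, p_1)$ and Assumption \ref{as:mon:rate} gives a $(p_1-1)|\sigma(\cdot)-\sigma(\cdot)|^2$ coefficient strictly larger than the $(p-1)$ coefficient that It\^o's formula produces, we have the slack needed to absorb a small Young-inequality factor while preserving the monotone structure.

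First, I would apply It\^o's formula to $|X_t^{i,N} - \hat{X}_t^{i,N,n}|^{p}$, average over $i \in \{1,\ldots,N\}$, and decompose each coefficient difference into telescoping pieces. For the drift and diffusion, I write, for $(t,x,\mu)$ shorthanded,
\begin{align*}
b(s,X_s^{i,N},\mu_s^{X,N}) - b^n(k_n(s),\hat{X}^{i,N,n}_{k_n(s)},\hat{\mu}^{X,N,n}_{k_n(s)})
&= \underbrace{\bigl[b(s,X_s^{i,N},\mu_s^{X,N}) - b(s,\hat{X}^{i,N,n}_{s},\hat{\mu}^{X,N,n}_{s})\bigr]}_{\text{monotone}} \\
&\quad + \underbrace{\bigl[b(s,\hat{X}^{i,N,n}_{s},\hat{\mu}^{X,N,n}_{s}) - b(k_n(s),\hat{X}^{i,N,n}_{k_n(s)},\hat{\mu}^{X,N,n}_{k_n(s)})\bigr]}_{\text{one-step regularity}} \\
&\quad + \underbrace{\bigl[b(k_n(s),\hat{X}^{i,N,n}_{k_n(s)},\hat{\mu}^{X,N,n}_{k_n(s)}) - b^n(\cdots)\bigr]}_{\text{taming error}},
\end{align*}
and analogously for $\sigma$. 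The monotone pieces of $b$ and $\sigma$ are grouped together under the $(p_1-1)$ coercivity in Assumption \ref{as:mon:rate}; they contribute at most $K\int_0^t \tfrac{1}{N}\sum_i \mathbb{E}|X_s^{i,N}-\hat{X}_s^{i,N,n}|^{p}\,ds$ after a Young's inequality with the slack $p_1-p>0$.

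For the one-step regularity pieces, I combine Assumption \ref{as:b:poly}, Remark \ref{rem:b:sig:gwt:tem}, the $1/2$-H\"older-in-time condition of Assumption \ref{as:mon:rate}, Cauchy--Schwarz, the uniform moment bound of Lemma \ref{higher MB}, and the one-step displacement Lemma \ref{L,scheme,diff}; the polynomial growth is compensated by splitting $p \cdot q$ exponents and using H\"older so that all moment terms stay below $p_0$ (this is where the condition $p \le p_0/(3q+1)$ is used, paralleling the occurrence of the $3q+1$ exponent in \eqref{S3}). These pieces yield a bound of the form $K\int_0^t \mathbb{E}|X_s^{i,N}-\hat{X}_s^{i,N,n}|^{p}\,ds + Kn^{-p/2}$. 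The taming pieces are handled by the key identity $b - b^n = n^{-1/2}|x|^{2q} b / (1+n^{-1/2}|x|^{2q})$ (and similarly for $\sigma,g$), together with Remark \ref{R_f} and Lemma \ref{higher MB}, producing an $n^{-p/2}$ contribution because each $n^{-1/2}|x|^{2q}$ factor can be matched against the $q$-degree polynomial growth of $b$ at the cost of $p_0$-moments. For the kernel terms involving $f^n$ and $g^n$, I invoke Lemma \ref{l,f} directly, which already delivers exactly the bound $K\int_0^t \tfrac{1}{N}\sum_i \mathbb{E}|X_s^{i,N}-\hat{X}_s^{i,N,n}|^{p}\,ds + Kn^{-p/2}$.

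Summing all contributions, averaging over particles, and applying Gr\"onwall's lemma yields
\[
\frac{1}{N}\sum_{i=1}^{N}\sup_{r\in[0,t]}\mathbb{E}|X_r^{i,N}-\hat{X}_r^{i,N,n}|^{p} \le K n^{-p/2},
\]
and the conclusion follows from exchangeability of the particles. The main obstacle, beyond careful bookkeeping, is ensuring that every Young splitting of polynomial-growth $\times$ one-step-displacement terms stays within the available $p_0$-moments; this dictates the explicit upper bound $p \le p_0/(3q+1)$ and explains why the polynomial-Lipschitz assumption on $b$ (Assumption \ref{as:b:poly}) cannot be weakened if one wants the sharp rate $1/2$ in $L^p$.
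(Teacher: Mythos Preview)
Your proposal is correct and follows essentially the same route as the paper: It\^o's formula on $|X_t^{i,N}-\hat X_t^{i,N,n}|^p$, particle averaging, the three-way telescoping of $b$ and $\sigma$ into a monotone piece (handled by Assumption~\ref{as:mon:rate} with the slack $p_1-p>0$), a one-step regularity piece (Assumption~\ref{as:b:poly}, time-H\"older continuity, Lemmas~\ref{higher MB} and~\ref{L,scheme,diff}), and a taming-error piece (the identity $b-b^n=n^{-1/2}|x|^{2q}b/(1+n^{-1/2}|x|^{2q})$ plus moment bounds), while the kernel contribution is delegated to Lemma~\ref{l,f}; Gr\"onwall and exchangeability finish. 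The paper labels these pieces $B_1,\,B_2\,(=B_{2.1}+B_{2.2}),\,B_3,\,B_4$ in exactly the decomposition you describe, so there is no substantive difference.
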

\begin{proof}
Applying It\^o's formula and Jensen\textquotesingle{s} inequality, 
    \begin{align*}
          \mathbb E | & X_t^{i,N} - \hat{X}_t^{i,N,n}|^p  
         \\
         & \leq  \, p   \mathbb E \int_0^t |X_s^{i,N} - \hat{X}_s^{i,N,n}|^{p-2} \langle X_s^{i,N} - \hat{X}_s^{i,N,n}, b(s,X_s^{i,N},\mu_s^{X,N}) - b^n(k_n(s),\hat{X}_{k_n(s)}^{i,N,n},\hat{\mu}_{k_n(s)}^{X,N,n})\rangle ds 
         \\
         &  + p \mathbb E\int_0^t |X_s^{i,N} - \hat{X}_s^{i,N,n}|^{p-2}  \Big\langle X_s^{i,N} - \hat{X}_s^{i,N,n},\frac{1}{N} \sum_{j=1}^N  \big\{\ f(X_s^{i,N}, X_s^{j,N}) - f^n (\hat{X}^{i,N,n}_{k_n(s)},\hat{X}^{j,N,n}_{k_n(s)}\big) \big\}  \Big\rangle ds 
         \\
         & + p(p-1) \mathbb E \int_0^t |X_s^{i,N} - \hat{X}_s^{i,N,n}|^{p-2} \big|{\sigma} (s,X_s^{i,N},\mu_s^{X,N}) - {\sigma} ^n(k_n(s),\hat{X}_{k_n(s)}^{i,N,n},\hat{\mu}_{k_n(s)}^{X,N,n}) \big|^2ds
         \\
         &  +  p(p-1) \mathbb E \int_0^t |X_s^{i,N} - \hat{X}_s^{i,N,n}|^{p-2} \frac{1}{N} \sum_{j=1}^N \big|g(X_s^{i,N}, X_s^{j,N})-g^n (\hat{X}^{i,N,n}_{k_n(s)},\hat{X}^{j,N,n}_{k_n(s)})\big|^2 ds
    \end{align*}
    for all $t \in [0,T]$. 
    Now, averaging over all particles and then by using Young\textquotesingle{s} inequality,  one obtains, 
    \begin{align*}
       & \frac{1}{N} \sum_{i=1}^N \mathbb E  |X_t^{i,N}  - \hat{X}_t^{i,N,n}|^p
       \\
       & \leq  \frac{p}{N} \sum_{i=1}^N \mathbb E\int_0^t  |X_s^{i,N} - \hat{X}_s^{i,N,n}|^{p-2} \big\{ \langle X_s^{i,N} - \hat{X}_s^{i,N,n}, b(s,X_s^{i,N},\mu^{X,N}_s) - b(s,\hat{X}_{s}^{i,N,n},\hat{\mu}_{s}^{X,N,n}) \rangle 
       \\
        & \qquad \qquad \qquad + (p_1-1)|\sigma (s,X_s^{i,N},\mu^{X,N}) -\sigma(s,\hat{X}_{s}^{i,N,n},\hat{\mu}_{s}^{X,N,n}|^2 \big\}  ds 
        \\
        & \quad +  \frac{p}{N} \sum_{i=1}^N \mathbb E \int_0^t |X_s^{i,N} - \hat{X}_s^{i,N,n}|^{p-2} \langle X_s^{i,N} - \hat{X}_s^{i,N,n}, b(s,\hat{X}_s^{i,N,n},\hat{\mu}_s^{X,N,n}) - b^n (k_n(s),\hat{X}_{k_n(s)}^{i,N,n},\hat{\mu}_{k_n(s)}^{X,N,n}) \rangle ds 
        \\
        & \quad +  \frac{K}{N} \sum_{i=1}^N \mathbb E \int_0^t |X_s^{i,N} - \hat{X}_s^{i,N,n}|^{p-2} |\sigma (s,\hat{X}_s^{i,N,n},\hat{\mu}_s^{{X,N,n}}) -\sigma^n(k_n(s),\hat{X}_{k_n(s)}^{i,N,n},\hat{\mu}_{k_n(s)}^{X,N,n})|^2 ds 
        \\
       & \quad + \frac{p}{N^2} \sum_{i, j=1}^N  \mathbb E\int_0^t |X_s^{i,N} - \hat{X}_s^{i,N,n}|^{p-2}  \{ \langle X_s^{i,N} - \hat{X}_s^{i,N,n},  f(X_s^{i,N}, X_s^{j,N}) - f^n (\hat{X}^{i,N,n}_{k_n(s)},\hat{X}^{j,N,n}_{k_n(s)})\rangle 
       \\
       & \qquad \qquad + (p-1)|g(X_s^{i,N},X_s^{j,N})-g^n (\hat{X}^{i,N,n}_{k_n(s)},\hat{X}^{j,N,n}_{k_n(s)}))|^2 \} ds := B_1 + B_2 + B_3 + B_4
    \end{align*}
    for all $t \in [0,T]$. 
By Assumption \ref{as:mon:rate}, $B_1$ is estimated by, 
\begin{align}
    B_1:= &  \frac{p}{N} \sum_{i=1}^N \mathbb E\int_0^t  |X_s^{i,N} - \hat{X}_s^{i,N,n}|^{p-2} \big\{ \big\langle X_s^{i,N} - \hat{X}_s^{i,N,n}, b(s,X_s^{i,N},\mu^{X,N}_s) - b(s,\hat{X}_{s}^{i,N,n},\hat{\mu}_{s}^{X,N,n}) \big\rangle  \notag
       \\
        & \qquad \qquad + (p_1-1)|\sigma (s,X_s^{i,N},\mu^{X,N}) -\sigma(s,\hat{X}_{s}^{i,N,n},\hat{\mu}_{s}^{X,N,n}|^2 \big\}  ds  \nonumber
        \\
        \leq & \frac{K}{N} \sum_{i=1}^N \mathbb E\int_0^t  |X_s^{i,N} - \hat{X}_s^{i,N,n}|^{p}ds \label{es:B1}
\end{align}
for all $t \in [0,T]$. 
Also, Cauchy-Schwarz inequality and Young\textquotesingle{s} inequality yields the following estimate for $B_2$, 
\begin{align}
    B_2: = &  \frac{p}{N} \sum_{i=1}^N \mathbb E \int_0^t |X_s^{i,N} - \hat{X}_s^{i,N,n}|^{p-2} \langle X_s^{i,N} - \hat{X}_s^{i,N,n}, b(s,\hat{X}_s^{i,N,n},\hat{\mu}_s^{X,N,n}) - b^n (k_n(s),\hat{X}_{k_n(s)}^{i,N,n},\hat{\mu}_{k_n(s)}^{X,N,n}) \rangle ds \nonumber
    \\
    \leq & \frac{K}{N} \sum_{i=1}^N \mathbb E\int_0^t  |X_s^{i,N} - \hat{X}_s^{i,N,n}|^{p}ds +  \frac{K}{N} \sum_{i=1}^N \mathbb E  \int_0^t  |b (s,\hat{X}_{s} ^ {i,N,n}, \hat{\mu}_s ^{X,N,n})- b(k_n(s),\hat{X}_{k_n(s)} ^ {i,N,n}, \hat{\mu}_{k_n(s)} ^{X,N,n})|^p ds \nonumber
    \\
   & \qquad  + \frac{K}{N} \sum_{i=1}^N \mathbb E \int_0^t |b(k_n(s),\hat{X}_{k_n(s)} ^ {i,N,n},  \hat{\mu}_{k_n(s)} ^{X,N,n})- b^n (k_n(s),\hat{X}_{k_n(s)} ^ {i,N,n}, \hat{\mu}_{k_n(s)} ^{X,N,n})|^p ds \notag 
   \\
   =: & \, \frac{K}{N} \sum_{i=1}^N \mathbb E\int_0^t  |X_s^{i,N} - \hat{X}_s^{i,N,n}|^{p}ds + B_{2.1}+ B_{2.2} \label{es:B21+B22}
\end{align}
 for all $t \in [0,T]$. 
 For $B_{2.1}$, by Assumption \ref{as:b:poly}, \ref{as:mon:rate}, Cauchy-Schwarz inequality, Lemma \ref{higher MB} and Lemma \ref{L,scheme,diff}, one has
\begin{align}
   B_{2.1} :=&     \frac{K}{N} \sum_{i=1}^N \mathbb E  \int_0^t  |b (s,\hat{X}_{s} ^ {i,N,n}, \hat{\mu}_s ^{X,N,n})- b(k_n(s),\hat{X}_{k_n(s)} ^ {i,N,n}, \hat{\mu}_{k_n(s)} ^{X,N,n})|^p ds \nonumber  
     \\
     \leq &  \frac{K}{N} \sum_{i=1}^N \mathbb E \int_0^t(1 +|\hat{X}_{s} ^ {i,N,n}| + |\hat{X}_{k_n(s)} ^ {i,N,n}| )^{pq}|\hat{X}_{s} ^ {i,N}-\hat{X}_{k_n(s)} ^ {i,N,n}|^p ds + K \mathbb E \int_0^t W^{(2)} ( \hat{\mu}_s ^{X,N,n},\hat{\mu}_{k_n(s)} ^{X,N,n})^p ds + Kn^{-\frac{p}{2}}   \nonumber 
    \\
   \leq & \frac{K}{N} \sum_{i=1}^N \int_0 ^t \Big\{\mathbb E\Bigl[(1 + |\hat{X}_{s} ^ {i,N,n}|^{2pq} + |\hat{X}_{k_n(s)} ^ {i,N,n}|^{2pq})\Bigl] \mathbb E\Bigl[|\hat{X}_{s} ^ {i,N,n}-\hat{X}_{k_n(s)} ^ {i,N,n}|^{2p} \Bigl] \Big\}^{1/2} ds \nonumber 
  \\
   & \qquad \qquad +   \frac{K}{N}\sum_{j=1}^N \mathbb E \int_0^t |\hat{X}_{s} ^ {j,N,n}-\hat{X}_{k_n(s)} ^ {j,N,n}|^p ds + Kn^{-\frac{p}{2}}  \leq  Kn^{-\frac{p}{2}} \label{es:B21}
\end{align}
for all $t \in [0, T]$. 
Further, one can estimate $B_{2.2}$ by applying Equation \eqref{taming-FiniteT}, Remark \ref{rem:b:sig:gwt:tem} and Lemma \ref{higher MB}, 
\begin{align}
    B_{2.2}:= & \frac{K}{N} \sum_{i=1}^N \mathbb E \int_0^t |b(k_n(s),\hat{X}_{k_n(s)} ^ {i,N,n},  \hat{\mu}_{k_n(s)} ^{X,N,n})- b^n (k_n(s),\hat{X}_{k_n(s)} ^ {i,N,n}, \hat{\mu}_{k_n(s)} ^{X,N,n})|^p ds \nonumber
   \\
     \leq & \frac{K}{N} \sum_{i=1}^N n^{-\frac{p}{2}} \mathbb E \int_0^t\frac{|\hat{X}_{k_n(s)} ^{i,N,n}|^{2pq}}{(1+n^{-\frac{1}{2}} |\hat{X}_{k_n(s)} ^{i,N,n}|^{2q})^p }|b(k_n(s),\hat{X}_{k_n(s)} ^ {i,N,n}, \hat{\mu}_{k_n(s)} ^{X,N,n})|^p ds\nonumber 
    \\
     \leq & \frac{K}{N} \sum_{i=1}^N n^{-\frac{p}{2}} \mathbb E \int_0^t|\hat{X}_{k_n(s)} ^{i,N,n}|^{2pq} \Bigl(1+|\hat{X}_{k_n(s)} ^{i,N,n}|^{q+1} + \Bigl(\frac{1}{N} \sum_{j=1}^N |\hat{X}_{k_n(s)} ^{j,N,n}|^2 \Bigl )^{\frac{1}{2}}  \Bigl )^p ds \nonumber
    \\
       \leq & \frac{K}{N} \sum_{i=1}^N n^{-\frac{p}{2}} \int_0^t \Big\{\mathbb E\Bigl[|\hat{X}_{k_n(s)} ^{i,N,n}|^{4pq} \Bigl] \mathbb E \Bigl[1+|\hat{X}_{k_n(s)} ^{i,N,n}|^{2p(q+1)}+\frac{1}{N} \sum_{j=1}^N |\hat{X}_{k_n(s)} ^{j,N,n}|^{2p} \Bigl]\Big\}^{1/2}   \leq Kn^{-\frac{p}{2}} \label{es:B22}
\end{align}
for all $t \in [0,T]$.
Thus, substituting estimates from Equations \eqref{es:B21} and \eqref{es:B22} into equation \eqref{es:B21+B22} yields,  
\begin{align}  \label{es:B2}
    B_2 \leq \frac{K}{N} \sum_{i=1}^N \mathbb E\int_0^t  |X_s^{i,N} - \hat{X}_s^{i,N,n}|^{p}ds +K n^{-\frac{p}{2}} 
\end{align}
for all $t \in [0,T]$.
The term $B_3$ can be estimated by adopting similar reasoning as have been used in $B_2$, which yields
\begin{align}
   B_3:= &  \frac{K}{N} \sum_{i=1}^N \mathbb E \int_0^t |X_s^{i,N} - \hat{X}_s^{i,N,n}|^{p-2} |\sigma (s,\hat{X}_s^{i,N,n},\hat{\mu}_s^{{X,N,n}}) -\sigma^n(k_n(s),\hat{X}_{k_n(s)}^{i,N,n},\hat{\mu}_{k_n(s)}^{X,N,n})|^2 ds  \nonumber
   \\
  \leq &  \frac{K}{N} \sum_{i=1}^N \mathbb E\int_0^t  |X_s^{i,N} - \hat{X}_s^{i,N,n}|^{p}ds +K n^{-\frac{p}{2}} \label{es:B3}
\end{align}
for all $t \in [0,T]$. 
Moreover, by Lemma \ref{l,f}, $B_4$ is estimated as
\begin{align}
   B_4:= & \frac{1}{N} \sum_{i=1}^N \mathbb E  \int_0^t |X_s^{i,N} - \hat{X}_s^{i,N,n}|^{p-2} \langle X_s^{i,N} - \hat{X}_s^{i,N,n},\frac{1}{N} \sum_{j=1}^N  (f(X_s^{i,N}, X_s^{j,N}) - f^n (\hat{X}^{i,N,n}_{k_n(s)},\hat{X}^{j,N,n}_{k_n(s)}))  \rangle \nonumber
   \\
   & \qquad \qquad + (p-1)|g(X_s^{i,N},X_s^{j,N})-g^n (\hat{X}^{i,N,n}_{k_n(s)},\hat{X}^{j,N,n}_{k_n(s)}))|^2 \} ds \notag
       \\
    \leq &  \frac{K}{N} \sum_{i=1}^N \mathbb E\int_0^t  |X_s^{i,N} - \hat{X}_s^{i,N,n}|^{p}ds +K n^{-\frac{p}{2}} \label{es:B4}
\end{align}
for all $t \in [0,T]$. 
Thus, collecting the estimates from \eqref{es:B1},  \eqref{es:B3} and \eqref{es:B4}, one gets
 \begin{align*}
       \frac{1}{N} \sum_{i=1}^N & \mathbb E  |X_t^{i,N}  - \hat{X}_t^{i,N,n}|^p   \leq \frac{K}{N} \sum_{i=1}^N \mathbb E\int_0^t  |X_s^{i,N} - \hat{X}_s^{i,N,n}|^{p}ds +K n^{-\frac{p}{2}}
\end{align*}
for all $t \in [0,T]$.
The proof is concluded using the Gr\"onwall\textquotesingle{s} lemma and the exchangeability of the particles. 
\end{proof}
\section{Ergodicity}
\label{sec:Ergodicity}

In this section, we examine the long-time behavior of the solution of MV-SDE \eqref{Mckean}, the IPS \eqref{IPS} and the tamed Euler scheme \eqref{eq:TamedEulerforErgodicity} (defined below), i.e.,  we demonstrate the existence of a unique invariant measure in each of these cases.
  Moreover, we show that the invariant measure of the tamed Euler scheme converges to the invariant measure of IPS \eqref{IPS} in the Wasserstein $W^{(2)}$-distance when the mesh size tends to zero. 

The results in Sections \ref{subsec:Ergod-MV-SDE} and \ref{subsec:Ergod-IPS}, for the MV-SDE \eqref{Mckean} and IPS \eqref{IPS} respectively, are inspired by the methodology from \cite{chen2025} and at points the presentation is streamlined. The results in Section \ref{sec:ergodicity tamed Euler scheme} establishing the ergodicity of the tamed Euler scheme, are to the best of our knowledge novel to the literature (even to less non-superlinear growth settings than those studied in this work) and the methodology combines \cite{chen2025}, \cite{bao2024geometric} and to a lesser extent \cite{yuanping2024explicit}. 
Lastly, in terms of presentation of our results, the previous sections hold only for $T<\infty$, thus part of the work here involves modifying the assumptions in order to allow $T\to \infty$. 
\smallskip

\textit{For presentation purposes}, we write this section in a self contained fashion with its own assumptions that involve a unavoidable repetition of the earlier assumptions but with sharper constant dependencies. 
For instance, Assumptions \ref{as:X0:erg}, \ref{as:Er Sol}  and \ref{as:Er Sol mon} are a sharpened, and slightly stronger, version of the well-posedness and PoC rate assumptions in Sections \ref{sec:main:welpos} and \ref{sec:IPS-and-PoC}. The ergodicity results for the taming scheme are presented in Section \ref{sec:ergodicity tamed Euler scheme} with their own version of the assumptions.

\subsection{Ergodicity of the McKean--Vlasov SDE}
\label{subsec:Ergod-MV-SDE}
To study the ergodic property of MV-SDE \eqref{Mckean} we take inspiration from \cite{Wang2018-DDSDE-Landau-type} and \cite{chen2025}, and   make the following assumptions. 
Just as in Sections \ref{sec:main:welpos} and \ref{sec:IPS-and-PoC}, take $q>0$, $\ell>2(q+1)$ and $p_0 \geq \ell$; note that $\ell>2$. In contrast to Sections \ref{sec:main:welpos}  and \ref{sec:IPS-and-PoC}, our arguments require a new running parameter $\ell$ relating (as $p_0$) to the integrability of the initial distribution and whose role will be made clear below. 
To make the section more self-contained, we reproduce Assumption \ref{assum_initial} as Assumption \ref{as:X0:erg}. 
\begin{assumption}
\label{as:X0:erg}
$\mathbb E |X_0|^{p_0} < \infty$. 
\end{assumption}
\begin{assumption}\label{as:Er Sol}
The function $f$ satisfies $f(x,y)= -f(y,x)$ for all $x, y \in \mathbb R^d$. 
Also, there exist constants $\hat{L}_{b\sigma}^{(1)},\hat{L}_{fg}^{(1)}, \hat{L}_{f}^{(1)} \in \mathbb R $ and $\hat{L}_{b\sigma} ,\hat{L}_{b\sigma}^{(2)}, \hat{L}_{fg} >0$ such that
    \begin{align*}
        \langle x,b(t,x,\mu)\rangle  + (p_0-1)|\sigma(t,x,\mu)|^2 & \leq \hat{L}_{b\sigma} +\hat{L}_{b\sigma}^{(1)}|x|^2 + \hat{L}_{b\sigma}^{(2)} W^{(2)} (\mu,\delta_0)^2,   
        \\
        (|x|^{p_0-2} - |y|^{p_0-2}) \langle x+y,f(x,y) \rangle & \leq \hat{L}_{f}^{(1)} (|x|^{p_0}+|y|^{p_0}),
        \\
        \langle x -y,f(x,y) \rangle + 2 (p_0-1)|g(x,y)|^2 & \leq \hat{L}_{fg} + \hat{L}_{fg}^{(1)} |x-y|^2,
    \end{align*}
    for all $t \in [0,\infty),x,y \in \mathbb R^d$ and $\mu \in \mathcal{P}_2(\mathbb{R}^2)$.
\end{assumption}
\begin{assumption}{\label{as:Er Sol mon}}
    There exist constants $L_{b\sigma}^{(1)}$, $L_{fg}^{(1)} \in \mathbb R$  and $L_{b\sigma}^{(2)} >0$ such that  
    \begin{align*}
       \langle x-x',b(t,x,\mu)-b(t,x',\mu')\rangle  + |\sigma(t,x,\mu)-\sigma(t,x',\mu')|^2  \leq &\ L_{b\sigma}^{(1)}|x-x'|^2 + L_{b\sigma}^{(2)} W^{(2)} (\mu,\mu')^2,  
       \\
         \langle (x-x') -(y-y'),f(x,y) - f(x',y') \rangle + {(p_0-1)} |g(x,y)-g(x',y')|^2 \leq &\  L_{fg}^{(1)} |(x-x')-(y-y')|^2,
\\
         |f(x,y) -  f(x',y')|^2  \leq  \, L_{f}^{(1)} \big(1+|x-y|^{2q} & + |x'-y'|^{2q} \big)|(x-x')-(y-y')|^2,
\end{align*} 
     for all $t \in [0,\infty)$, $x,x',y,y' \in \mathbb R^d$ and $\mu,\mu' \in \mathcal{P}_2(\mathbb R^d)$.
     The maps $b$ and $\sigma$ are jointly continuous. 
\end{assumption}
The following theorem establishes the ergodic properties of MV-SDE \eqref{Mckean} under the set of assumptions mentioned above. 
For this purpose, the solution of  MV--SDE \eqref{Mckean} with initial value $X_0$ is denoted by $X:=\{X_t\}_{t \geq 0}$ and the flow of its marginal laws by $\{P_{t} \mu_0\}_{t\geq 0} := \{\mu_t^{X}\}_{t\geq 0}$.   
Here,  $\mu_0:=\mu_0^{X}$ is the law of $X_0$.  
Moreover, if the initial law is $\nu_0$, then $Y:=\{Y_t\}_{t \geq 0}$ and $\{P_{t} \nu_0\}_{t\geq 0} := \{\mu_t^{Y}\}_{t\geq 0}$ denote the solution of MV--SDE \eqref{Mckean} and the flow of its marginal laws, respectively. 
\begin{theorem}{\label{Th:Erog sol}}
Take $q>0$, $\ell>2(q+1)$ and some $p_0 \geq \ell$ with $p_0\geq 5$. 
Let Assumptions \ref{as:X0:erg},  \ref{as:Er Sol} and \ref{as:Er Sol mon} be satisfied. 
Then, the following statements are true. 
    \newline
    \textnormal{(A)} Set $\rho_1 := \ell\big\{\hat{L}_{b\sigma}^{(1)}+\hat{L}_{b\sigma}^{(2)}+ 2\hat{L}_{fg}^{(1)+} +\frac{\hat{L}_{f}^{(1)}}{2}+\frac{3(\ell-2)}{2\ell}\big\}$.  If $\mu_0 \in \mathcal{P}_\ell(\mathbb R^d)$,  then for every $t \in [0,\infty)$,   
    \begin{align*}
        W^{(\ell)} (P_{t} \mu_0, \delta_0)^\ell \leq e^{{\rho_1}t} W^{(\ell)} (\mu_0,\delta_0)^\ell + \frac{2 (\hat{L}_{b\sigma})^{\ell/2}(e^{\rho_1 t} -1)}{\rho_1} \mathbbm{1}_{{\{\rho_1} \neq 0\}} + \bigl(2 (\hat{L}_{b\sigma})^{\ell/2} + (\hat{L}_{fg})^{\ell/2} \bigl) t  \mathbbm{1}_{{\{\rho_1}=0\}}.
    \end{align*}
    Moreover, if $\rho_1<0$, then $\displaystyle \sup_{t \in [0, \infty)}\mathbb E|X_t|^{\ell} \leq K$ for some constant $K>0$.  
    \newline 
   \textnormal{(B)}  
   Set $\rho_2 := 2L_{b\sigma}^{(1)} +4L_{fg}^{(1)+}+4L^{(2)}_{b \sigma}+1$.
   Then,  for every  $\mu_0,\nu_0 \in \mathcal{P}_\ell(\mathbb R^d)$ and $t \in [0, \infty)$, the following statements are true:
        \begin{align*}
             W^{(2)}(P_{t} \mu_0, P_{t} \nu_0)^2  &\leq 6 e^{\rho_2 t} \bigl(W^{(2)}(\mu_0,\delta_0)^2 + W^{(2)}(\delta,\nu_0)^2 \bigl).
        \end{align*}
\noindent
   \textnormal{(C)} Let $b$ and $\sigma$ be independent of time and choose $\rho_1 <0$ and $\rho_2 <0$ as defined above in statements    \textnormal{(A)} and    \textnormal{(B)}, respectively, under the additional restriction that $p_0,\ell$ satisfy: $2\ell -2 > 2(q+1)$ and $p_0 \geq  2\ell -2$.
   Then there exists a unique invariant probability measure $\bar \mu \in \mathcal{P}_{\ell}(\mathbb R^d)$ to MV--SDE \eqref{Mckean} in the following sense: for any $r\in[2,\ell]$, we have 
    \begin{align*}
         W^{(r)} (P_{t} \bar \mu, \bar \mu) =0 \quad \mbox{for all } t \in [0, \infty) \qquad \mbox{ and }\qquad 
         \textrm{for all } \hat{\nu} \in \mathcal{P}_{2\ell-2}(\mathbb R^d) \quad    \lim_{t \rightarrow \infty} W^{(r)} (P _{t} \hat{\nu}, \bar \mu) =0. 
        \end{align*}    
\end{theorem}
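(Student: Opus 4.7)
The plan is to apply It\^o's formula to $|X_t|^\ell$ in (A) and to $|X_t-Y_t|^2$ in (B), in each case reducing the Vlasov kernel contributions to expressions matching Assumption \ref{as:Er Sol} (resp.~\ref{as:Er Sol mon}) via symmetrisation against an independent copy. For (A), the $b,\sigma$ drift is immediately bounded by Assumption \ref{as:Er Sol}'s first inequality since $W^{(2)}(\mu_s^X,\delta_0)^2=\mathbb{E}|X_s|^2$. For the $f$-contribution, write $\mathbb{E}[|X_s|^{\ell-2}\langle X_s,\int f(X_s,y)\mu_s^X(dy)\rangle]=\mathbb{E}[|X_s|^{\ell-2}\langle X_s,f(X_s,Y_s)\rangle]$ with $Y_s$ an i.i.d.~copy of $X_s$; by exchangeability and the anti-symmetry $f(y,x)=-f(x,y)$, this equals $\tfrac12\mathbb{E}[\langle |X_s|^{\ell-2}X_s-|Y_s|^{\ell-2}Y_s,f(X_s,Y_s)\rangle]$, which via the identity $|X|^{\ell-2}X-|Y|^{\ell-2}Y=\tfrac12(|X|^{\ell-2}+|Y|^{\ell-2})(X-Y)+\tfrac12(|X|^{\ell-2}-|Y|^{\ell-2})(X+Y)$ decomposes into exactly the two pairings controlled by the second and third inequalities of Assumption \ref{as:Er Sol}. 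The $|g|^2$ part of the Brownian correction is estimated by Jensen together with the $g$-bound from the same assumption. Young's inequality and Gronwall's lemma then close (A), from which $\sup_t\mathbb{E}|X_t|^\ell\le K$ under $\rho_1<0$ follows trivially.

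\textbf{Part (B).} Couple $X,Y$ through the same Brownian motion and apply It\^o to $|X_t-Y_t|^2$. The $b,\sigma$ terms are controlled by Assumption \ref{as:Er Sol mon}'s first inequality together with $W^{(2)}(\mu_t^X,\mu_t^Y)^2\le\mathbb{E}|X_t-Y_t|^2$. For the Vlasov terms, introduce an independent copy $(\tilde X_t,\tilde Y_t)$ of $(X_t,Y_t)$ and write $\int f(X_t,y)\mu_t^X(dy)-\int f(Y_t,y)\mu_t^Y(dy)=\mathbb{E}[f(X_t,\tilde X_t)-f(Y_t,\tilde Y_t)\mid X_t,Y_t]$; the same exchangeability-plus-anti-symmetry manipulation as in (A) pairs $(X_t-Y_t)-(\tilde X_t-\tilde Y_t)$ with $f(X_t,\tilde X_t)-f(Y_t,\tilde Y_t)$, which is then bounded by Assumption \ref{as:Er Sol mon}'s second inequality. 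Conditional Jensen handles the $|g(X_t,\tilde X_t)-g(Y_t,\tilde Y_t)|^2$ contribution via $|\mathbb{E}[\cdot\mid X_t,Y_t]|^2\le\mathbb{E}[|\cdot|^2\mid X_t,Y_t]$. The non-standard form $6\,e^{\rho_2 t}(W^{(2)}(\mu_0,\delta_0)^2+W^{(2)}(\delta_0,\nu_0)^2)$ rather than $e^{\rho_2 t}W^{(2)}(\mu_0,\nu_0)^2$ arises because the symmetrisation leaves residual terms controllable only by the individual second moments of the two flows, and the constant $6$ appears via $W^{(2)}(\mu_0,\nu_0)^2\le 2(W^{(2)}(\mu_0,\delta_0)^2+W^{(2)}(\delta_0,\nu_0)^2)$ plus routine bookkeeping.

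\textbf{Part (C).} Uniqueness: if $\bar\mu,\bar\mu'\in\mathcal{P}_\ell$ are both invariant, (B) gives $W^{(2)}(\bar\mu,\bar\mu')^2=W^{(2)}(P_t\bar\mu,P_t\bar\mu')^2\le 6e^{\rho_2 t}(\cdots)\to 0$ as $t\to\infty$ since $\rho_2<0$ and the bracketed moments are finite. Existence: apply (B) with $\nu_0=P_{t-s}\mu_0$, using $P_s P_{t-s}\mu_0=P_t\mu_0$, to get $W^{(2)}(P_s\mu_0,P_t\mu_0)^2\le 6e^{\rho_2 s}(W^{(2)}(\mu_0,\delta_0)^2+W^{(2)}(\delta_0,P_{t-s}\mu_0)^2)$; Part (A) with $\rho_1<0$ bounds the second factor uniformly in $t$, so $(P_t\mu_0)_t$ is $W^{(2)}$-Cauchy with limit $\bar\mu\in\mathcal{P}_\ell$ (invariance follows from the $W^{(2)}$-continuity of the flow, itself another application of (B)). To upgrade convergence from $W^{(2)}$ to $W^{(r)}$ for $r\in[2,\ell]$ and $\hat\nu\in\mathcal{P}_{2\ell-2}$, interpolate: for any coupling $\pi$ of $P_t\hat\nu$ and $\bar\mu$, H\"older gives $\int|x-y|^r\pi(dx,dy)\le(\int|x-y|^2\pi)^{\theta}(\int|x-y|^{2\ell-2}\pi)^{1-\theta}$ with $\theta=(2\ell-2-r)/(2\ell-4)\in[0,1]$; picking $\pi$ optimal for $W^{(2)}$ and invoking Part (A) with $\ell$ replaced by $2\ell-2$ (which is allowed because $p_0\ge 2\ell-2$) renders the $(2\ell-2)$-moment factor uniformly bounded, while the $W^{(2)}$-factor decays. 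The principal obstacle across all three parts is the symmetrisation itself: anti-symmetry and an independent copy are essential to convert a ``single-integral'' Vlasov expression into one expressed in the increments $(X_s-Y_s,f(X_s,Y_s))$, and the fact that two distinct exponents $\ell$ and $2\ell-2$ must \emph{both} be accommodated by the same moment hypothesis on $p_0$ is what dictates the integrability condition $p_0\ge 2\ell-2$ in (C).
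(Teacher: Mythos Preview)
Your arguments for (A) and (C) match the paper's closely. For (B), however, you take a genuinely different route. The paper does \emph{not} couple the two MV--SDE solutions directly; instead it passes through the interacting particle system,
\[
\mathbb{E}|X_t^i - Y_t^i|^2 \;\leq\; 3\bigl(\mathbb{E}|X_t^i - X_t^{i,N}|^2 + \mathbb{E}|X_t^{i,N} - Y_t^{i,N}|^2 + \mathbb{E}|Y_t^i - Y_t^{i,N}|^2\bigr),
\]
proves a time-uniform propagation-of-chaos estimate for the two outer terms (so they vanish as $N\to\infty$), and bounds the middle term by It\^o's formula at the IPS level, where the empirical-measure structure turns the anti-symmetry manipulation into a finite double-sum identity rather than an independent-copy argument. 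This detour is precisely the origin of the factor $6=3\times 2$ and of the ``$+1$'' in $\rho_2$ (the latter enters via a Young inequality inside the PoC bound), and it explains why the stated estimate is in terms of $W^{(2)}(\mu_0,\delta_0)^2+W^{(2)}(\delta_0,\nu_0)^2$ rather than $W^{(2)}(\mu_0,\nu_0)^2$. Your direct approach via an independent copy $(\tilde X,\tilde Y)$ of the synchronously coupled pair $(X,Y)$ is valid and actually yields the sharper bound $\mathbb{E}|X_t-Y_t|^2 \le e^{\rho_2' t}\,\mathbb{E}|X_0-Y_0|^2$ for some $\rho_2'\le\rho_2$, from which the theorem's inequality follows a fortiori; your rationalisation of the factor $6$ as arising from ``residual terms controllable only by the individual second moments'' is therefore off --- in the direct route no such residuals appear. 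The paper's IPS detour, while less economical for (B) in isolation, is chosen because the intermediate estimates are reused verbatim in the subsequent IPS ergodicity analysis (Theorem~\ref{Th:Erog IPS}).
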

\begin{proof}
(A) 
Using It\^o's formula, Assumption \ref{as:Er Sol},  Lemma \ref{lem:sym-gwt:erg} and Young'{s} inequality, one has
\begin{align*}
   & e^{-\rho_1 t}  \mathbb E |X_t|^\ell 
   \\
   & \leq \,  \mathbb E |X_0|^{\ell} - \rho_1 \mathbb E \int_{0}^t e^{-\rho_1 s} |{X}_s|^{\ell} ds   
   + \ell \, \mathbb E \int_{0}^t e^{-\rho_1 s} |{X}_s|^{\ell-2}  \bigl\{ \big\langle {X}_s, b (s,{X}_s,{\mu}_s^X) \big\rangle + (\ell-1) \big|\sigma (s,{X}_s,{\mu}_s^X) \big|^2 \bigl\} ds
           \\
  & \quad +  \ell \, \mathbb E \int_{0}^t e^{-\rho_1 s} |{X}_s|^{\ell-2} \Big\{ \Big\langle {X}_s, \int_{\mathbb R^d} f ({X}_s,y) \mu_s^X(dy) \Big\rangle + (\ell-1) \int_{\mathbb R^d} \bigl|g ({X}_s,y) \big|^2 \mu_s^X(dy)\Big\}ds
\\
 & \leq  \mathbb E |X_0|^{\ell} - \rho_1 \mathbb E \int_{0}^t e^{-\rho_1 s} |{X}_s|^{\ell} ds +  \ell \,  \mathbb E \int_{0}^t e^{-\rho_1 s} |{X}_s|^{\ell-2} \big\{\hat{L}_{b\sigma} + \hat{L}_{b\sigma}^{(1)}|{X}_s|^2 + \hat{L}_{b\sigma}^{(2)} W^{(2)} (\mu_s^X,\delta_0)^2 \big\}
     \\
     & \quad + \ell \int_{0}^t \int_{\mathbb R^d} \int_{\mathbb R^d} e^{-\rho_1 s} |x|^{\ell-2} \big\{ \langle x,f(x,y) \rangle + (\ell -1) |g(x,y)|^2 \big\} \mu_s^X(dx)\mu_s^X(dy)ds
\\
  & \leq  W^{(\ell)}(\mu_0, \delta_0)^\ell + \Big(- \rho_1 +\Big\{\hat{L}_{b\sigma}^{(1)}+\hat{L}_{b\sigma}^{(2)}+ 2(\hat{L}_{fg}^{(1)})^+ +\frac{\hat{L}_{f}^{(1)}}{2}+\frac{3(\ell-2)}{2\ell}\Big\}\ell\Big)  \mathbb E \int_{0}^t e^{-\rho_1 s} |{X}_s|^{\ell} ds 
  \\
  & \quad + \bigl(2 (\hat{L}_{b\sigma})^{\ell/2} + (\hat{L}_{fg})^{\ell/2} \bigl) \int_0^t e^{-\rho_1 s} ds
\end{align*}
for all $t \in [0,T]$, and performing calculations similar to those above with $\rho_1 =0$ completes the proof.
It can also be noted from the above inequality that the bounds of $\ell$-th moment of the solution process does not depend on time $t$. 
\newline

\noindent 
\noindent
(B) Recall that $\{X_t\}_{t \geq 0}$ and $\{Y_t\}_{t \geq 0}$  are solutions of MV-SDE \eqref{Mckean} with initial values $X_0 \sim \mu_0$ and $Y_0 \sim \nu_0$, respectively,  where $\mu_0,\nu_0 \in \mathcal{P}_\ell (\mathbb R^d)$ with $\ell>2(q+1)$. 
In addition, their IPS are $\{X_t^{i,N}\}_{t \geq 0}$ and $\{Y_t^{i,N}\}_{t \geq 0}$, respectively.
Then, one observes that 
\begin{align}
    W^{(2)}(P_{t} \mu_0, P_{t} \nu_0)^2  \leq &\, \mathbb E |X_t^i - Y_t ^i|^2 \leq  3 \{\mathbb E |X_t^i-X_t^{i,N}|^2 + \mathbb E |X_t^{i,N}-Y_t^{i,N}|^2 + \mathbb E |Y_t^i-Y_t^{i,N}|^2\} {\label{E:1+E:2+E:3}}
\end{align}
for all $t \in [0,T]$. 
For the first term on the right side of Equation \eqref{E:1+E:2+E:3}, applying It\^o\textquotesingle{s} formula  and performing similar calculations as done in Equation \eqref{eq:ito:poc:dd}  and then using Assumption \ref{as:Er Sol mon}, one obtains  
\begin{align*}
   \frac{1}{N}\sum_{i=1}^N \mathbb E e^{-\rho_2 t }|X_t^i-X_t^{i,N}|^2  \leq &  \, \big\{2L_{b\sigma}^{(1)} +4L_{fg}^{(1)+}+1 - \rho_2\big\}  \int_0^t e^{-\rho_2 s }  \frac{1}{N}\sum_{i=1}^N \mathbb E |X_s^i-X_s^{i,N}|^2 ds 
   \\
   &  +  2L_{b\sigma}^{(2) }\int_0^t e^{-\rho_2 s } \frac{1}{N}\sum_{i=1}^N \mathbb E W^{(2)} (\mu_s^{X^i}, \mu_s^{X,N})^2 ds
   \\
   &   + \frac{1}{N} \sum_{i=1}^N \int_0^t  e^{-\rho_2 s } \mathbb E \Bigl | \frac{1}{N}\sum_{j=1}^N \int_{\mathbb R^d} (f(X_s^i , x)  -  f({X}^{i}_{s},{X}^{j}_{s}) ) \mu_s^{X^i}(dx) \Bigl |^2  ds
   \\
    &  + \frac{4}{N^3} \sum_{i=1}^N  \int_0^t e^{-\rho_2 s } \Bigl | \sum_{j=1}^N \mathbb E\int_{\mathbb R^d} (g(X_s^i , x)  -  g({X}^{i}_{s},{X}^{j}_{s}) ) \mu_s^{X^i}(dx) \Bigl |^2 ds
\end{align*}
for all $t \in [0,T]$. 
To estimate the third and fourth terms on the right side of the above equation, one perform similar calculations as done Equations \eqref{eq:f:rate} and \eqref{eq:g:rate}. 
However,  constants $K>0$ appearing therein do not depend on time $t$ due to part (A) of this theorem.  
Thus, 
\begin{align*}
 e^{-\rho_2 t }  \frac{1}{N}\sum_{i=1}^N \mathbb E |X_t^i-X_t^{i,N}|^2  \leq &  \, \big\{2L_{b\sigma}^{(1)} +4L_{fg}^{(1)+} +4L_{b\sigma}^{(2) } +1 - \rho_2\big\}  \int_0^t e^{-\rho_2 s }  \frac{1}{N}\sum_{i=1}^N \mathbb E |X_s^i-X_s^{i,N}|^2 ds 
   \\
   &  +  4L_{b\sigma}^{(2) }\int_0^t e^{-\rho_2 s }  \mathbb E W^{(2)} \Bigl(\frac{1}{N}\sum_{j=1}^N \delta_{X_s^j}, \mu_s ^{X} \Bigl)^2 ds
  + \frac{K}{N} \int_0^t  e^{-\rho_2 s } ds 
\end{align*}
where $K>0$  depends only on $L_f^{(1)}$ and $L_{fg}^{(1)}$. 
Moreover,  as $\rho_2 = 2L_{b\sigma}^{(1)} +4L_{fg}^{(1)+}+4L^{(2)}_{b \sigma}+1$ and the particles are exchangeable, one has 
\begin{align*}
         \mathbb E |X_t^i-X_t^{i,N}|^2 & \leq   4L_{b\sigma}^{(2) }\int_0^t e^{-\rho_2 (t-s) }  \mathbb E W^{(2)} \Bigl(\frac{1}{N}\sum_{j=1}^N \delta_{X_s^j}, \mu_s ^{X} \Bigl)^2 ds + \frac{K (e^{\rho_2 t}-1)}{N \rho_2} 
\end{align*}
which by using \eqref{rate:empi}
 leads to 
\begin{align}{\label{E:1}}
    \lim_{N \to \infty}  \mathbb E |X_t^i-X_t^{i,N}|^2 =0
\end{align}
for all $t \in [0,T]$.

For the estimation of second term on the right side of Equation \eqref{E:1+E:2+E:3},  apply It\^o\textquotesingle{s} formula to write, 
\begin{align*}
    e^{-\rho_2 t}\mathbb E|X_t ^{i,N}   - Y_t ^{i,N}|^2 = &\, \mathbb E|X_0 ^{i}   - Y_0 ^{i}|^2 -\rho_2  \int_0 ^t e^{-\rho_2 s} \mathbb E |X_s ^{i,N}   - Y_s ^{i,N}|^2 ds
    \\
    &  + 2 \mathbb E \int_0 ^t e^{-\rho_2 s}\big\{\langle X_s ^{i,N}   - Y_s ^{i,N}, b(s,X_s ^{i,N}, \mu_s^{X,N}) - b(s,Y_s ^{i,N}, \mu_s^{Y,N})\rangle 
    \\
    & \qquad + | \sigma(s,X_s ^{i,N}, \mu_s^{X,N}) - \sigma(s,Y_s ^{i,N}, \mu_s^{Y,N})|^2 \big\} ds
    \\
    &  + 2 \mathbb E \int_0 ^t e^{-\rho_2 s}\Big\langle X_s ^{i,N}   - Y_s ^{i,N}, \frac{1}{N} \sum_{j=1}^N \big\{f(X_s ^{i,N},X_s ^{j,N})-f(Y_s ^{i,N},Y_s ^{j,N})\big\}\Big\rangle ds 
    \\
    &  +2  \mathbb E \int_0^t e^{-\rho_2 s} \Bigl |\frac{1}{N} \sum_{j=1}^N \big\{g(X_s ^{i,N},X_s ^{j,N})-g(Y_s ^{i,N},Y_s ^{j,N})\big\} \Bigl |^2 ds
\end{align*}
which on using Assumption \ref{as:Er Sol mon} and Jensen\textquotesingle{s} inequality yields
\begin{align*}
&   e^{-\rho_2 t} \frac{1}{N} \sum_{i=1}^N \mathbb E|X_t ^{i,N}   - Y_t ^{i,N}|^2 
\\
&
\leq  \, \frac{1}{N} \sum_{i=1}^N \mathbb E|X_0 ^{i}   - Y_0 ^{i}|^2 + \{2L_{b\sigma}^{(1)}+2L_{b\sigma}^{(2)}- \rho_2 \} \int_0 ^t e^{-\rho_2 s} \frac{1}{N} \sum_{i=1}^N  \mathbb E |X_s ^{i,N}   - Y_s ^{i,N}|^2 ds
    \\
   & \qquad  + \frac{1}{N^2} \sum_{i,j=1}^N 
   \, \mathbb E \int_0 ^t e^{-\rho_2 s}  \big\{ \langle (X_s ^{i,N}   - Y_s ^{i,N})-(X_s ^{j,N}   - Y_s ^{j,N}),f(X_s ^{i,N},X_s ^{j,N})-f(Y_s ^{i,N},Y_s ^{j,N}) \rangle 
    \\
    & \qquad + 2 |g(X_s ^{i,N},X_s ^{j,N})-g(Y_s ^{i,N},Y_s ^{j,N})|^2 \big\} ds
    \\
    & \leq   \, \frac{1}{N} \sum_{i=1}^N \,\mathbb E|X_0 ^{i}   - Y_0 ^{i}|^2 + \{2L_{b\sigma}^{(1)}+2L_{b\sigma}^{(2)}+4L_{fg}^{(1)+}- \rho_2 \} \int_0 ^t e^{-\rho_2 s} \frac{1}{N} \sum_{i=1}^N  \mathbb E |X_s ^{i,N}   - Y_s ^{i,N}|^2 ds
    \\
  &  \leq  \,  \frac{1}{N} \sum_{i=1}^N  \mathbb E|X_0 ^{i}   - Y_0 ^{i}|^2 .
\end{align*}
 Now using the exchangeability of the particles we obtain 
    \begin{align}\label{E:2}
        \mathbb E|X_t ^{i,N}   - Y_t ^{i,N}|^2 
       \leq e^{\rho_2 t} \mathbb  E|X_0 ^{i} - Y_0 ^{i}|^2 \leq 2 e^{\rho_2 t} \bigl(\mathbb  E|X_0 ^{i}|^2 +  \mathbb  E| Y_0 ^{i}|^2 \bigl)
    \end{align}
for all $t \in [0,T]$.
By following similar calculations as in \eqref{E:1}, one can get
\begin{align}{\label{E:3}}
   \lim_{N  \to \infty} \mathbb E|Y_t^i -Y_t ^{i,N}|^2 = 0
\end{align}
for all $t \in [0,T]$.
Thus, by gathering the estimates \eqref{E:1}, \eqref{E:2} and \eqref{E:3} into Equation \eqref{E:1+E:2+E:3} we conclude the part (B) of the theorem.
\newline \newline \noindent
(C) We comment first on the constants $q,\ell,p_0$ that were initially assumed $q>0$, $\ell>2(q+1)$ and some $p_0 \geq \ell$; note that since $q>0$ it follows naturally $\ell>2$. The additional restriction $2\ell -2 > 2(q+1)$ is stronger than $\ell>2(q+1)$ since $\ell>2$ at least. The condition $p_0 \geq  2\ell -2$ suffices for $P_{\cdot}\mu_0$ to make sense for any $\mu_0\in \mathcal{P}_{(2\ell -2)} (\mathbb R^d) $. The reason for this is laid bare in \eqref{eq:l:drop}. 

First, notice that for any $\mu_0 \in \mathcal{P}_\ell(\mathbb R^d)$ we have that $ P_{t+s}\mu_0= P_{t}( P_{s}\mu_0)$ hold as $b$ and $\sigma$ are independent of time. Also, due to  statement (B), one can obtain,  
\begin{align*}
     W^{(2)} (P_{t}\mu_0, P_{t+s}\mu_0) 
     & 
     =   W^{(2)} \big ( P_{t}\mu_0, P_{t}(P_{s}\mu_0) \big)
     \leq  \sqrt{6} e^{\rho_2 t /2} \sup_{s \geq 0} \bigl(W^{(2)} (\mu_0,\delta_0) + W^{(2)}(\delta_0,P_{s}\mu_0) \bigl) < \infty
\end{align*}
where the last inequality holds due to statement (A)  with $\rho_1 <0$ and hence, following from $\rho_2<0$ we have 
\begin{align*}
\lim_{t \to \infty} \sup_{s \geq 0}    W^{(2)} (P_{t}\mu_0, P_{t+s}\mu_0) =0.
\end{align*}
Since $(\mathcal{P}_2(\mathbb R^d),W^{(2)})$ is a complete space, therefore, there exists a $\mu \in \mathcal{P}_2(\mathbb R^d)$ such that 
\begin{align*}
    \lim_{t \rightarrow \infty}W^{(2)} (P_{t}\mu_0,\mu)=0\qquad \textrm{for all $\mu_0 \in \mathcal{P}_\ell (\mathbb R^d)$. }
\end{align*}

\color{black}
With $\mu \in \mathcal{P}_2(\mathbb R^d)$, we do not have sufficient integrability to ensure that $P_t \mu$ makes sense via the well-posedness Theorem \ref{E-U} (and its assumption \ref{assum_initial} with $p_0>2(q+1)$). 
We next show that $\{P_t \mu_0\}_{t \geq 0}$ is Cauchy in $(\mathcal{P}_\ell(\mathbb R^d),W^{(\ell)})$ and for that we increase the integrability of $\mu_0$ as to leverage a Cauchy-Schwarz inequality argument. Concretely, by the Cauchy-Schwarz inequality we have for $s>0$
\begin{align}
\nonumber 
     W^{({\ell})} \big(P_{t}\mu_0, P_{t+s}\mu_0\big)^{{\ell}}
    = W^{({\ell})} \big( P_{t}\mu_0, P_{t}(P_s\mu_0) \big)^{{\ell}} 
    & \leq \mathbb E |X_t - Y_t|^{{\ell}}
    \\
    &
    \leq \Big (\mathbb E |X_t - Y_t|^{2} \mathbb E |X_t - Y_t|^{2{\ell}-2} \Big)^{1/2} 
    \leq K e^{\rho_2 t /2} 
    \label{eq:l:drop}
\end{align}
where $\{X_t \}_{t \geq 0}$ and $\{Y_t \}_{t \geq 0}$ are the solutions of MV--SDE \eqref{Mckean} with initial laws $\mu_0$ and $P_s\mu_0$, respectively, and with  $\mu_0,P_s\mu_0 \in\mathcal{P}_{2\ell -2}(\mathbb R^d)$. Hence, $\{P_t \mu_0\}_{t \geq 0}$ is Cauchy in $(\mathcal{P}_\ell(\mathbb R^d),W^{(\ell)})$ which implies $\bar \mu \in \mathcal{P}_\ell(\mathbb R^d)$.
Further, \cite[Remark 6.12]{villani2009OT} yields,  
\begin{align*}
    W^{(2)} (P_s\bar \mu,\bar \mu)
    & 
    \leq \liminf_{t \rightarrow \infty} W^{(2)} \bigl(P_s(P_t\mu_0),P_t\mu_0\bigl) 
    \leq 
    \liminf_{t \rightarrow \infty}     W^{(2)} (P_{t+s}\mu_0,\bar \mu) 
    + \liminf_{t \rightarrow \infty}   W^{(2)} \bigl(\bar \mu, P_t\mu_0\bigl) =  0
\end{align*}
for all $s \geq 0$. 
Thus, $\bar \mu$ is an invariant measure of MV--SDE \eqref{Mckean}. 
To close the proof, for any $\hat{\nu} \in \mathcal{P}_{2\ell-2}(\mathbb R^d)$  we have from the earlier results that 
\begin{align*}
    W^{(2)} (\bar{P}_t^N \hat{\nu},\bar \mu)  
    \leq 
    W^{(2)} (\bar{P}_t^N \hat{\nu},\bar{P}_t^N\bar \mu) 
     +  W^{(2)} (\bar{P}_t^N \bar \mu, \bar \mu)
     \leq 
     \sqrt{6} e^{\rho_2 t /2} \bigl(W^{(2)}(\hat{\nu},\delta_0)^2 + W^{(2)}(\delta_0,\bar \mu)^2 \bigl).
\end{align*}
In turn, this leads to 
\begin{align*}
    \lim_{ t \to \infty} W^{(2)} (P_t\hat{\nu},\bar  \mu)  =0\qquad \textrm{for any $\hat{\nu} \in \mathcal{P}_{2\ell-2}(\mathbb R^d)$.}
\end{align*}
Thus, the invariant measure $\bar \mu$ is also the ergodic limit of the marginal flows of MV-SDE \eqref{Mckean}. 
The arguments just used in combination with those used for \eqref{eq:l:drop} allow to straightforward establish the convergence and invariance in $W^{(r)}$ for any $r\in[2,\ell]$.   
\end{proof}
\subsubsection{The Vlasov kernel case}
In the arguments of the proof above, the invariant measure $\mu \in \mathcal{P}_{\ell} (\mathbb R^d)$ is found by starting the MV-SDE flow  $\hat{\nu}_0 \in \mathcal{P}_{2\ell -2} (\mathbb R^d)$.
This is happening due to the calculations involved in Equation \eqref{eq:l:drop}. 
However, if we consider the Vlasov kernel class discussed in Section \ref{sec:PoC-ind}, i.e., MV-SDE \eqref{Mckean} with coefficients given in Equation \eqref{eq:nu-barsig-inp}, then it can be shown that the invariant measure and $\hat{\nu}_0$ possess the $p_0$-th moment directly. 
The corollary below articulates this. Before stating it,  we make the following additional assumptions on the coefficients $\tilde{b}$ and $\tilde{\sigma}$ in line with the set up of Section \ref{sec:PoC-ind}. Concretely, 

\begin{assumption}{\label{as:ergo:DI:b:sig}}
There exist constants $\tilde{L}_{b\sigma}>0$, $\tilde{L}_{b\sigma}^{(2)} >0$ and $\tilde{L}_{b\sigma}^{(1)}$ $ \in \mathbb R$ such that
    \begin{align*}
        \langle x,\tilde{b}(t,x,y)\rangle  + (p_0-1)|\tilde{\sigma}(t,x,y)|^2 & \leq \tilde{L}_{b\sigma} + \tilde{L}_{b\sigma}^{(1)}|x|^2 + \tilde{L}_{b\sigma}^{(2)} |y|^2, 
    \end{align*}
    for all $t \in [0,\infty)$ and $x, y \in \mathbb R^d$. 
\end{assumption}

\begin{assumption}{\label{as:ergo:DI:mon:b:sig}}
There exist constants $\tilde{L}_{b\sigma}^{(4)}>0$, $\tilde{L}_b >0$ and $\tilde{L}_{b\sigma}^{(3)}$ $ \in \mathbb R$ such that
    \begin{align*}
        \langle x-x', \tilde{b}(t,x,y) - \tilde{b}(t,x',y') \rangle  +2(p_0-1) |\tilde{\sigma}(t,x,y) - \tilde{\sigma}(t,x',y')|^2 & \leq \tilde{L}_{b\sigma}^{(3)}|x-x'|^2+\tilde{L}_{b\sigma}^{(4)}|y-y'|^2,
         \\
         |\tilde{b}(t,x,y) - \tilde{b}(t,x,y')|  & \, \leq \tilde{L}_b |y-y'|,
    \end{align*}
    for all  $t \in [0,\infty)$ and $x, x', y, y' \in \mathbb R^d$.
\end{assumption}

\begin{assumption}{\label{as:ergo:DI:mon:f:g}}
There exist constants $\tilde{L}_{fg}^{(1)} \in \mathbb R$ and  $\tilde{L}_f^{(1)} >0$ such that
    \begin{align*}
         \langle (x  - y)-  (x'  - y'), f(x,y) -  f(x',y')  \rangle  + 4 (p_0-1) \, |  g(x,y) - g(x',y')  |^2   \leq  & \, \tilde{L}_{fg}^{(1)} |(x-y)-(x'-y')|^2, 
         \\
         (|x-x'|^{p_0-2}-|y-y'|^{p_0-2})  \langle  (x+y) -(x'+y'),f(x,y)  - f(x',y')\rangle \leq &\, \tilde{L}_{f}^{(1)}(|x-x'|^{p_0}+ |y-y'|^{p_0}), 
    \end{align*}
    for all $x$, $x'$, $y$,  $y' \in \mathbb R^d$.
\end{assumption}

\begin{corollary}{\label{Th:DI:Erog sol}}
Let $q>0$ and  $p_0 > \max\{2(q+1),5\}$. 
    Let Assumptions \ref{as:X0:erg}, \ref{as:Er Sol}, \ref{as:ergo:DI:b:sig}, \ref{as:ergo:DI:mon:b:sig}, \ref{as:ergo:DI:mon:f:g} be satisfied. 
    Then, the following statements hold. 
    \newline
    \textnormal{(A)} Let $\tilde\rho_1 := {p_0} \bigl(\tilde{L}_{b\sigma}^{(1)}+\tilde{L}_{b\sigma}^{(2)}+ 2(\hat{L}_{fg}^{(1)})^+ +\frac{\hat{L}_{f}^{(1)}}{2}+\frac{3({p_0}-2)}{2{p_0}}\bigl)$ and $\mu_0 \in \mathcal{P}_{p_0}(\mathbb R^d)$.
    Then for every $t \in [0,\infty)$,   
    \begin{align*}
        W^{({p_0})} (P_{t} \mu_0, \delta_0)^{p_0} \leq e^{{\tilde\rho_1}t} W^{({p_0})} (\mu_0,\delta_0)^{p_0} + \frac{2 (\tilde{L}_{b\sigma})^{{p_0}/2}(e^{\tilde\rho_1 t} -1)}{\tilde\rho_1} \mathbbm{1}_{{\rho_1} \neq 0} + \bigl(2 (\hat{L}_{b\sigma})^{\ell/2} + (\hat{L}_{fg})^{\ell/2} \bigl) t  \mathbbm{1}_{{\tilde\rho_1}=0}.
    \end{align*}
    Thus, if $\tilde\rho_1<0$, then $\displaystyle \sup_{t \in [0, \infty)}\mathbb E|X_t|^{{p_0}} \leq K$ where $K>0$ do not depend on time $t$.  
    \newline 
   \textnormal{(B)}  
   Let $\tilde\rho_2 := 2({p_0}-1) (2{p_0} -3) + \frac{{p_0}}{2} (2\tilde{L}_{b\sigma}^{(3)} + 2\tilde{L}_{b\sigma}^{(4)} + 4\tilde{L}_{fg}^{(1)+} + \tilde{L}_{f}^{(1)})$.
   Then,  for every  $\mu_0,\nu_0 \in \mathcal{P}_{p_0}(\mathbb R^d)$ and $t \in [0, \infty)$
        \begin{align*}
             W^{({p_0})}(P_{t} \mu_0, P_{t} \nu_0)^{p_0}  &\leq 2\cdot3^{{p_0} -1} e^{\tilde\rho_2 t} \bigl( W^{({p_0})}(\mu_0,\delta_0)^{p_0}+ W^{({p_0})}(\delta_0,\nu_0)^{p_0} \bigl).
        \end{align*}
        \newline 
   \textnormal{(C)} Let $\tilde b$ and $\tilde \sigma$ be independent of time, and $\tilde\rho_1 $ and $\tilde\rho_2 <0$
   Then,   there exists a unique probability measure $\tilde\mu \in \mathcal{P}_{p_0}(\mathbb R^d)$  of such that    
    \begin{align*}
            W^{({p_0})} (P_{t} \tilde\mu, \tilde\mu) =0 \quad \mbox{for all } t \in [0, \infty) \mbox{ and }  \lim_{t \rightarrow \infty} W^{({p_0})} (P _{t} \hat{\nu}, \tilde\mu) =0
        \end{align*}
    for all  $\hat{\nu} \in \mathcal{P}_{{p_0}}(\mathbb R^d) $.
\end{corollary}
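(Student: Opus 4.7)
My plan is to mirror the structure of the proof of Theorem \ref{Th:Erog sol} while exploiting the Vlasov kernel structure of Equation \eqref{eq:nu-barsig-inp} and, crucially, replacing the generic $W^{(2)}$ propagation of chaos step with the sharp dimension-free PoC result of Theorem \ref{dim-ind-poc}. The key payoff compared to Theorem \ref{Th:Erog sol}(C) is that the contraction in (B) holds directly at the $W^{(p_0)}$-level, so the Cauchy-Schwarz ``moment-drop trick'' in \eqref{eq:l:drop} becomes unnecessary and the invariant measure is reached from any initial law in $\mathcal{P}_{p_0}(\mathbb R^d)$ rather than $\mathcal{P}_{2\ell-2}(\mathbb R^d)$.

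For part (A), I apply It\^{o}'s formula to $e^{-\tilde\rho_1 t}|X_t|^{p_0}$ exactly as in the proof of Theorem \ref{Th:Erog sol}(A). The terms involving $\int \tilde b(t,X_t,y)\mu_t^X(dy)$ and $\int\tilde\sigma(t,X_t,y)\mu_t^X(dy)$ are dealt with by Jensen's inequality on the power $|\cdot|^2$ and the pointwise bound from Assumption \ref{as:ergo:DI:b:sig}, which yields contributions controlled by $\tilde L_{b\sigma}^{(1)}|X_t|^2+\tilde L_{b\sigma}^{(2)}\int|y|^2\mu_t^X(dy)$ and hence by $|X_t|^{p_0}$ after using Young's inequality and exchangeability $\int|y|^2\mu_t^X(dy)=\mathbb E|X_t|^2$. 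The kernel terms $\int f(X_t,y)\mu_t^X(dy)$ and $\int g(X_t,y)\mu_t^X(dy)$ are handled with Lemma \ref{lem:sym-gwt:erg} (the anti-symmetric-kernel bound used already in the proof of Theorem \ref{Th:Erog sol}(A)) which contributes the $2(\hat L_{fg}^{(1)})^+$ and $\hat L_f^{(1)}/2$ pieces of $\tilde\rho_1$. Gr\"onwall then produces the stated bound, and the sign $\tilde\rho_1<0$ gives the uniform moment.

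For part (B), I follow the triangle-decomposition used for Equation \eqref{E:1+E:2+E:3} but raised to $p_0$. Introducing the IPS started respectively from $\{X_0^i\}$ and from i.i.d.~copies of $\nu_0$, I write
\begin{align*}
\mathbb E|X_t^i-Y_t^i|^{p_0}\leq 3^{p_0-1}\bigl(\mathbb E|X_t^i-X_t^{i,N}|^{p_0}+\mathbb E|X_t^{i,N}-Y_t^{i,N}|^{p_0}+\mathbb E|Y_t^i-Y_t^{i,N}|^{p_0}\bigr).
\end{align*}
The outer two terms vanish as $N\to\infty$ by the sharp dimension-free PoC of Theorem \ref{dim-ind-poc} (whose hypotheses are precisely Assumptions \ref{as:ergo:DI:b:sig}--\ref{as:ergo:DI:mon:f:g} together with $f$ anti-symmetric), and the bound there is actually explicit at rate $N^{-p_0/2}$. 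For the middle IPS-to-IPS term, I apply It\^o's formula to $e^{-\tilde\rho_2 t}|X_t^{i,N}-Y_t^{i,N}|^{p_0}$, average over $i$, and use Assumption \ref{as:ergo:DI:mon:b:sig} (with the factor $(p_0-1)$ absorbing the $|\sigma|^2$ term from the $|\cdot|^{p_0}$ expansion) together with Assumption \ref{as:ergo:DI:mon:f:g} (the coercivity giving the $4(p_0-1)|g|^2$ contribution) and symmetrisation in $i,j$ to obtain contraction. The polynomial-monotonicity of $f$ in Assumption \ref{as:ergo:DI:mon:f:g} feeds the $\tilde L_f^{(1)}$ term via Lemma \ref{L_rate_x^i}. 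Collecting constants and using Young's inequality on cross terms reproduces exactly $\tilde\rho_2=2(p_0-1)(2p_0-3)+\tfrac{p_0}{2}(2\tilde L_{b\sigma}^{(3)}+2\tilde L_{b\sigma}^{(4)}+4\tilde L_{fg}^{(1)+}+\tilde L_f^{(1)})$, then exchangeability of the particles and Gr\"onwall close the estimate.

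For part (C), once (A) and (B) are in force with $\tilde\rho_1,\tilde\rho_2<0$, the semigroup property $P_{t+s}\mu_0=P_t(P_s\mu_0)$ (time-homogeneity) combined with (B) yields
\begin{align*}
W^{(p_0)}(P_t\mu_0,P_{t+s}\mu_0)^{p_0}\leq 2\cdot 3^{p_0-1}e^{\tilde\rho_2 t}\bigl(W^{(p_0)}(\mu_0,\delta_0)^{p_0}+\sup_{s\geq 0}W^{(p_0)}(\delta_0,P_s\mu_0)^{p_0}\bigr),
\end{align*}
and the supremum is finite by (A). Thus $\{P_t\mu_0\}$ is Cauchy in the complete metric space $(\mathcal{P}_{p_0}(\mathbb R^d),W^{(p_0)})$ and has a limit $\tilde\mu\in\mathcal{P}_{p_0}(\mathbb R^d)$. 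Lower semicontinuity of $W^{(p_0)}$ together with $W^{(p_0)}(P_s\tilde\mu,\tilde\mu)\leq\liminf_t W^{(p_0)}(P_{t+s}\mu_0,P_t\mu_0)=0$ gives invariance, and (B) applied between $\tilde\mu$ and an arbitrary $\hat\nu\in\mathcal{P}_{p_0}(\mathbb R^d)$ gives both uniqueness and convergence $W^{(p_0)}(P_t\hat\nu,\tilde\mu)\to 0$. The main technical subtlety—and the place where I expect to have to be most careful—is the bookkeeping of constants in part (B): the $p_0$-power It\^o expansion produces $p_0(p_0-1)$ factors on the diffusion, and Assumption \ref{as:ergo:DI:mon:b:sig} was stated with the $2(p_0-1)$ factor tailored precisely to absorb them; making sure the symmetrisation in $i,j$ of the kernel terms matches the $\tilde L_{fg}^{(1)+}$ appearing in $\tilde\rho_2$ (rather than producing a larger constant) is the principal bit of arithmetic, and ensuring that applying Theorem \ref{dim-ind-poc} at the level $p=p_0$ is legitimate under $p_0>\max\{2(q+1),5\}$ is the last point to verify before assembling the pieces.
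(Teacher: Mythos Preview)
Your overall strategy matches the paper's proof closely: part (A) is indeed obtained by verifying that Assumption~\ref{as:ergo:DI:b:sig} implies the coercivity of Assumption~\ref{as:Er Sol} (with $\hat L_{b\sigma}=\tilde L_{b\sigma}$, $\hat L_{b\sigma}^{(1)}=\tilde L_{b\sigma}^{(1)}$, $\hat L_{b\sigma}^{(2)}=\tilde L_{b\sigma}^{(2)}$) and then invoking the argument of Theorem~\ref{Th:Erog sol}(A); part (B) uses exactly the triangle decomposition through the IPS that you describe; and part (C) is the Cauchy argument in $(\mathcal P_{p_0},W^{(p_0)})$ as you outline.

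There is one substantive point where the paper proceeds differently from your plan. For the two outer PoC terms in part (B) you propose to invoke Theorem~\ref{dim-ind-poc} as a black box at level $p=p_0$. The paper does \emph{not} do this: it re-derives the PoC estimate with the exponential weight $e^{-\tilde\rho_2 s}$ built in, i.e.\ it applies It\^o's formula to $e^{-\tilde\rho_2 t}|X_t^i-X_t^{i,N}|^{p_0}$ and then re-uses the conditional-Rosenthal estimates of \eqref{eq:A3}--\eqref{A6} on the weighted integrand. The reason is that Theorem~\ref{dim-ind-poc} is a finite-horizon statement whose constant $K$ comes from Gr\"onwall and grows with $T$; citing it directly would only give $\mathbb E|X_t^i-X_t^{i,N}|^{p_0}\le K(t)N^{-p_0/2}$, which is not enough to pass to the limit $N\to\infty$ uniformly over $t\in[0,\infty)$. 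Redoing the computation with the weight and with the uniform-in-time $p_0$-moment bound from part (A) is what makes the outer terms vanish for every fixed $t$ with a controlled prefactor. This also addresses your flagged concern about the range $p\le 2p_0/(q+1)$ in Theorem~\ref{dim-ind-poc}: you are not invoking that theorem, only its Rosenthal building blocks \eqref{eq:A3}--\eqref{A6}, now fed by the time-uniform moments supplied by part (A).
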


\begin{proof}
(A) It can be verified that Assumption \ref{as:Er Sol}  holds due to Assumption \ref{as:ergo:DI:b:sig} with  $\hat{L}_{b\sigma}= \tilde{L}_{b\sigma}$, $\hat{L}_{b\sigma}^{(1)}=\tilde{L}_{b\sigma}^{(1)}$ and $\hat{L}_{b\sigma}^{(2)}=\tilde{L}_{b\sigma}^{(2)}$ where one takes $b(t,x,\mu)=\displaystyle \int_{\mathbb R^d} \tilde{b}(t,x,y)\mu(dy)$ and $\sigma(t,x,\mu)=\displaystyle \int_{\mathbb R^d} \tilde{\sigma}(t,x,y)\mu(dy)$. 
Thus, statement (A) of this corollary follows  from the statement (A) of Theorem \ref{Th:Erog IPS}.
\newline

\noindent
 
 \noindent
(B) 
Notice that, 
\begin{align}
    W^{(p_0)}(P_{t} \mu, P_{t} \nu)^{p_0}  \leq &\, \mathbb E |X_t^i - Y_t ^i|^{p_0} \leq  3^{{p_0}-1} \{\mathbb E |X_t^i-X_t^{i,N}|^{p_0} + \mathbb E |X_t^{i,N}-Y_t^{i,N}|^{p_0} + \mathbb E |Y_t^i-Y_t^{i,N}|^{p_0} \} {\label{R:1+R:2+R:3}}
\end{align}
for all $t \in [0,T]$. For the first term on the right side of Equation \eqref{R:1+R:2+R:3}, applying It\^o\textquotesingle{s} formula, one can obtain from \eqref{A1+.+A6} 
   \begin{align*}
   \nonumber 
      \frac{1}{N} \sum_{i=1}^N 
      & \mathbb E e^{-\tilde{\rho}_1 t}|X_t^i  - X_t^{i,N}|^{p_0}
      \\
      \leq 
       &  
      \,  \Bigl(-\tilde{\rho}_1 + 2({p_0}-1) (2{p_0} -3) + \frac{{p_0}}{2} (2\tilde{L}_{b\sigma}^{(3)} + 2\tilde{L}_{b\sigma}^{(4)} + 4\tilde{L}_{fg}^{(1)+} + \tilde{L}_{f}^{(1)}) \Bigl)
      \frac{1}{N} \sum_{i=1}^N \int_0^t e^{-\tilde{\rho}_1 s} \mathbb E|X_s^i  - X_s^{i,N}|^{p_0} ds  \notag
     \\
  &+ \frac{1}{N}\sum_{i=1}^N \int_0^t  e^{-\tilde{\rho}_1 s}\Big|\int_{\mathbb R^d} \tilde{b}(s,X_s^{i},y)\mu_s^{X^i}(dy) - \frac{1}{N} \sum_{j=1}^N \tilde{b}(s,X_s^{i},X_s^j) \Big|^{p_0} ds \nonumber
         \\
   &  +4({p_0}-1)\frac{1}{N} \sum_{i=1}^N \int_0^t e^{-\tilde{\rho}_1 s} \Big|\int_{\mathbb R^d}  \tilde{\sigma}(s,X_s^{i},y)\mu_s^{X^i}(dy)  - \frac{1}{N} \sum_{j=1}^N \tilde{\sigma}(s,X_s^{i},X_s^j)\Big|^{p_0}  ds \nonumber
  \\
       &\, + \int_0^t e^{-\tilde{\rho}_1 s}  \frac{1}{N} \sum_{i=1}^N  \mathbb E \Big| \int_{\mathbb R^d} f(X_s^i, x) \mu_s^{X^i}(dx) - \frac{1}{N} \sum_{j=1}^N f({X}^{i}_{s},{X}^{j}_{s}) \Big|^{p_0} ds  \nonumber
    \\
     & + 4({p_0}-1) \frac{1}{N}\sum_{i=1}^N \int_0^t e^{-\tilde{\rho}_1 s} \Bigl| \int_{\mathbb R^d} g(X_s^i,x) \mu_s^{X^i}(dx) -\frac{1}{N}\sum_{j=1}^N g({X}^{i}_{s},{X}^{j}_{s})\Bigl|^{p_0} ds 
   \end{align*}
  for all $t \in [0,T]$. Notice that, last four terms have been estimated in \ref{eq:A3}, \ref{eq:A4}, \ref{A5} and \ref{A6}. Therefore, by using exchangeability of particles
  \begin{align*}
      \mathbb E|X_t^i  - X_t^{i,N}|^{p_0} 
      \leq 
      & 
      \Bigl(-\tilde{\rho}_1 + 2({p_0}-1) (2{p_0} -3) + \frac{{p_0}}{2} (2\tilde{L}_{b\sigma}^{(3)} + 2\tilde{L}_{b\sigma}^{(4)} + 4\tilde{L}_{fg}^{(1)+} + \tilde{L}_{f}^{(1)}) \Bigl)
      \\
      & \qquad \qquad \qquad \qquad \times \int_0^t e^{-\tilde{\rho}_1 (t-s)} \mathbb E |X_s^i - X_s^{i,N}|^{{p_0}} ds + \frac{K (e^{\tilde{\rho}_1 t}-1)}{N^{{p_0} /2} \tilde{\rho}_1} 
 \end{align*}
 for all $t \in [0,T]$, where constant $K$ depends on $\tilde{L}_b^{(1)}$, $\tilde{L}_{b\sigma}^{(3)}$, $\tilde{L}_{b\sigma}^{(4)}$, $L_f^{(1)}$ and $\tilde{L}_{fg}^{(1)}$.
Thus, 
\begin{align}{\label{R:1}}
    \lim_{N \to \infty}  \mathbb E |X_t^i-X_t^{i,N}|^2 =0.
\end{align}
and by similar calculations, one obtains
\begin{align}{\label{R:3}}
    \lim_{N \to \infty}  \mathbb E |Y_t^i-Y_t^{i,N}|^2 =0
\end{align} 
Now, for the second term on the right side of Equation \eqref{R:1+R:2+R:3}, applying It\^o\textquotesingle{s} formula, one obtains 
\begin{align*}
\nonumber
        \frac{1}{N} \sum_{i=1}^N e^{-\tilde{\rho}_1 t} 
        & \mathbb E| X_t^{i,N}-Y_t^{i,N}|^{p_0} 
       \\ \nonumber 
       & \leq 
       \, \frac{1}{N} \sum_{i=1}^N \mathbb E|X_0^{i,N}-Y_0^{i,N}|^{p_0} - \tilde{\rho}_1 \frac{1}{N} \sum_{i=1}^N \int_0^t e^{-\tilde{\rho}_1 s} \mathbb E|X_s^{i,N}-Y_s^{i,N}|^{p_0} ds \nonumber
       \\
       & + \frac{{p_0}}{N^2} \sum_{i,j=1}^N \mathbb E \int_0^t e^{-\tilde{\rho}_1 s} |X_s^{i,N}-Y_s^{i,N}|^{{p_0}-2} \big\{ \big\langle X_s^{i,N}-Y_s^{i,N}, \tilde{b}(s,X_s^{i,N},X_s^{j,N}) - \tilde{b}(s,X_s^{i,N},X_s^{j,N})\big\rangle \nonumber
         \\
         & \qquad \qquad \qquad  +({p_0}-1) \big|\tilde{\sigma}(s,X_s^{i,N},X_s^{j,N}) - \tilde{\sigma}(s,X_s^{i,N},X_s^{j,N}) \bigl|^2 \big\} ds \nonumber
         \\
         &  +\frac{{p_0}}{N^2} \sum_{i,j=1}^N  \mathbb E\int_0^t e^{-\tilde{\rho}_1 s} |X_s^{i,N}-Y_s^{i,N}|^{{p_0}-2} \Bigl \{\big \langle X_s^{i,N}-Y_s^{i,N}, f(X_s^{i,N},X_s^{j,N})  - f({X}^{i,N}_{s},{X}^{j,N}_{s}) \big \rangle \nonumber
         \\
        & \qquad \qquad \qquad  + ({p_0}-1) \bigl|g(X_s^{i,N},X_s^{j,N})  - g({X}^{i,N}_{s},{X}^{j,N}_{s}) \bigl|^2 \bigl \}ds \nonumber
   \end{align*}
which gives for all $t \in [0,T]$
\begin{align*}
     & \mathbb E| X_t^{i,N}-Y_t^{i,N}|^{p_0} 
     \\
     &
     \leq \, e^{-\tilde{\rho}_1 t} \mathbb E|X_0^{i,N}-Y_0^{i,N}|^{p_0} + \bigl( - \tilde{\rho}_1 + \frac{{p_0}}{2} (2\tilde{L}_{b\sigma}^{(3)} + 2\tilde{L}_{b\sigma}^{(4)} + 4\tilde{L}_{fg}^{(1)+} + \tilde{L}_{f}^{(1)}) \bigl) \int_0^t e^{\tilde{\rho}_1 (t-s)} \mathbb E|X_s^{i,N}-Y_s^{i,N}|^{p_0} ds \nonumber
       \\
      & \leq e^{\tilde{\rho}_1 t} \mathbb E|X_0^{i,N}-Y_0^{i,N}|^{p_0}
      \\
       & \leq 2e^{\tilde{\rho}_1 t} \bigl( \mathbb E|X_0^{i,N}|^{p_0} + \mathbb E|Y_0^{i,N}|^{p_0} \bigl)
\end{align*}
Finally, by gathering all the estimates and injecting them in \eqref{R:1+R:2+R:3} concludes.
\newline

\noindent
(C) Since $\tilde{b}$ and $\tilde{\sigma}$ are independent of time, therefore using the first statement and second statement with $\tilde{\rho}_1 <0$
\begin{align*}
     W^{({p_0})} (P_{t}\mu_0, P_{t+s}\mu_0) 
     & =   W^{({p_0})} \big(P_{t}\mu_0, P_{t}(P_{s}\mu_0)\big)
     \\
     & \leq  \sqrt{2}\cdot 3^{{p_0} -1 /2}e^{\tilde\rho_2 t/2} \sup_{s \geq 0}\bigl(W^{(p_0)} (\mu_0,\delta_0) + W^{(p_0)}(\delta_0,P_{s}\mu_0) \bigl) < \infty
\end{align*}
which implies 
\begin{align*}
    \lim_{t \to \infty} \sup_{ s\geq 0} W^{({p_0})} (P_{t}\mu_0, P_{t+s}\mu_0) & =   0
\end{align*}
as $\tilde{\rho}_2 <0$. Thus, there exist $\tilde{\mu} \in \mathcal{P}_{p_0} (\mathbb R^d)$ such that and $\lim_{t \to \infty} W^{{p_0}} (P_t \mu_0, \tilde{\mu}) =0$. Further, following a similar approach as in Part (C) of Theorem \ref{Th:Erog sol} one can conclude the result.
\end{proof}


\subsection{Ergodicity of the interacting particle system}
\label{subsec:Ergod-IPS}
We now establish the ergodicity of the interacting particle system (IPS) \eqref{IPS} and we recall that the IPS system \eqref{IPS} is exchangeable and thus $\textrm{Law}(X_t^{i,N})=\textrm{Law}(X_t^{j,N})$ for any $t,i,j$.  
If the initial laws of the non IPS \eqref{NIPS} are $\mu_0^{X^{i}}= \mu_0$ and $\mu_0^{Y^{i}}= \nu_0$ for any $i$, then we use $\{X_t^{i,N}\}_{t \geq 0}$ and $\{Y_t^{i,N}\}_{t \geq 0}$ to denote the solutions of IPS \eqref{IPS} with $\mathbb R^d$-valued $\mathscr{F}_0$-measurable i.i.d.~random initial values $X_0^{i,N}=X_0^i$ and $Y_0^{i,N}=Y_0^i$, respectively.
Their flows of marginals are denoted by $\{P_{t}^{i,N} \mu_0\}_{t\geq 0} := \{\mu_t^{X^{i,N}}\}_{t \geq 0}$  and $\{P_{t}^{i,N} \nu_0\}_{t\geq 0} := \{\mu_t^{Y^{i,N}}\}_{t\geq 0}$ for $i \in \{1,\ldots,N \}$, respectively. This construction akin to that in the previous section and as in \cite{yuanping2024explicit}.

As we are looking for the invariant distribution of the IPS system, which is a measure on $(\bR^d)^N$, we introduce the joint law of the IPS as 
$\bar P^N_t \nu_0^{\otimes N}:= \textrm{Law}\big(  (X^{1,N}_t,\ldots,X^{N,N}_t) \big)$ where $\textrm{Law}(X^{i,N}_0)=\nu_0$ for all $i=1,\ldots,N$ and $\nu_0^{\otimes N}$ is the $N$-tensorised initial distribution $\nu_0 \in \cP(\bR^d)$.
In view of the results in Sections \ref{sec:IPS-and-PoC} and \ref{subsec:Ergod-MV-SDE}, one expects that as $t\to \infty$ the $N$-particle flow $\bar P^N_t \nu_0^{\otimes N}$ converges approximately to the $N$-tensorized invariant measure $\mu ^{\otimes N}$ of the MV-SDE \eqref{Mckean} (and at some exponential rate in $t$ uniformly over $N$) but, in particular, that $\textrm{Law}(X^{i,N}_t)\to \mu$ weakly as $t\to \infty$ as $N\to \infty$ (for any $i$). 
\color{black}

\begin{theorem}{\label{Th:Erog IPS}}
    Let the assumptions of Theorem \ref{Th:Erog sol} hold and consider $\rho_1$ and $\rho_2$ as given therein.  Then, the following statements hold.
    \newline
    \textnormal{(A)} Let $\mu_0 \in \mathcal{P}_\ell(\mathbb R^d)$. 
    Then, for any $t \in [0,\infty)$ and $i \in \{1,\ldots, N\}$,  
    \begin{align*}
         W^{(\ell)} (P_{t}^{i, N} \mu_0, \delta_0)^\ell \leq e^{{\rho_1}t} W^{(\ell)} (\mu_0,\delta_0)^\ell + \frac{2 (\hat{L}_{b\sigma})^{\ell/2}(e^{\rho_1 t} -1)}{\rho_1} \mathbbm{1}_{\{\rho_1 \neq 0\}} +\bigl(2 (\hat{L}_{b\sigma})^{\ell/2} + (\hat{L}_{fg})^{\ell/2} \bigl) t  \mathbbm{1}_{\{\rho_1=0\}}. 
    \end{align*}
     Further, if $\rho_1 <0$, then $\displaystyle \sup_{i\in \{1,\ldots,N\}} \sup_{t \in [0, \infty)}\mathbb E|X^{i, N}_t|^{\ell} \leq  K$ where $K$ does not depend on $N$ (or time). 
    \newline 
   \textnormal{(B)} If $\mu_0,\nu_0 \in \mathcal{P}_\ell(\mathbb R^d),$ then, for every $t \in [0,\infty)$ and $i \in \{1,\ldots, N\}$, the following contraction holds 
   \begin{align*}
             \sup_{i\in \{1,\ldots,N\}} W^{(2)}(P_{t}^{i,N} \mu_0, P_{t}^{i,N} \nu_0)^2  
             &
             \leq 
             2e^{\rho_2 t} \bigl(W^{(2)}(\mu_0,\delta_0)^2 + W^{(2)}(\delta,\nu_0)^2 \bigl).
        \end{align*}
   \newline 
   \textnormal{(C)} 
   Let $b$, $\sigma$ be independent of time and $\rho_1,\rho_2 <0$  under the restriction that $p_0,\ell$ satisfy $2\ell -2 > 2q+2$ and $p_0 \geq  2\ell -2$. 
   Then, there exists a unique invariant probability measure $\bar \mu^N \in \mathcal{P}_\ell \big( (\mathbb R^d)^N \big)$ of IPS \eqref{IPS} in the following sense (using the notation introduced just above the theorem): for any $r\in[2,\ell]$ we have 
\begin{align*}
    W^{(r)} (\bar P^N _{t} \bar \mu^N, \bar \mu^N) =0\ \ \mbox{ for all } t \in [0,\infty) 
    \quad \textrm{and}\quad\textrm{for all $\hat{\nu}^{\otimes N} \in \mathcal{P}_{2\ell-2}(\mathbb (\bR^d)^N)$} 
    \quad 
    \lim_{t  \to \infty} W^{(r)} (\bar P^{N}_{t} \hat{\nu}^{\otimes N}, \bar \mu^N) =0. 
\end{align*}
\end{theorem}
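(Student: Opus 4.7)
The plan is to mirror the three-part proof of Theorem \ref{Th:Erog sol}, but work directly on the $N$-particle dynamics so that all constants remain independent of $N$. For statement (A), I would apply It\^{o}'s formula to $e^{-\rho_1 t}|X_t^{i,N}|^{\ell}$, average over $i\in\{1,\dots,N\}$, and use Assumption \ref{as:Er Sol} to handle the $b,\sigma$ part exactly as in the MV--SDE case. The $(f,g)$ double sum $\frac{1}{N^2}\sum_{i,j}$ is controlled by symmetrizing via the anti-symmetry $f(x,y)=-f(y,x)$ and invoking Lemma \ref{lem:sym-gwt:erg} (or Corollary \ref{lem:symm-growth}), which yields a bound depending only on $|X_t^{i,N}|^\ell + |X_t^{j,N}|^\ell$, with constants independent of $N$. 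A Gr\"onwall argument and exchangeability then give the desired estimate, and the supremum in $t$ follows when $\rho_1<0$.

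For statement (B), I would apply It\^{o}'s formula to $e^{-\rho_2 t}\frac{1}{N}\sum_i |X_t^{i,N}-Y_t^{i,N}|^2$ and use the anti-symmetry of $f$ to rewrite the contribution of the interaction kernel in the symmetric form $\tfrac{1}{2N^2}\sum_{i,j}\langle (X^{i,N}-X^{j,N})-(Y^{i,N}-Y^{j,N}),\, f(X^{i,N},X^{j,N})-f(Y^{i,N},Y^{j,N})\rangle$, which is directly amenable to the second bound in Assumption \ref{as:Er Sol mon}. Combining this with the first bound of Assumption \ref{as:Er Sol mon} for the $(b,\sigma)$ part, the choice of $\rho_2=2L_{b\sigma}^{(1)}+4L_{fg}^{(1)+}+4L_{b\sigma}^{(2)}+1$ absorbs every drift and diffusion contribution into the exponential weight, leaving a pure Gr\"onwall-type inequality. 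Taking the supremum over $i$ is free since the dynamics is exchangeable.

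For statement (C), the argument is structurally the same as in Theorem \ref{Th:Erog sol}(C) but lifted to the joint law $\bar P_t^N\mu_0^{\otimes N}\in\mathcal{P}\big((\mathbb{R}^d)^N\big)$. Since $b,\sigma$ are time-independent the semigroup property $\bar P_{t+s}^N=\bar P_t^N\circ \bar P_s^N$ holds, and (A)--(B) together give $W^{(2)}\big(\bar P_t^N\mu_0^{\otimes N},\bar P_{t+s}^N\mu_0^{\otimes N}\big)\le K e^{\rho_2 t/2}\big(1+\sup_{s\ge0}W^{(2)}(\bar P_s^N\mu_0^{\otimes N},\delta_0^{\otimes N})\big)$, where the supremum is finite by (A) since $\rho_1<0$. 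Completeness of $\big(\mathcal{P}_2((\mathbb{R}^d)^N),W^{(2)}\big)$ produces a candidate limit $\bar\mu^N$, and the Cauchy--Schwarz interpolation trick from \eqref{eq:l:drop} (bounding $E|X_t-Y_t|^r\le (E|X_t-Y_t|^2 \cdot E|X_t-Y_t|^{2r-2})^{1/2}$, using (A) with the stronger integrability $2\ell-2$) upgrades this to a Cauchy sequence in $W^{(r)}$ for every $r\in[2,\ell]$, placing $\bar\mu^N\in\mathcal{P}_{\ell}((\mathbb{R}^d)^N)$. Invariance $\bar P_s^N\bar\mu^N=\bar\mu^N$ follows from lower-semicontinuity of $W^{(2)}$ (\cite[Remark 6.12]{villani2009OT}) applied to the inequality $W^{(2)}(\bar P_s^N\bar\mu^N,\bar\mu^N)\le \liminf_{t\to\infty}W^{(2)}(\bar P_{t+s}^N\mu_0^{\otimes N},\bar P_t^N\mu_0^{\otimes N})=0$, uniqueness is immediate from the contraction in (B), and convergence from any $\hat\nu^{\otimes N}\in\mathcal{P}_{2\ell-2}((\mathbb{R}^d)^N)$ is a direct consequence of (B) applied with the invariant measure as the second initial condition.

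The main obstacle is the passage from the ``averaged-over-particles'' estimate in (B) to a genuine $W^{(r)}$-contraction on the joint law for $r\in[2,\ell]$; the natural $W^{(2)}$-distance on $(\mathbb{R}^d)^N$ is bounded above by $\frac{1}{N}\sum_i E|X_t^{i,N}-Y_t^{i,N}|^2$ through a synchronous coupling, which is precisely what (B) controls, so this obstacle is surmounted by coupling the two particle systems through identical Brownian motions and i.i.d.~initial conditions, after which $r$-moment control is handled by the Cauchy--Schwarz bootstrap mentioned above, which is where the stronger integrability hypothesis $p_0\ge 2\ell-2$ (and consequently $2\ell-2>2(q+1)$ to ensure well-posedness of the IPS from $\mathcal{P}_{2\ell-2}$) enters.
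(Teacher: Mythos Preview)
Your proposal is correct and follows essentially the same route as the paper: part (A) via It\^{o}'s formula on $e^{-\rho_1 t}|X_t^{i,N}|^\ell$ averaged over particles with Corollary \ref{lem:symm-growth} replacing Lemma \ref{lem:sym-gwt:erg}, part (B) via the symmetrized difference estimate (which is exactly the paper's derivation of \eqref{E:2}), and part (C) by lifting to the joint law and running the Cauchy-plus-interpolation argument of Theorem \ref{Th:Erog sol}(C) verbatim. One small slip: the $W^{(2)}$-distance on $(\mathbb{R}^d)^N$ under synchronous coupling is bounded by $\sum_i \mathbb{E}|X_t^{i,N}-Y_t^{i,N}|^2$, not $\tfrac{1}{N}\sum_i$, so an $N$-dependent prefactor appears in the joint-law estimates---but since $N$ is fixed throughout part (C), this does not affect the Cauchy or invariance arguments.
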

\begin{proof}
This proof is established by directly adapting calculations carried out previously in Section \ref{subsec:Ergod-MV-SDE}. 
\newline  \noindent
(A) 
The proof follows by adapting arguments similar to those used in the proof of statement (A) of Theorem \ref{Th:Erog sol} where Corollary \ref{lem:symm-growth} is used instead of Lemma  \ref{lem:sym-gwt:erg}. The uniformity of $K$ over the particle number $N$ is straightforward to establish (see also the arguments used to prove Theorem \ref{thm:eu-ips}).  
\newline \newline \noindent
(B) 
The proof follows by straightforward work with Equation \eqref{E:2}.   
 \newline \newline \noindent
(C) 
Let us start by observing that statements (A) and  (B) holds for $\mu_0 ^{\otimes N} \in \mathcal{P}_{\ell}(\mathbb (R^d)^N)$ with additional dependence on $N$ but as $N$ is fixed for IPS, one gets the invariant distribution to the IPS. Following similar steps as in Part (C) of Theorem \ref{Th:Erog sol}, one can get the desired result where one needs to carry the argument over the IPS joint law flow $(\bar P^N_t \hat \nu^{\otimes N})_t $ leveraging the results established in part (A) and (B) for the IPS's marginals---critically, the constants may depend on $N$ but $N$ is fixed for the argument as one is establishing the invariant distribution to the IPS (and not yet recovering the invariant distribution of the original MV-SDE).  
Let $\mu_0 \in \mathcal{P}_\ell(\mathbb R^d)$, then  $\mu_0 ^{\otimes N} \in \mathcal{P}_{\ell}(\mathbb (\bR^d)^N)$ and from the earlier notation $\bar{P}_t^N \mu_0 ^{\otimes N}=\textrm{Law}(X_t^{1,N}, \ldots, X_t^{N,N})$ and  $\bar{P}_0^N \mu_0 ^{\otimes N}=\mu_0 ^{\otimes N}$. 
From part (A) we have for any $t \in[0,\infty)$ 
\begin{align}
    W^{(\ell)}(\bar{P}_t^N \mu_0^{\otimes N}, \delta_0^{\otimes N})^\ell 
    & 
    = \mathbb E \Big(\sum_{i=1}^N   |X_t^{i,N}|^2 \Big)^{\ell/2} \leq  N^{\ell/2-1}\sum_{i=1}^N \mathbb E|X_t^{i,N}|^\ell \notag
    \\
& \leq N^{\ell/2} \Big(e^{{\rho_1}t} W^{(\ell)} (\mu_0,\delta_0)^\ell + \frac{2 (\hat{L}_{b\sigma})^{\ell/2}(e^{\rho_1 t} -1)}{\rho_1} \mathbbm{1}_{{\rho_1} \neq 0} +\bigl(2 (\hat{L}_{b\sigma})^{\ell/2} + (\hat{L}_{fg})^{\ell/2} \bigl) t  \mathbbm{1}_{{\rho_1}=0}  \Big). 
\label{eq:ergo:IPS,dirac}
\end{align}
Similarly, from part (B), one obtains for any $t \in[0,\infty)$ 
\begin{align}
    W^{2}(\bar{P}_t^N \mu_0^{\otimes N},\bar{P}_t^N \nu_0^{\otimes N})^2 
    & 
    \leq \mathbb E \Big(\sum_{i=1}^N   |X_t^{i,N}-Y_t^{i,N}|^2 \Big)  
    \leq 2 N e^{\rho_2 t} \bigl(W^{(2)}(\mu_0,\delta_0)^2 + W^{(2)}(\delta_0,\nu_0)^2 \bigl). \label{eq:ergo:IPS,diff}
\end{align}
Now, as $b$ and $\sigma$ are independent of time and $\rho_1 <0,$ therefore from \eqref{eq:ergo:IPS,dirac} and \eqref{eq:ergo:IPS,diff}, one can see that for $\mu_0 \in \mathcal{P}_\ell (\mathbb R^d)$ 
    \begin{align*}
     W^{(2)} \big(\bar{P}_{t}^N \mu_0^{\otimes N}, \bar{P}_{t+s}^N \mu_0^{\otimes N}\big) ^2
     & 
     = 
     W^{(2)} \big(\bar{P}_{t}^N \mu_0^{\otimes N}, \bar{P}_{t}^N(\bar{P}_{s}^N \mu_0^{\otimes N})\big)^2
     \\
     &
     \leq 
     2 e^{\rho_2 t/2} \sup_{s \geq 0} \bigl(N\cdot W^{(2)}(\mu_0,\delta_0)^2 
     + W^{(2)}(\delta_0^{\otimes N},\bar{P}_{s}^N \mu_0^{\otimes N})^2 \bigl) 
     < \infty
\end{align*}
which in turn gives due to due to $\rho_2 <0$, 
\begin{align*}
    \lim_{t \to \infty} \sup_{s \geq 0} W^{(2)} (\bar{P}_{t}^N \mu_0^{\otimes N}, \bar{P}_{t+s} ^N\mu_0^{\otimes N}) =0,
\end{align*}
and hence we have a Cauchy sequence. Since $\big(  \mathcal{P}_2\big( (\mathbb R^d)^N\big), W^{(2)} \big)$ is complete metric space, therefore, there exists a $\bar \mu^{N} \in \mathcal{P}_2((\mathbb R^d)^N)$ such that 
\begin{align*}
    \lim_{t \rightarrow \infty}
    W^{(2)} (\bar{P}_{t}^N\mu_0^{\otimes N},\bar \mu^{N})=0
    \qquad \textrm{for all $\mu_0 \in \mathcal{P}_\ell ( \mathbb R^d )$. }
\end{align*}
Thus, to show that $\bar \mu^{N} \in \mathcal{P}_\ell ((\mathbb R^d)^N)$, it is easy to see that given $\mu_0 \in \mathcal{P}_{2\ell-2} ( \mathbb R^d )$ 
    \begin{align}
\nonumber 
     W^{({\ell})} \big(\bar{P}_{t}^N \mu_0^{\otimes N}, \bar{P}_{t+s}^N \mu_0^{\otimes N} \big)^{{\ell}} 
     &
    = 
    W^{({\ell})} \big( \bar{P}_{t}^N \mu_0^{\otimes N}, \bar{P}_{t}^N (\bar{P}_s^N \mu_0^{\otimes N}) \big)^{{\ell}} 
    \leq \mathbb E \Bigl(\sum_{i=1}^N |X_t^{i,N} - Y_t^{i,N}|^{{2}} \Bigl)^{\ell/2} 
    \\
    & \leq N^{\ell/2 -1} \sum_{i=1}^N \Big (\mathbb E |X_t^{i,N} - Y_t^{i,N}|^{2} \mathbb E |X_t^{i,N} - Y_t^{i,N}|^{2{\ell}-2} \Big)^{1/2} 
    \leq K N^{\ell/2 -1} e^{\rho_2 t /2}, \notag
\end{align}
where  $\text{Law}\bigl((X_0^{1,N},\ldots,X_0^{N,N})\bigl)= \mu_0^{\otimes N}$ and $\text{Law}\bigl((Y_0^{1,N},\ldots,Y_0^{N,N})\bigl)= \bar{P}_s^N \mu_0^{\otimes N}$ , respectively, and $\mu_0^{\otimes N},\bar{P}_s^N \mu_0^{\otimes N} \in\mathcal{P}_{2\ell -2}((\mathbb R^d)^N)$. Hence, $\{\bar{P}_t^N \mu_0^{\otimes N} \}_{t \geq 0}$ is Cauchy in $\big( \mathcal{P}_\ell\big( (\mathbb R^d)^N\big), W^{(\ell)}\big)$ which implies the existence of a limiting measure $\bar \mu^{ N} \in \mathcal{P}_\ell((\mathbb R^d)^N)$. Furthermore, \cite[Remark 6.12]{villani2009OT} yields,  
\begin{align*}
    W^{(2)} (\bar{P}^N_s \mu^{\otimes N},\bar \mu^{N}) 
    &  
    \leq \liminf_{t \rightarrow \infty} 
    W^{(2)} \bigl(\bar{P}^N_s(\bar{P}^N_t \mu_0^{\otimes N}),\bar{P}^N_t \mu_0^{\otimes N} \bigl) 
    \\
    & 
    \leq 
    \liminf_{t \rightarrow \infty} W^{(2)} (\bar{P}^N_{t+s}\mu_0^{\otimes N}, \bar \mu^{N}) 
    +
    \liminf_{t \rightarrow \infty} W^{(2)} \bigl(\bar \mu^{N}, \bar{P}^N_t\mu_0^{\otimes N} \bigl) 
    =  0
\end{align*}
for all $s \geq 0$. 
Thus, $\bar \mu^{N}$ is an invariant measure of IPS \eqref{IPS}. 
Finally, for any $\hat{\nu} \in \mathcal{P}_{2\ell-2}(\mathbb R^d)$ 
\begin{align*}
    W^{(2)} ( \bar P_t \hat{\nu}^{\otimes N},\bar \mu^{N}) 
    & \leq 
    W^{(2)} (P_t\hat{\nu}^{\otimes N},P_t \bar \mu^{N})
    +  W^{(2)} (\bar P_t \mu^{N}, \bar \mu^{N}) 
    \\
    & \leq \sqrt{2} e^{\rho_2 t/2} 
    \bigl(  \sqrt{N}\cdot W^{(2)}(\hat \nu , \delta_0)
              +  W^{(2)}(\delta_0^{\otimes N},\bar \mu^N )\bigl) 
\end{align*}
which leads to the (uniqueness within the class) conclusion 
\begin{align*}
    \lim_{ t \to \infty} W^{(2)} (\bar P_t\hat{\nu}^{\otimes N},\bar \mu^{N})  =0
    \qquad \textrm{for any $\hat{\nu} \in \mathcal{P}_{2\ell-2}( \mathbb R^d)$.}
\end{align*}
We conclude the proof with an intuitive reasoning for a clarification we did not find in the literature. As mentioned, the IPS \eqref{IPS} forms an exchangeable system across time and one may ask if the attractor/invariant distribution $\bar \mu^N$ over $(\bR^d)^N$ also forms an exchangeable system. For any permutation $\pi$ of $\{1,\ldots,N\}$ we have for any $t\geq 0$
\begin{align*}
\bar P^N_t ( \hat \nu^{\otimes N} )
= 
\textrm{Law}\big(  (X^{1,N}_t,\ldots,X^{N,N}_t) \big)
=
\textrm{Law}\big(  (X^{\pi(1),N}_t,\ldots,X^{\pi(N),N}_t) \big) 
=: \bar P^{\pi(N)}_t ( \hat \nu^{\otimes N} )
\end{align*}
where the superscript $\pi(N)$ denotes the permutation deployed to the particle system, noting that the tensorised initial condition $\hat \nu ^{\otimes N}$ is invariant under permutations, i.e., $\hat \nu^{\pi(\otimes N)}=\hat \nu ^{\otimes N}$.

Via the existence of a limiting distribution we have (in the appropriate Wasserstein distance) 
\begin{align*}
\bar \mu^N = \lim _{t \rightarrow \infty} \bar P^N_t(\nu^{\otimes N})=\lim _{t \rightarrow \infty} \bar P^N_t(\nu^{\pi(\otimes N)}).
\end{align*} 
On the other hand, by the permutation-equivariance of the Wasserstein distance  
\begin{align*}
\lim _{t \rightarrow \infty} \bar P_t^N ( \nu^{\pi(\otimes N)} ) 
=\lim _{t \rightarrow \infty}  \bar P_t^{\pi(N)} (\nu^{\otimes N}) 
  =\pi\left(\lim _{t \rightarrow \infty} \bar P_t^N \nu\right)
=\bar \mu^{\pi(N)}, 
\end{align*}
where in the second equality we use that the coordinate permutation map is an isometry under the Wasserstein distance. 

Hence $ \bar \mu^{\pi(N)}=\bar \mu^N$ for all permutations \(\pi\), i.e. \(\mu\) is invariant under all coordinate permutations, which is exactly exchangeability for the limit measure. 
\end{proof}

The following corollary is a direct consequence of Corollary \ref{Th:DI:Erog sol}, which states that the invariant measure $\tilde{\mu}^{i,N}$ of the interacting particle system corresponding to MV--SDE \eqref{Mckean} with the coefficients given in Equation \eqref{eq:nu-barsig-inp}, and $\hat{\nu}_0$ has $p_0$-th moment.

\begin{corollary}{\label{Th:DI:IPS:Erog}}
    Let assumptions of Corollary \ref{Th:DI:IPS:Erog} be satisfied with $\tilde{\rho}_1$ and $\tilde{\rho}_2$ as given therein. 
    Then, the following statements hold. 
    \newline
    \textnormal{(A)} Let $\mu_0 \in \mathcal{P}_{p_0}(\mathbb R^d)$,  then for every $t \in [0,\infty)$,   
    \begin{align*}
        W^{({p_0})} (P_{t}^{i,N} \mu_0, \delta_0)^{p_0} \leq e^{{\tilde\rho_1}t} W^{({p_0})} (\mu_0,\delta_0)^{p_0} + \frac{2 (\tilde{L}_{b\sigma})^{{p_0}/2}(e^{\tilde\rho_1 t} -1)}{\tilde\rho_1} \mathbbm{1}_{{\rho_1} \neq 0} + \bigl(2 (\hat{L}_{b\sigma})^{\ell/2} + (\hat{L}_{fg})^{\ell/2} \bigl) t  \mathbbm{1}_{{\tilde\rho_1}=0}.
    \end{align*}
    Thus, if $\tilde\rho_1<0$, then $\displaystyle \sup_{i=1,\ldots,N} \sup_{t \in [0, \infty)}\mathbb E|X_t^{i,N}|^{{p_0}} \leq K$ where $K>0$ do not depend on $N$ (or time $t$).  
    \newline 
   \textnormal{(B)}  
   Let $\mu_0,\nu_0 \in \mathcal{P}_{p_0}(\mathbb R^d)$ and $t \in [0, \infty)$. Then for every $i \in \{1,\ldots,N \}$,
        \begin{align*}
             W^{({p_0})}(P_{t}^{i,N} \mu_0, P_{t}^{i,N} \nu_0)^2  &\leq 2^{{p_0}}3^{{p_0} -1} e^{\tilde\rho_2 t} \bigl(W^{({p_0})}(\mu_0,\delta_0)^{p_0}+ W^{({p_0})}(\delta_0,\nu_0)^{p_0}\bigl).
        \end{align*}
   \textnormal{(C)} Let $\tilde b$ and $\tilde \sigma$ be independent of time, and $\tilde\rho_1 $, $\tilde\rho_2 <0$. 
   Then,   there exists a unique probability measure $\tilde\mu^N \in \mathcal{P}_{p_0}\big( (\mathbb R^d)^N\big )$  of such that    
    \begin{align*}
            W^{({p_0})} (\bar P_{t}^{N} \tilde\mu^N, \tilde\mu^N) =0 \quad \mbox{for all } t \in [0, \infty) \quad \mbox{ and } \quad  \textrm{for any  $\hat{\nu} \in \mathcal{P}_{{p_0}}(\mathbb R^d) $} \quad 
\lim_{t \rightarrow \infty} W^{({p_0})} (\bar P _{t}^{N} \hat{\nu}^{\otimes N}, \tilde\mu) =0.
        \end{align*}
\end{corollary}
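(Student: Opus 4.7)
The plan is to mirror the structure of Theorem \ref{Th:Erog IPS} but to exploit the sharper growth and monotonicity estimates from Corollary \ref{Th:DI:Erog sol} so that the moment / contraction bounds are directly obtained in $W^{(p_0)}$ rather than in $W^{(\ell)}$ and $W^{(2)}$. In particular, the Kantorovich-duality detour through the non-interacting particles (used in Corollary \ref{Th:DI:Erog sol} parts (A)--(B)) is replaced by pointwise pathwise estimates at the IPS level, which is the natural set-up here.

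For statement (A), I would apply It\^o's formula to $e^{-\tilde\rho_1 t}|X_t^{i,N}|^{p_0}$, average over $i\in\{1,\ldots,N\}$, and split the drift/diffusion into the $\tilde b,\tilde\sigma$ part (controlled by Assumption \ref{as:ergo:DI:b:sig}) and the Vlasov-kernel part (controlled via the symmetry-and-growth Corollary \ref{lem:symm-growth} together with Assumption \ref{as:Er Sol}). Because both controls produce constants independent of $N$ (the double sum $\frac{1}{N^2}\sum_{i,j}$ collapses after symmetrization), the Gr\"onwall step yields the claimed bound with a prefactor independent of $N$, and the uniform moment estimate for $\tilde\rho_1<0$ follows at once.

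For statement (B), I would apply It\^o's formula directly to $e^{-\tilde\rho_2 t}|X_t^{i,N}-Y_t^{i,N}|^{p_0}$ summed over $i$ (coupling through the same Brownian motions $W^i$, so no stochastic-integral residual survives in expectation). Expanding the square-bracket and the $p_0$-th power as in \eqref{A1+.+A6}, every interaction term produced by $\tilde b,\tilde\sigma,f,g$ is of the form $\frac{1}{N^2}\sum_{i,j}(\cdots)$ acting on differences; after the antisymmetrization trick
\[
\tfrac{1}{N^2}\sum_{i,j}\langle X_s^{i,N}-Y_s^{i,N},\,f(X_s^{i,N},X_s^{j,N})-f(Y_s^{i,N},Y_s^{j,N})\rangle=\tfrac{1}{2N^2}\sum_{i,j}\langle\Delta_{ij},\,f(X_s^{i,N},X_s^{j,N})-f(Y_s^{i,N},Y_s^{j,N})\rangle,
\]
with $\Delta_{ij}=(X_s^{i,N}-Y_s^{i,N})-(X_s^{j,N}-Y_s^{j,N})$, Assumptions \ref{as:ergo:DI:mon:b:sig} and \ref{as:ergo:DI:mon:f:g} give a one-sided bound in $\tfrac1N\sum_i\mathbb E|X_s^{i,N}-Y_s^{i,N}|^{p_0}$ with coefficient $\tilde\rho_2$. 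Importantly, because the interaction is pointwise (Vlasov kernels plus $\tilde b(\cdot,\cdot,\cdot),\tilde\sigma(\cdot,\cdot,\cdot)$) rather than through empirical laws with PoC error, no residual $1/N$ term appears. Gr\"onwall together with the conditioning $|x-x'|^{p_0}\leq 2^{p_0-1}(|x|^{p_0}+|x'|^{p_0})$ then yields the stated contraction, uniformly in $N$ and $i$.

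Statement (C) is the standard Cauchy-in-Wasserstein closure argument already used in Theorem \ref{Th:Erog IPS}(C), but now run at level $p_0$ from the start. Fix $\mu_0\in\mathcal P_{p_0}(\mathbb R^d)$; by (A) the joint flow $\bar P^N_t\mu_0^{\otimes N}$ has uniformly bounded $p_0$-th moment on $(\mathbb R^d)^N$, and by (B) the map $\bar P^N_t$ is an $e^{\tilde\rho_2 t/2^{\,?}}$-contraction so that $\{\bar P^N_t\mu_0^{\otimes N}\}_{t\geq 0}$ is Cauchy in $(\mathcal P_{p_0}((\mathbb R^d)^N),W^{(p_0)})$; completeness of this space delivers a limit $\tilde\mu^N$, and the usual semigroup/liminf argument (with \cite[Remark 6.12]{villani2009OT}) shows $\bar P^N_s\tilde\mu^N=\tilde\mu^N$. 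The convergence from any $\hat\nu\in\mathcal P_{p_0}(\mathbb R^d)$ is a direct application of (B) combined with $\bar P^N_t\tilde\mu^N=\tilde\mu^N$, and the exchangeability of $\tilde\mu^N$ follows from the permutation-equivariance argument already spelled out at the end of the proof of Theorem \ref{Th:Erog IPS}(C). The only mild subtlety---hence the main obstacle---is verifying that the constants appearing through Corollary \ref{lem:symm-growth} when handling the $|x|^{p_0-2}\langle x,f(x,y)\rangle$ and the $|x-x'|^{p_0-2}(\cdots)$ terms in (A)--(B) really are $N$-independent; this is where Assumption \ref{as:ergo:DI:mon:f:g} (and in particular the $(|x-x'|^{p_0-2}-|y-y'|^{p_0-2})$ condition) is used in exactly the same symmetrization spirit as in the proof of Theorem \ref{thm:eu-ips}, so the collapse from $\frac1{N^2}\sum_{i,j}$ to $\frac1N\sum_i$ with $N$-free coefficient goes through without incident, and lifting from $p_0=2$ to general $p_0$ costs only the polynomial-growth bound on $|f|$ from Remark \ref{rem:fg:grw:eu} combined with the moment estimate from (A).
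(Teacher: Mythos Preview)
Your outline is correct and matches the paper's intended argument: the paper gives no separate proof of Corollary \ref{Th:DI:IPS:Erog}, merely declaring it ``a direct consequence of Corollary \ref{Th:DI:Erog sol}'', which amounts to transplanting the IPS-level estimate for the middle term of \eqref{R:1+R:2+R:3} into the structural template of Theorem \ref{Th:Erog IPS}---exactly what you describe. One minor correction: the antisymmetrization identity you display in (B) omits the weight $|X_s^{i,N}-Y_s^{i,N}|^{p_0-2}$, so the collapse from $\tfrac1{N^2}\sum_{i,j}$ to $\tfrac1N\sum_i$ does not follow from the bare $\Delta_{ij}$ identity but rather from the Lemma \ref{L_rate_x^i} mechanism (driven by the second line of Assumption \ref{as:ergo:DI:mon:f:g}); you do acknowledge this at the end, so there is no actual gap.
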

\subsection{Ergodicity of the tamed Euler scheme}
\label{sec:ergodicity tamed Euler scheme}
This section is narrowly focused on establishing the ergodicity of the tamed Euler scheme at fixed number of particles $N$ and at fixed time-step discretisation $h=1/n$, and in a setting where the diffusion coefficient is a non-diagonal constant as in \cite{yuanping2024explicit,schuh2024conditions} and in contrast to many other works \cite{bao2024geometric,Brehier2023MR4655541,MR4803778,ottobrecrisanangeli2025}. 
We will work with a taming definition, see Equation \eqref{taming-ergodic} below, which is slightly different from the construction in Section \ref{sec:Taming-in Euler scheme}. This is a bespoke definition for the ergodic case of the class of superlinear growth particle systems we tackle---we need to renew its definition given in Section \ref{sec:Taming-in Euler scheme} (see Equations \eqref{taming-FiniteT} and \eqref{Eulercheme-FiniteT}) to accommodate technical changes to the time-grid and a variation on the taming functions themselves -- compare Equations \eqref{taming-FiniteT} and \eqref{taming-ergodic}. Comparatively to the finite time case of Section \ref{sec:Taming-in Euler scheme}, there we took a stronger taming in order to give a shorter proof of the moment bounds, otherwise the proof here would be very lengthy. 
The results of Section \ref{sec:Taming-in Euler scheme} can be shown to hold with the taming of Equation \eqref{taming-ergodic}. 

We state a new variant of the assumptions to clearly specify the role of the involved constants in the main theorem, provide the scheme and taming, and then comment holistically on the changes. A surprising element to this approach is that under taming (at fixed $n,N$), one can work under $\cP_2(\bR^d)$ only. The details are given below.
\medskip

Throughout let $q>0$ indicating polynomial growth of functions, fix the particle system size to be $N$, and define a fixed time step $h:=1/n$ for some fixed $n\in \bN$ and the corresponding time grid $t_k=k/n=hk$ for $k\in \bN_0$.  
\begin{assumption} \label{as:sch:gr:erg}
There exist constants $\hat{L}_{b\sigma}^{(1)}>0$, $ \hat{L}_{b\sigma}^{(2)}>0$, $\hat{L}_{fg}^{(1)} > 0$ and $q>0$ such that 
    \begin{align*}
        \langle x,b(t,x,\mu)\rangle  + |\sigma(t,x,\mu)|^2 & \leq -\hat{L}_{b\sigma}^{(1)}(1+|x|^{q})|x|^2 + \hat{L}_{b\sigma}^{(2)} W^{(2)} (\mu,\delta_0)^2, 
        \\
         \langle x -y,f(x,y) \rangle + 2|g(x,y)|^2 & \leq -\hat{L}_{fg}^{(1)} (1 + |x-y|^{q} )|x-y|^2,
    \end{align*}
    for all $t \in [0,\infty)$, $x$, $y \in \mathbb R^d$ and $\mu \in \mathcal{P}_2(\mathbb R^d)$. 
\end{assumption}
\begin{assumption}{\label{as:diff:b:f:erg}}
There exist constants $L_b^{(1)}>0$ and $L_f^{(1)} > 0$   such that 
    \begin{align*}
    |b(t,x,\mu)  -b(t,x',\mu')|^2 & \leq L_{b} ^{(1)}(1+|x|^{2q} + |x'|^{2q})|x-x'|^2 + L_{b} ^{(2)} W^{(2)} (\mu,\mu')^2,
    \\
    |f(x,y)  - f(x',y')|^2 & \leq L_{f}^{(1)} (1+|x-y|^{2q} + |x'-y'|^{2q}) |(x-x')-(y-y')|^2,
\end{align*}
for all $t \in [0,\infty)$, $x$, $x'$, $y$, $y' \in \mathbb R^d$ and $\mu$, $\mu' \in \mathcal{P}_2(\mathbb R^d)$. 
Also, $\displaystyle \sup_{t \in [0, \infty)} |b(t,0,0)| <\infty$ and $\displaystyle \sup_{t \in [0, \infty)} |\sigma(t,0,0)| <\infty$. 
\end{assumption}
For a fixed $n \in \mathbb N$, define $t_k:=k/n$ for any $k\in \mathbb N$ and 
 redefine the tamed coefficients as,  
\begin{align}
\label{taming-ergodic}
   & (b^n, \sigma^n)(t,x,\mu)  := \frac{(b, \sigma)(t,x,\mu)}{1+n^{- 1/2}|x|^{q}}
   \qquad \mbox{ and }\qquad   (f^n, g^n)(x,y)  := \frac{(f,g)(x,y)}{1+n^{-1/2}|x-y|^{q}},
\end{align}
for all $t $, $x$, $y\in \mathbb R^d$ and $\mu \in \mathcal{P}_2(\mathbb R^d)$ with $q>0$ as given in Assumption \ref{as:sch:gr:erg}. 

We use the same notation for taming, in Equation \eqref{taming-FiniteT} and in Equation  \eqref{taming-ergodic}, which should not cause confusion as they are defined in different contexts. 
 Similarly,  $\hat{X}^{i,N,n}$ is used in both cases to denote the corresponding tamed Euler scheme, see Equations \eqref{scheme} and \eqref{eq:TamedEulerforErgodicity}.  
 For the purpose of ergodicity, we take fixed step-size $h:=1/n$ and  consider the following  tamed Euler scheme for IPS  \eqref{IPS},
\begin{align}\label{eq:TamedEulerforErgodicity}
    \hat{X}_{t_{k+1}} ^{i,N,n}
     = 
     \hat{X}_{t_{k}} ^{i,N,n} 
    & + \big (b^n (t_{k},\hat{X}_{t_{k}} ^{i,N,n}, \hat{\mu}_{t_k}^{X,N,n}) + \frac{1}{N} \sum_{j=1}^N f^n(\hat{X}_{t_{k}} ^{i,N,n},\hat{X}_{t_{k}} ^{j,N,n}) \big )h \nonumber
    \\
    & + \big ( \sigma^n (t_{k},\hat{X}_{t_{k}} ^{i,N,n}, \hat{\mu}_{t_k}^{X,N,n})+ \frac{1}{N} \sum_{j=1}^N g^n(\hat{X}_{t_{k}} ^{i,N,n} ,\hat{X}_{t_{k}} ^{j,N,n}) \big) \Delta W_{t_k}^i,
\end{align}
almost surely for all $i \in \{1,\ldots, N\}$ and $k \in \mathbb N_0$ with  initial law $\mu_0 \in \mathcal{P}_2(\mathbb R^d)$ of $\mathscr{F}_0$-measurable initial random variable $X_0 ^{i,N,n} = X_0 ^i$ where  $\Delta W_{t_k}^i:= W_{t_{k+1}}^i-W_{t_k}^i$    and $\hat{\mu}_{t_k}^{X,N,n} = \displaystyle \frac{1}{N} \sum_{j=1}^N \delta_{\hat{X}_{t_k}^{j,N,n}}$.

The main result of this section is Theorem \ref{Th:Erg:Sch}. 

\subsubsection{Additional assumptions, remarks and preliminary results}
We start with some direct implications from the previous assumptions. 
\begin{remark}
\label{R:gr:tam:erg}
By Assumption \ref{as:diff:b:f:erg}, for all  $t \in [0,\infty)$, $x$, $y \in \mathbb R^d$ and $\mu \in \mathcal{P}_2(\mathbb R^d)$, 
    \begin{align*}
        |b(t,x,\mu)|^2 & \leq L_b^{(1)} (1+|x|^{2q})|x|^2 + L_b^{(2)} W^{(2)}(\mu, \delta_0)^2+ L_{b}^*, 
        \\
        |f(x,y)|^2 &\leq L_{f}^{(1)} (1+|x-y|^{2q}) |x-y|^2+ L_f^*,
        \\
       |b^n(t,x,\mu)|^2 & \leq \min \bigl\{ \bigl(n(L_b^{(1)} |x|^2  + L_{b}^*) +L_b^{(2)} W^{(2)} (\mu,\delta_0)^2 \bigl), |b(t,x,\mu)| \bigl\},
       \\
       |f^n(x, y)|^2 & \leq \min\bigl\{n \bigl(L_f^{(1)}|x-y|^2 + L_f^* \bigl), |f(x,y)| \bigl\},
    \end{align*}
    where $L_b^*:=2\displaystyle \sup_{t \in [0, \infty)}|b(t,0,\delta_0)|^2>0$ and $L_f^*:=2|f(0,0)|^2>0$.
\end{remark}
\begin{remark}[A clarifying example regarding the assumptions, taming and ergodicity]
One of the main difficulties with the taming method and ergodicity, is the loss of the initial dissipativity property of the coefficients due to taming (see \cite{Brehier2023MR4655541,MR3843836,bao2024geometric}). 

In our case, the assumptions of Section \ref{sec:ergodicity tamed Euler scheme} require extra ingredients to ensure ergodicity. As way of example and for $x \in \mathbb R^d$, the function $-|x|^2 x -x$ satisfies all the assumptions listed in Section \ref{sec:ergodicity tamed Euler scheme}, however the function $-|x|^2 x$ is not covered in our setup as it fails to satisfy Assumption \ref{as:sch:gr:erg}. 
\end{remark}
\begin{lemma}\label{lem:gr:sch:erg}
    Let the antisymmetric property of $f$ in Assumption \ref{as:Er Sol} be satisfied and let Assumption \ref{as:sch:gr:erg} hold. 
    Then,  
    \begin{align*}
         \langle x,b^n(t,x,\mu)\rangle  + |\sigma^n(t,x,\mu)|^2 & \leq -\hat{L}_{b\sigma}^{(1)} |x|^2 + \hat{L}_{b\sigma}^{(2)} W^{(2)} (\mu,\delta_0)^2,
         \\
         \frac{1}{N^2} \sum_{i,j=1}^N \big\{\langle x^i,f^n(x^i,x^j) \rangle + |g^n(x^i,x^j)|^2 \big\} 
         & 
         \leq  -\frac{L_{fg}^{(1)}}{N^2} \sum_{i,j=1}^N |x^i - x^j|^2,
    \end{align*}
    for all $n \in \mathbb N$, $t \in [0,\infty)$, $x$, $x^1, \ldots, x^N \in \mathbb R^d$ and $\mu \in \mathcal{P}_2(\mathbb{R}^2)$.
\end{lemma}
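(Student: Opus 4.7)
My plan hinges on one elementary algebraic trick tailored to the taming definition in \eqref{taming-ergodic}: with $\alpha := 1+n^{-1/2}|z|^q \geq 1$ (taking $z=x$ for the first bound and $z=x^i-x^j$ for the second), one uses (i) $1/\alpha^2 \leq 1/\alpha$, which collapses the quadratic diffusion scaling back to a single taming factor, and (ii) $(1+|z|^q)/(1+n^{-1/2}|z|^q)\geq 1$ whenever $n\geq 1$, which preserves the coercivity of the dissipative terms of Assumption \ref{as:sch:gr:erg} under taming.

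For the first inequality, I would write $\langle x,b^n(t,x,\mu)\rangle + |\sigma^n(t,x,\mu)|^2 = \langle x,b\rangle/\alpha + |\sigma|^2/\alpha^2$ and invoke (i) on the nonnegative diffusion term to bound the whole quantity by $(\langle x,b\rangle + |\sigma|^2)/\alpha$. Applying the first bullet of Assumption \ref{as:sch:gr:erg} inside the numerator gives $-\hat L_{b\sigma}^{(1)}(1+|x|^q)|x|^2/\alpha + \hat L_{b\sigma}^{(2)} W^{(2)}(\mu,\delta_0)^2/\alpha$. I would then use observation (ii) on the negative term to retain a full $-\hat L_{b\sigma}^{(1)}|x|^2$, and the trivial $1/\alpha\leq 1$ on the positive term, yielding the stated bound.

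For the second inequality, the main tool is a symmetrization trick. Because the taming factor $1+n^{-1/2}|x-y|^q$ is symmetric in $(x,y)$, the antisymmetry of $f$ transfers verbatim to $f^n$, so swapping dummy indices $i\leftrightarrow j$ and averaging gives
\begin{equation*}
\frac{1}{N^2}\sum_{i,j=1}^N \langle x^i, f^n(x^i,x^j)\rangle = \frac{1}{2N^2}\sum_{i,j=1}^N \langle x^i-x^j, f^n(x^i,x^j)\rangle,
\end{equation*}
and an identical relabeling turns $\sum_{i,j}|g^n(x^i,x^j)|^2$ into $\tfrac12\sum_{i,j}\bigl(|g^n(x^i,x^j)|^2+|g^n(x^j,x^i)|^2\bigr)$. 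Applying the second bullet of Assumption \ref{as:sch:gr:erg} to both $(x,y)=(x^i,x^j)$ and $(x,y)=(x^j,x^i)$, then averaging and using the antisymmetry of $f$ to identify the two inner products, produces the matched dissipative bound $\langle x^i-x^j, f(x^i,x^j)\rangle + |g(x^i,x^j)|^2 + |g(x^j,x^i)|^2 \leq -\hat L_{fg}^{(1)}(1+|x^i-x^j|^q)|x^i-x^j|^2$. Dividing through by the common factor $1+n^{-1/2}|x^i-x^j|^q$ and applying observations (i)--(ii) termwise yields the claim, with the constant $L_{fg}^{(1)}$ in the statement identified as $\hat L_{fg}^{(1)}/2$ after absorbing the $1/2$ from symmetrization.

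The main obstacle I anticipate is the bookkeeping around the mismatch between the $2|g|^2$ appearing in Assumption \ref{as:sch:gr:erg} and the bare $|g^n|^2$ in the lemma: the symmetrization step is exactly what converts the assumption's factor $2$ into the pair $|g^n(x^i,x^j)|^2 + |g^n(x^j,x^i)|^2$, and skipping this step would force a loss of the coercive pre-factor. Once this is properly accounted for, the rest is a direct consequence of (i) and (ii).
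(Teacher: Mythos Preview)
Your proposal is correct and follows essentially the same route as the paper: the identical use of $1/\alpha^2\leq 1/\alpha$ and $(1+|z|^q)/(1+n^{-1/2}|z|^q)\geq 1$ for the first inequality, and the antisymmetry-based symmetrization of the drift for the second. The only cosmetic difference is that the paper does not symmetrize the diffusion term separately---after obtaining $\sum_{i,j}\langle x^i,f^n(x^i,x^j)\rangle = \tfrac12\sum_{i,j}\langle x^i-x^j,f^n(x^i,x^j)\rangle$ it simply writes $|g^n(x^i,x^j)|^2 = \tfrac12\cdot 2|g^n(x^i,x^j)|^2$ and applies the assumption once per $(i,j)$, reaching the same constant $\hat L_{fg}^{(1)}/2$ without your extra averaging over the two orderings (so your anticipated ``obstacle'' is in fact handled by the $\tfrac12$ already produced by the drift symmetrization).
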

\begin{proof}
 Using Assumption \ref{as:sch:gr:erg}, one can notice that 
    \begin{align*}
       \langle x,  b^n(t,x,\mu)&\,\rangle    + |\sigma^n(t,x,\mu)|^2  \leq \frac{1}{1+n^{-1/2}|x|^{q}} \big\{\langle x,b(t,x,\mu)\rangle  + |\sigma(t,x,\mu)|^2\big\}
       \\
        & \leq \frac{-\hat{L}_{b\sigma}^{(1)}(1+|x|^{q})|x|^2 + \hat{L}_{b\sigma}^{(2)} W^{(2)} (\mu,\delta_0)^2}{1+n^{-1/2}|x|^{q}}  \leq - \hat{L}_{b\sigma}^{(1)}\frac{1+|x|^{q}}{1+n^{-1/2}|x|^{q}}|x|^2 + \hat{L}_{b\sigma}^{(2)} W^{(2)} (\mu,\delta_0)^2
    \end{align*}
for all $t \in [0,\infty)$, $x \in \mathbb R^d$,  and $\mu \in \mathcal{P}_2(\mathbb R^d)$ and then inequality $1+n^{-1/2}|x|^q \leq 1+|x|^q$  completes the proof of the first statement. 
Also, by  anti-symmetric property of $f$ given in  Assumption \ref{as:Er Sol} and  Assumption \ref{as:sch:gr:erg}, one has
    \begin{align*}
        \frac{1}{N^2} \sum_{i,j=1}^N & \big\{\langle x^i,f^n(x^i,x^j) \rangle + |g^n(x^i,x^j)|^2 \big\} = \frac{1}{2N^2} \sum_{i,j=1}^N \big\{\langle x^i-x^j,f^n(x^i,x^j) \rangle + 2|g^n(x^i,x^j)|^2 \big\}
        \\
        & \leq \frac{1}{2N^2} \sum_{i,j=1}^N \frac{1}{1+n^{-1/2}|x^i -x^j|^q} \big\{-L_{fg}^{(1)}(1 + |x^i-x^j|^q )|x^i - x^j|^2 \big\}
    \end{align*}   
    for all $n \in \mathbb N$, $x$, $x^1,\ldots, x^N  \in \mathbb R^d$  and $\mu \in \mathcal{P}_2(\mathbb R^d)$. Using that $L^{(1)}\geq 0$ and the inequality mentioned above, concludes the proof of the second statement. 
\end{proof}
\begin{assumption}
\label{as: Er sch b f}
There exist constants $L_{b\sigma}^{(1)}>0$, $L_{b\sigma}^{(2)}>0$  and $L_{fg}^{(1)}>0$ such that
    \begin{align*}
       \big\langle x-x'
       ,b(t,x,\mu)-b(t,x',\mu')\big\rangle  & + 2|\sigma(t,x,\mu)-\sigma(t,x',\mu')|^2  
       \\
        & \leq -L_{b\sigma}^{(1)}(1+|x|^q + |x'|^q)|x-x'|^2 + L_{b\sigma}^{(2)} W^{(2)} (\mu,\mu')^2,   
       \\
         \big\langle (x-x') -(y-y')
         ,f(x,y) - f(x',y') \big\rangle & + 4|g(x,y)-g(x',y')|^2 
         \\
         & \leq -L_{fg}^{(1)} (1 + |x-y|^q + |x'-y'|^q)|(x-x')-(y-y')|^2,
    \end{align*} 
     for all $t \in [0,\infty)$, $x,x',y,y' \in \mathbb R^d$ and $\mu,\mu' \in \mathcal{P}_2(\mathbb R^d)$.
\end{assumption}
\begin{assumption}{\label{as: Er sch x b f}}
There exist constants $L_{b\sigma} ^{(3)}>0$, $L_{b\sigma} ^{(4)}>0$, $L_{b\sigma} ^{(5)}>0$, $L_{fg}^{(2)}>0$ and $ L_{fg}^{(2)} >0$ such that 
    \begin{align*}
       \langle x-x'&,b(t,x,\mu) |x'|^q -b(t,x',\mu') |x|^q \rangle  + 2|\sigma(t,x,\mu)|x'|^q-\sigma(t,x',\mu')|x|^q|^2 \\
        & \leq \bigl \{L_{b\sigma} ^{(3)}(1+|x|^q + |x'|^q) - L_{b\sigma} ^{(4)}|x|^q|x'|^q  \bigl \}|x-x'|^2  +  L_{b\sigma}^{(5)}W^{(2)} (\mu,\mu')^2,\\
        \langle (x-x') &-(y-y'),f(x,y)|x'-y'|^q - f(x',y')|x-y|^q \rangle + 4|g(x,y)|x'-y'|^q -g(x',y')|x-y|^q|^2 \\
         & \leq \bigl \{ L_{fg}^{(2)}(1+|x-y|^q + |x'-y'|^q) - L_{fg}^{(3)} |x-y|^q|x'-y'|^q \bigl\} |(x-x')-(y-y')|^2,  
    \end{align*}
     for all $t \in [0,\infty)$, $x,x',y,y' \in \mathbb R^d$ and $\mu,\mu' \in \mathcal{P}_2(\mathbb R^d)$.
\end{assumption}
\begin{assumption}{\label{as: Er sch gr b f}}
There exist constants $L_{b} ^{(3)}>0$, $L_{b} ^{(4)}>0$ and $L_{f} ^{(2)} >0$ such that 
    \begin{align*}
        \big|b(t,x,\mu)|x'|^q  & -b(t,x',\mu')|x|^q\big |^2 
        \\
        & \leq L_{b} ^{(3)}(1+|x|^{2q} + |x'|^{2q} + |x|^{2q}|x'|^{2q})|x-x'|^2 + L_{b} ^{(4)} W^{(2)} (\mu,\mu')^2,
        \\
        \big |f(x,y)|x'-y'|^q & - f(x',y')|x-y|^q\big|^2 
        \\
        & \leq L_{f}^{(2)} (1+|x-y|^{2q} + |x'-y'|^{2q} + |x-y|^{2q}|x'-y'|^{2q}) |(x-x')-(y-y')|^2,
    \end{align*}
     for all $t \in [0,\infty)$, $x,x',y,y' \in \mathbb R^d$ and $\mu,\mu' \in \mathcal{P}_2(\mathbb R^d)$.
\end{assumption} 
Assumptions \ref{as: Er sch b f} and \ref{as: Er sch x b f} are crucial as they ensure that the tamed coefficients are preserving some aspect of dissipativity condition while Assumption \ref{as: Er sch gr b f} is needed for the contraction of scheme as the time iterations progress, see statement (B) of Theorem \ref{Th:Erg:Sch}.
\begin{lemma}{\label{lem: Er sch b mono}}
Let Assumptions \ref{as:diff:b:f:erg}, \ref{as: Er sch b f}, \ref{as: Er sch x b f} and \ref{as: Er sch gr b f} be satisfied. 
Then
    \begin{align*}
         \langle x-x',b^n(t,x,\mu)  - b^n(t,x',\mu')\rangle  \, & + |\sigma^n(t,x,\mu)-\sigma^n(t,x',\mu')|^2  
         \\
         & \leq - \big \{ L_{b\sigma}^{(1)}/2 \wedge L_{b\sigma}^{(4)} \big \}|x-x'|^2 + \big\{L_{b\sigma}^{(2)}+L_{b\sigma}^{(5)}\big\} W^{(2)} (\mu,\mu')^2, 
         \\
        \langle (x-x') -(y-y'),f^n(x,y) - f^n(x',y') \rangle & + 2|g^n(x,y)-g^n(x',y')|^2 
        \\
        & \leq 
        - \big \{ L_{fg}^{(1)}/2 \wedge L_{fg}^{(3)} \big \} |(x-x')-(y-y')|^2, 
        \\
        |b^n(t,x,\mu)-b^n(t,x',\mu')|^2 & \leq 4n\{L_{b} ^{(1)} \vee L_{b} ^{(3)}\} |x-x'|^2
             + 2\{L_b^{(2)} + L_{b} ^{(4)}\}W^{(2)} (\mu,\mu')^2
             \\
         |f^n(x,y)-f^n(x',y')|^2 & \leq 4n \{L_{f}^{(1)} \vee L_{f}^{(2)}\}|(x-x')-(y-y')|^2
    \end{align*}
 for all $t \in [0,\infty)$, $x,y,x',y'\in \mathbb R^d$, $\mu,\mu' \in \mathcal{P}_2(\mathbb R^d)$ and  $h=(1/n) \in (0, h^*)$ where $h^*:= \big(\frac{L_{b\sigma}^{(1)}}{2L_{b\sigma}^{(3)}}\big)^2 \wedge \bigl (\frac{L_{fg}^{(1)}}{2L_{fg}^{(2)}} \bigl )^2$. 
\end{lemma}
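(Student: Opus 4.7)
The plan is to treat all four inequalities by a single algebraic decomposition. For any $x,x' \in \mathbb{R}^d$ I will write
\begin{equation*}
b^n(t,x,\mu) - b^n(t,x',\mu') = \frac{A + n^{-1/2} B}{D_n},
\qquad
\sigma^n(t,x,\mu) - \sigma^n(t,x',\mu') = \frac{C + n^{-1/2} E}{D_n},
\end{equation*}
where $D_n := (1+n^{-1/2}|x|^q)(1+n^{-1/2}|x'|^q)$, $A := b(t,x,\mu)-b(t,x',\mu')$, $B := b(t,x,\mu)|x'|^q - b(t,x',\mu')|x|^q$, and $C, E$ are the corresponding numerator pieces for $\sigma$. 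This is obtained simply by bringing to a common denominator. An entirely parallel decomposition is used for $f^n, g^n$ with $|x|^q$ replaced by $|x-y|^q$ and the role of the state-variable assumptions played by their kernel analogues.

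For the first monotonicity inequality I will bound $|\sigma^n - \sigma^n|^2 \leq (2|C|^2 + 2n^{-1}|E|^2)/D_n^2$ and use $D_n^2 \geq D_n$ to put everything over a single $D_n$. This gives
\begin{equation*}
\langle x-x', b^n - b^n\rangle + |\sigma^n-\sigma^n|^2 \leq \frac{\big(\langle x-x',A\rangle + 2|C|^2\big) + n^{-1/2}\big(\langle x-x',B\rangle + 2 n^{-1/2}|E|^2\big)}{D_n}.
\end{equation*}
Assumption \ref{as: Er sch b f} controls the first bracket; for the second, I use $2n^{-1/2}|E|^2 \leq 2|E|^2$ (valid for $n \geq 1$) and apply Assumption \ref{as: Er sch x b f}. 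The restriction $h = 1/n < h^* = (L_{b\sigma}^{(1)}/(2L_{b\sigma}^{(3)}))^2$ is exactly what guarantees $n^{-1/2} L_{b\sigma}^{(3)} \leq L_{b\sigma}^{(1)}/2$, so the positive $L_{b\sigma}^{(3)}$-term from the weighted piece is absorbed by half of the negative $L_{b\sigma}^{(1)}$-term, leaving a $|x-x'|^2$-coefficient proportional to $-\{(L_{b\sigma}^{(1)}/2)(1+|x|^q+|x'|^q) + n^{-1/2}L_{b\sigma}^{(4)}|x|^q|x'|^q\}/D_n$. The key elementary inequality that then closes the estimate is
\begin{equation*}
(L_{b\sigma}^{(1)}/2)(1+|x|^q+|x'|^q) + n^{-1/2}L_{b\sigma}^{(4)}|x|^q|x'|^q \geq \{L_{b\sigma}^{(1)}/2 \wedge L_{b\sigma}^{(4)}\}\, D_n,
\end{equation*}
which follows by termwise comparison against $D_n = 1 + n^{-1/2}|x|^q + n^{-1/2}|x'|^q + n^{-1}|x|^q|x'|^q$ using $|x|^q \geq n^{-1/2}|x|^q$ and $n^{-1/2}|x|^q|x'|^q \geq n^{-1}|x|^q|x'|^q$ for $n \geq 1$. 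The $W^{(2)}(\mu,\mu')^2$-coefficient is handled by the trivial $D_n \geq 1$. The corresponding monotonicity inequality for $f^n, g^n$ is obtained by the same template, invoking the antisymmetry of $f$ implicitly in the kernel forms of Assumptions \ref{as: Er sch b f} and \ref{as: Er sch x b f}; only the bookkeeping changes.

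For the two Lipschitz-in-$n$ estimates I will start from $|b^n - b^n|^2 \leq (2|A|^2 + 2n^{-1}|B|^2)/D_n^2$, apply Assumption \ref{as:diff:b:f:erg} to $|A|^2$ and Assumption \ref{as: Er sch gr b f} to $|B|^2$, and use the elementary bounds
\begin{equation*}
\frac{1+|x|^{2q}+|x'|^{2q}}{D_n^2} \leq 2n
\qquad \text{and} \qquad
\frac{(1+|x|^{2q})(1+|x'|^{2q})}{D_n^2} \leq n^2,
\end{equation*}
both consequences of $(1+a)/(1+n^{-1/2}\sqrt{a})^2 \leq n$ for $a \geq 0$, $n \geq 1$ (equivalently, $n(1+a/n) \geq 1+a$). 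The extra $n^{-1}$ multiplying $|B|^2$ cancels one power of $n$ from the denominator bound, leaving the stated $\mathcal{O}(n)$ Lipschitz constant. The corresponding inequality for $f^n$ is identical in structure. The main obstacle I anticipate is purely bookkeeping: keeping track of which piece is multiplied by $n^{-1/2}$, whether the denominator is $D_n$ or $D_n^2$, and how the smallness of $h$ interacts with the positive $L_{b\sigma}^{(3)}$- and $L_{fg}^{(2)}$-terms of Assumption \ref{as: Er sch x b f}. No single step is deep, but the arithmetic must be tracked carefully to recover the claimed constants.
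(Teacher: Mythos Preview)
Your approach is essentially identical to the paper's: bring the tamed differences to the common denominator $D_n$, split into the $A+n^{-1/2}B$ and $C+n^{-1/2}E$ pieces, apply Assumptions~\ref{as: Er sch b f} and~\ref{as: Er sch x b f} to the two brackets, use $h<h^*$ to absorb the positive $L_{b\sigma}^{(3)}$ term into half of the negative $L_{b\sigma}^{(1)}$ term, and close with the termwise comparison against $D_n$ that you spell out (the paper phrases the same step as $\frac{n^{-1/2}+a}{1+a}\geq n^{-1/2}$). For the third and fourth inequalities there is one small bookkeeping divergence: your separate bounds $(1+u+v)/D_n^2\le 2n$ and $(1+u)(1+v)/D_n^2\le n^2$ yield the $|x-x'|^2$-coefficient $4nL_b^{(1)}+2nL_b^{(3)}$ rather than the stated $4n\{L_b^{(1)}\vee L_b^{(3)}\}$; to land exactly on the lemma's constant, first merge the two numerators into the single factor $1+u+v+n^{-1}uv$ (bounding both $L_b^{(1)}$ and $L_b^{(3)}$ by their max), lower-bound $D_n^2\geq 1+n^{-1}(u+v)+n^{-2}uv$, and then observe that the resulting ratio is $\leq n$ since $1+u+v+n^{-1}uv \leq n\bigl(1+n^{-1}(u+v)+n^{-2}uv\bigr)$ reduces to $1\leq n$.
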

    \begin{proof}
    Due to Equation \eqref{taming-ergodic},  observe that 
     \begin{align*}
     \langle x-x',& b^n(t,x,\mu)- b^n(t,x',\mu')\rangle  + |\sigma^n(t,x,\mu)-\sigma^n(t,x',\mu')|^2 
     \\
         =& \, \Bigl \langle  x-x' , \frac{b(t,x,\mu)}{1+n^{-1/2}|x|^q} -\frac{b(t,x',\mu')}{1+n^{-1/2}|x'|^q} \Bigl \rangle  +  \Bigl |\frac{\sigma(t,x,\mu)}{1+n^{-1/2}|x|^q} -\frac{\sigma(t,x',\mu')}{1+n^{-1/2}|x'|^q} \Bigl|^2 
         \\ 
          \leq & \, \frac{ \langle x-x', b(t,x,\mu)-b(t,x',\mu') \rangle + 2 |\sigma(t,x,\mu)-\sigma(t,x',\mu')|^2 }{1+n^{-1/2}(|x|^q + |x'|^q) + n^{-1} |x|^q |x'|^q} \\
         & + n^{-1/2} \frac{ \langle x-x',b(t,x,\mu) |x'|^q -b(t,x',\mu') |x|^q \rangle  + 2|\sigma(t,x,\mu)|x'|^q-\sigma(t,x',\mu')|x|^q|^2}{1+n^{-1/2}(|x|^q + |x'|^q) + n^{-1} |x|^q |x'|^q}
     \end{align*}
     which on using Assumptions \ref{as: Er sch b f} and \ref{as: Er sch x b f} yields the following estimates, 
     \begin{align*}
      \langle x-x'&,b^n(t,x,\mu)-b^n(t,x',\mu')\rangle  + |\sigma^n(t,x,\mu)-\sigma^n(t,x',\mu')|^2
      \\
         & \leq \frac{-L_{b\sigma}^{(1)}(1+|x|^q + |x'|^q)|x-x'|^2 + L_{b\sigma}^{(2)} W^{(2)} (\mu,\mu')^2 } {1+n^{-1/2}(|x|^q + |x'|^q) + n^{-1} |x|^q |x'|^q} 
         \\
        & \quad + n^{-1/2} \frac{\bigl \{L_{b\sigma} ^{(3)}(1+|x|^q + |x'|^q) - L_{b\sigma} ^{(4)}|x|^q|x'|^q  \bigl\}|x-x'|^2  +  L_{b\sigma}^{(5)}W^{(2)} (\mu,\mu')^2}{1+n^{-1/2}(|x|^q + |x'|^q) + n^{-1} |x|^q |x'|^q}  
        \\
         & \leq - \frac{\bigl\{\big(L_{b\sigma}^{(1)}- n^{-1/2}L_{b\sigma}^{(3)}\big)(1+|x|^q + |x'|^q)+n^{-1/2}L_{b\sigma}^{(4)} |x|^q|x'|^q \bigl\}|x-x'|^2}{1+n^{-1/2}(|x|^q + |x'|^q) + n^{-1} |x|^q |x'|^q} 
         \\
         & \qquad\qquad + \big\{L_{b\sigma}^{(2)}+L_{b\sigma}^{(5)}\big\} W^{(2)} (\mu,\mu')^2 
         \\
         & \leq - \Bigl\{ \frac{L_{b\sigma}^{(1)}}{2} \wedge L_{b\sigma}^{(4)} \Bigl \}|x-x'|^2 + \big\{L_{b\sigma}^{(2)}+L_{b\sigma}^{(5)}\big\} W^{(2)} (\mu,\mu')^2 
     \end{align*}
     for all $t \in [0,\infty)$, $x,x',y,y' \in \mathbb R^d$ and $\mu,\mu' \in \mathcal{P}_2(\mathbb R^d)$. In the last inequality, one uses  $\displaystyle n^{-1/2} = h^{1/2} \leq \frac{L_{b\sigma}^{(1)}}{2L_{b\sigma}^{(3)}} $ and the inequality $\displaystyle \frac{n^{-1/2} + a}{1+a} \geq \frac{n^{-1/2} + n^{-1/2} a}{1+a} = n^{-1/2}$ for all constant $a>0$.
   Further, using Assumptions \ref{as: Er sch b f} and \ref{as: Er sch x b f}, one can obtain
    \begin{align*}
        \langle & (x-x') -(y-y'),f^n(x,y) - f^n(x',y') \rangle + 2|g^n(x,y)-g^n(x',y')|^2 
        \\
        & \leq \frac{\langle (x-x')-(y-y'),f(x,y) - f(x',y') \rangle + 4|g(x,y)-g(x',y')|^2}{1+n^{-1/2}(|x-y|^q+ |x'-y'|^q)+n^{-1}|x-y|^q |x'-y'|^q} 
        \\
        & \qquad + n^{-1/2}\frac{ \langle (x-x')-(y-y'),f(x,y)|x'-y'|^q - f(x',y')|x-y|^q \rangle + 4|g(x,y)|x'-y'|^q -g(x',y')|x-y|^q|^2}{1+n^{-1/2}(|x-y|^q+ |x'-y'|^q)+n^{-1}|x-y|^q |x'-y'|^q}
        \\
        & \leq \frac{ -L_{fg}^{(1)} (1 + |x-y|^q + |x'-y'|^q) |(x-x')-(y-y')|^2 }{1+n^{-1/2}(|x-y|^q+ |x'-y'|^q) +n^{-1}|x-y|^q |x'-y'|^q}\\
        & \qquad + n^{-1/2}  \frac{ \big\{L_{fg}^{(2)}(1+|x-y|^q + |x'-y'|^q) - L_{fg}^{(3)} |x-y|^q|x'-y'|^q \big\}  |(x-x')-(y-y')|^2}{1+n^{-1/2}(|x-y|^q+ |x'-y'|^q)+n^{-1}|x-y|^q |x'-y'|^q}
        \\
        & \leq - \Bigl \{ \frac{L_{fg}^{(1)}}{2} \wedge L_{fg}^{(3)} \Bigl \} |(x-x')-(y-y')|^2
    \end{align*}
  for all $x,x',y,y' \in \mathbb R^d$.  Furthermore, using Assumptions \ref{as:diff:b:f:erg} and \ref{as: Er sch gr b f}, it is easy to see that
         \begin{align*}
             |b^n  (t,x,\mu)& -b^n(t,x',\mu')|^2 \leq \frac{2|b(t,x,\mu)-b(t,x',\mu')|^2 + 2n^{-1}|b(t,x,\mu) |x'|^q - b(t,x',\mu') |x|^q |^2}{(1+n^{-1/2}(|x|^q + |x'|^q) + n^{-1} |x|^q |x'|^q)^2}\\
             & \leq \frac{2L_{b} ^{(1)}(1+|x|^{2q} + |x'|^{2q})|x-x'|^2 + 2L_{b} ^{(2)} W^{(2)} (\mu,\mu')^2  }{(1+n^{-1/2}(|x|^q + |x'|^q) + n^{-1} |x|^q |x'|^q)^2}\\
             & \qquad + n^{-1} \frac{2 L_{b} ^{(3)}(1+|x|^{2q} + |x'|^{2q} +  |x|^{2q}|x'|^{2q})|x-x'|^2 + 2L_{b} ^{(4)} W^{(2)} (\mu,\mu')^2 }{(1+n^{-1/2}(|x|^q + |x'|^q) + n^{-1} |x|^q |x'|^q)^2}
             \\
             & \leq \frac{4\{L_{b} ^{(1)} \vee L_{b} ^{(3)}\}\bigl (1+|x|^{2q} + |x'|^{2q} + n^{-1} |x|^{2q} |x'|^{2q} \bigl )|x-x'|^2}{1+n^{-1}(|x|^{2q} + |x'|^{2q}) + n^{-2} |x|^{2q} |x'|^{2q}}  + 2(L_b^{(2)} + L_{b} ^{(4)})W^{(2)} (\mu,\mu')^2
             \\
             & \leq 4n\{L_{b} ^{(1)} \vee L_{b} ^{(3)}\} |x-x'|
             + 2\{L_b^{(2)} + L_{b} ^{(4)}\} W^{(2)} (\mu,\mu')^2
         \end{align*}
     for all $t \in [0,\infty)$, $x,x',y,y' \in \mathbb R^d$ and $\mu,\mu' \in \mathcal{P}_2(\mathbb R^d)$. Finally, using Assumptions \ref{as:diff:b:f:erg} and {\ref{as: Er sch gr b f}}, one can observe
\begin{align*}
     |f^n &(x,y) -f^n(x',y')|^2  \leq \frac{2 |f(x,y)-f(x',y')|^2 + 2n^{-1}|f(x,y)|x-y|^q-f(x',y')|x'-y'|^q|^2}{(1+n^{-1/2}(|x-y|^q+|x'-y'|^q)+n^{-1}|x-y|^q|x'-y'|^q)^2}\\
     & \leq \frac{2L_{f}^{(1)} (1+|x-y|^{2q} + |x'-y'|^{2q}) |(x-x')-(y-y')|^2 }{(1+n^{-1}(|x-y|^{2q}+|x'-y'|^{2q})+n^{-2}|x-y|^{2q}|x'-y'|^{2q})}\\
     & \quad + n^{-1} \frac{2L_{f}^{(2)} (1+|x-y|^{2q} + |x'-y'|^{2q} + |x-y|^{2q}|x'-y'|^{2q}) |(x-x')-(y-y')|^2}{(1+n^{-1}(|x-y|^{2q}+|x'-y'|^{2q})+n^{-2}|x-y|^{2q}|x'-y'|^{2q})}
     \\
     & \leq 4n\{L_{f}^{(1)} \vee L_{f}^{(2)}\}|(x-x')-(y-y')|^2
\end{align*}
for all $x,x',y,y' \in \mathbb R^d$ which completes the proof.
    \end{proof}

\subsubsection{The main result}

We now introduce a final element of notation to use in the main result on the ergodic properties of the tamed Euler scheme \eqref{eq:TamedEulerforErgodicity}. 
Let $X_0$ and $Y_0$ be $\mathbb R^d-$valued  $\mathscr{F}_0$-measurable random variables. 
If $\mu_0 := \mu^X_0$, then the solution of scheme \eqref{eq:TamedEulerforErgodicity},  is denoted by $\{\hat{X}_{t_k} ^{i,N,n}\}_{k \in \mathbb N_0}$. 
Similarly, $\{\hat{Y}_{t_k} ^{i,N,n}\}_{k \geq 1}$ represents the solution of the scheme \eqref{eq:TamedEulerforErgodicity} with an initial value $Y_0$ having law $\nu_0:= \nu_0^Y$. 
Moreover, we define $P_{t_k}^{i, N, n} \mu_0 := \mu^{\hat{X}^{i,N,n}}_{t_k}$ and $P_{t_k}^{i, N, n} \nu_0 := \nu^{\hat{Y}^{i,N,n}}_{t_k}$ where $\mu^{\hat{X}^{i,N,n}}_{t_k}$ and $\nu^{\hat{Y}^{i,N,n}}_{t_k}$ are laws of $\hat{X}^{i,N,n}_{t_k}$  and $\hat{Y}^{i,N,n}_{t_k}$ for any $k \in \mathbb N_0$ and $i \in \{1,\ldots,N\}$, respectively. 
 
As in the previous section, we are looking for the invariant distribution of the tamed Euler scheme associated to the IPS system which is a measure on $(\bR^d)^N$. We introduce the joint law of the tamed Euler scheme as 
$\bar P^{N,n}_{t_k} \mu_0^{\otimes N}:= \textrm{Law}\big(  (\hat X^{1,N,n}_{t_k},\ldots,\hat X^{N,N,n}_{t_k}) \big)$  where $\textrm{Law}(\hat X^{i,N,n}_0)=\mu_0$ (or alternatively $\bar P^{N,n}_{0} \mu_0^{\otimes N}=\mu_0^{\otimes N}$) for all $i=1,\ldots,N$ and $\mu_0^{\otimes N}$ is the $N$-tensorised initial distribution $\mu_0 \in \cP_\cdot(\bR^d)$.  
\color{black}

\begin{theorem}{\label{Th:Erg:Sch}}
Fix $N,n\in \bN$, $h=1/n$ and $q>0$. Let Assumption \ref{as:X0:erg} hold for some $p_0\geq 2$. 
Let the antisymmetric property of $f$ in Assumption \ref{as:Er Sol} holds.
In addition, let Assumptions \ref{as:sch:gr:erg} to \ref{as: Er sch gr b f} be satisfied. 
Assume the constants $L_b^*,L_f^*$ defined in Remark \ref{R:gr:tam:erg} satisfy $L_b^*>0$ and $L_f^*>0$. 
Recall  $h^*>0$ as defined in Lemma \ref{lem: Er sch b mono}, and set the constants $\hat{\rho}_1,\hat{\rho}_2$ as 
\begin{align*}
\hat{\rho}_1 & := \hat{L}_{b\sigma}^{(1)} - \hat{L}_{b\sigma}^{(2)} - 4L_{fg}^{(1)} -L_{b}^{(1)} - L_b^{(2)} -4L_f^{(1)}>0
\\
\hat{\rho}_2 & := \big(L_{b\sigma}^{(1)}/2 \wedge L_{b\sigma}^{(4)} \big)-\big(L_{b\sigma}^{(2)}+L_{b\sigma}^{(5)} \big) +2 \big (L_{fg}^{(1)}/2 \wedge L_{fg}^{(3)} \big) - 4 (L_b^{(1)} \vee L_b^{(3)}) - 2 (L_b^{(2)} \vee L_b^{(4)}) - 16(L_f^{(1)} \vee L_f^{(2)}).  
\end{align*}
Then, the following statements are true. 
\newline \noindent 
 \textnormal{(A)} For every initial measure $\mu_0 \in \mathcal{P}_\ell(\mathbb R^d)$,  
 $k \in \mathbb N_0$ and  $ i \in \{1,\ldots,N \}$, one has,  
\begin{align*}
    W^{(2)} (P_{t_{k}}^{i,N,n}\mu_0,\delta_0)^2 \leq e^{-2 \hat{\rho}_1t_{k}} W^{(2)}(\mu_0, \delta_0)^2 + \frac{L_b^*+L_f^*}{ \hat{\rho}_1} \big\{1-(1- 2h \hat{\rho}_1)^{k}\big\}. 
\end{align*}
Moreover, $\displaystyle \sup_{i\in \{1,\ldots,N\}} \sup_{k \in \mathbb N_0} \mathbb E|\hat{X}_{t_{k}}^{i,N,n}|^2 \leq K$ whenever $ h \in (0, 1/2\hat{\rho}_1)$ where $K>0$ is independent of $N,n,k,h$.

\noindent
   \textnormal{(B)} 
   For all $\mu_0, \nu_0 \in \mathcal{P}_\ell(\mathbb R^d)$, $k \in \mathbb N_0$ and $h  \in (0,\min \{h^*,1/2\hat{\rho}_1\}) $  one has  
       \begin{align*}
       W^{(2)}(P^{i,N,n}_{t_{k}}\mu_0,P^{i, N,n}_{t_{k}}\nu_0)^2 \leq 2 e^{-2\hat{\rho}_2t_{k}}
       \big(W^{(2)}(\mu_0,\delta_0)^2 + W^{(2)}(\delta_0,\nu_0)^2 \big). 
    \end{align*}  
    
 \noindent
    \textnormal{(C)}  
    Assume that $b$ and $\sigma$ are independent of time.  
    If $\hat{\rho}_1, \hat{\rho}_2 >0$ and $h \in (0,h^* \wedge 1/(2\hat{\rho}_1))$, then there exists a unique invariant measure      $\bar {\mu}^{N,n} \in \mathcal{P}_2 \big(  (\mathbb R^d)^N\big)$ to the tamed Euler scheme \eqref{eq:TamedEulerforErgodicity} in the following sense  
    \begin{align*}
     W^{(2)}(\bar P_{t_k}^{ N,n} \bar {\mu}^{N,n}, \bar {\mu}^{N,n})  = 0 \quad \mbox{ for all } k \in \mathbb N_0 \quad    \mbox{ and } \quad 
    \textrm{for all $\hat{\nu} \in \mathcal{P}_{2}(\mathbb R^d)$} 
        \quad 
        \lim_{k \rightarrow \infty}  
        W^{(2)}(\bar P_{t_k}^{N,n} \hat{\nu}^{\otimes N}, \bar {\mu}^{N,n}) = 0.
\end{align*}
\end{theorem}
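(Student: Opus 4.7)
The plan is to derive a one-step recursion from the scheme and iterate. Writing \eqref{eq:TamedEulerforErgodicity} as $\hat X_{t_{k+1}}^{i,N,n} = \hat X_{t_k}^{i,N,n} + A_k^i h + B_k^i \Delta W_{t_k}^i$ with obvious definitions of $A_k^i,B_k^i$, I square and take expectations (the martingale property kills the cross Brownian term, and $\mathbb{E}[|B_k^i \Delta W_{t_k}^i|^2\mid \mathscr{F}_{t_k}]=h|B_k^i|^2$), then average over $i\in\{1,\ldots,N\}$. The dissipative contribution comes from Lemma \ref{lem:gr:sch:erg}: the $(b^n,\sigma^n)$ piece gives $-\hat L_{b\sigma}^{(1)}|\hat X_{t_k}^{i,N,n}|^2+\hat L_{b\sigma}^{(2)}W^{(2)}(\hat\mu_{t_k}^{X,N,n},\delta_0)^2$, and the antisymmetric symmetrisation $\tfrac{1}{N^2}\sum_{i,j}$ for $(f^n,g^n)$ converts the kernel contribution into $-L_{fg}^{(1)}\tfrac{1}{N^2}\sum_{i,j}|\hat X^{i,N,n}_{t_k}-\hat X^{j,N,n}_{t_k}|^2$ which is controlled by $\tfrac{1}{N}\sum_i|\hat X^{i,N,n}_{t_k}|^2$. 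The noise/drift remainders of order $h$ and $h^2$ are tamed via Remark \ref{R:gr:tam:erg} using the crucial identity $h\cdot n = 1$, e.g.\ $h^2|b^n|^2 \leq h(L_b^{(1)}|x|^2+L_b^*)+h^2 L_b^{(2)}W^{(2)}$. Assembling yields the scalar recursion $\tfrac{1}{N}\sum_i \mathbb{E}|\hat X_{t_{k+1}}^{i,N,n}|^2 \leq (1-2h\hat\rho_1)\tfrac{1}{N}\sum_i \mathbb{E}|\hat X_{t_k}^{i,N,n}|^2 + h(L_b^*+L_f^*)$, and iteration with exchangeability gives the stated bound. The uniform-in-$k$ moment bound follows whenever $1-2h\hat\rho_1\in(0,1)$.

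\textbf{Plan for (B).} Same strategy applied to $\mathbb{E}|\hat X_{t_{k+1}}^{i,N,n}-\hat Y_{t_{k+1}}^{i,N,n}|^2$. The dissipativity is now supplied by Lemma \ref{lem: Er sch b mono}: the drift–diffusion pair produces $-(L_{b\sigma}^{(1)}/2\wedge L_{b\sigma}^{(4)})|X-Y|^2+(L_{b\sigma}^{(2)}+L_{b\sigma}^{(5)})W^{(2)}(\hat\mu^X,\hat\mu^Y)^2$, and antisymmetrisation $\tfrac{1}{N^2}\sum_{i,j}$ yields the corresponding $L_{fg}^{(1)}/2\wedge L_{fg}^{(3)}$ contribution from the kernels. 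The leftover $h\cdot|B_k^i - \tilde B_k^i|^2$ and $h^2|A_k^i-\tilde A_k^i|^2$ are dominated by the Lipschitz-type bounds $|b^n-b^n|^2 \leq 4n(L_b^{(1)}\vee L_b^{(3)})|X-Y|^2+\cdots$ and $|f^n-f^n|^2 \leq 4n(L_f^{(1)}\vee L_f^{(2)})|(X-X')-(Y-Y')|^2$ of the same lemma; once again the magic cancellation $h\cdot n=1$ converts them into $O(1)$ coefficients on $|X-Y|^2$, while the restriction $h<h^*$ is exactly the domain required by Lemma \ref{lem: Er sch b mono}. Collecting yields $\tfrac{1}{N}\sum_i\mathbb{E}|\hat X_{t_{k+1}}^{i,N,n}-\hat Y_{t_{k+1}}^{i,N,n}|^2 \leq (1-2h\hat\rho_2)\tfrac{1}{N}\sum_i\mathbb{E}|\hat X_{t_k}^{i,N,n}-\hat Y_{t_k}^{i,N,n}|^2$, and a geometric iteration closes the contraction estimate.

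\textbf{Plan for (C).} I would mirror the proof of Theorem \ref{Th:Erog IPS}(C), now at the level of the $N$-particle joint law flow $\bar P_{t_k}^{N,n}\mu_0^{\otimes N}$. From (A), applied coordinatewise, $\sup_{k}W^{(2)}(\bar P_{t_k}^{N,n}\mu_0^{\otimes N},\delta_0^{\otimes N})<\infty$; from (B) (coordinatewise) together with time-homogeneity and the semigroup identity $\bar P_{t_{k+j}}^{N,n}=\bar P_{t_k}^{N,n}\circ \bar P_{t_j}^{N,n}$, one gets
\[
W^{(2)}\bigl(\bar P_{t_k}^{N,n}\mu_0^{\otimes N},\bar P_{t_{k+j}}^{N,n}\mu_0^{\otimes N}\bigr)^2 \leq 2N e^{-2\hat\rho_2 t_k}\Bigl\{W^{(2)}(\mu_0,\delta_0)^2+\sup_{j\geq 0}W^{(2)}(\delta_0,\bar P_{t_j}^{N,n}\mu_0^{\otimes N})^2\Bigr\}\xrightarrow[k\to\infty]{}0
\]
uniformly in $j$. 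Hence $\{\bar P_{t_k}^{N,n}\mu_0^{\otimes N}\}_k$ is Cauchy in the complete space $(\mathcal P_2((\mathbb R^d)^N),W^{(2)})$, producing a limit $\bar\mu^{N,n}$. Invariance $\bar P_{t_k}^{N,n}\bar\mu^{N,n}=\bar\mu^{N,n}$ follows by the standard triangle-inequality/continuity argument (as in the proof of Theorem \ref{Th:Erog IPS}(C)), and both uniqueness and convergence of $\bar P_{t_k}^{N,n}\hat\nu^{\otimes N}$ for any $\hat\nu\in\mathcal P_2(\mathbb R^d)$ are immediate from (B) applied to the pair $(\hat\nu^{\otimes N},\bar\mu^{N,n})$.

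\textbf{The main obstacle.} The delicate point throughout (A) and (B) is that the discretisation remainders contain factors of $n|x|^2$ coming from the tamed growth, which naively would dominate the favourable $-2h\hat\rho_1 |x|^2$ dissipativity term; the whole scheme works only because the scaling $h\cdot n = 1$ exactly converts these $h$-weighted $n|x|^2$ terms into $O(1)$ coefficients on $|x|^2$ plus the harmless constants $L_b^*,L_f^*$. The explicit form of the constants $\hat\rho_1,\hat\rho_2$ in the statement is precisely the net balance after collecting every such remainder; keeping track of this bookkeeping, and exploiting the antisymmetric and tensor-product structure to avoid losses in $N$, is the main technical work.
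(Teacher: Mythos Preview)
Your proposal is correct and follows essentially the same route as the paper: for (A) and (B) you expand the squared increment, average over particles, invoke Lemma~\ref{lem:gr:sch:erg} (respectively Lemma~\ref{lem: Er sch b mono}) for the dissipative piece and Remark~\ref{R:gr:tam:erg} for the $h^2$-remainders, exploit $h\cdot n=1$ to convert the tamed growth bounds into $O(1)$ coefficients, and iterate the resulting one-step recursion using exchangeability; for (C) you run the Cauchy-sequence argument in $(\mathcal{P}_2((\mathbb{R}^d)^N),W^{(2)})$ exactly as in the proof of Theorem~\ref{Th:Erog IPS}(C). The only cosmetic discrepancy is that the paper's recursion in (A) carries the constant $2h(L_b^*+L_f^*)$ rather than $h(L_b^*+L_f^*)$, which is what yields the stated form $\frac{L_b^*+L_f^*}{\hat\rho_1}\{1-(1-2h\hat\rho_1)^k\}$ after summing the geometric series.
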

\begin{proof}
(A)  The result holds trivially for $k=0$. 
 For $k \in \mathbb N_0$, it can be  seen that
        \begin{align*}
         \mathbb E|\hat{X}_{t_{k+1}}^{i,N,n}|^2  = &\, \mathbb E |\hat{X}_{t_{k}}^{i,N,n}|^2 + 2h \mathbb E \Bigl\langle\hat{X}_{t_{k}}^{i,N,n},b^n(\hat{X}_{t_{k}}^{i,N,n},\hat{\mu}_{t_k}^{X,N,n}) + \frac{1}{N}\sum_{j=1}^N f^n(\hat{X}_{t_{k}}^{i,N,n},\hat{X}_{t_{k}}^{j,N,n}) \Bigl\rangle 
         \\
         &  + h^2 \mathbb E \Bigl| b^n(\hat{X}_{t_{k}}^{i,N,n},\hat{\mu}_{t_k}^{X,N,n}) + \frac{1}{N}\sum_{j=1}^Nf^n(\hat{X}_{t_{k}}^{i,N,n},\hat{X}_{t_{k}}^{j,N,n})\Bigl|^2 
         \\
         &  + h \mathbb E \Bigl| \sigma^n(\hat{X}_{t_{k}}^{i,N,n},\hat{\mu}_{t_k}^{X,N,n}) + \frac{1}{N}\sum_{j=1}^N g^n(\hat{X}_{t_{k}}^{i,N,n},\hat{X}_{t_{k}}^{j,N,n})\Bigl|^2 
    \end{align*}
for any   $i \in \{1,\ldots,N \}$. 
On averaging  both sides, one gets
\begin{align}
       \frac{1}{N} \sum_{i=1}^N \mathbb E|\hat{X}_{t_{k+1}}^{i,N,n}|^2  \leq & \, \frac{1}{N} \sum_{i=1}^N \mathbb E |\hat{X}_{t_{k}}^{i,N,n}|^2 + \frac{2h}{N} \sum_{i=1}^N \mathbb E \bigl\{\langle\hat{X}_{t_{k}}^{i,N,n},b^n(\hat{X}_{t_{k}}^{i,N,n},\hat{\mu}_{t_k}^{X,N,n}) \rangle + |\sigma^n(\hat{X}_{t_{k}}^{i,N,n},\hat{\mu}_{t_k}^{X,N,n})|^2\bigl\} \notag
         \\
       &  + \frac{2h}{N^2} \sum_{i,j=1}^N \mathbb E \bigl\{\langle\hat{X}_{t_{k}}^{i,N,n}, f^n(\hat{X}_{t_{k}}^{i,N,n},\hat{X}_{t_{k}}^{j,N,n}) \rangle + |g^n(\hat{X}_{t_{k}}^{i,N,n},\hat{X}_{t_{k}}^{j,N,n})|^2 \bigl\} \notag
       \\
       &  + \frac{2h^2}{N} \sum_{i=1}^N \mathbb E | b^n(\hat{X}_{t_{k}}^{i,N,n},\hat{\mu}_{t_k}^{X,N,n})|^2 + \frac{2h^2}{N^2}\sum_{i,j=1}^N \mathbb E |f^n(\hat{X}_{t_{k}}^{i,N,n},\hat{X}_{t_{k}}^{j,N,n})|^2 \label{eq:dis:ito}
\end{align}
which on using Lemma \ref{lem:gr:sch:erg} and Remark \ref{R:gr:tam:erg} gives
\begin{align*}
    \frac{1}{N} \sum_{i=1}^N & \mathbb E|\hat{X}_{t_{k+1}}^{i,N,n}|^2 \leq \frac{1}{N} \sum_{i=1}^N \mathbb E |\hat{X}_{t_{k}}^{i,N,n}|^2 
    -\hat{L}_{b\sigma}^{(1)} \frac{2h}{N} \sum_{i=1}^N \mathbb E  |\hat{X}_{t_{k}}^{i,N,n}|^2 + 2h \hat{L}_{b\sigma}^{(2)} W^{(2)} (\hat{\mu}_{t_k}^{X,N,n},\delta_0)^2 
    \\
    & \quad - \frac{2h L_{fg}^{(1)}}{N^2} \sum_{i,j=1}^N \mathbb E |\hat{X}_{t_{k}}^{i,N,n}-\hat{X}_{t_{k}}^{j,N,n}|^2 
    + L_b^{(1)} n \frac{2h^2}{N} \sum_{i=1}^N \mathbb E |\hat{X}_{t_{k}}^{i,N,n}|^2 + 2h L_b^{(2)} W^{(2)} (\hat{\mu}_{t_k}^{X,N,n},\delta_0)^2 + 2hL_b^*
    \\
    & \quad + \frac{2h^2 n}{N^2} \sum_{i,j=1}^N \mathbb E \big\{L_f^{(1)}|\hat{X}_{t_{k}}^{i,N,n}-\hat{X}_{t_{k}}^{j,N,n}|^2 + L_f^*\big\}
\end{align*}
and after further simplification and  iterations, one obtains the following estimates, 
\begin{align*}
   \frac{1}{N} \sum_{i=1}^N \mathbb E |\hat{X}_{t_{k+1}}^{i,N,n}|^2  \leq &\, \bigl\{1 - 2h(\hat{L}_{b\sigma}^{(1)} - \hat{L}_{b\sigma}^{(2)} - 4L_{fg}^{(1)} -L_{b}^{(1)} - L_b^{(2)} -4L_f^{(1)}) \bigl\} \frac{1}{N} \sum_{i=1}^N \mathbb E |\hat{X}_{t_{k}}^{i,N,n}|^2  + 2h(L_b^*+L_f^*)
    \\
     = & \bigl(1- 2h \hat{\rho}_1 \bigl) \frac{1}{N} \sum_{i=1}^N\mathbb E |\hat{X}_{t_{k}}^{i,N,n}|^2  + 2h(L_b^*+L_f^*)
    \\
     \leq &\, \bigl(1- 2h \hat{\rho}_1 \bigl)^{k+1} \frac{1}{N} \sum_{i=1}^N\mathbb E |\hat{X}_{t_{0}}^{i}|^2  + 2(L_b^*+L_f^*)h \sum_{j=0}^k (1- 2h \hat{\rho}_1)^j
    \\
     \leq & \, e^{-2\hat{\rho}_1 t_{k+1}} \frac{1}{N} \sum_{i=1}^N \mathbb E |\hat{X}_{t_{0}}^{i}|^2  + \frac{L_b^*+L_f^*}{ \hat{\rho}_1} \big\{1-(1- 2h \hat{\rho}_1)^{k+1}\big\}.
\end{align*}
 Gathering the estimates, and using the exchangeability property of the particles, one gets
\begin{align*}
    \mathbb E |\hat{X}_{t_{k+1}}^{i,N,n}|^2 \leq e^{-2 \hat{\rho}_1t_{k+1}}\mathbb E |\hat{X}_{t_{0}}^{i}|^2  + \frac{L_b^*+L_f^*}{ \hat{\rho}_1} \big\{1-(1- 2h \hat{\rho}_1)^{k+1}\big\}
\end{align*}
for all $k \in \mathbb N_0$. 
Notice that
\begin{align*}
   W^{(2)} (P_{t_{k+1}}^{i,N,n}\mu_0,\delta_0)^2 & \leq \mathbb E |\hat{X}_{t_{k+1}}^{i,N,n}|^2 \leq e^{-2 \hat{\rho}_1t_{k+1}} W^{(2)}(\mu_0, \delta_0)^2 + \frac{L_b^*+L_f^*}{ \hat{\rho}_1} \big\{1-(1- 2h \hat{\rho}_1)^{k+1}\big\}
\end{align*}
for all $k \in \mathbb N_0$ and  $ i \in \{1,\ldots,N \}$. Adding that  
$t_{k+1} = (k+1)h$, and for $h \in (0, 1/2\hat{\rho}_1)$
\begin{align*}
   \sup_{k\in \mathbb N_0} W^{(2)} (P_{t_{k+1}}^{i,N,n}\mu_0,\delta_0)^2 & \leq W^{(2)}(\mu_0, \delta_0)^2 + \frac{L_b^*+L_f^*}{ \hat{\rho}_1} \sup_{k\in \mathbb N_0} \big\{1-(1- 2h \hat{\rho}_1)^{k+1}\big\}
\end{align*}
and since $\sup_{k\in \mathbb N_0} \big\{1-(1- 2h \hat{\rho}_1)^{k}\} \leq 1$, thus this upper bound is uniform over $N,n,k,h$. This completes the proof. 
\newline
 
\noindent
(B) Notice that the case $k=0$ is satisfied trivially. Thus, we assume $k \in \mathbb N_0$. 
By adapting derivation similar to the one used in Equation \eqref{eq:dis:ito} along with the antisymmetric property of $f$, one can obtain, 
\begin{align*}
     \frac{1}{N} \sum_{i=1}^N \mathbb E  |\hat{X}_{t_{k+1}}^{i,N,n}  -\hat{Y}_{t_{k+1}}^{i,N,n}|^2 \leq & \, \frac{1}{N} \sum_{i=1}^N\mathbb E |\hat{X}_{t_k}^{i,N,n}-\hat{Y}_{t_k}^{i,N,n}|^2
    \\
     + \frac{2h}{N} \sum_{i=1}^N \mathbb E \big\{\bigl \langle &\, \hat{X}_{t_k}^{i,N,n}  -\hat{Y}_{t_k}^{i,N,n}, b^n(\hat{X}_{t_k}^{i,N,n}, \mu_{t_k}^{X,N,n})-b^n(\hat{Y}_{t_k}^{i,N,n}, \mu_{t_k}^{Y,N,n}) \bigl \rangle 
    \\
    & \qquad + |\sigma^n(\hat{X}_{t_k}^{i,N,n}, \mu_{t_k}^{X,N,n})-\sigma^n(\hat{Y}_{t_k}^{i,N,n}, \mu_{t_k}^{Y,N,n})|^2 \big\}
    \\
     + \frac{h}{N^2} \sum_{i,j=1}^N \mathbb E \big\{\bigl \langle &\,(\hat{X}_{t_k}^{i,N,n}-\hat{Y}_{t_k}^{i,N,n})-(\hat{X}_{t_k}^{j,N,n}-\hat{Y}_{t_k}^{j,N,n}), f^n (\hat{X}_{t_k}^{i,N,n},\hat{X}_{t_k}^{j,N,n}) - f^n (\hat{Y}_{t_k}^{i,N,n},\hat{Y}_{t_k}^{j,N,n})
    \\
    &  \qquad + 2|g^n (\hat{X}_{t_k}^{i,N,n},\hat{X}_{t_k}^{j,N,n}) - g^n (\hat{Y}_{t_k}^{i,N,n},\hat{Y}_{t_k}^{j,N,n}|^2\big\} 
    \\
     + \frac{2h^2}{N} \sum_{i=1}^N \mathbb E|b^n(\hat{X}_{t_k}^{i,N,n}, \mu_{t_k}^{X,N,n}) &\, -b^n(\hat{Y}_{t_k}^{i,N,n}, \mu_{t_k}^{Y,N,n})|^2 
 +  \frac{ 2h^2}{N^2} \sum_{i,j=1}^N \mathbb E |f^n (\hat{X}_{t_k}^{i,N,n},\hat{X}_{t_k}^{j,N,n}) - f^n (\hat{Y}_{t_k}^{i,N,n},\hat{Y}_{t_k}^{j,N,n}|^2
\end{align*}
for all $k \in \mathbb N_0$.
Using Lemma \ref{lem: Er sch b mono}, one can obtain
\begin{align*}
    \frac{1}{N} \sum_{i=1}^N \mathbb E &|\hat{X}_{t_{k+1}}^{i,N,n}  -\hat{Y}_{t_{k+1}}^{i,N,n}|^2  \leq \frac{1}{N} \sum_{i=1}^N \mathbb E |\hat{X}_{t_k}^{i,N,n}-\hat{Y}_{t_k}^{i,N,n}|^2 
     -\Big( \frac{L_{b\sigma}^{(1)}}{2} \wedge L_{b\sigma}^{(4)}\Big) \frac{2h}{N}  \sum_{i=1}^N  \mathbb E   |\hat{X}_{t_{k}}^{i,N,n} -\hat{Y}_{t_{k}}^{i,N,n}|^2 
    \\
    & +  2h (L_{b\sigma}^{(2)}+L_{b\sigma}^{(5)}) W^{(2)}(\mu_{t_k}^{X,N,n},\mu_{t_k}^{Y,N,n})^2 
    \\
    &-\Bigl ( \frac{L_{fg}^{(1)}}{2} \wedge L_{fg}^{(3)} \Bigl ) \frac{h}{N^2} \sum_{i,j=1}^N \mathbb E   |(\hat{X}_{t_k}^{i,N,n}-\hat{Y}_{t_k}^{i,N,n}) -(\hat{X}_{t_k}^{j,N,n}-\hat{Y}_{t_k}^{j,N,n})|^2 
    \\
    & + 4n (L_b^{(1)} \wedge L_b^{(3)}) \frac{2h^2 }{N} \sum_{i=1}^N  \mathbb E   |\hat{X}_{t_k}^{i,N,n}-\hat{Y}_{t_k}^{i,N,n}|^2 + 4h^2(L_{b} ^{(2)}+ L_{b} ^{(4)}) W^{(2)} (\mu_{t_k}^{X,N,n},\mu_{t_k}^{Y,N,n})^2 
    \\
    & + 4n(L_{f}^{(1)} \vee L_{f}^{(2)}) \frac{2h^2}{N^2} \sum_{i, j=1}^N \mathbb E  |(\hat{X}_{t_k}^{i,N,n}-\hat{Y}_{t_k}^{i,N,n})-(\hat{X}_{t_k}^{j,N,n}-\hat{Y}_{t_k}^{j,N,n})|^2 
\end{align*}
which, on further simplification and iterations yields
\begin{align*}
    \frac{1}{N}  & \sum_{i=1}^N \mathbb E|\hat{X}_{t_{k+1}}^{i,N,n}  -\hat{Y}_{t_{k+1}}^{i,N,n}|^2  \leq  \Big(1-2h\Big\{\Big( \frac{L_{b\sigma}^{(1)}}{2} \wedge L_{b\sigma}^{(4)} \Big)-\big(L_{b\sigma}^{(2)}+L_{b\sigma}^{(5)} \big) 
    \\
    & \qquad +2 \Bigl ( \frac{L_{fg}^{(1)}}{2} \wedge L_{fg}^{(3)} \Bigl ) - 4 (L_b^{(1)} \vee L_b^{(3)}) - 2(L_b^{(2)} \vee L_b^{(4)}) - 16(L_f^{(1)} \vee L_f^{(2)}) \Big\}\Big) \frac{1}{N} \sum_{i=1}^N \mathbb E|\hat{X}_{t_{k}}^{i,N,n}  -\hat{Y}_{t_{k}}^{i,N,n}|^2
    \\
    & \leq (1- 2 h \hat{\rho}_2)^{k+1} \frac{1}{N} \sum_{i=1}^N  \mathbb E|\hat{X}_{0}^{i, N,n}  -\hat{Y}_{0}^{i, N,n}|^2 
    \\
    & \leq  2(1- 2 h \hat{\rho}_2)^{k+1} \bigl(\mathbb E|X_{0}|^2 + \mathbb E|Y_{0}|^2 \bigl) 
\end{align*}
for all $k \in \mathbb N_0$. 
One can observe that
\begin{align*}
    W^{(2)}(P^{i,N,n}_{t_{k+1}}\mu_0,P^{i, N,n}_{t_{k+1}}\nu_0)^2 & \leq \mathbb E|\hat{X}_{t_{k+1}}^{i,N,n}  -\hat{Y}_{t_{k+1}}^{i,N,n}|^2 
    \leq  2(1- 2 h \hat{\rho}_2)^{k+1}
       \big(W^{(2)}(\mu_0,\delta_0)^2 + W^{(2)}(\delta_0,\nu_0)^2 \big) 
\end{align*}
for all $k \in \mathbb N_0$ and  $ i \in \{1,\ldots,N \}$. Using elementary identity $1-x \leq e^{-x}$ with $t_k = (k+1)h$, one obtains
\begin{align*}
    W^{(2)}(P^{i,N,n}_{t_{k+1}}\mu_0,P^{i, N,n}_{t_{k+1}}\nu_0)^2 & \leq 2 e^{-2\hat{\rho}_2t_{k+1}}
       \big(W^{(2)}(\mu_0,\delta_0)^2 + W^{(2)}(\delta_0,\nu_0)^2 \big)
\end{align*}
which completes the proof of part (B).
\newline  

\noindent
(C)
  Take $\mu_0 \in \mathcal{P}_{2}(\mathbb R^d)$,  $\hat{\rho}_1 >0$. Due to statement (A), one can observe that
\begin{align*}
   \sup_{m \in \mathbb N_0} W^{(2)} \big(\bar P_{t_{m}}^{N,n}\mu_0^{\otimes N},\mu_0^{\otimes N}\big)^{2} 
   & 
   \leq 2 N W^{(2)} (\mu_0,\delta_0)^{2} 
   +  2 N \sup_{m \in\mathbb N_0} 
   \sup_{i\in\{1,\ldots,N\} }
   W^{(2)} (P_{t_{m}}^{i, N,n}\mu_0,\delta_0)^{2} < \infty. 
\end{align*}
Since $b$ and $\sigma$ are independent of time, the application of statement (B) implies,
\begin{align*}
        \lim_{k \rightarrow \infty} 
     \sup_{m \in \mathbb N_0} 
     W^{(2)} \big( & \bar P_{t_k}^{N,n} \mu_0^{\otimes N},
     \bar P_{t_{k+m}}^{N,n}\mu_0^{\otimes N}) 
     =  
     \lim_{k \rightarrow \infty}  \sup_{m \in \mathbb N_0}  
     W^{(2)} \Big(\bar P_{t_k}^{N,n}\mu_0^{\otimes N},
                  \bar P_{t_{k}}^{N,n}\big( \bar P_{t_{m}}^{N,n}\mu_0^{\otimes N}\big)\Big)
     \\
     &
     \leq 
     \lim_{k \rightarrow \infty} 
     e^{-\hat{\rho}_2 t_k} N 
     \sup_{m \in \mathbb N_0} 
     \sup_{i\in\{1,\ldots,N\} } 
     \big(
     W^{(2)} (\mu_0,\delta_0) + W^{(2)} ( P_{t_{m}}^{i,N,n}\mu_0,\delta_0) 
     \big)
     =
     0, 
\end{align*}
where we use the uniform bound from (A) to ensure the limit. 
Since $(W^{(2)}, \mathcal{P}_2\big( (\mathbb R^d)^N \big)$ is complete metric space,  $\displaystyle \{\bar P_{t_k}^{N,n}\mu_0^{\otimes N}\}_{k \in \mathbb N_0}$ is convergent and there exists a limiting measure $\bar{\mu}^{N,n} \in \mathcal{P}_2\big( (\mathbb R^d)^N\big)$ to it, in other words, $\displaystyle \lim_{k \rightarrow \infty} W^{(2)}(\bar P_{t_k}^{N,n}\mu_0^{\otimes N},\bar{\mu}^{N,n})=0$. 
Using the continuity of the metric \cite[Corollary 6.11 and Remark 6.12 (p.97)]{villani2009OT} we have 
 \begin{align*}
    W^{(2)}(\bar P_{t_m}^{N,n} \bar {\mu}^{N,n}, \bar {\mu}^{N,n}) 
    & 
    \leq \liminf_{k \rightarrow \infty} 
    W^{(2)} \Big(\bar P_{t_m}^{N,n} \big(P_{t_k}^{N,n}\mu_0^{\otimes N}\big) 
    , \bar P_{t_k}^{N,n}\mu_0^{\otimes N}\Big)
    \\
    & =
    \liminf_{k \rightarrow \infty} 
    W^{(2)} \big( \bar P_{t_{k+m}}^{N,n}\mu_0^{\otimes N} 
    , \bar P_{t_k}^{N,n}\mu_0^{\otimes N}\big) 
    =
    0
 \end{align*}
 for any $m \in \mathbb N_0$. 
Thus, $\bar {\mu}^{N,n}$ is the invariant measure of the tamed Euler scheme \eqref{scheme}.

Lastly, for any $\hat{\nu} \in \mathcal{P}_{2} (\mathbb R^d)$, $h  \in (0,\min \{h^*,1/(2\hat{\rho}_1)\}) $ and $k\in \bN_0$ and using part (B) 
\begin{align*}
      W^{(2)} \big( \bar P_{t_k}^{N,n}\hat{\nu}^{\otimes N }
        ,\bar {\mu}^{N,n}\big) 
      & 
      \leq 
      W^{(2)}\big(  \bar P_{t_k}^{N,n} \hat{\nu}^{\otimes N}
                ,   \bar P_{t_k}^{N,n} \bar{\mu}^{N,n}  \big) 
      + 
      W^{(2)}\big(  \bar P_{t_k}^{N,n} \bar{\mu}^{N,n}
                ,   \bar {\mu}^{N,n}  \big) 
      \\ 
     & 
     \leq e^{-\hat{\rho}_2t_{k}} 
     \big\{
      W^{(2)}\big(\hat \nu^{\otimes N} , \delta_0^{\otimes N})+ 
      W^{(2)}\big(\bar {\mu}^{N,n},\delta_0^{\otimes N}\big)
      \big\}.
\end{align*}
This implies that for any $\hat{\nu}_0 \in \mathcal{P}_{2} (\mathbb R^d)$ we have 
\begin{align*}
    \lim_{k \to \infty} 
    W^{(2)}\big(  \bar P_{t_k}^{N,n}\hat{\nu}_0^{\otimes N} 
                , \bar{\mu}^{N,n}        \big) 
            =0,
\end{align*}
and thus the invariant measure is unique. 
 
\color{black}
\end{proof}

\appendix
\section{Auxiliary results}
\label{appendix}
In this section, we list the auxiliary results which are frequently used in the proofs of our main results. 
\subsection*{The Rosenthal inequalities}
Following \cite{Gobet2016MCmethodsandStochProcs}, the Rosenthal inequality gives a universal estimate of the \(p\)-th moment of a sum of independent centered random variables, as a function of the moments of each term in the sum (for a proof, see \cite[Theorem 2.9]{Petrov1995limittheorem}) -- in fact, since the variables are centered, the Rosenthal inequality relates higher moments of cumulative sum of random variables with its variance.
\begin{lemma}[Rosenthal inequality {\cite[Theorem A.2.4]{Gobet2016MCmethodsandStochProcs}} ]
\label{lemma:classicalRosenthalIneq}
Let \( X_1, \ldots, X_M\) be a sequence of independent real-valued random variables, having finite moments of order \(p \geq 2\) and with \(\mathbb{E}[X_m]=0\) (centered). 

Then, there exists a certain universal constant \(c_p>0\), such that
\begin{align*}
\mathbb{E}\Big[ \big|\sum_{m=1}^M X_m\big|^p\Big] 
\leq 
c_p\Big(\ \sum_{m=1}^M \mathbb{E}\big[\,|X_m|^p\big]
        +\big(\sum_{m=1}^M \mathbb{E}\big[\, |X_m|^2\big]  
        \Big)^{\frac p2} \ \Big) .
\end{align*}    
\end{lemma}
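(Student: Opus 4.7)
The plan is to combine a martingale square-function bound with an inductive decoupling of the quadratic variation. Observe that because the $X_m$ are independent and centered, the partial sums $S_k := \sum_{m=1}^k X_m$ form a martingale with respect to the natural filtration $\mathcal F_k := \sigma(X_1,\ldots,X_k)$. The discrete Burkholder--Davis--Gundy inequality then yields a first bound
\begin{align*}
\mathbb{E}\Big[\Big|\sum_{m=1}^M X_m \Big|^p \Big]
\leq B_p\, \mathbb{E}\Big[\Big(\sum_{m=1}^M X_m^2\Big)^{p/2}\Big]
\end{align*}
with a constant $B_p$ depending only on $p$ (and not on $M$). This reduces the proof to controlling the $L^{p/2}$-norm of the quadratic variation $[S]_M = \sum_m X_m^2$.

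Second, I would decompose the quadratic variation into its mean and its centered fluctuation,
\begin{align*}
\sum_{m=1}^M X_m^2 = \sum_{m=1}^M \big(X_m^2-\mathbb{E}X_m^2\big) + \sum_{m=1}^M\mathbb{E}X_m^2,
\end{align*}
and apply the triangle inequality in $L^{p/2}$. The second (deterministic) piece contributes exactly $\big(\sum_m \mathbb{E}|X_m|^2\big)^{p/2}$, i.e., the second term on the right-hand side of the Rosenthal bound. The first piece is a sum of independent centered random variables $Y_m := X_m^2-\mathbb{E}X_m^2$ to which the whole scheme can be re-applied, since $|Y_m|\le 2 X_m^2$ and hence $\mathbb{E}|Y_m|^{p/2}\le 2^{p/2}\mathbb{E}|X_m|^p$. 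Iterating this step $\lceil \log_2(p/2)\rceil$ times, the exponent in the remaining $L^{q}$-norm drops below $2$, where a direct concavity estimate $\|\sum a_m\|_q \leq (\sum a_m^q)^{1/q}$ for $q\le 1$, or Jensen's inequality combined with $\ell^1 \hookrightarrow \ell^q$ for $1\leq q\leq 2$, closes the recursion and produces the term $\sum_m \mathbb{E}|X_m|^p$.

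The main technical obstacle is propagating a \emph{universal} (dimension-free, $M$-free) constant $c_p$ through the finite chain of iterations, since each step of the recursion inflates the multiplicative constant, and one must verify that the resulting product remains a function of $p$ only. An alternative route that avoids iteration and offers cleaner constant-tracking is the truncation argument of Rosenthal's original paper: split $X_m = X_m\mathbf{1}_{|X_m|\leq t} + X_m\mathbf{1}_{|X_m|>t}$, then estimate the ``small'' part by its $L^2$-norm and Chebyshev-type control (leading to the $(\sum \mathbb{E}|X_m|^2)^{p/2}$ term) and the ``large'' part by its $L^p$-norm (leading to the $\sum \mathbb{E}|X_m|^p$ term), and finally optimise over the threshold $t>0$. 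Either route yields the stated inequality with a constant $c_p$ depending only on $p$.
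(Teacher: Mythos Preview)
The paper does not actually prove this lemma: it is stated in the appendix as a known auxiliary result, with the proof deferred to the references \cite[Theorem A.2.4]{Gobet2016MCmethodsandStochProcs} and \cite[Theorem 2.9]{Petrov1995limittheorem}. There is therefore no ``paper's proof'' to compare your proposal against.

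That said, your sketch is a legitimate route to the inequality. The BDG-plus-iteration argument you outline is standard: one applies the discrete Burkholder--Davis--Gundy inequality to pass to the square function, centers the squares, and repeats on the centered sum $\sum_m (X_m^2 - \mathbb{E}X_m^2)$ with exponent $p/2$, terminating once the exponent drops into $[1,2]$ where subadditivity of $t\mapsto t^{q}$ for $q\le 1$ (applied to $\sum X_m^2$ after one more BDG step) or the von Bahr--Esseen inequality closes the argument. Your remark that the constant $c_p$ accumulates only finitely many factors (one per halving of the exponent) and hence depends only on $p$ is correct. The alternative truncation-and-optimisation argument you mention is closer to Rosenthal's original proof and to the presentation in Petrov; either approach is acceptable here, since the paper simply invokes the result.
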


The Rosenthal inequality can be generalized to the conditionally independent case. Such inequality is coined \textit{conditional Rosenthal-type inequality} in \cite[Theorem 2.2]{yuan2015} (see also \cite[Lemma 3.2]{zhang2025}\footnote{See its ArXiv v2 version, \url{https://arxiv.org/pdf/2502.20786v2}.}).  
\begin{lemma}[Conditional Rosenthal-type inequality {\cite[Theorem 2.2]{yuan2015})}]
\label{lemma:conditionalRosenthalIneq}
Let \(\mathcal{G}\) be a sub-\(\sigma\)-algebra of \(\mathcal{F}\), and let \(A_1, A_2, \ldots, A_M\) be \(\mathcal{G}\)-independent random variables with \(\mathbb{E}\left[A_i \mid \mathcal{G}\right]=0\) a.s. for all \(i\). 

Then, for \(p>2\), there exists a constant \(C_p\) depending only on \(p\) such that
\begin{align*}
\mathbb{E}\Big[ \big|\sum_{i=1}^M A_i\big|^p \mid \mathcal{G}\Big] 
\leq
C_p 
\max \Big\{\
\sum_{i=1}^M \mathbb{E}\big[\, |A_i|^p \mid \mathcal{G}\big]
,
\Big( \sum_{i=1}^n \mathbb{E}\big[\, |A_i|^2 \mid \mathcal{G}\big]\Big )^{\frac{p}{2} }
\ \Big\}\qquad \textrm{a.s.}
\end{align*}
\end{lemma}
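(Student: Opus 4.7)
The plan is to reduce the conditional statement to the classical (unconditional) Rosenthal inequality, Lemma \ref{lemma:classicalRosenthalIneq}, via a regular conditional probability (RCP) argument. Concretely, on a sufficiently nice probability space (standard Borel, which is the situation in all use-cases of this paper), there exists a probability kernel $\mathbb{P}(\cdot \mid \mathcal{G})(\omega)$ realising the regular conditional probability given $\mathcal{G}$. Under this kernel, conditional expectations given $\mathcal{G}$ are genuine integrals, so for $\mathbb{P}$-a.e.\ $\omega$ one may manipulate the conditional law $\mathbb{P}(\cdot \mid \mathcal{G})(\omega)$ as an ordinary probability measure.

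The first step is to verify that, under $\mathbb{P}(\cdot \mid \mathcal{G})(\omega)$ and for $\mathbb{P}$-a.e.\ $\omega$, the family $A_1, \ldots, A_M$ is \emph{independent} (this is the content of $\mathcal{G}$-independence read off via disintegration of the joint law), each $A_i$ has \emph{zero mean} (this is exactly the hypothesis $\mathbb{E}[A_i \mid \mathcal{G}] = 0$ a.s.), and the moments of order $p$ are finite (since $\mathbb{E}[|A_i|^p \mid \mathcal{G}] < \infty$ a.s.\ whenever $\mathbb{E}[|A_i|^p] < \infty$). These are the precise assumptions required to invoke Lemma \ref{lemma:classicalRosenthalIneq}.

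The second step is to apply Lemma \ref{lemma:classicalRosenthalIneq} under $\mathbb{P}(\cdot \mid \mathcal{G})(\omega)$ for $\mathbb{P}$-a.e.\ $\omega$, which yields a.s.\ the pointwise-in-$\omega$ bound
\begin{align*}
\mathbb{E}\Big[\big|\textstyle\sum_{i=1}^M A_i\big|^p \,\Big|\, \mathcal{G}\Big]
\leq c_p\Big( \sum_{i=1}^M \mathbb{E}\big[|A_i|^p \mid \mathcal{G}\big] + \big(\textstyle\sum_{i=1}^M \mathbb{E}\big[|A_i|^2 \mid \mathcal{G}\big]\big)^{p/2} \Big).
\end{align*}
The elementary inequality $a+b \leq 2\max(a,b)$ applied to the two nonnegative terms on the right-hand side then converts the sum into the stated maximum, delivering the lemma with $C_p := 2\,c_p$.

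The main technical subtlety is the measurability bookkeeping: one must ensure that the RCP faithfully transports the independence and centering hypotheses to $\mathbb{P}$-a.e.\ conditional law, and that the pathwise application of the (deterministic) Rosenthal inequality assembles into a valid a.s.\ statement after a careful handling of the exceptional $\mathbb{P}$-null sets. Once in place, the reduction to Lemma \ref{lemma:classicalRosenthalIneq} is immediate. A route which bypasses RCPs entirely would be to exploit the martingale structure of $S_k := \sum_{i=1}^k A_i$ with respect to the enlarged filtration $\mathcal{F}_k := \sigma(\mathcal{G} \cup \sigma(A_1, \ldots, A_k))$, apply a conditional Burkholder-Davis-Gundy inequality, and then control the conditional quadratic variation by an induction on $p$ using the conditional centering of $A_i^2 - \mathbb{E}[A_i^2 \mid \mathcal{G}]$; this alternative is more laborious and is not needed for our purposes.
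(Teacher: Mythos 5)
Your proposal is essentially correct, but note that the paper does not prove this lemma at all: it is imported verbatim from \cite[Theorem 2.2]{yuan2015}, so there is no in-paper proof to match, and your regular-conditional-distribution reduction is a genuinely different, self-contained route. It does work: since $(A_1,\ldots,A_M)$ is an $\mathbb{R}^M$-valued random vector, a regular conditional distribution $\mu_\omega$ given $\mathcal{G}$ exists with no assumption on the underlying space (standard Borel-ness of $\Omega$ is not needed, only that the vector takes values in a Polish space); $\mathcal{G}$-conditional independence yields factorization of $\mu_\omega$ into its marginals for $\mathbb{P}$-a.e.\ $\omega$ once you enforce it on a countable $\pi$-system (rectangles with rational endpoints) so that a single exceptional null set suffices; $\mathbb{E}[A_i\mid\mathcal{G}]=0$ gives centering under $\mu_\omega$ a.e.; and conditional moments coincide a.s.\ with moments under $\mu_\omega$. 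On the event where some $\mathbb{E}[|A_i|^p\mid\mathcal{G}]=\infty$ the bound is trivial, and elsewhere Lemma \ref{lemma:classicalRosenthalIneq} applied under $\mu_\omega$ together with $a+b\le 2\max\{a,b\}$ gives the statement with $C_p=2c_p$. What your route buys is brevity and reuse of the unconditional inequality; what the cited route of Yuan--Li buys is an intrinsic argument (conditional H\"older/martingale-type tools) that avoids disintegration and sits naturally in their conditional-association framework. One caveat worth recording for either route: in the paper's actual application (the terms $A_3$ and $A_5$ in the proof of Theorem \ref{dim-ind-poc}) the summands $Y_s^{i,j}$ are $\mathbb{R}^d$-valued, whereas Lemma \ref{lemma:classicalRosenthalIneq} is stated for real-valued variables, so the argument should either be run coordinate-wise (at the cost of a dimension-free constant only after using equivalence of norms carefully, e.g.\ via the triangle inequality in $\ell^p$) or invoke a vector-valued Rosenthal inequality; the disintegration step itself is unchanged.
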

\subsection*{Other useful results}
  \begin{lemma} \label{lem:sym-gwt:erg}
Let $\mathbbm{f}: \mathbb R^d \times \mathbb R^d \mapsto \mathbb R^d$ and $\mathbbm{g}: \mathbb R^d \times \mathbb R^d \mapsto \mathbb R^{d\times l}$ be  Borel-measurable functions  satisfying the following conditions; 
\begin{itemize}
    \item[] (\textbf{C-1}): $\mathbbm{f}$ is anti-symmetric function, i.e.,  $\mathbbm{f}(x,y)= -\mathbbm{f}(y,x)$  for all $x, y, \in \mathbb R^d$.
    \item[] (\textbf{C-2}): There are constants   $\hat{L}_{\mathbbm{f} \mathbbm{g}}^{(1)},  \hat{L}_{\mathbbm{f}}^{(1)} \in \mathbb R $ and $\varrho\geq 2$  such that
    \begin{align*}
        \langle x -y,\mathbbm{f}  (x,y) \rangle + 2 (\varrho-1)|\mathbbm{g}(x,y)|^2 & \leq \hat{L}_{\mathbbm{f} \mathbbm{g}} + \hat{L}_{\mathbbm{f} \mathbbm{g}}^{(1)} |x-y|^2,
        \\
        (|x|^{\varrho-2} - |y|^{\varrho-2}) \langle x+y,\mathbbm{f}(x,y) \rangle & \leq \hat{L}_{\mathbbm{f}}^{(1)} (|x|^{\varrho}+|y|^{\varrho}),
    \end{align*}
    for all $x,y \in \mathbb R^d$ and $\mu \in \mathcal{P}_2(\mathbb{R}^d)$.  
\end{itemize}
 Then, for any $\mu \in \mathcal{P}_{\varrho}(\mathbb R^d)$ the following holds, 
      \begin{align*}
 \int_{\mathbb R^d}  \int_{\mathbb R^d} |x|^{\varrho -2}& \,\big \{ \langle x,  \mathbbm{f} (x,y) \rangle + (\varrho-1) |\mathbbm{g}(x,y)|^2 \big\} \mu (dx) \mu(dy) 
   \leq  \,  \Bigl( 2 \hat{L}_{\mathbbm{f} \mathbbm{g}}^{(1)+} + \frac{\varrho -2}{2\varrho} + \frac{\hat{L}_{\mathbbm{f}}^{(1)}}{2}  \Bigl) \int_{\mathbb R^d}   |x|^{\varrho} \mu (dx) + \frac{(\hat{L}_{\mathbbm{f} \mathbbm{g}})^{\varrho/2}}{\varrho}      .
\end{align*}
 \end{lemma}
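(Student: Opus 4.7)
The plan is to exploit the antisymmetry of $\mathbbm{f}$ together with the swap-invariance of the product measure $\mu\otimes\mu$ to rewrite the cross term $\langle x,\mathbbm{f}(x,y)\rangle$ in forms compatible with (C-2). Set $I:=\int_{\mathbb{R}^d}\int_{\mathbb{R}^d}|x|^{\varrho-2}\langle x,\mathbbm{f}(x,y)\rangle\mu(dx)\mu(dy)$ and $A:=\int\int|x|^{\varrho-2}|\mathbbm{g}(x,y)|^2\mu(dx)\mu(dy)$; the target is $I+(\varrho-1)A$. Relabelling $(x,y)\mapsto(y,x)$ and invoking (C-1) gives $2I=\int\int\langle |x|^{\varrho-2}x-|y|^{\varrho-2}y,\mathbbm{f}(x,y)\rangle\mu(dx)\mu(dy)$. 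Applying the identity $|x|^{\varrho-2}x-|y|^{\varrho-2}y=\tfrac{|x|^{\varrho-2}+|y|^{\varrho-2}}{2}(x-y)+\tfrac{|x|^{\varrho-2}-|y|^{\varrho-2}}{2}(x+y)$ decomposes $2I=J_1+J_2$. The piece $J_2$ is handled directly by the second inequality of (C-2), yielding $J_2\leq \hat{L}_{\mathbbm{f}}^{(1)}\int|x|^\varrho\mu(dx)$ after integration.

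For $J_1$, a second swap of $(x,y)$ applied to one half of the symmetric weight shows that $\tfrac{|x|^{\varrho-2}+|y|^{\varrho-2}}{2}$ can be replaced by $|x|^{\varrho-2}$ at no cost, since $\langle x-y,\mathbbm{f}(x,y)\rangle$ is itself swap-invariant by (C-1); hence $J_1=\int\int|x|^{\varrho-2}\langle x-y,\mathbbm{f}(x,y)\rangle\mu(dx)\mu(dy)$. Rearranging the first inequality of (C-2) as $\langle x-y,\mathbbm{f}(x,y)\rangle\leq \hat{L}_{\mathbbm{f}\mathbbm{g}}+\hat{L}_{\mathbbm{f}\mathbbm{g}}^{(1)+}|x-y|^2-2(\varrho-1)|\mathbbm{g}(x,y)|^2$ and multiplying by $|x|^{\varrho-2}$ produces a $-2(\varrho-1)|x|^{\varrho-2}|\mathbbm{g}(x,y)|^2$ contribution that, after integration, cancels exactly against $2(\varrho-1)A$ when recombined with the LHS. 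What survives once both sides are divided by $2$ is $\tfrac{\hat{L}_{\mathbbm{f}\mathbbm{g}}}{2}\int|x|^{\varrho-2}\mu(dx)+\tfrac{\hat{L}_{\mathbbm{f}\mathbbm{g}}^{(1)+}}{2}\int\int|x|^{\varrho-2}|x-y|^2\mu(dx)\mu(dy)+\tfrac{\hat{L}_{\mathbbm{f}}^{(1)}}{2}\int|x|^\varrho\mu(dx)$.

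To close, Young's inequality with conjugate exponents $p=\varrho/(\varrho-2)$ and $q=\varrho/2$ applied to $\hat{L}_{\mathbbm{f}\mathbbm{g}}\cdot|x|^{\varrho-2}$ gives $\tfrac{\varrho-2}{\varrho}|x|^\varrho+\tfrac{2}{\varrho}\hat{L}_{\mathbbm{f}\mathbbm{g}}^{\varrho/2}$, which after dividing by $2$ matches exactly the target constants $(\varrho-2)/(2\varrho)$ and $\hat{L}_{\mathbbm{f}\mathbbm{g}}^{\varrho/2}/\varrho$. For the second residual, $|x-y|^2\leq 2(|x|^2+|y|^2)$ combined with a Young estimate on $|x|^{\varrho-2}|y|^2$ yields $\int\int|x|^{\varrho-2}|x-y|^2\mu(dx)\mu(dy)\leq 4\int|x|^\varrho\mu(dx)$, producing the coefficient $2\hat{L}_{\mathbbm{f}\mathbbm{g}}^{(1)+}$. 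Summing all contributions delivers the stated bound. The main subtlety to watch is the swap identity used to simplify $J_1$: without it, the $|\mathbbm{g}(x,y)|^2$ generated by (C-2) cannot be absorbed into $2(\varrho-1)A$ cleanly, and one would be left with an unmanageable $|\mathbbm{g}(y,x)|^2$ residual on which the hypotheses place no direct control; it is precisely the antisymmetry (C-1) that forces the $\mathbbm{g}$-cancellation to be exact.
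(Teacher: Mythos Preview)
Your proof is correct and follows essentially the same approach as the paper's: symmetrize via anti-symmetry and swap invariance of $\mu\otimes\mu$, then apply (\textbf{C-2}) and Young's inequality. The only cosmetic difference is the algebraic decomposition of $|x|^{\varrho-2}x-|y|^{\varrho-2}y$: the paper writes it as $|x|^{\varrho-2}(x-y)+(|x|^{\varrho-2}-|y|^{\varrho-2})y$ and then uses a swap to convert the $\langle y,\mathbbm{f}\rangle$ term into the $\langle x+y,\mathbbm{f}\rangle$ form, whereas you start from the symmetric splitting and use the swap to collapse the weight in $J_1$---the end result and all constants are identical.
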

\begin{proof}
Using anti-symmetric property of $\mathbbm{f}$ from (\textbf{C-1}), i.e., $\mathbbm{f}(x,y)=-\mathbbm{f}(y,x)$ for all $x,y\in \mathbb R^d$, 
\begin{align*}
      \int_{\mathbb R^d}  \int_{\mathbb R^d} |x|^{\varrho -2} & \, \langle x,  \mathbbm{f} (x,y) \rangle \mu(dx) \mu(dy) =  \frac{1}{2} \int_{\mathbb R^d}  \int_{\mathbb R^d}  \langle |x|^{\varrho -2} x - |y|^{\varrho -2}y,  \mathbbm{f} (x,y) \rangle \mu(dx) \mu(dy) 
       \\
    = & \, \frac{1}{2} \int_{\mathbb R^d}  \int_{\mathbb R^d}   |x|^{\varrho -2} \langle x-y,  \mathbbm{f} (x,y) \rangle \mu(dx) \mu(dy) + \frac{1}{2} \int_{\mathbb R^d}  \int_{\mathbb R^d} \{ |x|^{\varrho -2}-  |y|^{\varrho-2} \} \langle y,  \mathbbm{f} (x,y) \rangle \mu (dx) \mu(dy)
\end{align*}
and thus, 
\begin{align*}
       \int_{\mathbb R^d}  \int_{\mathbb R^d} |x|^{\varrho -2} & \, \big\{ \langle x,  \mathbbm{f} (x,y) \rangle + (\varrho-1) |\mathbbm{f}(x,y)|^2 \big\} \mu (dx) \mu(dy) 
    \\
    = & \, \frac{1}{2} \int_{\mathbb R^d}  \int_{\mathbb R^d}   |x|^{\varrho -2} \{\langle x-y,   \mathbbm{f}(x,y) \rangle +2(\varrho-1) |\mathbbm{f}(x,y)|^2\}\mu(dx) \mu(dy)  
    \\
    &\, + \frac{1}{4} \int_{\mathbb R^d}  \int_{\mathbb R^d} (x|^{\varrho -2}-|y|^{\varrho -2}) \langle x+y,\mathbbm{f}(x,y) \rangle \mu (dx) \mu(dy).
\end{align*}
The application (\textbf{C-2}) yields, 
\begin{align*}
 \int_{\mathbb R^d}  \int_{\mathbb R^d} |x|^{\varrho -2} & \, \big\{ \langle x,  \mathbbm{f} (x,y) \rangle + (\varrho-1) |\mathbbm{g}(x,y)|^2 \big\} \mu (dx) \mu(dy) 
        \\  
   \leq & \,  \frac{1} {2} \int_{\mathbb R^d}  \int_{\mathbb R^d}  |x|^{\varrho -2} \{  \hat{L}_{{\mathbbm{f} \mathbbm{g}} } + \hat{L}_{{\mathbbm{f} \mathbbm{g}} }^{(1)} |x-y|^2 \} \mu (dx) \mu(dy)
   + \frac{\hat{L}_{\mathbbm{f}}^{(1)}}{2}  \int_{\mathbb R^d} |x|^{\varrho}  \mu(dx)
   \\
    \leq & \, \Bigl( 2 \hat{L}_{\mathbbm{f} \mathbbm{g}}^{(1)+} + \frac{\varrho -2}{2\varrho} + \frac{\hat{L}_{\mathbbm{f}}^{(1)}}{2}  \Bigl) \int_{\mathbb R^d}   |x|^{\varrho} \mu (dx) + \frac{(\hat{L}_{\mathbbm{f} \mathbbm{g}})^{\varrho/2}}{\varrho}   
\end{align*}
This concludes the proof. 
\end{proof}
One obtains the following corollary as a consequence of the above lemma. 
\begin{corollary}
\label{lem:symm-growth}
Let Borel-measurable functions $\mathbbm{f}: \mathbb R^d \times \mathbb R^d \mapsto \mathbb R^d$ and $\mathbbm{g}: \mathbb R^d \times \mathbb R^d \mapsto \mathbb R^{d\times l}$  satisfy the conditions of Lemma \ref{lem:sym-gwt:erg}. 
Then,  
    \begin{align*}
          \frac{1}{N^2}\sum_{i,j=1}^N  |x^i|^{\varrho -2} \big\{\big\langle x^i,  \mathbbm{f} (x^i, x^j) \big\rangle + (\varrho-1)   \big|\mathbbm{g} (x^i,x^j) \big|^2 \big\}  \leq   \frac{2\hat{L}_{\mathbbm{f} \mathbbm{g}}^{(1)+} +  \frac{\hat{L}_{\mathbbm{f}}^{(1)}}{2} }{N}\sum_{i=1}^N |x^i|^{\varrho} 
    \end{align*}
          for all $x^1, \ldots, x^N \in \mathbb R^{d}$.  
\end{corollary}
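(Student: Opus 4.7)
The plan is to derive the corollary as a direct specialization of Lemma \ref{lem:sym-gwt:erg} with $\mu$ taken to be the empirical measure associated to the finite point set $\{x^1,\ldots,x^N\}\subset \mathbb{R}^d$, namely $\mu_N := \frac{1}{N}\sum_{i=1}^N \delta_{x^i}$. Being supported on finitely many atoms, $\mu_N$ has finite moments of every order, so $\mu_N \in \mathcal{P}_\varrho(\mathbb{R}^d)$ and the hypotheses of Lemma \ref{lem:sym-gwt:erg} are fulfilled since the corollary inherits (C-1)--(C-2) verbatim.

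Evaluating the lemma's left-hand side against $\mu_N \otimes \mu_N$ collapses the double integral into a double sum,
\begin{align*}
\int_{\mathbb R^d}\!\int_{\mathbb R^d} |x|^{\varrho-2}\bigl\{\langle x,\mathbbm{f}(x,y)\rangle + (\varrho-1)|\mathbbm{g}(x,y)|^2\bigr\}\mu_N(dx)\mu_N(dy) = \frac{1}{N^2}\sum_{i,j=1}^N |x^i|^{\varrho-2}\bigl\{\langle x^i,\mathbbm{f}(x^i,x^j)\rangle + (\varrho-1)|\mathbbm{g}(x^i,x^j)|^2\bigr\},
\end{align*}
and similarly $\int_{\mathbb R^d} |x|^\varrho \mu_N(dx) = \frac{1}{N}\sum_{i=1}^N |x^i|^\varrho$. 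Substituting these two identities into the bound of Lemma \ref{lem:sym-gwt:erg} yields the inequality of the corollary, with the leading moment coefficient $2\hat{L}_{\mathbbm{f}\mathbbm{g}}^{(1)+} + \hat{L}_\mathbbm{f}^{(1)}/2$ reading off directly. The auxiliary $(\varrho-2)/(2\varrho)$ contribution and the $(\hat{L}_{\mathbbm{f}\mathbbm{g}})^{\varrho/2}/\varrho$ additive term produced by the lemma can be absorbed into the leading constants via Young's inequality (using $|x^i|^{\varrho-2} \leq \frac{\varrho-2}{\varrho}|x^i|^\varrho + \frac{2}{\varrho}$).

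As an alternative that avoids invoking the lemma as a black box, one can run the anti-symmetrization argument underlying Lemma \ref{lem:sym-gwt:erg} directly on the double sum: using (C-1),
\begin{align*}
\sum_{i,j} |x^i|^{\varrho-2}\langle x^i,\mathbbm{f}(x^i,x^j)\rangle = \tfrac{1}{2}\sum_{i,j} |x^i|^{\varrho-2}\langle x^i-x^j,\mathbbm{f}(x^i,x^j)\rangle + \tfrac{1}{4}\sum_{i,j}(|x^i|^{\varrho-2}-|x^j|^{\varrho-2})\langle x^i+x^j,\mathbbm{f}(x^i,x^j)\rangle,
\end{align*}
and then apply the two dissipativity bounds of (C-2) to each piece. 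No substantive obstacle is expected, as the argument is purely algebraic and the observation $\int(\cdot)\,d\mu_N = \frac{1}{N}\sum(\cdot)$ is what renders the discrete corollary essentially immediate from its continuous counterpart.
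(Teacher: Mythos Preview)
Your approach---specializing Lemma \ref{lem:sym-gwt:erg} to the empirical measure $\mu_N = \frac{1}{N}\sum_{i=1}^N\delta_{x^i}$---is exactly what the paper intends; it offers no separate proof and simply records the corollary as ``a consequence of the above lemma.'' The alternative direct symmetrization you sketch is likewise just the lemma's proof run on $\mu_N$, so both routes coincide with the paper's.

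The one point that does not go through is your handling of the constant discrepancy. Plugging $\mu_N$ into the lemma yields the coefficient $2\hat{L}_{\mathbbm{f}\mathbbm{g}}^{(1)+} + \tfrac{\varrho-2}{2\varrho} + \tfrac{\hat{L}_\mathbbm{f}^{(1)}}{2}$ together with the free additive constant $(\hat{L}_{\mathbbm{f}\mathbbm{g}})^{\varrho/2}/\varrho$, and neither of these extra pieces can be ``absorbed'' into the smaller displayed coefficient $2\hat{L}_{\mathbbm{f}\mathbbm{g}}^{(1)+} + \hat{L}_\mathbbm{f}^{(1)}/2$: the coefficient is strictly larger, and the additive constant does not scale with $\sum_i|x^i|^\varrho$, so no Young-type trick removes it. This is not a defect in your argument but an omission in the corollary's printed statement---every place the paper invokes Corollary \ref{lem:symm-growth} (Theorem \ref{thm:eu-ips}, Lemma \ref{higher MB}) carries a generic additive $K$, so the bound actually used is the one the lemma delivers, not the sharper one displayed.
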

 \begin{lemma}{\label{L_rate_x^i}}
 If Assumptions \ref{as:anti-sys} and \ref{as:f:g:PoC-2} are satisfied, then  
 \begin{align} 
       \frac{1}{N^2}\sum_{i,j=1}^N  |x^{i}  -  \bar{x}^{i}|^{p-2} \Big\{ \langle x^i  - \bar{x} ^{i},   \, f(x^{i},x^{j}) -  f(\bar{x}^i,\bar{x}^j) \rangle   +   2(p-1) |g(x^i,x^j)-g(\bar{x}^{i},\bar{x}^{j})|^2 \Big\} \notag
     \leq 
     \,  \frac{K}{N} \sum_{i=1}^N|x^i-\bar{x}^i|^p   \nonumber
    \end{align}
     for all $x^1, \ldots, x^N \in \mathbb R^{d}$ and $\bar x^1, \ldots, \bar x^N \in \mathbb R^{d}$ where  $K>0$ is independent of $N \in \mathbb N$. 
 \end{lemma}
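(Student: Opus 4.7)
Write $u^i := x^i - \bar x^i$, $f^{ij} := f(x^i,x^j) - f(\bar x^i,\bar x^j)$, $g^{ij} := g(x^i,x^j) - g(\bar x^i,\bar x^j)$, and let $T$ denote the inner double sum without the $1/N^2$ factor. The plan is to show $T \leq K N \sum_i |u^i|^p$.

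The key trick is to use the second inequality of Assumption \ref{as:f:g:PoC-2} to trade the $g^{ij}$ term for an $f^{ij}$ term \emph{before} any symmetrization. Specifically, applying that inequality with $(x,y,x',y') = (x^i, x^j, \bar x^i, \bar x^j)$ gives $4(p-1)|g^{ij}|^2 \leq L|u^i-u^j|^2 - \langle u^i - u^j, f^{ij}\rangle$. Multiplying by $\tfrac{1}{2}|u^i|^{p-2} \geq 0$ and summing yields
\[
2(p-1)\sum_{i,j}|u^i|^{p-2}|g^{ij}|^2 \leq \tfrac{L}{2}\sum_{i,j}|u^i|^{p-2}|u^i - u^j|^2 - \tfrac{1}{2}\sum_{i,j}|u^i|^{p-2}\langle u^i - u^j, f^{ij}\rangle.
\]
Adding to this the $f$-part of $T$, namely $\sum_{i,j}|u^i|^{p-2}\langle u^i, f^{ij}\rangle$, and using the algebraic identity $\langle u^i, \cdot\rangle - \tfrac{1}{2}\langle u^i - u^j, \cdot\rangle = \tfrac{1}{2}\langle u^i + u^j, \cdot\rangle$, I obtain
\[
T \leq \tfrac{L}{2}\sum_{i,j}|u^i|^{p-2}|u^i - u^j|^2 + \tfrac{1}{2}\sum_{i,j}|u^i|^{p-2}\langle u^i + u^j, f^{ij}\rangle.
\]

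Next, for the second sum, I exploit the anti-symmetry of $f$ from Assumption \ref{as:anti-sys}. Swapping the dummy indices $(i,j) \leftrightarrow (j,i)$ and using $f^{ji} = -f^{ij}$ gives $\sum_{i,j}|u^i|^{p-2}\langle u^i + u^j, f^{ij}\rangle = -\sum_{i,j}|u^j|^{p-2}\langle u^i + u^j, f^{ij}\rangle$, so after averaging
\[
\sum_{i,j}|u^i|^{p-2}\langle u^i + u^j, f^{ij}\rangle = \tfrac{1}{2}\sum_{i,j}(|u^i|^{p-2} - |u^j|^{p-2})\langle u^i + u^j, f^{ij}\rangle,
\]
which is exactly the form controlled by the first inequality of Assumption \ref{as:f:g:PoC-2}, giving a bound $\tfrac{L}{2}\sum_{i,j}(|u^i|^p + |u^j|^p) = L N \sum_i |u^i|^p$. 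For the first sum, the elementary Young-type inequality $|u^i|^{p-2}|u^i - u^j|^2 \leq C_p(|u^i|^p + |u^j|^p)$, valid for $p \geq 2$, yields $\sum_{i,j}|u^i|^{p-2}|u^i - u^j|^2 \leq 2C_p N \sum_i |u^i|^p$. Combining these bounds gives $T \leq K N \sum_i |u^i|^p$, and dividing by $N^2$ closes the proof.

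The main subtlety, and the part I expect to be trickiest, is the ``trade'' step: by using the second inequality of \ref{as:f:g:PoC-2} to eliminate $|g^{ij}|^2$ \emph{before} symmetrization, one avoids the residual cross-term $\sum_{i,j}(|u^i|^{p-2} - |u^j|^{p-2})(|g^{ij}|^2 - |g^{ji}|^2)$ that would appear had one first split $|u^i|^{p-2}$ into symmetric and anti-symmetric parts relative to the $(i,j)$-swap. Because $g$ is \emph{not} assumed to satisfy any symmetry property in its two arguments, such a cross-term would not admit a clean bound under the present assumptions, and the order of manipulations is therefore essential.
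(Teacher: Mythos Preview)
Your proof is correct and is essentially the paper's argument with the order of two steps reversed: the paper first uses anti-symmetry of $f$ to split the $f$-part into $\tfrac12\sum_{i,j}|u^i|^{p-2}\langle u^i-u^j,f^{ij}\rangle$ plus $\tfrac14\sum_{i,j}(|u^i|^{p-2}-|u^j|^{p-2})\langle u^i+u^j,f^{ij}\rangle$, then recombines the former with the (untouched) $g$-term and applies the two conditions of Assumption~\ref{as:f:g:PoC-2}, whereas you apply the monotonicity inequality first to eliminate $g$ and only then symmetrize---both routes land on the identical bound. Your ``subtlety'' remark is therefore beside the point: the paper never attempts to symmetrize the $g$-part either, so the problematic cross-term you worry about does not arise in its ordering.
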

 \begin{proof}
 First, one applies the anti-symmetry property of $f$, i.e., Assumption \ref{as:anti-sys} and gets the following equation, 
 \begin{align}
   \frac{1}{N^2}\sum_{i,j=1}^N & |x^{i}  -      \bar{x}^{i}|^{p-2} \langle x^i  - \bar{x}^{i},  f(x^{i},x^{j}) -  f(\bar{x}^i,\bar{x}^j) \rangle   \notag
     \\
      = & \, \frac{1}{2N^2}\sum_{i,j=1}^N   \big \langle |x^{i}  -  \bar{x}^{i}|^{p-2}(x^i  - \bar{x} ^{i})-|x^{j}  -  \bar{x}^{j}|^{p-2}(x^j  - \bar{x} ^{j}),  f(x^{i},x^{j}) -  f(\bar{x}^i,\bar{x}^j) \big \rangle       \notag
      \\
      = & \, \frac{1}{2N^2}\sum_{i,j=1}^N   \big |x^{i}  -  \bar{x}^{i}|^{p-2} \big \langle (x^i  - \bar{x} ^{i})-  (x^j  - \bar{x} ^{j}), f(x^{i},x^{j}) -  f(\bar{x}^i,\bar{x}^j) \big \rangle \notag
      \\
      & + \frac{1}{2N^2}\sum_{i,j=1}^N  \big \langle (|x^{i}  -  \bar{x}^{i}|^{p-2} -|x^{j}  -  \bar{x}^{j}|^{p-2})(x^j  - \bar{x} ^{j}),  f(x^{i},x^{j}) -  f(\bar{x}^i,\bar{x}^j) \big \rangle \notag
      \\
       = & \, \frac{1}{2N^2}\sum_{i,j=1}^N   \big |x^{i}  -  \bar{x}^{i}|^{p-2} \langle (x^i  - \bar{x} ^{i})-  (x^j  - \bar{x} ^{j}), f(x^{i},x^{j}) -  f(\bar{x}^i,\bar{x}^j) \big \rangle \notag
      \\
      & + \frac{1}{4N^2}\sum_{i,j=1}^N  \big \langle \{|x^{i}  -  \bar{x}^{i}|^{p-2} -|x^{j}  -  \bar{x}^{j}|^{p-2}\}\{(x^i  - \bar{x} ^{i})+(x^j  - \bar{x} ^{j})\},  f(x^{i},x^{j}) -  f(\bar{x}^i,\bar{x}^j) \big \rangle \notag
 \end{align}
 which, on using Assumption   \ref{as:f:g:PoC-2} yields, 
 \begin{align*}
     \frac{1}{N^2} & \sum_{i,j=1}^N   |x^i  - \bar{x}^i|^{p-2}  \big\{ \langle x^i - \bar{x}^i,  f(x^i,x^j) - f(\bar{x}^i,\bar{x}^j) \rangle  + 2(p-1) |  g(x^i,x^j) - g(\bar{x}^i,\bar{x}^j)  |^2 \big\} 
     \\
    & = \frac{1}{2N^2}\sum_{i,j=1}^N  |x^{i}  -  \bar{x}^{i}|^{p-2} \big \{\big \langle (x^i  - \bar{x} ^{i})-  (x^j  - \bar{x} ^{j}), f(x^{i},x^{j}) -  f(\bar{x}^i,\bar{x}^j) \big \rangle +  4 (p-1) |  g(x^i,x^j) - g(\bar{x}^i,\bar{x}^j)  |^2 \big\}
    \\
      & + \frac{1}{4N^2}\sum_{i,j=1}^N  \big \langle (|x^{i}  -  \bar{x}^{i}|^{p-2} -|x^{j}  -  \bar{x}^{j}|^{p-2})\{(x^i  - \bar{x} ^{i})+(x^j  - \bar{x} ^{j})\},  f(x^{i},x^{j}) -  f(\bar{x}^i,\bar{x}^j) \big \rangle \notag
      \\
      & \leq \frac{L}{2N^2}\sum_{i,j=1}^N |x^{i}  -  \bar{x}^{i}|^{p-2} |(x^i-x^j)-(\bar x^i-\bar x^j)|^2 + \frac{L}{4N^2}\sum_{i,j=1}^N \{|x^i-\bar x^i|^p+|x^j-\bar x^j|^p \}
    \end{align*}
 for all $(x^1, \ldots, x^N) \in \mathbb R^{d\times N}$ and $(\bar x^1, \ldots, \bar x^N) \in \mathbb R^{d\times N}$. The proof is completed on further simplification.
 \end{proof}


\end{document}